\numberwithin{equation}{section}
\g@addto@macro\bfseries{\boldmath}
\setlist{nolistsep}
\newcolumntype{L}[1]{>{\raggedright\let\newline\\\arraybackslash\hspace{0pt}}m{#1}}
\newcolumntype{C}[1]{>{\centering\let\newline\\\arraybackslash\hspace{0pt}}m{#1}}
\newcolumntype{R}[1]{>{\raggedleft\let\newline\\\arraybackslash\hspace{0pt}}m{#1}}
\newcolumntype{N}{@{}m{0pt}@{}}
\newcommand{\R}{\mathbb{R}}
\newcommand{\e}{\mathrm{E}}
\newcommand{\E}{\mathrm{E}}
\newcommand{\Tri}{\Delta}
\newcommand{\oo}{o}
\newcommand{\T}{\mathsf{T}}
\newcommand{\bcdot}{\,\boldsymbol{\cdot}\,}
\DeclareMathOperator{\spann}{span}
\newtheorem*{definition*}{Definition}
\newtheorem{lemma}{Lemma}
\newtheorem*{remark}{Remark}
\newcommand*\boxednum[1]{\!\scalebox{0.9}{
	\tikz[baseline=(char.base)]{
		\node[shape=rectangle,draw,inner sep=2pt](char){#1};}}\,}
\newcommand{\dashto}[1][2pt]{
	\settowidth{\@tempdima}{${}\rightarrow{}$}
	\makebox[\@tempdima]{${}\rightarrow{}$}
  \makebox[-\@tempdima]{\hspace{-0.1\@tempdima}\color{white}\rule[0.5ex]{#1}{1pt}}
  \makebox[\@tempdima]{}
	}
\newcommand{\tto}{\!\to\!}
\newcommand{\tdashto}{\!\dashto\!}
\newcommand{\sxi}{\textstyle{\frac{\xi_1}{1-\zeta}}}
\newcommand{\sxii}{\textstyle{\frac{\xi_2}{1-\zeta}}}
\newcommand{\sxia}{\textstyle{\frac{\xi_a}{1-\zeta}}}
\newcommand{\sxib}{\textstyle{\frac{\xi_b}{1-\zeta}}}
\newcommand{\xieta}{\xi,\zeta}
\newcommand{\sxz}{\textstyle{\frac{x}{1+z}}}
\newcommand{\sxxz}{\textstyle{\frac{1-x}{1+z}}}
\newcommand{\syz}{\textstyle{\frac{y}{1+z}}}
\newcommand{\syyz}{\textstyle{\frac{1-y}{1+z}}}
\newcommand{\szz}{\textstyle{\frac{1}{1+z}}}
\newcommand{\szzz}{\textstyle{\frac{z}{1+z}}}
\let\tilde\widetilde
\begin{document}

\renewcommand*{\thepage}{Title}
\begin{titlepage}
\thispagestyle{empty}
\begin{center}

{}~\\[0.4cm]
\includegraphics[width=0.45\textwidth]{./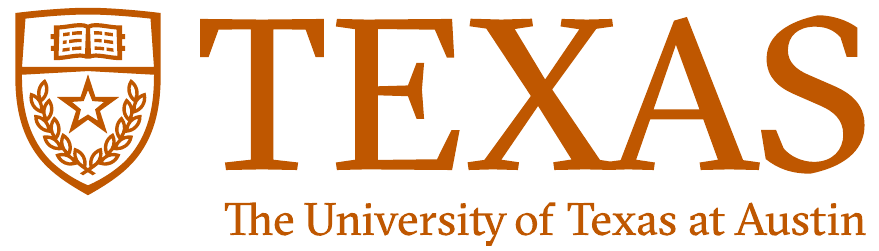}~\\[0.5cm]
\includegraphics[width=0.41\textwidth]{./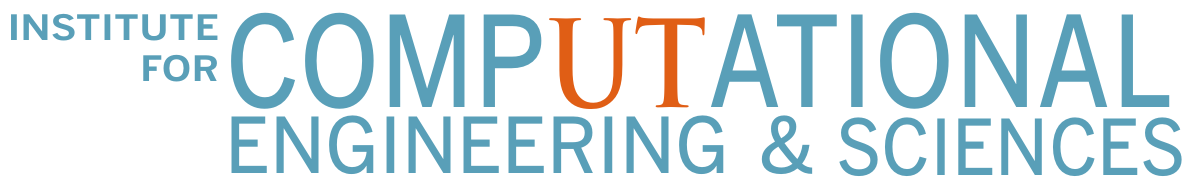}~\\[3cm] 


\linespread{1.8}\selectfont
{ \huge Orientation Embedded High Order Shape Functions for the Exact Sequence Elements of All Shapes } \\[3cm]

\linespread{1.2}\selectfont
{\Large Federico Fuentes}\\[0.1cm]
{\Large Brendan Keith}\\[0.1cm]
{\Large Leszek Demkowicz}\\[0.1cm]
{\Large Sriram Nagaraj}\\[4cm]

{\Large April 2015}\\[1.0cm]
\vfill

\end{center}

\end{titlepage}

\newpage
\pagenumbering{roman}
\renewcommand{\abstractname}{\large Abstract}
\begin{abstract}
\normalsize 
A unified construction of high order shape functions is given for all four classical energy spaces ($H^1$, $H(\mathrm{curl})$, $H(\mathrm{div})$ and $L^2$) and for elements of ``all'' shapes (segment, quadrilateral, triangle, hexahedron, tetrahedron, triangular prism and pyramid). 
The discrete spaces spanned by the shape functions satisfy the commuting exact sequence property for each element.
The shape functions are conforming, hierarchical and compatible with other neighboring elements across shared boundaries so they may be used in hybrid meshes.
Expressions for the shape functions are given in coordinate free format in terms of the relevant affine coordinates of each element shape. 
The polynomial order is allowed to differ for each separate topological entity (vertex, edge, face or interior) in the mesh, so the shape functions can be used to implement local $p$ adaptive finite element methods.
Each topological entity may have its own orientation, and the shape functions can have that orientation embedded by a simple permutation of arguments.
\end{abstract}

\newpage
\tableofcontents
\newpage
\pagenumbering{arabic}
\section{Preliminaries}
\label{sec:Introduction}

\subsection{Introduction}

In the context of finite elements, construction of higher order shape functions for elements forming the exact sequence has been a long standing activity in both engineering and numerical analysis communities.
A comprehensive review of the subject can be found for example in \citet{hpbook}, \citet{hpbook2} and references therein.

This document presents a self-contained systematic theory for the construction of a particular set of hierarchical, orientation embedded, $H^1$, $H(\mathrm{curl})$, $H(\mathrm{div})$, and $L^2$ conforming shape functions for elements of ``all shapes'', forming the 1D, 2D, and 3D commuting exact sequences discussed within.
By elements of ``all shapes'', we specifically mean the segment (unit interval) in 1D, the quadrilateral and triangle in 2D, and the hexahedron, tetrahedron, prism (wedge) and pyramid in 3D.

\begin{figure}[!ht]
\begin{center}
\includegraphics[scale=0.75]{./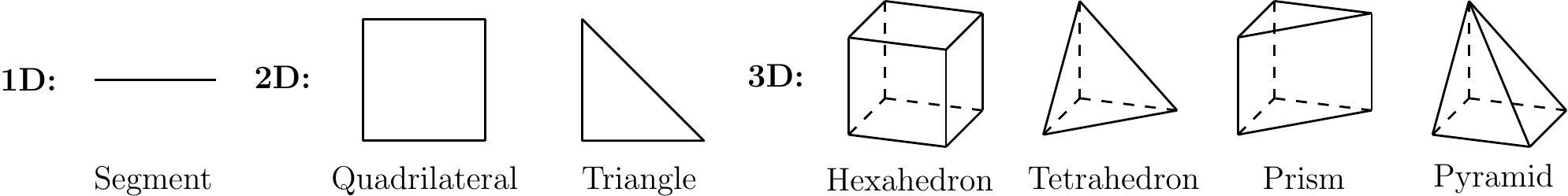}
\caption{Elements of ``all shapes''.}\label{fig:elementsallshapes}
\end{center}
\end{figure}

There are many ways to construct sets of shape functions satisfying the aforementioned properties.
However, we believe that in this work we have constructed a set which strikes an uncommon balance between simplicity and applicability. 
For all elements, and each associated energy space, we rely upon a simple methodology and a very small collection of ancillary functions to generate all of our shape functions.
Furthermore, we have supplemented this text with a package written in Fortran 90 defining each function presented in this work.\footnote{See the ESEAS library available at https://github.com/libESEAS/ESEAS.}
For these reasons, when reproducing our work in their own software, the readers should find the burden of implementation minimal.
We hope that our exposition will be clear and useful, particularly to those less familiar with the subject.



For those at the forefront of shape function construction, we hope that our work will be intriguing if only for the elegance of our construction.
Particularly, we evidence \S\ref{sec:Pyramid} on pyramid shape functions.
The higher order discrete commuting exact sequence for this element appeared only recently in the work of \citet{Nigam_Phillips_11}.
Our construction for the pyramid presents shape functions spanning each of their discrete energy spaces while maintaining compatibility with the other 3D elements.
We also remark that, for any given mesh, our shape functions are fully compatible across adjacent interelement boundaries due to considering so-called orientation embeddings.
Hence, no alterations of the shape functions are necessary at the finite element assembly procedure.
Moreover, these orientation embeddings are handled almost effortlessly by simply permuting the entries of a few relevant functions.

With regard to the choice of geometry (shape and size) of master elements, we followed \citet{hpbook}.
However, one of the key points is that our construction naturally applies to any other choice of master element geometries.
Other specific choices we made when enumerating element vertices, edges, and faces, can also be modified with little effort to the preferences of the reader.

For completeness, we have chosen to thoroughly verify the mathematical properties and to give a sound geometrical interpretation of our constructions rather than only give the necessary shape functions to the reader.
We concede that due to the depth of our presentation, and the expanse of our coverage, our offense lies only in the length of this document.
However, in a sense, an abridged version of this work is already present in a set of tables summarizing all the shape functions.
These can be conveniently consulted in Appendix~\ref{app:ShapeFunctionTable}. 



\subsection{Energy Spaces and Exact Sequences}
\label{sec:Exactsequences}
Let $\Omega\subseteq\mathbb{R}^N$, with $N=1,2,3$ be a domain.
One arrives naturally at the \textit{energy spaces} $H^1(\Omega)$, $H(\text{curl},\Omega)$, $H(\text{div},\Omega)$ and $L^2(\Omega)$ in context of various variational formulations, see e.g. Chapter 1 in \citet{hpbook2}, \citet{DemkowiczGopalakrishnan13_2} and \citet{DemkowiczClosedRange}.
Along with operations of gradient, curl and divergence (understood in the sense of distributions), these spaces form the so-called \textit{complexes}, i.e. the composition of any two operators in the sequence reduces to the trivial operator.
The 1D complex, where $\Omega\subseteq\R$, provides the simplest example:
\begin{equation*}
	\mathbb{R} \stackrel{\text{id}}{\longrightarrow} H^1(\Omega) \stackrel{\nabla}{\longrightarrow} L^2(\Omega)
	\stackrel{0}{\longrightarrow} \{ 0 \}\,.
	\label{eq:1D_exact_sequence}
\end{equation*}
Here, the symbol $\mathbb{R}$ denotes constant functions, and $\{0\}$ is the trivial vector space consisting of the zero function only. By using the name of \textit{complex}, we communicate two simple facts: a) the derivative of a constant function is zero, and b) the composition of derivative (in fact, any linear operator) with the trivial (zero) operator is trivial as well. Equivalently, we can express the same facts by using null spaces and ranges of the involved operators:
\begin{equation*}
	\mathsf{R}(\text{id}) \subseteq \mathsf{N}(\nabla) \quad \text{and} \quad
	\mathsf{R}(\nabla) \subseteq \mathsf{N}(0) \, .
\end{equation*}
If instead of inclusions above, we have equalities, then we say that the complex (sequence) is \textit{exact}. This is indeed the case for the simply connected domain $\Omega = (0,1)$. 
By using the name \textit{exact sequence}, we communicate more information: a) the derivative of a function is zero \textit{if and only if} the function is a constant, and b) the function $\nabla : H^1(\Omega) \to L^2(\Omega)$ is a surjection (onto). From now on, we remove mention of the first and final terms of the exact sequence. The two spaces $\mathbb{R}$ and $\{0\}$, and the operators $\text{id}$ and $0$, are always assumed to buttress each of the sequences we later present. Moreover, whenever possible, we absorb the $\Omega$ assignment within the notation of each energy space. The domain $\Omega$ will always be assumed to be a simply connected domain in the relevant $\R^N$.

\paragraph{1D Exact Sequence.} We now present the first exact sequence of simply connected domains in $\R$:
\begin{equation}
H^1 \stackrel{\nabla}{\longrightarrow} L^2\, .
\end{equation}

\paragraph{2D Exact Sequence.} The exact sequence for simply connected domains in $\R^2$ is of the form
\begin{equation}
H^1 \xrightarrow{\,\,\nabla\,\,} H(\mathrm{curl}) \xrightarrow{\nabla\times} L^2 \,,
\label{eq:2DExactSeq}
\end{equation}
where $\nabla\times$ and $\times$ are understood in two dimensions:
\begin{equation}
    \nabla\times E=\nabla\times\begin{pmatrix}E_1\\E_2\end{pmatrix}
        =\frac{\partial E_2}{\partial \xi_1}-\frac{\partial E_1}{\partial \xi_2}\,,\qquad\quad
    E\times F=\begin{pmatrix}E_1\\E_2\end{pmatrix}\times\begin{pmatrix}F_1\\F_2\end{pmatrix}
        =E_1 F_2 - E_2 F_1\,.\label{eq:2Dcurlandcross}
\end{equation}
By ``rotating'' $H(\mathrm{curl})$, the space $H(\mathrm{div})$ arises naturally:
\begin{equation}
    H(\mathrm{div})=\Big\{V_E=\Big(\begin{smallmatrix}0&1\\[2pt]-1&0\end{smallmatrix}\Big)E=
        \Big(\begin{smallmatrix}E_2\\-E_1\end{smallmatrix}\Big):
            E=\Big(\begin{smallmatrix}E_1\\E_2\end{smallmatrix}\Big)\in H(\mathrm{curl})\Big\}\,.
\label{eq:Hdiv2Ddef}
\end{equation}
Defined in this way, the ``rotated'' exact sequence is immediately satisfied:
\begin{equation}
H^1 \xrightarrow{\mathrm{curl}} H(\mathrm{div}) \xrightarrow{\,\nabla\cdot\,} L^2 \,,
\label{eq:2DExactSeqRotated}
\end{equation}
where, for all $\phi\in H^1$ and all $E\in H(\mathrm{curl})$, the operations satisfy the following relations: 
\begin{equation}
    \mathrm{curl}\,(\phi)=\begin{pmatrix}\frac{\partial\phi}{\partial \xi_2}\\[4pt]-\frac{\partial\phi}{\partial \xi_1}\end{pmatrix}
        =\begin{pmatrix}0&1\\[4pt]-1&0\end{pmatrix}\nabla\phi\,,\qquad\quad
            \nabla\cdot V_E=\nabla\cdot\begin{pmatrix}0&1\\[4pt]-1&0\end{pmatrix}E=\nabla\times E\,.
\end{equation}

\paragraph{3D Exact Sequence.} Finally, for a simply connected domain in $\mathbb{R}^3$, we have the 3D exact sequence
\begin{equation}
H^1 \xrightarrow{\,\,\nabla\,\,} H(\mathrm{curl}) \xrightarrow{\nabla\times} H(\mathrm{div}) \xrightarrow{\,\nabla\cdot\,} L^2 \, .
\label{eq:3D_exact_sequence}
\end{equation}

For all elements, these exact sequences will be reproduced on the discrete level by replacing the energy spaces with appropriate polynomial subspaces.\footnote{Or rational polynomial subspaces in the case of the pyramid (see \S\ref{sec:Pyramid}).} We shall use the standard notation:
\begin{equation}
\begin{aligned}
	\text{1D}:&\quad\qquad W^p \xrightarrow{\,\,\nabla\,\,\,}Y^p \, ,\\
	\text{2D}:&\qquad\!\!\Bigg\{\begin{array}{c}
		W^p \xrightarrow{\,\,\nabla\,\,\,} Q^p \xrightarrow{\nabla\times} Y^p  \, ,\\[4pt]
		W^p \xrightarrow{\mathrm{curl}} V^p \xrightarrow{\,\nabla\cdot\,} Y^p  \, ,\end{array}\\
	\text{3D}:&\quad\qquad W^p \xrightarrow{\,\,\nabla\,\,\,} Q^p \xrightarrow{\nabla\times} V^p \xrightarrow{\,\nabla\cdot\,} Y^p\,.
\end{aligned}
\label{eq:polynomial_exact_sequences}
\end{equation}
The symbol $p$ loosely denotes the polynomial order and should not be interpreted literally.\footnote{By this we mean that $p$ should, in fact, be interpreted as a multi-index for the Cartesian product elements.}

In this work, we shall consider only spaces of the \textit{first type} which, with the exception of the pyramid, were introduced by \citet{Nedelec80} in the \textit{first} of his two famous papers.
The pyramid spaces were taken as the first set of spaces proposed by \citet{Nigam_Phillips_11}.
All these spaces satisfy a number of fundamental properties.
First, the spaces of the different elements are said be tracewise \textit{compatible} at the level of spaces.
This allows them to be used in \textit{hybrid meshes}, which may contain elements of all shapes.
Second, for each element, $W^p$ contains polynomials of total order $p$, while $Q^p$, $V^p$ and $Y^p$ contain polynomials of total order $p-1$, meaning that the overall drop in polynomial degree from the first discrete energy space in the exact sequence to the last discrete energy space is one.
Thirdly, for a given element and energy space, the discrete spaces form a nested sequence of spaces as the order increases (for instance, $W^p\subseteq W^{p+1}$ and so on).
This is a necessary condition for the construction of hierarchical sets of shape functions.
Lastly, the spaces form \textit{commuting} exact sequences for each element.
This, coupled with the previous properties, ensures (global) interpolation estimates for \textit{all} of our energy spaces subject to affine transformations of the master element geometries \citep{monk_demkowicz}.

\subsection{Shape Functions}
We shall always identify the discrete spaces first (like $W^p$, $Q^p$, $V^p$ and $Y^p$ in \eqref{eq:polynomial_exact_sequences}), and only afterward introduce the corresponding shape functions that provide bases for those spaces.
This is a good place to remind the reader that there are, in fact, two competing schools of thought when it comes to the theory of shape functions.

The classical definition of \citet{Ciarlet} starts with \textit{degrees of freedom} that are functionals defined on some large subset $\mathcal{X}$ of an energy space $U$ (like $C^\infty\cap H^1\subseteq H^1$). 
The shape functions, which are elements spanning some discrete (finite dimensional) space $X\subseteq\mathcal{X}$ (e.g. $W^p\subseteq C^\infty\cap H^1$), are then defined as the dual basis to the linearly independent (when restricted to $X$) degrees of freedom.
An \textit{interpolation operator} from $\mathcal{X}$ to $X$ is then naturally defined.
In this construction, we must (usually) precompute the shape functions, e.g. in terms of combinations of monomials whose corresponding coefficients are stored.


The competing approach of Szab\'o \citep{SzaboBabuska91} starts with a direct construction of shape functions by following a topological classification (of vertices, edges, faces and element interiors) induced by conformity requirements.
The shape functions are defined in terms of families of polynomials (e.g. Legendre) and their integrals, and are computed using simple recursive formulas. This is the approach taken in this work.
The so-called \textit{projection-based interpolation} \citep{hpbook,hpbook2} defined through local projections over element edges, faces and interior, is introduced \textit{independently} of the construction of shape functions.
Therefore, with no need to precompute coefficients defining the shape functions, following Szab\'o's approach is perhaps more convenient and straightforward.

\subsection{Hierarchy in \texorpdfstring{$p$}{p}}

Given an energy space (like $H^1$) and a conforming discrete space of order $p$ (like $W^p\subseteq H^1$), denote the shape functions forming a basis for the discrete space by $\mathcal{B}^p$ (e.g. $\mathrm{span}(\mathcal{B}^p)=W^p\subseteq H^1$). 
A construction is said to be hierarchical in $p$ if $\mathcal{B}^p\subseteq\mathcal{B}^{p+1}$ for all $p$, so that the set of shape functions spanning a space of a certain order is found in all subsequent enriched spaces of higher order.
This implies that as $p$ increases, all one has to do is to add a few functions to a smaller previously constructed set of shape functions.


In our construction, hierarchy in $p$ will be enforced. 
In a given mesh, it will allow comparison of shape functions between adjacent elements that have different order, so that at least some of the shape functions of the neighboring elements will match.
This is crucial with regard to the notion of local $p$ adaptivity, which some methods employ.
In our work, this flexibility in the variability of the order will be partly reflected by the natural anisotropies present in Cartesian product elements, such as quadrilaterals, hexahedra and prisms, where each independent direction can have a different order.
More information can be consulted in the literature (see \citet{hpbook2} and references therein).

\subsection{Traces and Compatibility}
\label{sec:compatibility}

For a function to be contained in a given energy space it must satisfy some global conformity conditions which depend upon the space.
For instance, functions in $H^1$ are almost globally continuous, but functions in $L^2$ can be much more discontinuous.

Due to these conformity requirements, each energy space has a different definition of \textit{trace} at the boundaries. 
The different traces only make sense on certain parts of the boundary.
For instance, consider a polyhedral element.
\begin{itemize}
	\item The $H^1$ trace is the value of the function itself at the boundary. In 3D, it may take values at element vertices, edges and faces which lie along the boundary.
	\item The $H(\text{curl})$ (tangential) trace is the tangential component of the vector valued function across the boundary. It may take values at edges and faces in the boundary, but not at vertices, since these do not have a concept of tangent. In fact, the $H(\text{curl})$ (tangential) trace is scalar valued across edges (which have 1D tangent spaces) and has two components across faces (which have 2D tangent spaces).
	\item The $H(\text{div})$ (normal) trace is the normal component of the vector valued function across the boundary. In 3D, it can take values at the faces of the boundary, but not at vertices or edges, since they do not have a unique notion of normal. In 2D, edges along the boundary \textit{do} have a notion of normal, so the (normal) edge trace does exist.
	\item There is no notion of trace for $L^2$.
\end{itemize}


At the discrete level, these considerations lead naturally to a classification of shape functions according to topological entities, which in turn depend on the number of spatial dimensions:
\begin{equation*}
	\begin{aligned}
		\text{1D:}&\quad\qquad\text{vertex and edge shape functions,}\\
		\text{2D:}&\quad\qquad\text{vertex, edge, and face shape functions,}\\
		\text{3D:}&\quad\qquad\text{vertex, edge, face and interior shape functions.}
	\end{aligned}
\end{equation*}

For any given mesh and energy space, shape functions of adjacent elements \textit{must} be continuous at the trace level across the shared interelement boundaries.
This is referred to as \textit{compatibility}, and results in the global conformity of the (disjoint union of) shape functions. 
For instance, in 2D, the order $p$ edge shape function of a quadrilateral would need to be compatible with the order $p$ edge shape function of an adjacent triangle (see Figure \ref{fig:compatibilitydefinition}).

\begin{figure}[!ht]
\begin{center}
\includegraphics[scale=0.7]{./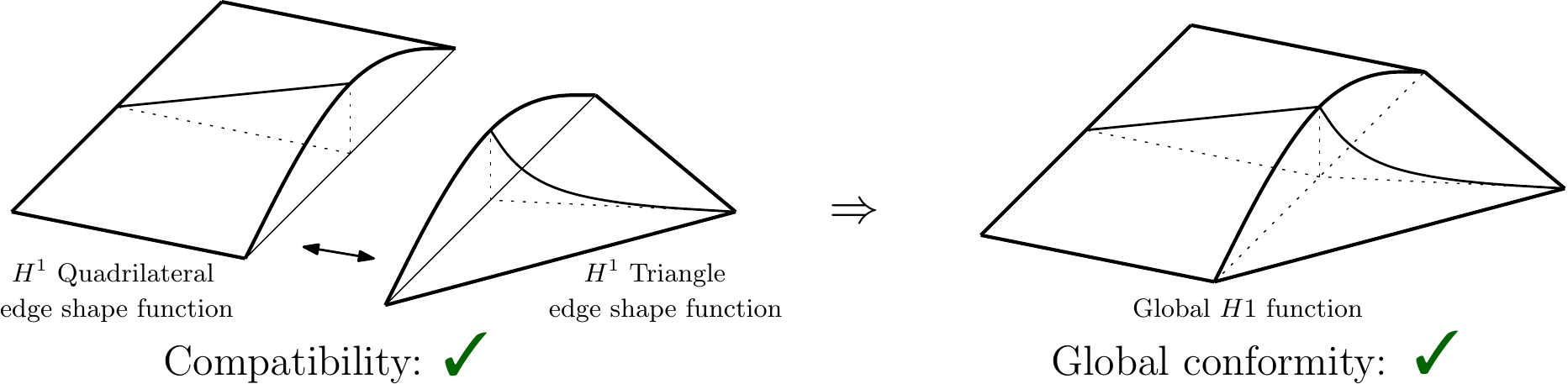}
\caption{Example of compatible $H^1$ edge functions resulting in a globally conforming $H^1$ function.}
\label{fig:compatibilitydefinition}
\end{center}
\end{figure}


\subsection{Embedded Sequences and Dimensional Hierarchy}
\label{sec:dimensionalhierarchy}

To enforce compatibility, it is useful to actually begin with a known trace over the (shared) boundary and then extend (or lift) it to the rest of the element.
This is the approach inherently present in our constructions.
In fact, this idea is reinforced when looking at the exact sequences.
Note the crucial fact that the lower dimensional sequences are ``embedded'' in the higher dimensional sequences if one considers the appropriate restrictions.
This is better represented by the following diagram,
\begin{equation}
	\begin{gathered}
  	\xymatrix{
        {\text{3D:}} & H^1 \ar[r]^{\nabla\,\,\,\,} \ar@{.>}[d]^{\mathrm{tr}} & H(\mathrm{curl})
          \ar[r]^{\,\,\,\nabla\times} \ar@{.>}[d]^{\mathrm{tr}} & H(\mathrm{div})
          	\ar[r]^{\,\,\,\nabla\cdot} \ar@{.>}[d]^{\mathrm{tr}} & L^2\\
        {\text{2D:}} & H^1 \ar[r]^{\nabla\,\,\,} \ar@{.>}[d]^{\mathrm{tr}} & H(\mathrm{curl})
          \ar[r]^{\,\,\,\nabla\times} \ar@{.>}[d]^{\mathrm{tr}} & L^2\\
        {\text{1D:}} & H^1 \ar[r]^{\nabla\,\,\,\,} & \,\,\,\,L^2\,,\, }
	\end{gathered}\label{eq:traceexactsequences}
\end{equation}
where the ``mapping arrows'', $\xymatrix{{}\ar@{.>}[r]^{\mathrm{tr}}&{}}$, indicate that the range (of the trace) is actually a larger space.
These arrows are meant to be motivational only.
At the discrete level, we always reproduce the above diagram precisely with the dotted lines being replaced by well defined maps.

Indeed, we will enforce a \textit{dimensional hierarchy} through traces which is consistent with the previous discussion.
The template for this new form of hierarchy is precisely \eqref{eq:traceexactsequences}, but it is satisfied at the level of the shape functions themselves (not only the spaces).
One will begin the construction by first defining the 1D shape functions (over the segment), then defining all the 2D functions (over the quadrilateral and triangle), and finish with the 3D shape functions.
Throughout the construction, the higher dimensional shape functions will have as (nonzero) trace a lower dimensional function,
so that they are actually extensions of these lower dimensional functions.
Indeed, the shape functions are nested through the trace operation at each topological entity lying in the boundary.

For example, given an edge of a 2D element, the nonzero edge traces of the 2D $H^1$ shape functions should reproduce the 1D $H^1$ shape functions.
Hence, some 2D $H^1$ shape functions are said to be extensions of the 1D $H^1$ shape functions.
Similarly, the nonzero edge trace of 2D $H(\mathrm{curl})$ shape functions should be the 1D $L^2$ shape functions (see \eqref{eq:traceexactsequences}).
These relations hold per topological entity as well.
For example, the nonzero edge trace of 2D $H^1$ \textit{vertex} functions should coincide with the 1D $H^1$ \textit{vertex} functions, and the nonzero trace of the 2D $H^1$ \textit{edge} functions should coincide with the 1D $H^1$ \textit{edge} functions (see Figures \ref{fig:2Dvertexcompatibility} and \ref{fig:2Dedgecompatibility} later on).
When enforced, these nice relationships not only aid in the compatibility, but, from the computational standpoint, have the benefit of allowing us to recycle a large amount of code when moving from one element construction to another.

\subsection{Basic Properties of Shape Functions}

In this section we will describe the basic properties that our shape functions should satisfy.
These properties are specific to the topological entity (vertices, edges, faces, interior) and the energy space to which the shape functions are associated.
For example, vertex functions satisfy different properties than edge functions in $H^1$, and 2D edge functions satisfy different properties in $H^1$ and $H(\mathrm{curl})$.

For a given element and energy space, each topological entity owns a set of shape functions, which is said to be \textit{associated} to the entity.
These functions are nonzero at the associated topological entity.
Indeed, each vertex is associated to \textit{one} $H^1$ vertex shape function.
Meanwhile, each edge, face, and interior of the element is associated to a \textit{set} of edge, face, and interior shape functions of size of the order of $p$, $p^2$ and $p^3$ respectively.
For example, in $H^1$, each edge is associated to a set of $p-1$ edge shape functions.

Now, for a given dimension, space, and topological entity, we will cover three main aspects of the shape functions.
First, are the vanishing properties that they should satisfy.
These properties establish a form of trivial compatibility along some parts of the boundary.
Functions whose trace vanishes everywhere along the boundary are called \textit{bubbles}, and they are trivially compatible with each other.
Second, come the nonzero trace properties.
These are, in general, nontrivial, and ensure either full compatibility or compatibility modulo ``orientations'' (see \S\ref{sec:introorientations} for a discussion on ``orientations'').
Fortunately, dimensional hierarchy will determine the form of these nonzero traces.
Third, are some properties that the shape functions should satisfy along the element itself in order to have hierarchy in $p$.


\subsubsection{1D}

In one dimension, classification of shape functions is:
\begin{align*}
  H^1:&\quad\text{vertex and edge functions,}\\
  L^2:&\quad\text{\phantom{vertex and} edge functions.}
\end{align*}
There is only one simply connected 1D element, which is the segment (or edge), and its boundary is just its two vertices.

\paragraph{$H^1$.}
\textit{Vertex functions:} 
First, there are the vanishing properties.
The vertex functions should vanish at the other (unassociated) vertex.
Second, there are the nonzero trace properties.
The vertex functions should take the value $1$ at the associated vertex.
This will ensure \textit{full} compatibility in 1D.
Third, there is the form of the function itself.
There are multiple ways in which the vertex function can decay towards the other vertex (see Figure \ref{fig:vertexcompatibility}).
If the hierarchy in $p$ is not an issue, the decay could be nonlinear and dependent on $p$, and this may have some computational advantages.
For example, when having a mesh with uniform order $p$ across all elements, this nonlinear decay might lead to a better conditioning of finite element matrices.
However, as mentioned before, we want our shape functions to be useful in $p$ adaptive environments in higher dimensions.
Hence, we enforce hierarchy in $p$, and this restricts our choice to $p=1$, so the decay must be linear.
Indeed, this is the typical and simplest choice.

\begin{figure}[!ht]
\begin{center}
\includegraphics[scale=0.7]{./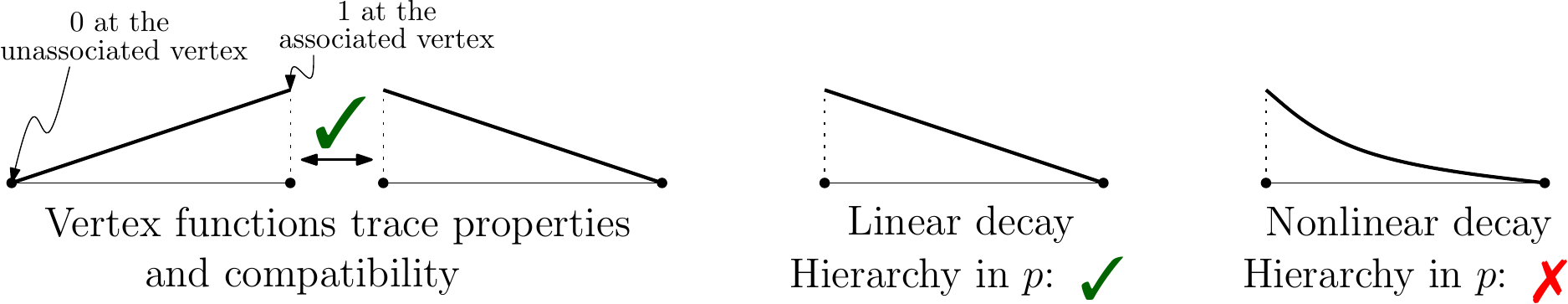}
\caption{Natural vertex function compatibility and the different possible decays.}
\label{fig:vertexcompatibility}
\end{center}
\end{figure}

\textit{Edge functions:} In 1D the $H^1$ edge shape functions are called \textit{edge bubbles} and should vanish at the two endpoints of the edge (or segment).
This makes them automatically compatible in 1D.
When $p=1$ there are no edge bubbles, when $p=2$ there is one $p=2$ edge bubble, when $p=3$ there is the $p=2$ bubble and an extra $p=3$ bubble giving a total of two bubbles, and so on.
This results in a hierarchical construction of the shape functions in $p$.


\begin{figure}[!ht]
\begin{center}
\includegraphics[scale=0.7]{./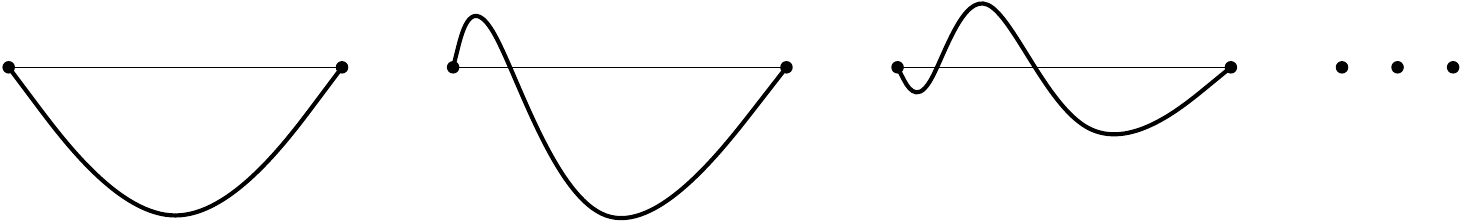}
\caption{Potential set of of 1D $H^1$ edge bubbles vanishing at both endpoints.}
\label{fig:1DH1bubbles}
\end{center}
\end{figure}

\paragraph{$L^2$.}
\textit{Edge functions:}
The 1D $L^2$ edge functions do not need to satisfy any trace properties (neither vanishing nor nonzero) because there is no notion of trace.
They span the space of the (1D) gradients of the $H^1$ conforming shape functions, and should be hierarchical in their construction.

\subsubsection{2D}

In two dimensions, classification of shape functions is:
\begin{align*}
  	H^1:&\quad\text{vertex, edge, and face functions,}\\
  	H(\mathrm{curl}):&\quad\text{\phantom{vertex,} edge, and face functions,}\\
  	L^2:&\quad\text{\phantom{vertex, edge, and} face functions.}
\end{align*}
There are two 2D elements: the quadrilateral and the triangle.
Their boundaries are composed of edges and vertices.

\paragraph{$H^1$.}
\textit{Vertex functions:}
The vertex functions should vanish at the other (unassociated) vertices and disjoint edges.
They should take the value $1$ at the associated vertex.
In 2D, at this point, this does \textit{not} guarantee compatibility.
However, dimensional hierarchy requires the (nonzero) trace of vertex functions over the adjacent edges to be precisely a 1D $H^1$ vertex function (associated to the vertex in question).
This results in full compatibility, and implies the 2D vertex functions are extensions of their 1D analogues.
Regarding the form of the shape functions across the (quadrilateral or triangle) face itself, they can have different forms of decay.
Again, hierarchy in $p$ will restrict our choice, so that the vertex functions we define lie in the lowest order space possible.
Indeed, quadrilateral vertex functions present a bilinear decay, while the decay is linear for triangles.

\begin{figure}[!ht]
\begin{center}
\includegraphics[scale=0.7]{./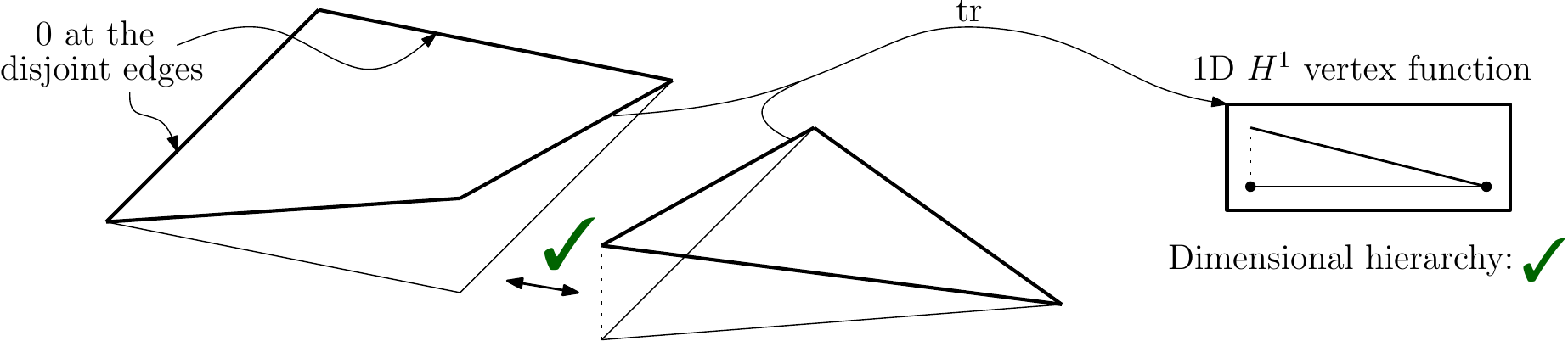}
\caption{Trace properties of 2D $H^1$ vertex functions. Dimensional hierarchy implies full compatibility.}
\label{fig:2Dvertexcompatibility}
\end{center}
\end{figure}

\textit{Edge functions:}
Edge functions should vanish at all other (unassociated) edges of the element.
By dimensional hierarchy, for a given order $p$, the nonzero trace over the (associated) edge itself should take the form of a 1D $H^1$ edge bubble of order $p$.
This gives compatibility modulo edge ``orientations'' in 2D, and it implies the edge functions are extensions of the original 1D $H^1$ edge bubbles.
Now, the functions themselves should present a certain decay from the (associated) edge towards the rest of the element.
Again, this choice is generally restricted by the hierarchy in $p$.
For the quadrilateral element the decay we invoke is linear, but things are more complicated for the triangle element. 

\begin{figure}[!ht]
\begin{center}
\includegraphics[scale=0.7]{./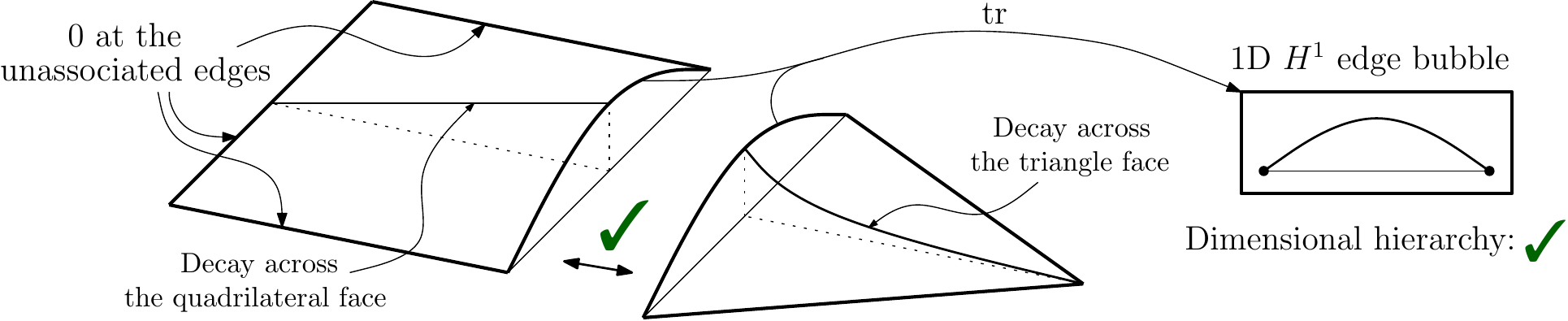}
\caption{Compatibility of edge functions. Depicted as well is the decay across the faces.}
\label{fig:2Dedgecompatibility}
\end{center}
\end{figure}

\textit{Face functions:}
The 2D \textit{face bubbles} vanish at all the edges of the element, so they are automatically compatible in 2D.
They are constructed usually by using the edge functions previously defined (since these already satisfy some vanishing properties) and making some modifications to establish the remaining vanishing conditions.
Again, their construction is hierarchical in $p$.

\paragraph{$H(\mathrm{curl})$.}
\textit{Edge functions:}
The edge functions must have vanishing (tangential) trace at all other edges.
For a given order $p$, the nonzero (tangential) trace over the edge itself should take the form of a 1D $L^2$ edge function of order $p$.
This ensures compatibility modulo edge ``orientations''.
Now, to respect hierarchy in $p$, the edge functions themselves should present a decay (in each of their two components) which is consistent with the lowest order possible decay.
Ultimately, the decay will be similar to that of $H^1$ edge functions.

\textit{Face functions:}
The face bubbles have zero (tangential) trace over all edges, so they are automatically compatible.
They are hierarchically constructed using the same ideas as for their $H^1$ counterparts.

\paragraph{$L^2$.}
\textit{Face functions:}
The $L^2$ face functions do not need to satisfy any trace properties, because there is no notion of trace in $L^2$.
They span the space of (2D) curls of the $H(\mathrm{curl})$ shape functions, and their construction should be hierarchical in $p$.

\subsubsection{3D}

In three dimensions, classification of shape functions is:
\begin{align*}
  H^1:&\quad\text{vertex, edge, face, and interior shape functions,}\\
  H(\mathrm{curl}):&\quad \text{\phantom{vertex,} edge, face, and interior shape functions,}\\
  H(\mathrm{div}): &\quad\text{\phantom{vertex, edge,} face, and interior shape functions,}\\
  L^2:&\quad\text{\phantom{vertex, edge, face, and} interior shape functions.}
\end{align*}
There are four 3D elements: the hexahedron, the tetrahedron, the prism and the pyramid.
Their boundaries are composed of faces, edges, and vertices.

\paragraph{$H^1$.}
\textit{Vertex functions:}
Vertex functions have to vanish at all other (unassociated) vertices and at all disjoint edges and faces.
Dimensional hierarchy requires the (nonzero) trace of vertex functions over the adjacent faces to be precisely a 2D $H^1$ vertex function (associated to the vertex in question).
This ensures full compatibility of the vertex functions in 3D.

\textit{Edge functions:}
Edge shape functions should vanish at all other (unassociated) edges and disjoint faces.
By dimensional hierarchy, for a given order $p$, the nonzero trace over the adjacent faces should be a 2D $H^1$ edge function of order $p$ (associated to the edge in question).
Again, this ensures compatibility modulo edge ``orientations''.

\textit{Face functions:}
Face functions should vanish at all other (unassociated) faces.
Dimensional hierarchy establishes that, for a given order, the nonzero trace over the (associated) face itself should be a 2D $H^1$ face bubble of the same order.
This establishes compatibility modulo face ``orientations''.

\textit{Interior functions:}
The 3D \textit{interior bubbles} vanish at all faces of the element, and are fully compatible in 3D.
Our construction of these functions usually involves making some changes to the face functions previously defined in order to establish the remaining vanishing conditions.

\paragraph{$H(\mathrm{curl})$.}
\textit{Edge functions:}
The edge functions must have vanishing (tangential) trace at all other edges and disjoint faces.
For a given order $p$, the nonzero trace over the adjacent faces should be a 2D $H(\mathrm{curl})$ edge function of order $p$.
This gives compatibility modulo edge ``orientations''.

\textit{Face functions:}
The face functions have vanishing (tangential) trace over all other faces.
Requiring the trace over the face itself to be a 2D $H(\mathrm{curl})$ face bubble of a given order ensures compatibility modulo face ``orientations''.

\textit{Interior functions:}
The interior bubbles have zero (tangential) trace over all the faces and are fully compatible in 3D.
They are constructed using the same ideas as for interior bubbles in $H^1$.

\paragraph{$H(\mathrm{div})$.}
\textit{Face functions:}
The face functions should have vanishing (normal) trace at all other faces.
The trace over the face itself should be a 2D $L^2$ face function of a given order.
Compatibility modulo face ``orientations'' is then established.

\textit{Interior functions:}
The interior bubbles have zero normal component at all the faces, and they are automatically compatible.
They are constructed using the same ideas as in $H^1$.

\paragraph{$L^2$.}
\textit{Interior functions:}
These do not have to satisfy any trace properties and they span the space of the (3D) divergences of the $H(\mathrm{div})$ shape functions.

\subsection{Orientation Embedded Shape Functions}
\label{sec:introorientations}

\begin{figure}[!ht]
\begin{center}
\includegraphics[scale=0.7]{./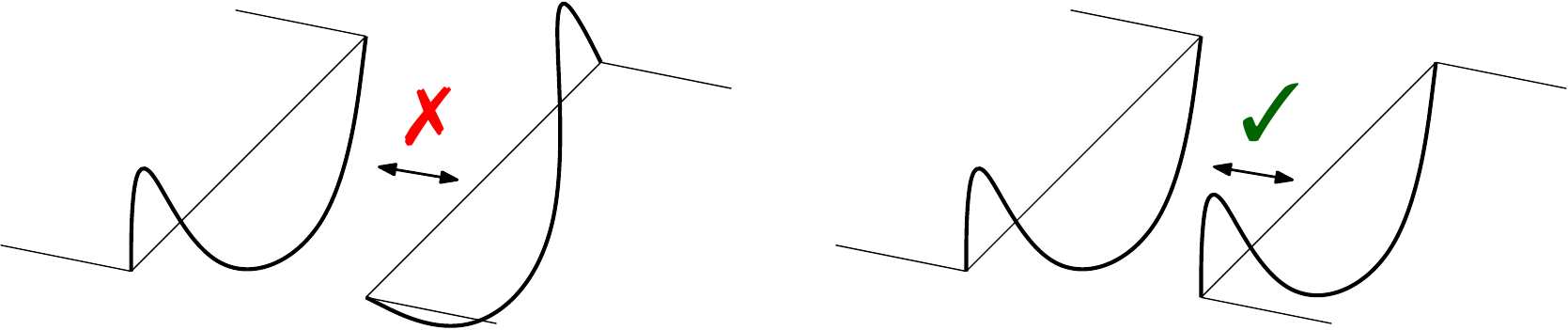}
\caption{Potential edge function ``orientation'' mismatch in 2D due to disregarding the global mesh.}
\label{fig:edgemismatchintro}
\end{center}
\end{figure}

To ensure \textit{full} compatibility of the shape functions along the boundaries, the concept of \textit{orientations} needs to be introduced.
The simplest example occurs in 2D, where edge functions might not match in the \textit{global} mesh due to a simple change in coordinates over the edge itself.
This ``orientation'' mismatch occurs when the edge functions in the (local) master element are transformed to the global mesh.
Indeed, in the standard Szab\'o's approach, master element shape functions are constructed with no regard for global edge or face coordinates, and element shape functions contributing to an edge or face basis function may not coincide with each other along the shared boundary.
This leads to the necessity of an additional action during the finite element assembly process to account for local-to-global orientation changes (change of coordinates).
Usually, these involve \textit{sign factors} in the case of edges and quadrilateral faces, and more complicated adjustments in the case of triangle faces (see the discussion in \citet[p.50]{hpbook2}).
In the context of $h$ adaptive codes involving hanging nodes, the implementation of these modifications in the assembly procedure can become quite involved, and alternative solutions to this problem are therefore desired.

One such solution involves the concept of orientation embedding \citep{GattoDemkowicz10}.
Here, a given topological entity (an edge in 2D, or a face or edge in 3D), regardless of what elements it is adjacent to, is given a \textit{global orientation} at the mesh level, so that it effectively owns a system of coordinates.
In the mesh, this is equivalent to ordering the vertices in a certain order for that given topological entity.
For instance, for an edge with vertices $a$ and $b$, the vertices can be ordered as $a\to b$ or $b\to a$. 
This choice defines a certain global edge orientation.
This information is then passed to the master element, and the shape functions are defined depending on this new information.
The resulting shape functions are then \textit{automatically} compatible with each other.
Naturally, at the local level, this constitutes an extension to the typical approach by Szab\'o, but at the assembly level, it simplifies the implementation of constrained approximation (hanging nodes) by an order of magnitude.

In view of these observations, in this work we constructed orientation embedded shape functions which take into account the information regarding the ``orientation'' of each relevant topological entity.
For each element, we explain these orientation embeddings only after first presenting a complete construction of the classical (``unoriented'') shape functions.
Hence, the information is conveniently decoupled for ease of consultation.

\subsection{Affine Coordinates}
\label{sec:affinecoordinates}

In this work, we chose to exploit simplex (barycentric) \textit{affine coordinates} to formulate all shape function constructions. 
It is well known that affine coordinates are useful when constructing shape functions for the triangle and tetrahedron, which are simplices.
However, we note that with the exception of the pyramid, all elements are either a simplex or a Cartesian product of simplices. 
Indeed, we use these coordinates for \textit{all} the elements, including the pyramid, where we define affine-related coordinates to complement the construction.

Using affine coordinates has many desirable advantages.
Firstly, they give a solid geometrical intuition to the shape functions.
Secondly, they allow the expressions for the shape functions to be used in many other master element geometries.
Lastly, they play a vital role in the context of orientation embedded shape functions.
Indeed, orientation changes are handled almost effortlessly by simple permutations in the arguments of a few crucial \textit{ancillary functions} (or \textit{operators}).
The arguments of these functions are precisely affine coordinates (or affine-related), and they are permuted in accordance to a simple auxiliary permutation function.
This property might be somewhat intuitive in the case of $H^1$ functions, but what is remarkable is that it also holds for the relevant $H(\mathrm{curl})$ and $H(\mathrm{div})$ functions, where technically speaking, nontrivial pullback maps (sometimes called Piola transforms) are required to make these coordinate changes.
Hence, these pullback maps become superfluous with the aid of ancillary operators having affine coordinate functions as their arguments.

\begin{figure}[!ht]
\begin{center}
\includegraphics[scale=0.58]{./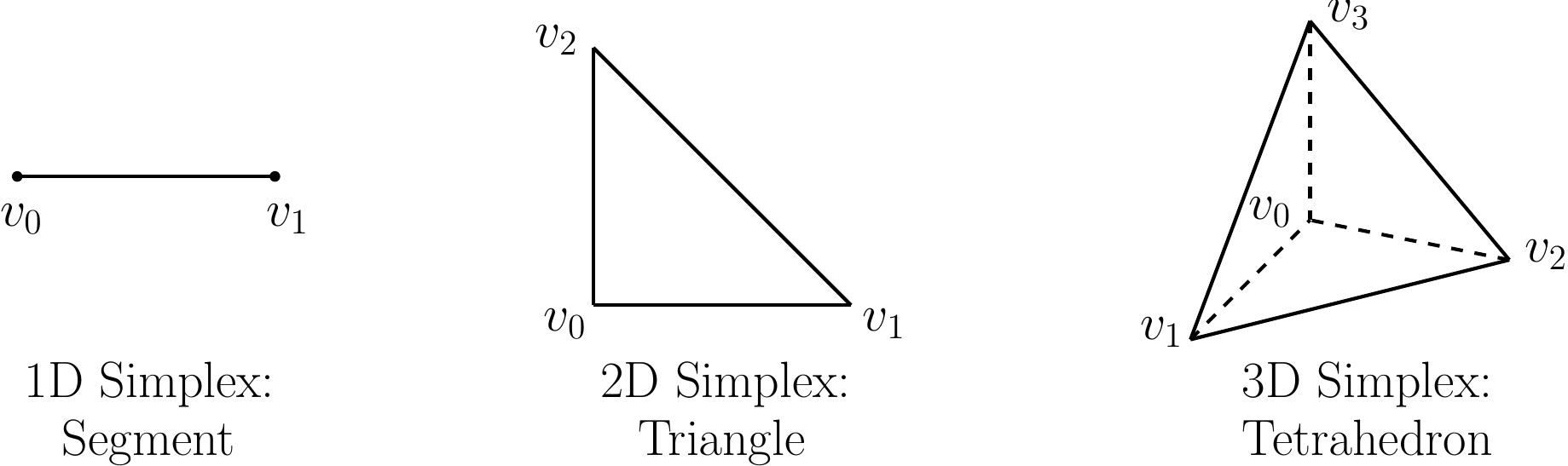}
\caption{Simplices in 1D, 2D and 3D.}
\label{fig:affinesimplices}
\end{center}
\end{figure}

We now define the affine coordinates.
Let $ v_0,\ldots, v_N$, denote the vertices of some simplex, $\Delta$.
Any point $ x\in\Delta$ can be expressed as a convex combination of the vertices:
\begin{equation}
 x = \sum_{a=0}^N s_a v_a\, .\label{eq:affinerepresentation}
\end{equation}
The weights in the sum above, $s_0,\ldots,s_N$, are the affine coordinates for $\Delta$.
We can think of them both as coordinates in and of themselves, or functions of the Cartesian variable $x$. 
Due to being a convex combination, for all $x\in\Delta$ it holds that
\begin{equation}
\sum_{a=0}^N s_a(x)=1\,,\quad\text{ and }\quad s_a(x)\geq0\,.\label{eq:affinesumtoone}
\end{equation}

Throughout this document, to ease the understanding, we shall use the following convention for affine coordinates.
\begin{itemize}
	\item 1D: $\mu_0,\mu_1$ will be affine coordinates for edges ($N=1$, $\mu_a=s_a$ and $a=0,1$).
	\item 2D: $\nu_0,\nu_1,\nu_2$ will be affine coordinates for triangles ($N=2$, $\nu_a=s_a$ and $a=0,1,2$).
	\item 3D: $\lambda_0,\lambda_1,\lambda_2,\lambda_3$ will be affine coordinates for tetrahedra ($N=3$, $\lambda_a=s_a$ and $a=0,1,2,3$).
\end{itemize}
We will often use the following ``vector'' notation for compactness:
\begin{equation}
	\vec{s}_{ab}=(s_a,s_b)\,,\quad\qquad\vec{s}_{abc}=(s_a,s_b,s_c)\,,
\end{equation}
where $s=\mu,\nu,\lambda$. Hence, for example, $\vec{\nu}_{12}=(\nu_1,\nu_2)$ and $\vec{\lambda}_{031}=(\lambda_0,\lambda_3,\lambda_1)$.

Explicit formulas  for the affine coordinates (in terms of Cartesian coordinates) used for each element will be given at the beginning of each corresponding section.


\subsection{Outline}

The document will be organized naturally starting with the simplest element in 1D and then, as dimension and complexity increase, leading into the most complicated elements in 3D.
The order of sections is: segment (\S\ref{sec:Segment}), quadrilateral (\S\ref{sec:Quad}), triangle (\S\ref{sec:Tri}), hexahedron (\S\ref{sec:Hexa}), tetrahedron (\S\ref{sec:Tet}), prism (\S\ref{sec:Prism}) and pyramid (\S\ref{sec:Pyramid}).
Those interested only in simplicial elements may simply read segment, triangle and tetrahedron, while those interested only in quadrilateral and hexadedral elements can also skip the nonrelevant sections.
The prism and pyramid sections are better appreciated after reading through all of the previous sections.
As mentioned before, with regard to orientations, for each element there will always be a final subsection describing the necessary notions and modifications to implement orientation embedded shape functions.
Therefore, as a first iteration in trying to implement the shape functions, or for those readers for which this aspect is not of interest, we suggest skipping those subsections.

As a prelude to all the constructions, there is a section introducing the concept of polynomial scaling and the versions of Legendre and Jacobi polynomials used in our constructions.
More importantly, the concept of \textit{homogenization} is defined.
This is fundamental for the elements involving triangle faces (triangle, tetrahedron, prism, and pyramid).

Finally, as mentioned before, a set of tables available in Appendix \ref{app:ShapeFunctionTable} give a thorough definiton of all ancillary functions and shape functions presented in the text.
These tables should be used as a reference by the reader when looking at the provided code or when implementing their own version.

\subsection{Previous Work}

Our constructions are often based either in part or in full in previous work by various collaborators in the field.
Construction of shape functions for the quadrilateral follows \citet{AinsworthCoyle01} (see also \citet{hpbook2}).

For the triangle and tetrahedron, our construction is based on the concept of \textit{scaled polynomials} as described by \citet{karniadakisbook}, \citet{Schoeberl_Zaglmayr_05}, \citet{SabineThesis} and the subsequent work of \citet{Beuchler_Pillwein_Schoeberl_Zaglmayr_12}.
See also \citet{Beuchler_Schoeberl_06, Beuchler_Pillwein_07, Beuchler_Pillwein_Zaglmayr_12} and \citet{Beuchler_Pillwein_Zaglmayr_13} for more details on obtaining good sparsity properties via appropriate selection of Jacobi polynomials.
Contrary to their work, here we study the classical $H(\text{curl})$ and $H(\text{div})$ conforming N\'{e}d\'{e}lec and Raviart-Thomas spaces having the property that they are affine invariant, and being compatible (at the space level) with the spaces proposed for the pyramid.
Other interesting shape functions for the tetrahedron include those of \citet{AinsworthBezier} based on Bernstein polynomials. Lastly, it is worth noting that \citet{SabineThesis} also presents a unified construction of the hexahedron and prism to complement the tetrahedron, but does not include the pyramid.

The prism element is a Cartesian product of the 2D triangle and 1D segment.
The prism shape functions therefore utilize constructs from the triangle and segment.

Construction of pyramid shape functions builds on the fundamental work of \citet{Nigam_Phillips_11} and their first family of pyramid spaces.
The spaces are natural for (parallelogram-based) affine pyramids, but as evidenced by \citet{Bergot_Durufle_14}, they also have other attractive properties in a non-affine setting.
We also note that \citet{Bergot_Gary_Durufle_10} and \citet{Bergot_Durufle_14} have contributed to the work on higher order pyramid shape functions, but their spaces and shape functions are different.

The idea of orientation embedded shape functions follows the work of \citet{GattoDemkowicz10} and stems from discussions with Joachim Sch\"oberl dating back to the Vienna WCCM congress in 2002.

\newpage
\section{Polynomials Prelude}
\label{sec:Notation}

\subsection{Notation}



The polynomials of order $p$ with arguments $x\in\mathbb{R}$ will be denoted by
\begin{equation}
   	\mathcal{P}^p(x)=\mathrm{span}\{x^j:j=0,\ldots,p\}\,.
\end{equation}
Similarly, in two dimensions  the polynomials of total order $p$ with arguments $(x,y)\in\mathbb{R}^2$ are denoted by
\begin{equation}
    \mathcal{P}^p(x,y)=\mathrm{span}\{x^iy^j:i\geq0,j\geq0,n=i+j\leq p\}\,,
\end{equation}
while the \textit{homogeneous} polynomials of total order $p$ are denoted by
\begin{equation}
    \tilde{\mathcal{P}}^p(x,y)=\mathrm{span}\{x^iy^j:i\geq0,j\geq0,i+j=p\}\,.
\end{equation}
Similar definitions apply to polynomials of three variables. 
Moreover, when the domain is clear from the context, we will simply refer to $\mathcal{P}^p(x,y)$ and $\tilde{\mathcal{P}}^p(x,y)$ as $\mathcal{P}^p$ and $\tilde{\mathcal{P}}^p$ respectively.

Define
\begin{equation}
 \mathcal{Q}^{p,q}(x,y)=\mathcal{P}^p(x)\otimes\mathcal{P}^q(y)
        =\spann\{x^i y^j \,:\,0\leq i\leq p,\:0\leq j\leq q\}\,,
\end{equation}
and similarly for $\mathcal{Q}^{p,q,r}(x,y,z)$. When the variables are clear from the context, these spaces are simply written as $\mathcal{Q}^{p,q}$ and $\mathcal{Q}^{p,q,r}$ respectively.

The notation for vector valued polynomial spaces will be
\begin{equation}
	(\mathcal{P}^p)^2 = \mathcal{P}^p\times\mathcal{P}^p\,,
\end{equation}
and similarly for $\left(\mathcal{P}^p\right)^3$, and the vector valued homogeneous polynomials, $(\tilde{\mathcal{P}}^p)^N$, $N=2,3$.

\subsection{Scaled Polynomials}

Given an order $i$ univariate polynomial, $\psi_i(x)\in\mathcal{P}^i(x)$, we define the corresponding {\em scaled polynomial}
\begin{equation}
	\psi_i(x; t) = \psi_i\Big(\frac{x}{t}\Big) t^i \, .
	\label{eq:scaledpolyomials}
\end{equation}
Obviously, $\psi_i(x;1) = \psi_i(x)$, so the scaled polynomials define two variable polynomial extensions into the $(x,t)$ space.
Furthermore, the reader may observe that $\psi_i(x;t)$ is homogenous of order $i$ as a polynomial in this space, i.e. $\psi_i(x;t)\in\tilde{\mathcal{P}}^i(x,t)$.

\subsection{Legendre Polynomials}
\label{sec:LegendrePol}

In this work, we will use Legendre and Jacobi polynomials for the construction of all shape functions. Should the reader wish to work with different families of polynomials, our construction easily generalizes as discussed in Appendix~\ref{app:GeneratingFamilies}.

The classical Legendre polynomials comprise a specific orthogonal basis for $L^2(-1,1)$. Truncated to the first $p$ elements, $\{\tilde{P}_i:i=0,\ldots,p-1\}$, the Legendre polynomials\footnote{Although the common notation for the classical Legendre polynomials is $P_i$, we choose to denote the elements in this set with $\sim$ as we will only need this definition temporarily.} are a basis for the space of (single variable) polynomials of order $p-1$.

Of many properties of the Legendre polynomials, we list the following recursion formula
\begin{equation}
\begin{aligned}
	\tilde{P}_0(y)&=1\,,\\
	\tilde{P}_1(y)&=y\,,\\
	i\tilde{P}_i(y)&=(2i-1)y\tilde{P}_{i-1}(y) - (i-1)\tilde{P}_{i-2}(y)\,, \quad \text{for }\, i\geq2\,,
\end{aligned}
\label{eq:recursion1}
\end{equation}
and the derivative formula for $i\geq1$,
\begin{equation}
(2i+1) \tilde{P}_i(y) = \frac{\partial}{\partial y} \Big(\tilde{P}_{i+1}(y)-\tilde{P}_{i-1}(y)\Big)\,,
\label{eq:Dervivative1}
\end{equation}
which are both well known in the literature. We also make note of the $L^2$ orthogonality relationship
\begin{equation}
\int_{-1}^{1} \tilde{P}_i(y)\tilde{P}_j(y)\,\mathrm{d}y=0\,, \quad \text{if } i\neq j\,.
\label{eq:Orthogonality1}
\end{equation}
This relationship, and the definition $\tilde{P}_0(y) = 1$, leads to the zero average property
\begin{equation}
\int_{-1}^{1} \tilde{P}_i(y)\,\mathrm{d}y=0\,, \quad \text{for } i\geq1\,.
\label{eq:ZeroAverage1}
\end{equation}

\paragraph{Shifting.}
The range of affine coordinates is always $[0,1]$ (see \eqref{eq:affinesumtoone}).
Although not clear at the moment, this implies that we want to have the zero average property over the interval $[0,1]$ instead of $[-1,1]$.
We can obtain this property by composing each Legendre polynomial above with the shifting operation
\begin{equation}
y \mapsto 2x-1\,.
\label{eq:shift}
\end{equation}
The (shifted) Legendre polynomials over $[0,1]$ are defined for $i\geq0$,
\begin{equation}
P_i(x) = \tilde{P}_i\left(2x-1\right)\,.
\end{equation}




\paragraph{Scaling.}
The (shifted) scaled Legendre polynomials are defined by \eqref{eq:scaledpolyomials} as
\begin{equation}
	P_i(x;t) = P_i\left(\frac{x}{t}\right)t^i = \tilde{P}_i\left(2\left(\frac{x}{t}\right)-1\right)t^i = \tilde{P}_i(2x-t;t) \,,
\end{equation}
where the domain in $x$ is $[0,t]$.

This set of scaled Legendre polynomials obeys a recursion formula similar to (\ref{eq:recursion1}):
\begin{equation}
	\begin{aligned}
		P_0(x;t)&=1 \,,\\
		P_1(x;t)&=2x-t \,,\\
		iP_i(x;t)&=(2i-1)( 2x - t) P_{i-1}(x; t) - (i-1)t^2 P_{i-2}(x; t) \,, \quad \text{for }\, i\geq2\, .
	\end{aligned}
\label{eq:recursion2}
\end{equation}
Moreover, we carry over the orthogonality and zero average properties from \eqref{eq:Orthogonality1} and \eqref{eq:ZeroAverage1} to the scaled domain $[0,t]$:
\begin{equation}
	\begin{gathered}
		\int_{0}^{t} P_i(x;t)P_j(x;t)\,\mathrm{d}x=0\,, \quad \text{if } i\neq j\,,\\
		\int_{0}^{t} P_i(x;t)\,\mathrm{d}x=0\,, \quad \text{for } i\geq1\,.
	\end{gathered}
	\label{eq:ZeroAverage2}
\end{equation}

\paragraph{Integrated Legendre Polynomials.}

For all $i\geq1$, we define the (scaled) integrated Legendre polynomials,
\begin{equation}
	L_{i}(x; t) = \int_0^{x} P_{i-1}(\tilde{x}; t)\, \mathrm{d}\tilde{x}\,,
\end{equation}
where of course $L_i(x)=L_i(x;1)$. Notice that $\mathcal{P}^{p}(x)=\mathrm{span}(\{1\}\cup\{L_i:i=1,\ldots,p\})$. By construction, the $L_i$ are seen as elements of $H^1$ and as a result, their pointwise evaluation is understood to be well defined. Therefore, recalling the zero average property of the Legendre polynomials, we observe that,
\begin{equation}
	L_i(0)=L_i(0;t)=0=L_i(t;t)=L_i(1)\,,\quad\text{for }\,i\geq2\,.\label{eq:Lvanishatendpoints}
\end{equation}

Next, \eqref{eq:Dervivative1} motivates the formulas for computing the integrated Legendre polynomials:
\begin{equation}
	\begin{aligned}
		L_1(x;t)&=x\,,\\
		2(2i-1) L_i(x; t)&=P_i(x; t) - t^2 P_{i-2}(x; t) \,, \quad\text{for }\, i\geq2\,.
	\end{aligned}
	\label{eq:shifted_scaled_lobatto}
\end{equation}

Clearly,
\begin{equation}
	\frac{\partial}{\partial x} L_{i}(x; t) = P_{i-1}(x; t)\,.
\end{equation}
Derivatives of $L_i(x;t)$ with respect to $t$ will also be necessary in our computations.
For this, we define
\begin{equation}
	R_i(x) = (i+1)L_{i+1}(x) - xP_i(x)\,,\quad\text{for }\, i\geq0\,,
	\label{eq:RDef}
\end{equation}
which the reader may observe is an order $i$ polynomial. In Appendix~\ref{app:GeneratingFamilies} we show that
\begin{equation}
	\frac{\partial}{\partial t} L_i(x;t) = R_{i-1}(x;t)\,.
\end{equation}
Obviously $R_0(x;t)=0$, and by use of \eqref{eq:recursion2} and \eqref{eq:shifted_scaled_lobatto}, one can reduce \eqref{eq:RDef} to
\begin{equation}
	R_i(x;t) = -\frac{1}{2} \Big(P_i(x;t)+tP_{i-1}(x;t)\Big)\,,\quad\text{for }\,i\geq1\,.
\end{equation}

\subsection{Jacobi Polynomials}

Motivated by \citet{Beuchler_Schoeberl_06} and \citet{Beuchler_Pillwein_07}, we use Jacobi polynomials in our constructions of elements involving triangle faces.
The (shifted to $[0,1]$) Jacobi polynomials, $P^{(\alpha,\beta)}_i(x)$, $\alpha,\beta>-1$, form a two parameter family of polynomials including the Legendre polynomials previously defined ($P_i^{(0,0)} = P_i$). 
Jacobi polynomials have similar recursion formulas as the Legendre polynomials.
One may find a selection of such formulas in \citet{Beuchler_Pillwein_07}.
For our purposes, we will only consider the case $\beta=0$, so that from now on $P_i^\alpha=P_i^{(\alpha,0)}$.

Jacobi polynomials are also orthogonal in a weighted $L^2$ space. Assuming the scaling operation discussed previously, we have the orthogonality relation
\begin{equation}
	\int_{0}^{t} x^\alpha P_i^\alpha(x;t)P_j^\alpha(x;t)\,\mathrm{d}x=0\,, \quad \text{if } i\neq j\,,
\end{equation}
which for $\alpha\neq0$ no longer implies the zero average property.

The following is the recursion formula we use to compute the $[0,t]$ Jacobi polynomials:
\begin{equation}
	\begin{aligned}
		P_0^\alpha(x;t) &= 1\,, \\
		P_1^\alpha(x;t) &= 2x-t+\alpha x\,,\\
		a_i {P}_i^\alpha(x;t) &= b_i \left( c_i(2x-t) + \alpha^2 t \right) P_{i-1}^\alpha(x;t) - d_i t^2 P_{i-2}^\alpha(x;t)\,,
		\quad \text{for }\, i\geq2\, ,
	\end{aligned}
\label{eq:RecursionJacobi}
\end{equation}
where
\begin{equation*}
	\begin{aligned}
		a_i &=  2i(i+\alpha)\, (2i + \alpha - 2)\,,\\
		b_i &=  2i + \alpha - 1\,,\\
		c_i &=  (2i+\alpha)\, (2i + \alpha - 2)\,,\\
		d_i &=  2(i+\alpha-1)\, (i-1)\, (2i+\alpha)\,.
	\end{aligned}
\end{equation*}

We remark that other recursive relations in weight and order to compute Jacobi polynomials, such as $(\alpha+i)P_i^\alpha(x;t)=(\alpha+2i)P_i^{\alpha-1}(x;t)+itP_{i-1}^\alpha(x;t)$, were experimentally found to be numerically unstable as compared to fixing a value of $\alpha$ and using \eqref{eq:RecursionJacobi}, so that the latter approach is recommended.


\paragraph{Integrated Jacobi Polynomials.}
Finally, we define the (scaled) integrated Jacobi polynomials for $i\geq1$:
\begin{equation}
	L^{\alpha}_i(x;t) = \int_0^x P^{\alpha}_{i-1} (\tilde{x};t) \, \mathrm{d}\tilde{x} \, ,
\end{equation}
with $L^{\alpha}_i(x)=L^{\alpha}_i(x;1)$. Note that because of the absence of the zero average property, we cannot deduce that $L^\alpha_i(1)=0$, and in general, this does not hold. However, it is obvious that for all $\alpha>-1$,
\begin{equation}
	L^\alpha_i(0)=L^\alpha_i(0;t)=0\,,\quad\quad\text{for }\,i\geq1\,.\label{eq:Lalphavanishzero}
\end{equation}

We evaluate the integrated Jacobi polynomials using the following relations:\footnote{cf. (2.9) in \citet{Beuchler_Pillwein_07}.}
\begin{equation}
	\begin{aligned}
		L^{\alpha}_1(x;t) & = x\,, \\
		L^{\alpha}_i(x;t) & = a_i P^{\alpha}_i(x,t) + b_i t P^{\alpha}_{i-1}(x;t)- c_i t^2 P^{\alpha}_{i-2}(x;t)\,,
			\quad\text{for }\, i\geq2\,,
	\end{aligned}
	\label{eq:IntegratedJacobiFormula}
\end{equation}
where
\begin{equation*}
	\begin{aligned}
		a_i = & \frac{i + \alpha}{(2i + \alpha -1)(2i + \alpha)}\,, \\
		b_i = & \frac{\alpha}{(2i + \alpha -2)(2i + \alpha)}\,, \\
		c_i = & \frac{i-1}{(2i + \alpha -2)(2i + \alpha-1)}\,.
	\end{aligned}
\end{equation*}

As in the case of the integrated Legendre polynomials, we find that
\begin{equation}
\frac{\partial }{\partial x}L^\alpha_{i}(x;t) = P^\alpha_{i-1}(x;t)\,,\qquad\quad\frac{\partial}{\partial t} L^\alpha_{i}(x;t) = R^\alpha_{i-1}(x;t)\,,
\end{equation}
where again,
\begin{equation}
	R^\alpha_i(x) = (i+1)L^\alpha_{i+1}(x) - xP^\alpha_i(x)\,.
\label{eq:RalphaDef}
\end{equation}
Obviously $R^\alpha_0(x;t)=0$, and by use of \eqref{eq:RecursionJacobi} and \eqref{eq:IntegratedJacobiFormula}, one can reduce \eqref{eq:RalphaDef} to\footnote{cf. (2.16) in \citet{Beuchler_Pillwein_07}.}
\begin{equation}
	R^\alpha_i(x;t) = -\frac{i}{2i+\alpha} \Big(P^\alpha_i(x;t)+tP^\alpha_{i-1}(x;t)\Big)\,,\quad\text{for }\,i\geq1\,.
\end{equation}

\subsection{Homogenization}

\begin{definition*}
For an order $i$ polynomial
\begin{equation*}
	\psi_i \in \mathcal{P}^i(s_1,\ldots,s_d)\,,
\end{equation*}
we define the operation of homogenization $($of order $i$$)$ as a linear transformation
\begin{equation}
 [\,\cdot\,]:\mathcal{P}^i(s_1,\ldots,s_d) \longrightarrow \tilde{\mathcal{P}}^i(s_0,s_1,\ldots,s_d)\,,
\end{equation}
where
\begin{equation*}
	[\psi_i](s_0,s_1,\ldots,s_d)=\psi_i\left(\frac{s_1}{s_0+\cdots+s_d},\ldots,\frac{s_d}{s_0+\cdots+s_d}\right)\,(s_0+\cdots+s_d)^i\,.
\end{equation*}
\end{definition*}

Notice that homogenization is a form of scaling, and as such, it is forming an extension of the particular case in which $s_0+s_1\cdots+s_d=1$.
It is not a coincidence that this is precisely the property that affine coordinates satisfy (see \eqref{eq:affinesumtoone} in \S\ref{sec:affinecoordinates}).
Moreover, note that $[\psi_i]$ is always a homogeneous polynomial of degree $i$, so we have the following scaling property for all scalars $\gamma$,
\begin{equation}
	[\psi_i](\gamma s_0,\gamma s_1,\ldots,\gamma s_d) = \gamma^i [\psi_i](s_0,s_1,\ldots,s_d)\,.
	\label{eq:ScalingProperty}
\end{equation}

One will observe that for the particular case $d=1$,
\begin{equation}
	[\psi_i](s_0,s_1) = \psi_i(s_1;s_0+s_1)\, .
	\label{eq:univariate}
\end{equation}
Therefore, we see that
\begin{equation}
	[\psi_i](s_0,s_1) = \psi_i(s_1;1) = \psi_i(s_1) \,,\quad \quad\text{if }\,s_0+s_1=1\,,
	\label{eq:homogfor1Daffine}
\end{equation}
where we remind the reader that the 1D affine coordinates satisfy precisely this property.

Moreover, take the case of the integrated Legendre polynomials and recall property \eqref{eq:Lvanishatendpoints}. It follows that for all $s_0,s_1$,
\begin{equation}
	[L_i](s_0,0)=L_i(0;s_0)=0=L_i(s_1;s_1)=[L_i](0,s_1)\,,\quad\text{for }\,i\geq2\,.\label{eq:Lhomogvanishatendpoints}
\end{equation}

In the $d=2$ case, one can make another useful observation.
Let $\chi_j$ be a one variable polynomial of order $j$, and consider the homogenization of the $i+j$ order (two variable) polynomial
\begin{equation}
	\psi_{ij}(s_0,s_1) = [\psi_i](s_0,s_1)\chi_j(1-s_0-s_1)\,.
\end{equation}
In this case, using \eqref{eq:ScalingProperty}, we find
\begin{equation}
	\begin{aligned}
		{}[\psi_{ij}](s_2,s_0,s_1)&=[\psi_i]\Big(\frac{s_0}{s_0+s_1+s_2},\frac{s_1}{s_0+s_1+s_2}\Big)
				\chi_j\Big(1-\frac{s_0+s_1}{s_0+s_1+s_2}\Big)(s_0+s_1+s_2)^{i+j}\\
			&=\frac{1}{(s_0+s_1+s_2)^{i}}[\psi_i](s_0,s_1)\chi_j\Big(\frac{s_2}{s_0+s_1+s_2}\Big)(s_0+s_1+s_2)^{i+j}\\
			&=[\psi_i](s_0,s_1)[\chi_j](s_0+s_1,s_2)\,.
			\label{eq:homogproduct}
	\end{aligned}
\end{equation}
This inspires the following definition for single variable polynomials, $\psi_i$ and $\chi_j$, of order $i$ and $j$ respectively:
\begin{equation}
	[\psi_i,\chi_j](s_0,s_1,s_2)= [\psi_i](s_0,s_1)[\chi_j](s_0+s_1,s_2)\,,
\end{equation}
where it is clear $[\psi_i,\chi_j]\in\tilde{\mathcal{P}}^{i+j}(s_0,s_1,s_2)$ is a homogeneous polynomial of order $i+j$.

Again, observe that
\begin{equation}
	[\psi_i,\chi_j](s_0,s_1,s_2)=[\psi_i](s_0,s_1)\chi_j(s_2)\,,\quad\quad\text{if }\,s_0+s_1+s_2=1\,,
\end{equation}
where we remind the reader that the 2D affine coordinates satisfy precisely this property.

Finally, using properties \eqref{eq:Lvanishatendpoints} and \eqref{eq:Lalphavanishzero} of the integrated Legendre and Jacobi polynomials, it follows that for all $s_0,s_1,s_2$,
\begin{equation}
	[L_i,L_j^\alpha](s_0,s_1,0)=[L_i,L_j^\alpha](s_0,0,s_2)=[L_i,L_j^\alpha](0,s_1,s_2)=0\,,\quad\text{for }\,i\geq2\,,\,j\geq1\,.
	\label{eq:LiLjvanishing}
\end{equation}

\newpage
\section{Segment}
\label{sec:Segment}

\begin{figure}[!ht]
\begin{center}
\includegraphics[scale=0.5]{./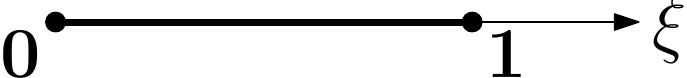}
\caption{Master segment with numbered vertices.}
\label{fig:masterseg}
\end{center}
\end{figure}

The 1D simplex is the segment or edge. 
The master element is defined as the unit interval $(0,1)$, and it is illustrated in Figure \ref{fig:masterseg} with a parameterization given by $\xi\in[0,1]$.

Denote vertex $a$ by $v_a$.
The definition of affine coordinates (see \eqref{eq:affinerepresentation}) states that $\xi=\mu_0(\xi)v_0+\mu_1(\xi)v_1$, with $\mu_0(\xi)+\mu_1(\xi)=1$, $\mu_0(\xi)\geq0$ and $\mu_1(\xi)\geq0$ for all $\xi\in[0,1]$.
Hence, $\mu_0$ is the weight related to $v_0$ and $\mu_1$ is the weight related to $v_1$.
For our master element, $v_0=0$ and $v_1=1$.
It then follows that the 1D affine coordinates for the segment, $\mu_0,\mu_1$, are the most basic linear functions.
They are written explicitly below for our master element:
\begin{equation}
		\mu_0(\xi) = 1 - \xi\,,\qquad\qquad \mu_1(\xi) = \xi \,.\label{eq:H1_1DAffine}
\end{equation}
The gradients of the affine coordinates (in 1D) are
\begin{equation}
		\nabla\mu_0(\xi)=-1\,,\qquad\qquad\nabla\mu_1(\xi)=1\,.\label{eq:gradH1_1DAffine}
\end{equation}

\subsubsection*{Exact Sequence}
The 1D exact sequence can be consulted in \S\ref{sec:Exactsequences}. 
Polynomial spaces are subsets of both $H^1$ and $L^2$ (in $(0,1)$), so we may consider a truncated polynomial approximation (of order $p$) to $H^1$ and induce a new (discrete) polynomial exact sequence
\begin{equation}
	\mathcal{P}^p \xrightarrow{\nabla} \mathcal{P}^{p-1}\,,\label{eq:ESsegment}
\end{equation}
where $\mathcal{P}^p=\mathcal{P}^p(\xi)$.
In the notation of \eqref{eq:polynomial_exact_sequences}, $W^p=\mathcal{P}^p$ and $Y^p=\mathcal{P}^{p-1}$.

\subsection{\texorpdfstring{$H^1$}{H1} Shape Functions}

The set of all shape functions defined in this section will form a basis for the space $\mathcal{P}^p$ which has dimension $p+1$.
In fact, there will be $2$ vertex shape functions and $p-1$ edge shape functions.
They will all be linearly independent and be contained in $\mathcal{P}^p$, so they will clearly form the desired basis.

\subsubsection{\texorpdfstring{$H^1$}{H1} Vertices}

As previously mentioned, each vertex is linked to an affine coordinate.
For instance, $v_0$ is linked to $\mu_0$.
It is then quite natural to consider the affine coordinate itself as the \textit{associated} vertex shape function to $v_0$:
\begin{equation*}
	\phi^\mathrm{v}(\xi)=\mu_0(\xi)\,.
\end{equation*}
Indeed, it satisfies all the desired trace properties, since it takes the value $1$ at $v_0=0$, and $0$ at $v_1=1$.
Moreover, it decays linearly to the other vertex, so that it lies in $\mathcal{P}^1$, and respects the hierarchy.
Having the vertex function of the form $\mu_0(\xi)^p$ instead, would give a (faster) nonlinear decay, but then the function would be dependent on $p$ and the hierarchy would be broken.

In general, the vertex functions, along with their gradients are,
\begin{equation}
    \phi^\mathrm{v}(\xi) = \mu_a(\xi)\,,\qquad \quad \nabla\phi^\mathrm{v}(\xi)=\nabla\mu_a(\xi)\,,
\end{equation}
for $a=0,1$. Clearly, there are a total of $2$ vertex functions (one associated to each vertex).


\subsubsection{\texorpdfstring{$H^1$}{H1} Edge Bubbles}
Recall from \S\ref{sec:LegendrePol} that the Legendre polynomials are seen as elements of $L^2$ which have the zero average property (over $[0,1]$), so that the integrated Legendre polynomials (of order $2$ and higher) are elements of $H^1$ which vanish at $0$ and $1$ (see \eqref{eq:Lvanishatendpoints}).
These are precisely the desired characteristics for $H^1$ edge bubbles.
Hence, the edge functions are defined as:
\begin{equation*}
    \phi^\mathrm{e}_i(\xi) = L_{i}(\xi),\quad i=2,\ldots,p\,.
\end{equation*}
This formula is perfectly valid and quite simple.
However, along this document, the use of affine coordinates will be enforced as much as possible.
The reasons for this will become clear as we move into higher dimensions.
Indeed, notice that due to $\mu_1(\xi)=\xi$, one can write $L_i(\xi)=L_i(\mu_1(\xi))$.
Moreover, since $\mu_0+\mu_1=1$, by \eqref{eq:homogfor1Daffine} it follows
\begin{equation*}
	L_i(\mu_1)=L_i(\mu_1;1)=[L_i](\mu_0,\mu_1)\,.
\end{equation*}
With this in mind, consider the following more general setting.

\begin{definition*}
Let $s_0$ and $s_1$ be arbitrary functions of some spatial variable in $\R^N$, with $N=1,2,3$. Denote by $p_s$ the order in the coordinate pair $(s_0,s_1)$. Then
\begin{equation}
    \phi^\E_i(s_0,s_1) = [L_i] (s_0,s_1) = L_i(s_1;s_0+s_1)\,,
\end{equation}
for $i=2,\ldots,p_s$. The gradients, understood in $\R^N$, are
\begin{equation}
    \begin{aligned}
    \nabla\phi^\E_i(s_0,s_1)&=[P_{i-1}](s_0,s_1)\nabla s_1 + [R_{i-1}](s_0,s_1)\nabla (s_1+s_0)\\
        &=P_{i-1}(s_1;s_0+s_1)\nabla s_1 + R_{i-1}(s_1;s_0+s_1)\nabla (s_1+s_0)\,.
    \end{aligned}
\end{equation}
\end{definition*}

Clearly, the definition of $\phi_i^\E$, involving homogenization, can be thought of as an extension of our more simple case.
This is the first of the so-called \textit{ancillary functions} which are defined in this work.
It is highlighted as an important definition, because it will be used multiple times throughout the text in more general settings.
Here, the superscript $\E$ stands for \textit{edge}, and one should think of this topological entity when looking at this function.
Also, note its arguments, $s_0,s_1$, are meant to be affine coordinates (or at least affine-related).

Rewriting \eqref{eq:Lhomogvanishatendpoints}, it follows that for any $s_0,s_1$, and all $i\geq2$,
\begin{equation}
	\phi^\E_i(0,s_1)=\phi^\E_i(s_0,0)=0\,.\label{eq:phiEvanishing}
\end{equation}

As observed, when the coordinates are 1D affine coordinates, like in this case, the formulas for $\phi^\E_i$ and its gradient are simplified.
For this, record the next remark.

\begin{remark}
Let $\mu_0=1-\mu_1$, where $\mu_1$ is an arbitrary function of some spatial variable in $\R^N$, $N=1,2,3$, and where $p$ is the order in the coordinates $(\mu_0,\mu_1)$. Then for all $i=2,\ldots,p$,
\begin{equation}
    \phi_i^\E(\mu_0,\mu_1)=L_i(\mu_1)\,,\quad\qquad \nabla\phi_i^\E(\mu_0,\mu_1)=P_{i-1}(\mu_1)\nabla\mu_1\,.
    \label{eq:H11Dspecialcase}
\end{equation}
\end{remark}

From now on, shape functions will be written in terms of ancillary functions and the affine coordinates of the element being analyzed.
Indeed, all that is required to compute $\phi_i^\E$ and $\nabla\phi^\E_i$ are $s_0,s_1$ and $\nabla s_0,\nabla s_1$ (since we already know from \S\ref{sec:LegendrePol} how to compute the scaled versions of $L_i$, $P_i$ and $R_i$).
For the segment, $s_0=\mu_0,s_1=\mu_1$, so this information is in \eqref{eq:H1_1DAffine} and \eqref{eq:gradH1_1DAffine}.

At first, this approach might seem to be overcomplicated given the simplicity of the initial formula (which does not involve scaling).
However, computationally speaking, this motivates coding $\phi^\E_i$ and $\nabla\phi^\E_i$, which will be observed to be fundamental as the document progresses.
If desired, within the $\phi^\E_i$ subroutine, one could decide to separately handle the special situation where $s_0,s_1$ are 1D affine coordinates, in which case the simplification shown in \eqref{eq:H11Dspecialcase} would then hold.
More importantly, when orientations become relevant, they will be handled through permutations of the arguments of $\phi_i^\E$.
Hence, having everything written in terms of $\phi_i^\E$, and in general, in terms of ancillary functions, is highly desirable.

Recalling that $\vec{\mu}_{01}=(\mu_0,\mu_1)$, the shape functions and their gradients are then defined as
\begin{equation}
    \phi^\mathrm{e}_i(\xi) = \phi^\E_i(\vec{\mu}_{01}(\xi))\,,\qquad\quad
    	\nabla\phi^\mathrm{e}_i(\xi) = \nabla\phi^\E_i(\vec{\mu}_{01}(\xi))\,,\label{eq:phiEgeneral}
\end{equation}
for $i=2,\ldots,p$. There are $p-1$ edge bubbles for the segment.

As previously mentioned, it is clear that $\phi^\mathrm{e}_i(0)=\phi^\mathrm{e}_i(1)=0$, by the properties of the integrated Legendre polynomials when $i\geq2$ (see \eqref{eq:Lhomogvanishatendpoints} or \eqref{eq:phiEvanishing}).
Hence the trace properties are satisfied.


\subsection{\texorpdfstring{$L^2$}{L2} Shape Functions}

The collection of $L^2$ conforming shape functions is simple and motivated from the exact sequence. 
In 1D, all $L^2$ functions are realized as gradients of $H^1$ functions. 
To resemble this property at the discrete level, we simply consider the linearly independent derivatives of $\phi^\mathrm{v}$ and $\phi^\mathrm{e}_i$.
Clearly they will be a basis for $\mathcal{P}^{p-1}$.

\subsubsection{\texorpdfstring{$L^2$}{L2} Edges}
The $L^2$ edge shape functions are the Legendre polynomials, which written in terms of affine coordinates are
\begin{equation}
    \psi^\mathrm{e}_i(\xi) = P_i(\mu_1(\xi))= [P_i](\vec{\mu}_{01}(\xi))\nabla\mu_1(\xi)\,,\label{eq:the1DL2edge}
\end{equation}
for $i=0,\ldots,p-1$. There are $p$ such edge functions and they span $\mathcal{P}^{p-1}$. 
The apparently trivial factor $\nabla\mu_1(\xi)$ makes the expression coordinate free, so it takes the same form independent of any (possibly nonlinear) transformations.

\subsection{Orientations}
\label{sec:fulledgeorientations}
In 1D, the trace is simply the two endpoints of each element, and it is clear that shape functions of adjacent elements will have full compatibility at the vertices. 
However, in 2D and 3D, the boundaries involve edges and faces.
Achieving this compatibility is nontrivial.
By dimensional hierarchy, edge functions of elements in higher dimensions will involve (through the trace operation) the 1D edge functions defined in this section (see \S\ref{sec:compatibility}).
In view of this, it is natural to explain \textit{edge} orientations at this time.

\subsubsection{Edge Orientations Explained}
\label{sec:edgeorientations}

\begin{figure}[!ht]
\begin{center}
\includegraphics[scale=0.75]{./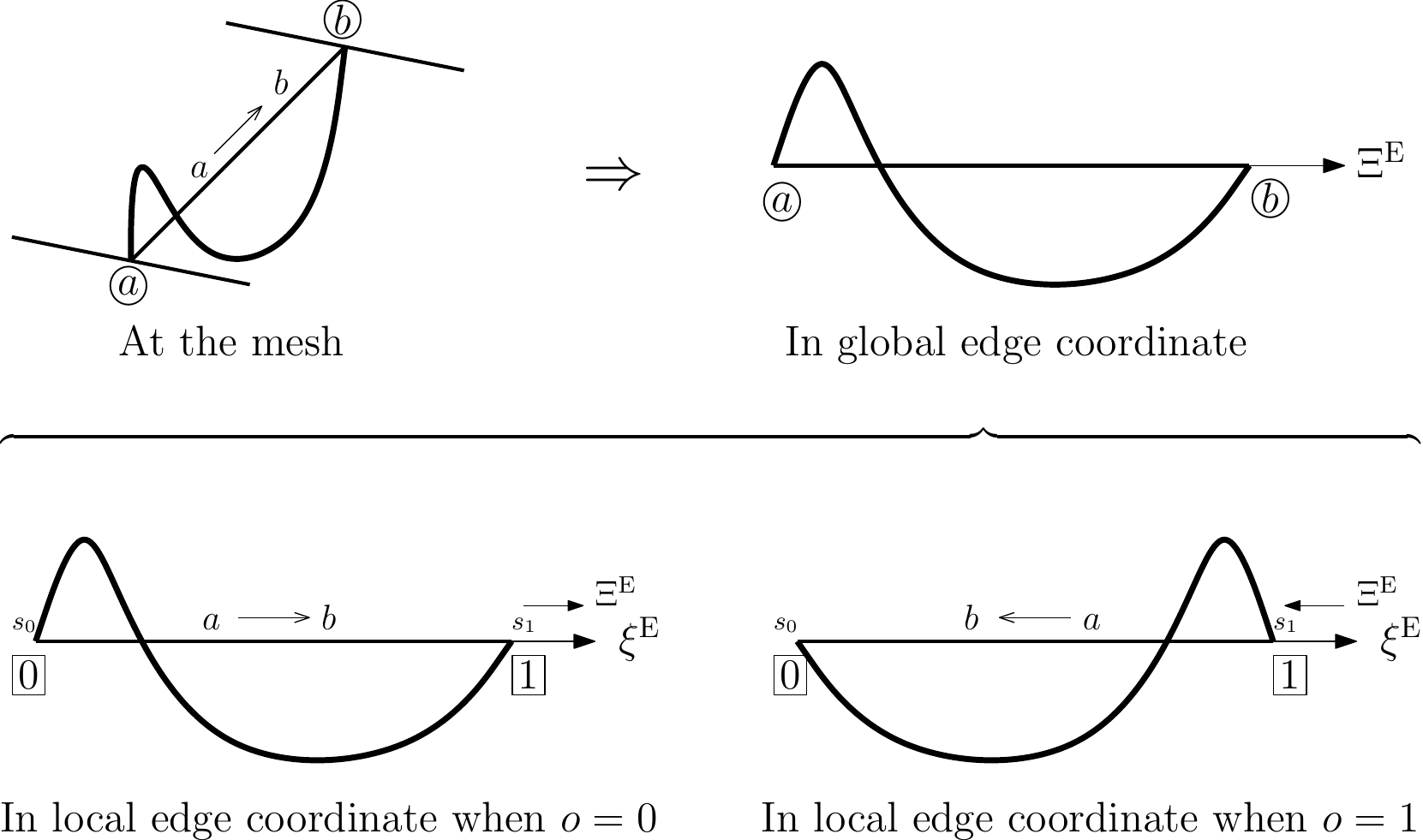}
\caption{Edge orientations.}
\label{fig:orientationsedge}
\end{center}
\end{figure}

In 2D, if one naively disregards how the elements are placed in the global mesh, and proceeds to define all shape functions at the (local) master element level, one might end up with shape functions that, when transformed back into the original mesh, are incompatible across shared edges (see Figure \ref{fig:edgemismatchintro}).
With orientation embedded shape functions this problem is avoided by taking into account more information of the global mesh. 
This is done by giving each mesh edge its own \textit{global orientation}, and is represented by a global coordinate $\Xi^\E$, or equivalently by a \textit{global edge vertex-ordering}.
For example, given an edge at the mesh with vertices $a$ and $b$, a global edge vertex-ordering of the form $a\tto b$ means that $\Xi^\E$ has its origin at $a$ and points from $a$ to $b$.
This information is then passed to the particular master element, where the edge has its own fixed \textit{local orientation}, represented by the local coordinate $\xi^\E$, or equivalently by the the fixed \textit{local} edge vertex-ordering of the form $\boxednum{0}\!\tdashto\boxednum{1}$ (note the dashed arrow for \textit{local} orderings).
Viewed at the local level, the global coordinate $\Xi^\E$ can either coincide with the local coordinate $\xi^\E$ or point in the opposite direction.
To reflect these \textit{two} possibilities, the orientation parameter $\oo$ is introduced.
If $\oo=0$, this means the local and global coordinates coincide, and otherwise $\oo=1$.
All this is depicted in Figure \ref{fig:orientationsedge}. 

Due to the use of affine coordinates and the form of the ancillary functions proposed in this work, these orientation problems can be readily tackled.
To ensure full compatibility, we want the shape functions over a given edge to be immovable when observed in the global coordinates (like in Figure \ref{fig:orientationsedge}).
This is achieved by evaluating the ancillary functions with global coordinates.
Unfortunately, the available coordinates produced by the master element are the (fixed) local coordinates.
Hence, the idea is to apply a \textit{local-to-global transformation} over the edge, which will obviously depend on the orientation parameter $\oo$. 
Such a transformation is completely natural in the context of affine coordinates, since this only involves permutations of these coordinates.
Indeed, a simple permutation function dependent on $\oo$, denoted by $\sigma_\oo^\E$, will represent this transformation.

\begin{definition*}
Let $s_0$ and $s_1$ be arbitrary variables, and let $\oo=0,1$ be the edge orientation parameter. 
The edge orientation permutation function, $\sigma_\oo^\E$, is defined as
\begin{equation}
	\sigma_\oo^\E(s_0,s_1)=\begin{cases}\sigma_0^\E(s_0,s_1)=(s_0,s_1)&\quad\text{if  }\,\oo=0\,,\\
		\sigma_1^\E(s_0,s_1)=(s_1,s_0)&\quad\text{if  }\,\oo=1\,.\end{cases}\label{eq:orientEdge}
\end{equation}
\end{definition*}

To explain the definition of $\sigma_\oo^\E$, note that in \eqref{eq:orientEdge}, if one links $s_0$ to the local vertex $\boxednum{0}$ and $s_1$ to the local vertex $\boxednum{1}$, then the \textit{locally ordered} pair $(s_0,s_1)$ represents the local coordinates.
It is ordered in the sense that $s_0$ comes first and $s_1$ comes second, and this is meant to correspond with the fixed \textit{local} ordering $\boxednum{0}\!\tdashto\boxednum{1}$, where $\boxednum{0}$ comes first and $\boxednum{1}$ comes second.
Similarly, there are \textit{globally ordered} pairs which depend on the parameter $\oo$.
Indeed, in Figure \ref{fig:orientationsedge}, looking at the \textit{global} edge vertex-ordering $a\tto b$, there is an induced \textit{global} vertex-ordering of the two vertices $\boxednum{0}$ and $\boxednum{1}$.
It is $\boxednum{0}\!\tto\boxednum{1}$ if $\oo=0$, and $\boxednum{1}\!\tto\boxednum{0}$ if $\oo=1$.
Hence, the global coordinates are represented by the globally ordered pairs $(s_0,s_1)$ if $\oo=0$ and $(s_1,s_0)$ if $\oo=1$.
Therefore, in this sense, $\sigma_\oo^\E$ is a local-to-global transformation.

Now, all that is required is to compose the \textit{edge} ancillary functions and their differential form (those with superscript $\e$) with this permutation function $\sigma_\oo^\E$.
Thus, in 2D and 3D, all instances of $\phi_i^\E$ and $\nabla\phi_i^\E$ in the shape functions should be replaced with $\phi_i^\E\circ\sigma_\oo^\E$ and $\nabla\phi_i^\E\circ\sigma_\oo^\E$ respectively.
The resulting functions are then said to be \textit{orientation embedded} shape functions.
More concrete examples will be given in the 2D and 3D elements as the document progresses.

\newpage
\section{Quadrilateral}
\label{sec:Quad}

\begin{figure}[!ht]
\begin{center}
\includegraphics[scale=0.5]{./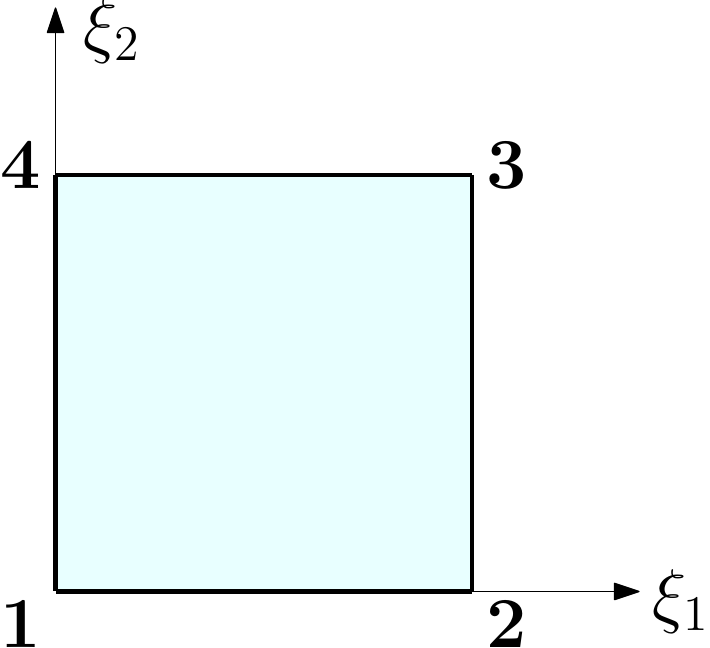}
\caption{Master quadrilateral with numbered vertices.}
\label{fig:MasterQuad}
\end{center}
\end{figure}

The master element for quadrilaterals, which is $(0,1)^2$, is shown in Figure \ref{fig:MasterQuad} in $\xi=(\xi_1,\xi_2)$ space.
The master quadrilateral is clearly a Cartesian product of two segments.

Due to the product structure, there are \textit{two} pairs of 1D affine coordinates:
\begin{equation}
    \begin{alignedat}{4}
        \mu_0(\xi_1)&=1-\xi_1\,,\quad \mu_1(\xi_1)=\xi_1\,\qquad&\Rightarrow\qquad
            \nabla\mu_0(\xi_1)&=\Big(\begin{smallmatrix}-1\\[2pt]0\end{smallmatrix}\Big)\,,\quad
                \nabla\mu_1(\xi_1)=\Big(\begin{smallmatrix}1\\[2pt]0\end{smallmatrix}\Big)\,,\\
        \mu_0(\xi_2)&=1-\xi_2\,,\quad \mu_1(\xi_2)=\xi_2\,\qquad&\Rightarrow\qquad
            \nabla\mu_0(\xi_2)&=\Big(\begin{smallmatrix}0\\[2pt]-1\end{smallmatrix}\Big)\,,\quad
                \nabla\mu_1(\xi_2)=\Big(\begin{smallmatrix}0\\[2pt]1\end{smallmatrix}\Big)\,.
    \end{alignedat}
\end{equation}
These will be used explicitly or implicitly in the formulas that follow.

Again, vertex $a$ is denoted by $v_a$, so that $v_1=(0,0)$, $v_2=(1,0)$, $v_3=(1,1)$ and $v_4=(0,1)$.
These vertices are related to the affine coordinates just as in 1D.
For example, over edge 12 (or edge 43), $\mu_0(\xi_1)$ is the weight related to $v_1$, while $\mu_1(\xi_1)$ is the weight related to $v_2$ (and similarly with $v_4$ and $v_3$).
Indeed, given a point $(\xi_1,0)$ on edge 12, it holds that $(\xi_1,0)=\mu_0(\xi_1)v_1+\mu_1(\xi_1)v_2$.
The same way, $\mu_0(\xi_2)$ is related to $v_1$ in edge 14, so that $v_1$ is linked to both $\mu_0(\xi_1)$ and $\mu_0(\xi_2)$.
A similar assertion holds for each vertex, where each is linked to \textit{two} affine coordinates.
Now, looking at the edges, note that $\mu_0(\xi_2)$ takes the value $1$ over edge 12 and $0$ at opposite edge 43.
This way, each edge is linked to \textit{one} affine coordinate.

\subsubsection*{Exact Sequence}

Recall the 2D exact sequence for simply connected domains \eqref{eq:2DExactSeq} and its rotated analogue \eqref{eq:2DExactSeqRotated}.
The corresponding polynomial exact sequences are
\begin{equation}
	\begin{alignedat}{4}
    &\mathcal{Q}^{p,q} \xrightarrow{\,\,\nabla\,\,} & \mathcal{Q}^{p-1,q}\times\mathcal{Q}^{p,q-1} 
    	\xrightarrow{\nabla\times} &&\mathcal{Q}^{p-1,q-1} \,,\\
    &\mathcal{Q}^{p,q} \xrightarrow{\mathrm{curl}\,} &\mathcal{Q}^{p,q-1}\times\mathcal{Q}^{p-1,q} 
    	\xrightarrow{\,\nabla\cdot\,} &&\mathcal{Q}^{p-1,q-1} \,,
	\end{alignedat}
	\label{eq:QuadES}
\end{equation}
where $\mathcal{Q}^{p,q}=\mathcal{Q}^{p,q}(\xi_1,\xi_2)=\mathcal{P}^p(\xi_1)\otimes\mathcal{P}^q(\xi_2)$. 
These are the standard N\'{e}d\'{e}lec's spaces \citeyearpar{Nedelec80} of the first type for the quadrilateral.
Note here the natural anisotropy of the element, which has order $p$ in the $\xi_1$ direction and a potentially different $q$ in the $\xi_2$ direction. 
The hierarchy should be maintained in both $p$ and $q$ separately. 
This is associated to the notion of local $p$ adaptivity. 
It will sometimes be convenient to refer to $p_a$ as the order in the $\xi_a$ direction, so that $p_1=p$ and $p_2=q$.

\subsection{\texorpdfstring{$H^1$}{H1} Shape Functions}

It will be clear that all shape functions lie in $\mathcal{Q}^{p,q}$ and that they span the space. 
For this, one will only require the linear independence of the shape functions, which will be evident, and a judicious count of them, which will give $(p+1)(q+1)$ (the dimension of $\mathcal{Q}^{p,q}$).

Also, note that due to the Cartesian product structure, there is a natural separation of variables, and one expects the shape functions to be tensor products of the relevant 1D functions for the edges and vertices. 
That is, tensor products of $\mu_a(\xi_b)$ and $\phi_i^\E(\vec{\mu}_{01}(\xi_b))$, for $a=0,1$ and $b=1,2$.
Fortunately, this is the case.

\subsubsection{\texorpdfstring{$H^1$}{H1} Vertices}
\label{sec:H1QuadVertices}

As mentioned before, each vertex is linked with two affine coordinates, and the \textit{associated} vertex function is precisely the tensor product of these two coordinates.
For instance, $v_1$ is linked to $\mu_0(\xi_1)$ and $\mu_0(\xi_2)$, so its associated vertex function is  
\begin{equation*}
	\phi^\mathrm{v}(\xi)=\mu_0(\xi_1)\mu_0(\xi_2)\,.
\end{equation*}
It satisfies all the desired properties, since it vanishes at the disjoint edges 23 and 34, and more importantly, its trace over the adjacent edges is a 1D $H^1$ vertex function associated to the vertex.
For instance, over edge 12, where $\mu_0(\xi_2)=1$, its trace is $\mu_0(\xi_1)$, which is the 1D $H^1$ vertex function associated to $v_1$ over the edge 12.
Finally, the function decays bilinearly and is in the lowest order possible space, $\mathcal{Q}^{1,1}$, so that it respects the hierarchy in both $p$ and $q$.

More generally, the vertex functions and their gradients are,
\begin{equation}
    \phi^\mathrm{v}(\xi)=\mu_a(\xi_1)\mu_b(\xi_2)\,,\quad\qquad
    \nabla\phi^\mathrm{v}(\xi)=\mu_a(\xi_1)\nabla\mu_b(\xi_2)+\mu_b(\xi_2)\nabla\mu_a(\xi_1)\,.\label{eq:H1vertexquad}
\end{equation}
for $a=0,1$ and $b=0,1$.
%
There is a total of $4$ vertex functions (one for each vertex). 

\subsubsection{\texorpdfstring{$H^1$}{H1} Edges}
\label{sec:H1edgesQuad}
To ease the comprehension, take for example edge 12, where $\xi_2=0$.
The idea for the edge functions is to use the segment bubbles $\phi_i^\E$ in $\xi_1$ and \textit{blend} them with a linear function in $\xi_2$.
That is, blend them with the linked 1D affine coordinate, $\mu_0(\xi_2)$.
Hence, the associated edge functions will be the tensor products of $\phi_i^\E(\vec{\mu}_{01}(\xi_1))$ and $\mu_0(\xi_2)$,
\begin{equation*}
    \phi_i^\mathrm{e}(\xi)=\mu_0(\xi_2)\phi_i^\E(\vec{\mu}_{01}(\xi_1))=(1-\xi_2)L_i(\xi_1)\,,
\end{equation*}
with $i=2,\ldots,p$.
The trace properties are satisfied mainly due to the vanishing conditions of the $L_i$ at the endpoints (see \eqref{eq:Lvanishatendpoints}), which are restated in terms of $\phi_i^\E$ in \eqref{eq:phiEvanishing}. 
The vanishing properties are easily observed in the simplified form, $(1-\xi_2)L_i(\xi_1)$, but we will write these traces in terms of the ancillary functions and the affine coordinates. 
For this, it is useful to write the boundary restrictions in terms of affine coordinates.
For example, over edge 12, which has equation $\xi_2=0$, $\vec{\mu}_{01}(\xi_2)=(\mu_0(\xi_2),\mu_1(\xi_2))=(1,0)$. 
Using these natural relationships, the edge traces are
\begin{align*}
    \phi_i^\mathrm{e}(\xi)|_{\xi_2=0}&=\mu_0(\xi_2)\phi_i^\E(\vec{\mu}_{01}(\xi_1))|_{\vec{\mu}_{01}(\xi_2)=(1,0)}
    	=1\cdot\phi_i^\E(\vec{\mu}_{01}(\xi_1))=\phi_i^\E(\vec{\mu}_{01}(\xi_1))\,,\\
    \phi_i^\mathrm{e}(\xi)|_{\xi_1=1}&=\mu_0(\xi_2)\phi_i^\E(\vec{\mu}_{01}(\xi_1))|_{\vec{\mu}_{01}(\xi_1)=(0,1)}
    	=\mu_0(\xi_2)\phi_i^\E(0,1)=0\,,\\
  	\phi_i^\mathrm{e}(\xi)|_{\xi_2=1}&=\mu_0(\xi_2)\phi_i^\E(\vec{\mu}_{01}(\xi_1))|_{\vec{\mu}_{01}(\xi_2)=(0,1)}
    	=0\cdot\phi_i^\E(\vec{\mu}_{01}(\xi_1))=0\,,\\
    \phi_i^\mathrm{e}(\xi)|_{\xi_1=0}&=\mu_0(\xi_2)\phi_i^\E(\vec{\mu}_{01}(\xi_1))|_{\vec{\mu}_{01}(\xi_1)=(1,0)}
    	=\mu_0(\xi_2)\phi_i^\E(1,0)=0\,.
\end{align*}
Hence, the desired vanishing properties are satisfied, and more importantly, over the edge 12 itself, the trace is $\phi_i^\E(\vec{\mu}_{01}(\xi_1))$ which as expected is a 1D $H^1$ edge bubble.
Note here the decay towards the rest of the element, represented by the blending function $\mu_0(\xi_2)$, is linear.
Indeed, the edge functions for edge 12 lie in $\mathcal{Q}^{p,1}$ and they respect the hierarchy in $q$.

Next, we will give a geometrical representation of the edge 12 shape functions presented above.
Recall that over edge 12, $v_1$ is linked to $\mu_0(\xi_1)$, while $v_2$ is linked to $\mu_1(\xi_1)$. 
Hence, $\vec{\mu}_{01}(\xi_1)=(\mu_0(\xi_1),\mu_1(\xi_1))$ actually represents a point in the edge,
\begin{equation*}
	\mu_0(\xi_1)\Big(\begin{smallmatrix}0\\[2pt]0\end{smallmatrix}\Big)
		+\mu_1(\xi_1)\Big(\begin{smallmatrix}1\\[2pt]0\end{smallmatrix}\Big)
			=\Big(\begin{smallmatrix}\xi_1\\[2pt]0\end{smallmatrix}\Big)\,.
\end{equation*} 
This can be interpreted as a projection to edge 12 from an arbitrary point $(\xi_1,\xi_2)$,
\begin{equation*}
    (\xi_1,\xi_2)\;\longmapsto\;(\xi_1,0)\,.
\end{equation*}
The trivial projection consists simply of finding the intersection $P'=(\xi_1,0)$ of the edge with the normal projecting line passing through the original point $P=(\xi_1,\xi_2)$. 
It is better illustrated in Figure \ref{fig:QuadProjection}.

\begin{figure}[!ht]
\begin{center}
\includegraphics[scale=0.55]{./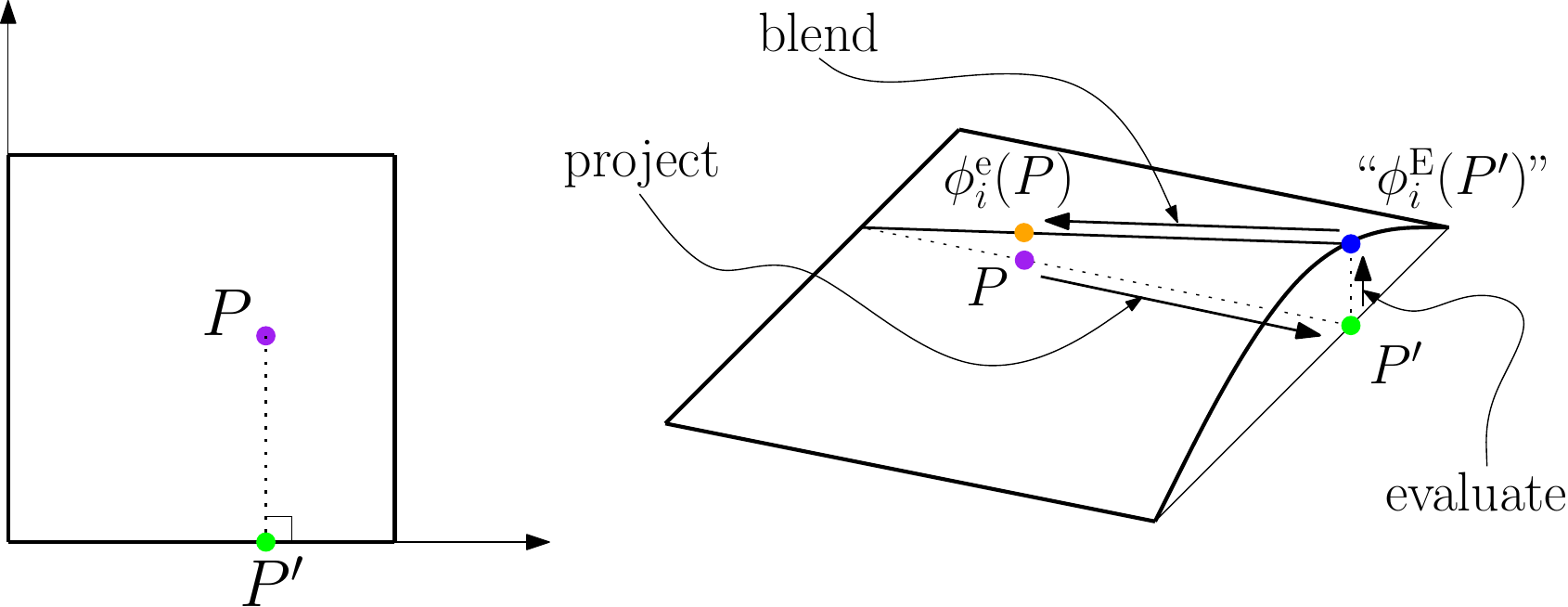}
\caption{Edge projection from $P$ to $P'$, and the logic project$\,\to\,$evaluate$\,\to\,$blend.}
\label{fig:QuadProjection}
\end{center}
\end{figure}

After the original point is projected to the desired edge, it is evaluated at that edge, and finally it is blended linearly:
\begin{equation*}
    \phi_i^\mathrm{e}(\xi)=\underbrace{\mu_0(\xi_2)}_{\text{blend}}
        \underbrace{\phi_i^\E(\underbrace{\vec{\mu}_{01}(\xi_1)}_{\text{project}})}_{\text{evaluate}}
    	=\underbrace{(1-\xi_2)}_{\text{blend}}\underbrace{L_i(\underbrace{\xi_1}_{\text{project}})}_{\text{evaluate}}\,.
\end{equation*}
This blending represents and extension or lifting to the rest of the element.
The whole process of
\begin{equation*}
	\text{projecting}\longrightarrow\text{evaluating}\longrightarrow\text{blending}
\end{equation*}
is extremely important, since it is used in the construction of shape functions for all remaining elements. 
Moreover, as depicted in Figure \ref{fig:QuadProjection}, it gives a geometrical interpretation to the formulas. 
It should be mentioned that if orientations are to be handled, they are taken care of only at the level of evaluating, where a local-to-global transformation will need to be prepended to the original evaluating procedure.
Projecting and blending are unaffected by orientations.

The general formula for edge functions is
\begin{equation}
    \phi_i^\mathrm{e}(\xi)=\mu_c(\xi_b)\phi_i^\E(\vec{\mu}_{01}(\xi_a))\,,\quad\qquad
    	\nabla\phi_i^\mathrm{e}(\xi)=\mu_c(\xi_b)\nabla\phi_i^\E(\vec{\mu}_{01}(\xi_a))
        +\phi_i^\E(\vec{\mu}_{01}(\xi_a))\nabla\mu_c(\xi_b)\,,\label{eq:QuadH1Edge}
\end{equation}
where $i=2,\ldots,p_a$, $(a,b)=(1,2),(2,1)$ and $c=0,1$, with $p_a$ being the order in the $\xi_a$ coordinate. 
For example for edge 12 (linked to $\mu_c(\xi_b)=\mu_0(\xi_2)$), this would correspond to $(a,b)=(1,2)$, $c=0$ and $p_a=p$, for edge 23 (linked to $\mu_1(\xi_1)$) it is $(a,b)=(2,1)$, $c=1$ and $p_a=q$, and so on. 
For each edge there are $p_a-1$ shape functions, leading to a total of $2(p-1)+2(q-1)$ edge shape functions.

\subsubsection{\texorpdfstring{$H^1$}{H1} Face Bubbles}

The quadrilateral face bubbles can be naturally defined as the tensor product of 1D edge bubbles,
\begin{equation*}
    \phi_{ij}^\mathrm{f}(\xi)=\phi_i^\E(\vec{\mu}_{01}(\xi_1))\phi_j^\E(\vec{\mu}_{01}(\xi_2))\,,
\end{equation*}
for $i=2,\ldots,p$ and $j=2,\ldots,q$. 
Using \eqref{eq:phiEvanishing}, it is clear the vanishing conditions over all four edges are satisfied.
This motivates a more general definition.

\begin{definition*}
Let $(s_0,s_1)$ and $(t_0,t_1)$ be two pairs of coordinates which are arbitrary functions of some spatial variable in $\R^N$, $N=2,3$. Let $p_s$ be the order in the $(s_0,s_1)$ coordinates, and $p_t$ be the order in the $(t_0,t_1)$ coordinates. Then
\begin{equation}
    \phi_{ij}^\square(s_0,s_1,t_0,t_1)=\phi_i^\E(s_0,s_1)\phi_j^\E(t_0,t_1)\,,
\end{equation}
for $i=2,\ldots,p_s$ and $j=2,\ldots,p_t$. The gradients, understood in $\R^N$, are
\begin{equation}
	\begin{aligned}
    \nabla\phi_{ij}^\square(s_0,s_1,t_0,t_1)&=\phi_i^\E(s_0,s_1)\nabla\phi_j^\E(t_0,t_1)+\phi_j^\E(t_0,t_1)\nabla\phi_i^\E(s_0,s_1)\,.
	\end{aligned}
\end{equation}
\end{definition*}


Rewritten in terms of $\phi_{ij}^\square$, the general formulas for the bubbles and their gradients are,
\begin{equation}
    \phi_{ij}^\mathrm{f}(\xi)=\phi_{ij}^\square(\vec{\mu}_{01}(\xi_1),\vec{\mu}_{01}(\xi_2))\,,\qquad\quad
        \nabla\phi_{ij}^\mathrm{f}(\xi)=\nabla\phi_{ij}^\square(\vec{\mu}_{01}(\xi_1),\vec{\mu}_{01}(\xi_2))\,,
\end{equation}
where $i=2,\ldots,p$ and $j=2,\ldots,q$. 
There are a total of $(p-1)(q-1)$ such functions. 

\subsection{\texorpdfstring{$H(\mathrm{curl})$}{Hcurl} Shape Functions}
It will be clear that all shape functions will lie in $\mathcal{Q}^{p-1,q}\times\mathcal{Q}^{p,q-1}$. 
Moreover, after all functions are defined, a rigorous count will give $p(q+1)+(p+1)q$, which is precisely the dimension of $\mathcal{Q}^{p-1,q} \times\mathcal{Q}^{p,q-1}$, so that the shape functions span the desired space.

\subsubsection{\texorpdfstring{$H(\mathrm{curl})$}{Hcurl} Edges}
\label{sec:HcurledgesQuad}

First, take for instance edge 12.
As mentioned in \S\ref{sec:dimensionalhierarchy}, the tangential trace of the $H(\mathrm{curl})$ edge functions should be a 1D $L^2$ shape function.
From \eqref{eq:the1DL2edge}, the 1D $L^2$ edge functions (with coordinate $\xi_1$) are $[P_i](\vec{\mu}_{01}(\xi_1))$.
Meanwhile, note the tangential vector to edge 12 is $(1,0)=\nabla\mu_1(\xi_1)$.
When coupled with a blending factor, $\mu_0(\xi_2)$, representing a linear decay (like that of $H^1$), this suggests,
\begin{equation*}
    E_i^\mathrm{e}(\xi)=\mu_0(\xi_2)[P_i](\vec{\mu}_{01}(\xi_1))\nabla\mu_1(\xi_1)
    	=(1-\xi_2)P_i(\xi_1)\Big(\begin{smallmatrix}1\\[2pt]0\end{smallmatrix}\Big)\,,
\end{equation*}
for $i=0,\ldots,p-1$.
Next, the trace properties are checked.
For this, note that $(0,1)=\nabla\mu_1(\xi_2)$ is the tangent direction to the edges 23 and 14 (where $\xi_1=1$ and $\xi_1=0$ respectively). Hence,
\begin{align*}
    \mathrm{tr}(E_i^\mathrm{e}(\xi))|_{\xi_2=0}&=E_i^\mathrm{e}(\xi)|_{\vec{\mu}_{01}(\xi_2)=(1,0)}\cdot(v_2-v_1)
    		=1\cdot[P_i](\vec{\mu}_{01}(\xi_1))\cdot1=[P_i](\vec{\mu}_{01}(\xi_1))\,,\\
    \mathrm{tr}(E_i^\mathrm{e}(\xi))|_{\xi_1=1}&=E_i^\mathrm{e}(\xi)|_{\vec{\mu}_{01}(\xi_1)=(0,1)}\cdot(v_3-v_2)
    		=\mu_0(\xi_2)[P_i](0,1)\cdot0=0\,,\\
  	\mathrm{tr}(E_i^\mathrm{e}(\xi))|_{\xi_2=1}&=E_i^\mathrm{e}(\xi)|_{\vec{\mu}_{01}(\xi_2)=(0,1)}\cdot(v_3-v_4)
    		=0\cdot[P_i](\vec{\mu}_{01}(\xi_1))\cdot1=0\,,\\
    \mathrm{tr}(E_i^\mathrm{e}(\xi))|_{\xi_1=0}&=E_i^\mathrm{e}(\xi)|_{\vec{\mu}_{01}(\xi_1)=(1,0)}\cdot(v_4-v_1)
    		=\mu_0(\xi_2)[P_i](1,0)\cdot0=0\,.
\end{align*}
The trace properties are then satisfied. Inspired by first order Whitney functions, this motivates the following more general definition.


\begin{definition*}
Let $s_0$ and $s_1$ be arbitrary functions of some spatial variable in $\R^N$, with $N=2,3$. Denote by $p_s$ the order in the coordinate pair $(s_0,s_1)$. Then
\begin{equation}
    E^\E_i(s_0,s_1)=[P_i](s_0,s_1)(s_0\nabla s_1-s_1\nabla s_0)\,,\label{eq:Hcurledgefunctions}
\end{equation}
for $i=0,\ldots,p_s-1$, and where the gradients are understood in $\R^N$. The curls are
\begin{equation}
    \nabla\times E_i^\E(s_0,s_1)=(i+2)[P_i](s_0,s_1)\nabla s_0\times\nabla s_1\,.\label{eq:curlsEiE}
\end{equation}
\end{definition*}

Here, if $N=2$, the curl and cross product take the form described in \eqref{eq:2Dcurlandcross}. 
Note $E_i^\E$ involves the gradients of its entries, so that it is actually a differential operator assumed to be acting on a functional space (the entries $s_0,s_1$ are \textit{functions}). 
Hence, the use of the term ancillary \textit{operator} is perhaps more appropriate in this case.
The final expression for the curls is nontrivial (see Lemma \ref{lem:curlformula} below). 
Indeed, it is a very powerful result, since at first it is not evident that there should be no partial derivatives of $P_i$ in \eqref{eq:curlsEiE}. 
Fortunately that is the case. 
In fact, it is a requirement, since the $P_i$ are elements of $L^2$, and their derivatives do not exist in general.
The formula follows from the following lemma coupled with the fact that $[P_i](s_0,s_1)$ is a homogeneous polynomial of total order $i$ in $s_0$ and $s_1$.

\begin{lemma}
\label{lem:curlformula}
Let $\psi_i(s_0,s_1)\in\tilde{\mathcal{P}}^i(s_0,s_1)$ be a homogeneous polynomial of total order $i$ in $s_0$ and $s_1$ , where $s_0$ and $s_1$ are arbitrary functions of some spatial variable in $\R^N$, with $N=2,3$. Then
\begin{equation*}
    \nabla\times\Big(\psi_i(s_0,s_1)(s_0\nabla s_1-s_1\nabla s_0)\Big)=(i+2)\psi_i(s_0,s_1)\nabla s_0\times\nabla s_1\,.
\end{equation*}
\end{lemma}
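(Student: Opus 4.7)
The plan is to expand the left-hand side using the product rule for the curl, $\nabla\times(fF)=f\,\nabla\times F + \nabla f\times F$, applied with $f=\psi_i(s_0,s_1)$ and $F=s_0\nabla s_1-s_1\nabla s_0$. This splits the computation into two manageable pieces.

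First I would compute $\nabla\times F$. Using the identity $\nabla\times(g\nabla h)=\nabla g\times\nabla h$ (since $\nabla\times\nabla h=0$), I get $\nabla\times(s_0\nabla s_1)=\nabla s_0\times\nabla s_1$ and $\nabla\times(s_1\nabla s_0)=\nabla s_1\times\nabla s_0=-\nabla s_0\times\nabla s_1$, so that $\nabla\times F=2\,\nabla s_0\times\nabla s_1$. This already produces the ``$2$'' contribution to the coefficient $i+2$.

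Next I would expand $\nabla\psi_i$ by the chain rule in the functional arguments $s_0,s_1$, namely $\nabla\psi_i=\tfrac{\partial\psi_i}{\partial s_0}\nabla s_0+\tfrac{\partial\psi_i}{\partial s_1}\nabla s_1$. Crossing with $F$ and using $\nabla s_a\times\nabla s_a=0$ yields
\begin{equation*}
\nabla\psi_i\times F = \Big(s_0\tfrac{\partial\psi_i}{\partial s_0}+s_1\tfrac{\partial\psi_i}{\partial s_1}\Big)\,\nabla s_0\times\nabla s_1.
\end{equation*}
This is where the homogeneity hypothesis enters: by Euler's identity for homogeneous polynomials of degree $i$, the parenthesized expression equals $i\,\psi_i(s_0,s_1)$, contributing the ``$i$'' piece of the coefficient. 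Adding the two contributions gives $(i+2)\psi_i\,\nabla s_0\times\nabla s_1$, as claimed.

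The only subtle point is that $\psi_i$ is a polynomial in the two \emph{functions} $s_0,s_1$, so one must be careful to distinguish the formal partial derivatives $\partial/\partial s_a$ from the spatial gradient $\nabla$; it is the chain rule that ties these together, and Euler's identity is an algebraic statement about the formal derivatives. Once that bookkeeping is clear, the rest is a short direct computation. Since both sides of the claimed identity are linear in $\psi_i$, one could alternatively check it only on the monomial basis $\psi_i=s_0^a s_1^{i-a}$, where Euler's identity reduces to the trivial fact $a+(i-a)=i$; this monomial reduction is probably the cleanest way to present the proof if the chain-rule/homogeneity argument feels too abstract.
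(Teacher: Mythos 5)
Your proof is correct and follows essentially the same route as the paper's: both split the curl by the product rule, compute $\nabla\times(s_0\nabla s_1-s_1\nabla s_0)=2\,\nabla s_0\times\nabla s_1$, and then extract the factor $i$ from the gradient term using homogeneity. The only difference is presentational --- you package the degree contribution as Euler's identity $s_0\partial_{s_0}\psi_i+s_1\partial_{s_1}\psi_i=i\psi_i$, whereas the paper carries out the equivalent computation explicitly on monomials $s_0^as_1^b$ and sums (the reduction you yourself note as an alternative).
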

\begin{proof}
Notice that
\begin{align*}
    \nabla\times(s_0\nabla s_1-s_1\nabla s_0)=\nabla s_0\times\nabla s_1+s_0\nabla\times\nabla s_1
        -\nabla s_1\times\nabla s_0-s_1\nabla\times\nabla s_0=2\nabla s_0\times\nabla s_1\,,
\end{align*}
due to $\nabla\times\nabla s_0=\nabla\times\nabla s_1=0$. Now, consider a monomial $s_0^as_1^b$, so that
\begin{align*}
    \nabla\times\Big(s_0^as_1^b(s_0\nabla s_1-s_1\nabla s_0)\Big)
        &=s_0^as_1^b\nabla\times(s_0\nabla s_1-s_1\nabla s_0)+\nabla(s_0^as_1^b)\times(s_0\nabla s_1-s_1\nabla s_0)\\
        &=2s_0^as_1^b\nabla s_0\times\nabla s_1
            +(as_0^{a-1}s_1^b\nabla s_0+bs_0^{a}s_1^{b-1}\nabla s_1)\times(s_0\nabla s_1-s_1\nabla s_0)\\
        &=2s_0^as_1^b\nabla s_0\times\nabla s_1+as_0^{a-1}s_1^bs_0\nabla s_0\times\nabla s_1
            -bs_0^{a}s_1^{b-1}s_1\nabla s_1\times\nabla s_0\\
        &=(2+a+b)s_0^as_1^b\nabla s_0\times\nabla s_1\,,
\end{align*}
where it is used that $\nabla s_0\times \nabla s_0=\nabla s_1\times \nabla s_1=0$. Then observe that any homogeneous polynomial $\psi_i(s_0,s_1)$ is composed of monomials of the form $s_0^as_1^b$ of fixed total order $a+b=i$. The result follows.
\end{proof}

Next, record the following important remark.

\begin{remark}
Let $\mu_0=1-\mu_1$, where $\mu_1$ is an arbitrary function of some spatial variable in $\R^N$, with $N=2,3$, and where $p$ is the order in the coordinates $(\mu_0,\mu_1)$. Then for all $i=0,\ldots,p-1$,
\begin{equation}
    E_i^\E(\mu_0,\mu_1)=P_i(\mu_1)\nabla\mu_1\,,\quad\qquad \nabla\times E_i^\E(\mu_0,\mu_1)=0\,.\label{eq:Hcurl1Dspecialcase}
\end{equation}
\end{remark}

With this new ancillary function in our toolset, the $H(\mathrm{curl})$ shape functions for edge 12 are written analogously to those in $H^1$, and the same logic of project$\,\to\,$evaluate$\,\to\,$blend applies,
\begin{equation*}
    E_i^\mathrm{e}(\xi)=\underbrace{\mu_0(\xi_2)}_{\text{blend}}
        \underbrace{E_i^\E(\underbrace{\vec{\mu}_{01}(\xi_1)}_{\text{project}})}_{\text{evaluate}}\,.
\end{equation*}

In general, the edge shape functions and their curls are
\begin{equation}
    E_i^\mathrm{e}(\xi)=\mu_c(\xi_b)E_i^\E(\vec{\mu}_{01}(\xi_a))\,,\qquad\quad
    \nabla\times E_i^\mathrm{e}(\xi)=\nabla\mu_c(\xi_b)\times E_i^\E(\vec{\mu}_{01}(\xi_a))\,,
    \label{eq:QuadHcurlEdge}
\end{equation}
where $i=0,\ldots,p_a-1$, $(a,b)=(1,2),(2,1)$ and $c=0,1$. There are $p_a$ functions for each edge, giving a total of $2p+2q$ edge shape functions.

\subsubsection{\texorpdfstring{$H(\mathrm{curl})$}{Hcurl} Face Bubbles}

In general, the idea is to consider the tensor product of the ancillary functions $E_i^\E$ and $\phi_j^\E$, evaluated at the 1D affine coordinate pairs $\vec{\mu}_{01}(\xi_1)$ and $\vec{\mu}_{01}(\xi_2)$. 
To cover all possibilities, one must consider both $\phi_j^\E(\vec{\mu}_{01}(\xi_2))E_i^\E(\vec{\mu}_{01}(\xi_1))$ and $\phi_j^\E(\vec{\mu}_{01}(\xi_1))E_i^\E(\vec{\mu}_{01}(\xi_2))$.
Clearly, both of these cases are actually generated by a single key operator $E_{ij}^\square$, which is defined generally next.

\begin{definition*}
Let $(s_0,s_1)$ and $(t_0,t_1)$ be two pairs of coordinates which are arbitrary functions of some spatial variable in $\R^N$, with $N=2,3$. Let $p_s$ be the order in the $(s_0,s_1)$ coordinates, and $p_t$ be the order in the $(t_0,t_1)$ coordinates. Then
\begin{equation}
    E_{ij}^{\square}(s_0,s_1,t_0,t_1)=\phi_j^\E(t_0,t_1)E_i^\E(s_0,s_1)\,,
\end{equation}
for $i=0,\ldots,p_s-1$ and $j=2,\ldots,p_t$. The curls, understood in $\R^N$, are
\begin{equation}
    \nabla\times E_{ij}^{\square}(s_0,s_1,t_0,t_1)=\phi_j^\E(t_0,t_1)\nabla\times E_i^\E(s_0,s_1)
    	+\nabla\phi_j^\E(t_0,t_1)\times E_i^\E(s_0,s_1)\,.
\end{equation}
\end{definition*}
Using \eqref{eq:phiEvanishing}, and proceeding as with the $H(\mathrm{curl})$ edge functions, it is clear that this ancillary operator, when evaluated at  $(\vec{\mu}_{01}(\xi_1),\vec{\mu}_{01}(\xi_2))$ or  $(\vec{\mu}_{01}(\xi_2),\vec{\mu}_{01}(\xi_1))$, satisfies the necessary vanishing trace at all edges. 
There are two families, which together comprise $p(q-1)+q(p-1)$ bubble functions.

\subparagraph{Family I:}
The shape functions for the first family and their corresponding curls are
\begin{equation}
    E_{ij}^{\mathrm{f}}(\xi)=E_{ij}^{\square}(\vec{\mu}_{01}(\xi_1),\vec{\mu}_{01}(\xi_2))\,,\qquad
    \nabla\times E_{ij}^{\mathrm{f}}(\xi)=\nabla\times E_{ij}^{\square}(\vec{\mu}_{01}(\xi_1),\vec{\mu}_{01}(\xi_2))\,,
\end{equation}
for $i=0,\ldots,p-1$ and $j=2,\ldots,q$.
There are $p(q-1)$ shape functions in this family.

\subparagraph{Family II:}
The shape functions for the second family and their corresponding curls are
\begin{equation}
    E_{ij}^{\mathrm{f}}(\xi)=E_{ij}^{\square}(\vec{\mu}_{01}(\xi_2),\vec{\mu}_{01}(\xi_1))\,,\qquad
    \nabla\times E_{ij}^{\mathrm{f}}(\xi)=\nabla\times E_{ij}^{\square}(\vec{\mu}_{01}(\xi_2),\vec{\mu}_{01}(\xi_1))\,,
\end{equation}
for $i=0,\ldots,q-1$ and $j=2,\ldots,p$. 
Notice the only difference with the first family is that the entries $\vec{\mu}_{01}(\xi_1)$ and $\vec{\mu}_{01}(\xi_2)$ are permuted (along with the associated orders $p$ and $q$).
Hence, there are $q(p-1)$ shape functions in this family.

\subsection{\texorpdfstring{$H(\mathrm{div})$}{Hdiv} Shape Functions}

By the definition of the $H(\mathrm{div})$ space (see \eqref{eq:Hdiv2Ddef}) in two dimensions, it is clear that it is isomorphic to $H(\mathrm{curl})$. 
Indeed, the $H(\mathrm{div})$ shape functions will be the rotation of the corresponding $H(\mathrm{curl})$ shape functions. 
More explicitly, given a shape function $E\in H(\mathrm{curl})$ and its curl $\nabla\times E\in L^2$, the corresponding $H(\mathrm{div})$ shape function with its divergence is
\begin{equation}
    V=\begin{pmatrix}0&1\\-1&0\end{pmatrix}E\,,\quad\qquad\nabla\cdot V=\nabla\times E\,.
\end{equation}
Note in this case the original polynomial space $Q^{p,q}=\mathcal{Q}^{p-1,q}\times\mathcal{Q}^{p,q-1}$ for $H(\mathrm{curl})$ simply becomes the $H(\mathrm{div})$ conforming space $V^{p,q}=\mathcal{Q}^{p,q-1}\times\mathcal{Q}^{p-1,q}$.

\subsection{\texorpdfstring{$L^2$}{L2} Shape Functions}

As expected, they are the tensor products of the 1D $L^2$ shape functions, and there are $pq$ such functions spanning $\mathcal{Q}^{p-1,q-1}$.

\subsubsection{\texorpdfstring{$L^2$}{L2} Face}

The coordinate free $L^2$ shape functions for the quadrilateral faces are
\begin{equation}
    \psi_{ij}^\mathrm{f}(\xi)=P_i(\mu_1(\xi_1))P_j(\mu_1(\xi_2))
    	=[P_i](\vec{\mu}_{01}(\xi_1))[P_j](\vec{\mu}_{01}(\xi_2))(\nabla\mu_1(\xi_1)\!\!\times\!\!\nabla\mu_1(\xi_2))\,,
   \label{eq:QuadL2Functions}
\end{equation}
for $i=0,\ldots,p-1$ and $j=0,\ldots,q-1$. There are $pq$ face functions. 
The factor $\nabla\mu_1(\xi_1)\!\times\!\nabla\mu_1(\xi_2)$ makes the expression coordinate free (with the $\xi$ coordinates it is $1$).

\subsection{Orientations}
\label{sec:QuadOrientations}

For 2D quadrilaterals, only edge orientations need to be considered to ensure compatibility.
However, in 3D elements, quadrilateral \textit{faces} will have the notion of orientation, and one will need to consider this to ensure full compatibility.
In view of this, first we will show how the concept of edge orientations, already explained in \S\ref{sec:edgeorientations}, applies to the quadrilateral element. We assume that section has been covered.
After that, the quadrilateral face orientations are introduced as a preview to the 3D elements.

\subsubsection{Edge Orientations}
\label{sec:QuadEdgeOrientations}

\begin{figure}[!ht]
\begin{center}
\includegraphics[scale=0.5]{./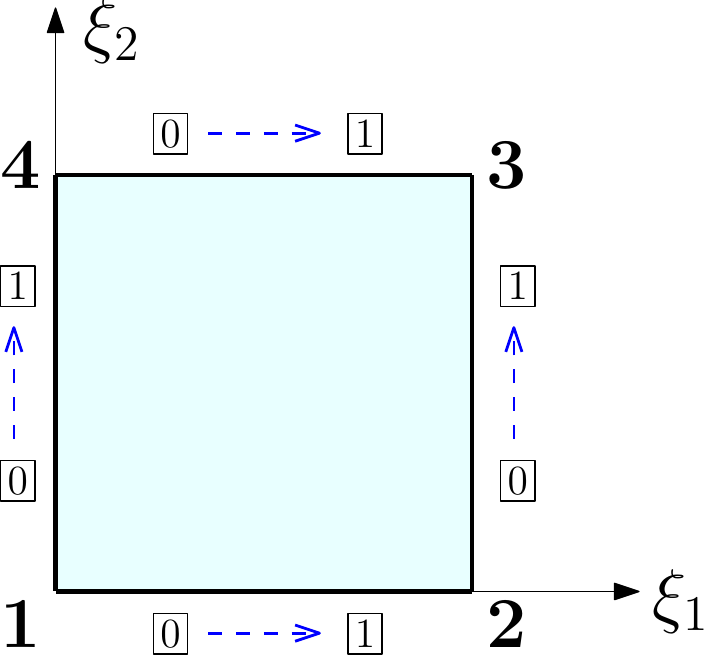}
\caption{Master quadrilateral with local edge orientations.}
\label{fig:MasterQuadOrientations}
\end{center}
\end{figure}

The master quadrilateral has a predefined \textit{local} orientation for each edge, which reperesents the $\oo=0$ case. 
These are illustrated in Figure \ref{fig:MasterQuadOrientations}.
They are \textit{our} choices for the local orientations.

Each edge has a local orientation described by the fixed ordering $\boxednum{0}\!\tdashto\boxednum{1}$.
This induces a master element \textit{local} edge vertex-ordering, which in turn determines a \textit{locally ordered} pair of affine coordinates, since, over a given edge, each master element vertex is linked to one affine coordinate.
For instance, over edge 12, $v_1$ is linked to $\mu_0(\xi_1)$ and $v_2$ linked to $\mu_1(\xi_1)$.
On this edge the induced master element \textit{local} ordering is $v_1\tdashto v_2$ (see Figure \ref{fig:MasterQuadOrientations}), meaning that the locally ordered pair is $\vec{\mu}_{01}(\xi_1)=(\mu_0(\xi_1),\mu_1(\xi_1))$.
The locally ordered pair represents the local coordinates, and it serves as the input of the edge local-to-global transformation $\sigma_\oo^\E$, which transforms them to a globally ordered pair (depending on the parameter $\oo$).
The globally ordered pair is then introduced into the edge ancillary functions of the edge shape functions, and the resulting functions are said to be \textit{orientation embedded} shape functions.
Thus, for example for edge 12 the orientation embedded $H^1$ edge functions are, 
\begin{equation*}
    \phi_i^\mathrm{e}(\xi)=\mu_0(\xi_2)\phi_i^\E\circ\sigma_\oo^\E(\vec{\mu}_{01}(\xi_1))
        =\begin{cases}
            \mu_0(\xi_2)\phi_i^\E\Big(\sigma_0^\E(\vec{\mu}_{01}(\xi_1))\Big)
            	=\mu_0(\xi_2)\phi_i^\E(\mu_0(\xi_1),\mu_1(\xi_1))\,\,\,\text{if }\oo=0\,,\\
            \mu_0(\xi_2)\phi_i^\E\Big(\sigma_1^\E(\vec{\mu}_{01}(\xi_1))\Big)
            	=\mu_0(\xi_2)\phi_i^{\e}(\mu_1(\xi_1),\mu_0(\xi_1))\,\,\,\text{if }\oo=1\,,
        \end{cases}
\end{equation*}
for $i=2,\ldots,p$.
This composition with $\sigma_\oo^\E$ naturally applies to all 2D edge functions in $H^1$ and $H(\mathrm{curl})$ and their differential forms. 
Hence, $\sigma_\oo^\E$ should be composed with $\phi_i^\E$, $\nabla\phi_i^\E$, $E_i^\E$ and $\nabla\times E_i^\E$ in \eqref{eq:QuadH1Edge} and \eqref{eq:QuadHcurlEdge}.

\begin{figure}[!ht]
\begin{center}
\includegraphics[scale=0.75]{./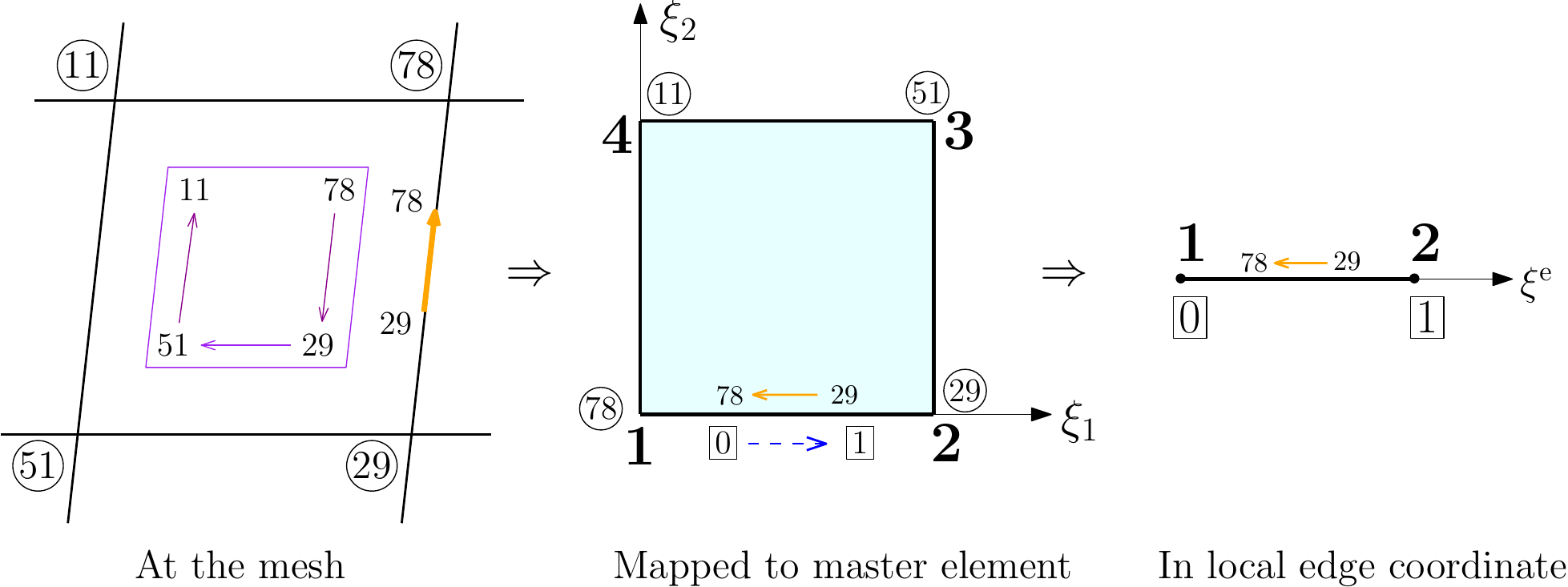}
\caption{A mesh with global edge orientations followed by transformations to master element and local edge coordinates.}
\label{fig:QuadEdgeOrientExample}
\end{center}
\end{figure}

Now, to see a more explicit example starting from the global mesh, consider Figure \ref{fig:QuadEdgeOrientExample}.
There, one can observe a quadrilateral in a mesh with vertices $11$, $29$, $51$ and $78$.
In 2D, Szabo's approach (implicitly) involves specifying a \textit{face vertex-ordering} at the mesh that determines the mapping to the master element, which has a fixed master element ordering $v_1\tto v_2\tto v_3\tto v_4$.
However, there is no independent edge vertex-ordering of each edge.
In this case, the face vertex-ordering at the mesh is shown to be $78\tto29\tto51\tto11$, and at the master element level it coincides with $v_1\tto v_2\tto v_3\tto v_4$.
The novelty here is that additionally the mesh edge with vertices $78$ and $29$ also has a \textit{global} orientation given by the edge vertex-ordering $29\tto78$.
This edge is mapped to the master element edge 12 (as induced by the face vertex-ordering), and receives the induced master element \textit{global} ordering $v_2\tto v_1$. 
Clearly, the \textit{local} orientation, given by the induced \textit{local} ordering $v_1\tdashto v_2$, does not coincide with the \textit{global} orientation, meaning that the orientation parameter is $\oo=1$ for this master element edge.
Therefore, the orientation embedded $H^1$ edge shape functions would specifically be,
\begin{equation*}
    \phi_i^\mathrm{e}(\xi)=\mu_0(\xi_2)\phi_i^{\e}\circ\sigma_\oo^\E(\vec{\mu}_{01}(\xi_1))
            	=\mu_0(\xi_2)\phi_i^\E(\mu_1(\xi_1),\mu_0(\xi_1))\,,
\end{equation*}
for $i=2,\ldots,p$. 
The neighboring element in the mesh (also sharing vertices $29$ and $78$) might have a different face vertex-ordering, but it has the \textit{same} edge vertex-ordering at the mesh (the global orientation). 
This can result in another orientation parameter at the master element edge of the neighboring element, but by construction, the fact remains that when mapped back to the global mesh, the shape functions will be fully compatible along that shared edge.  

\subsubsection{Quadrilateral Face Orientations Explained}
\label{sec:QuadFaceOrientations}

\begin{figure}[!ht]
\begin{center}
\includegraphics[scale=0.75]{./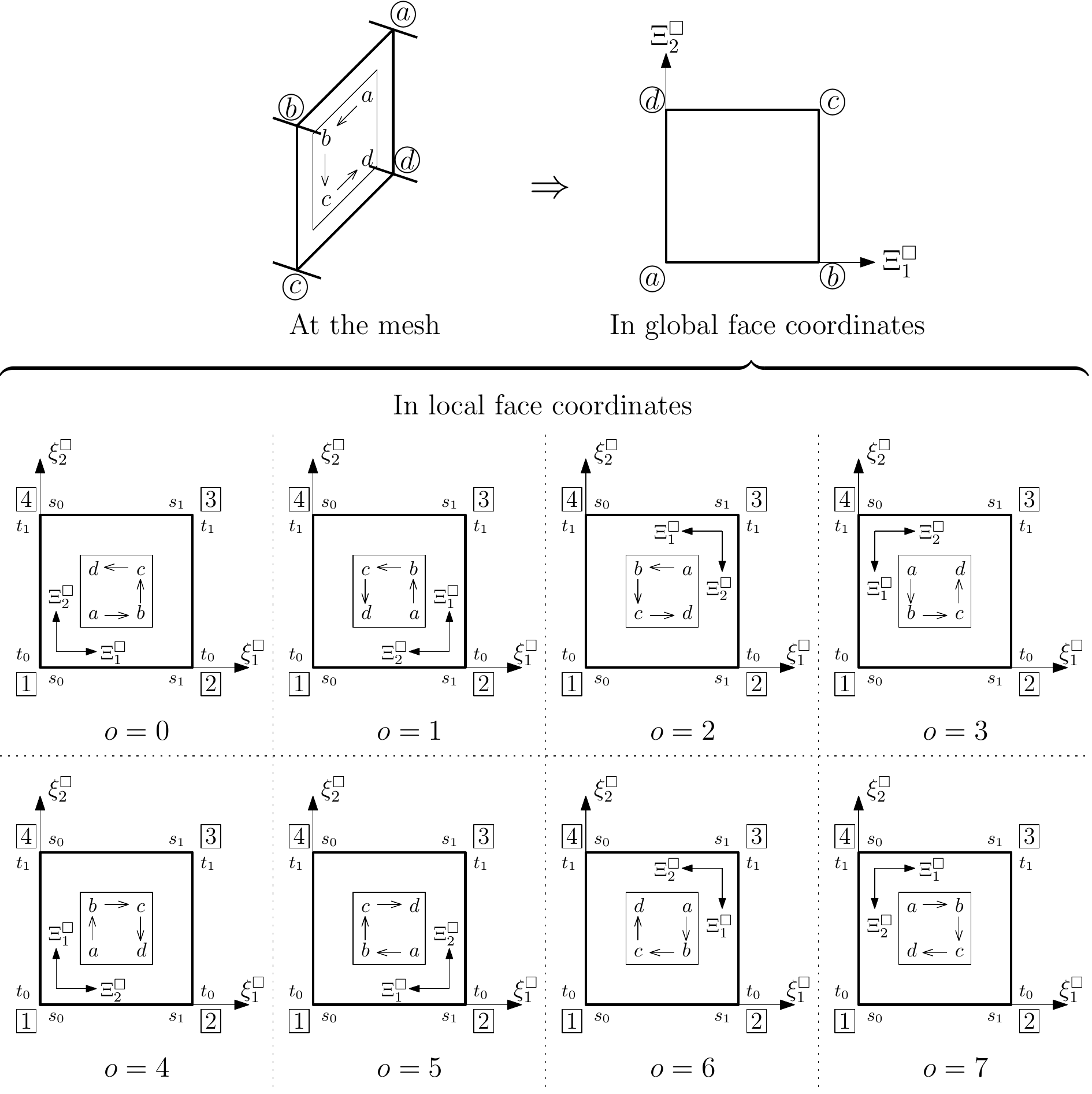}
\caption{Quadrilateral face orientations.}
\label{fig:orientationsquad}
\end{center}
\end{figure}

In 3D, as with edge orientations, each face at the mesh must be given its own \textit{global face orientation} to ensure full compatibility across the boundaries.
For quadrilaterals, this is represented by the global quadrilateral face coordinates $\Xi^\square=(\Xi_1^\square,\Xi_2^\square)$, or equivalently, by the \textit{global face vertex-ordering}.
For example, given a quadrilateral face in the mesh, a vertex-ordering of the form $a\tto b\tto c\tto d$ means the origin of $\Xi^\square$ is located at $a$, $\Xi_1^\square$ points from $a$ to $b$, and $\Xi_2^\square$ points from $a$ to $d$.
Meanwhile, at the master element, the mapped face has its own fixed \textit{local orientation}.
It is represented by the coordinates $\xi^\square=(\xi_1^\square,\xi_2^\square)$ or equivalently by the fixed \textit{local} ordering of the form $\boxednum{1}\!\tdashto\boxednum{2}\!\tdashto\boxednum{3}\!\tdashto\boxednum{4}$.
In general, the two systems of coordinates will not match, and this mismatch is represented by the orientation parameter $\oo$.
In fact, there are \textit{eight} possible orientations for quadrilateral faces, meaning $\oo=0,\ldots,7$.
These are all illustrated in Figure \ref{fig:orientationsquad}.


As with edges, the idea is to have a local-to-global transformation which depends on the orientation parameter $\oo$.
Here, the \textit{local} orientation is represented by the \textit{locally ordered} quadruple $(s_0,s_1,t_0,t_1)$ composed of the two pairs $(s_0,s_1)$ and $(t_0,t_1)$.
The \textit{first} pair, $(s_0,s_1)$, is a locally ordered pair corresponding to the \textit{first} local face coordinate $\xi_1^\square$ (which as an \textit{edge} coordinate has the local ordering $\boxednum{1}\!\tdashto\boxednum{2}$ or $\boxednum{4}\!\tdashto\boxednum{3}$).
The \textit{second} pair, $(t_0,t_1)$, is a locally ordered pair corresponding to the \textit{second} local face coordinate $\xi_2^\square$ (which as an \textit{edge} coordinate has the local ordering $\boxednum{1}\!\tdashto\boxednum{4}$ or $\boxednum{2}\!\tdashto\boxednum{3}$).
Meanwhile, the \textit{global} orientation is analogously represented by a \textit{globally ordered} quadruple composed of two pairs.
The first pair is a globally ordered pair corresponding to the first global face coordinate $\Xi_1^\square$, and similarly with the second pair.
For example, looking at Figure \ref{fig:orientationsquad}, when $\oo=1$, $\Xi_1^\square$ is associated to $(t_0,t_1)$, while $\Xi_2^\square$ is associated to $(s_1,s_0)$, so that the globally ordered quadruple is $(t_0,t_1,s_1,s_0)$.
This way, the local-to-global transformation is actually a permutation dependent on $\oo$ that can easily be determined by looking at Figure \ref{fig:orientationsquad}.

\begin{definition*}
Let $s_0$, $s_1$, $t_0$ and $t_1$ be arbitrary variables, and let $\oo=0,1,2,3,4,5,6,7$ be the quadrilateral face orientation parameter. 
The quadrilateral face orientation permutation function, $\sigma_\oo^\square$, is defined as
\begin{equation}
	\sigma_\oo^\square(s_0,s_1,t_0,t_1)=\begin{cases}
		\sigma_0^\square(s_0,s_1,t_0,t_1)=(s_0,s_1,t_0,t_1)&\quad\text{if  }\,\oo=0\,,\\
		\sigma_1^\square(s_0,s_1,t_0,t_1)=(t_0,t_1,s_1,s_0)&\quad\text{if  }\,\oo=1\,,\\
		\sigma_2^\square(s_0,s_1,t_0,t_1)=(s_1,s_0,t_1,t_0)&\quad\text{if  }\,\oo=2\,,\\
		\sigma_3^\square(s_0,s_1,t_0,t_1)=(t_1,t_0,s_0,s_1)&\quad\text{if  }\,\oo=3\,,\\
		\sigma_4^\square(s_0,s_1,t_0,t_1)=(t_0,t_1,s_0,s_1)&\quad\text{if  }\,\oo=4\,,\\
		\sigma_5^\square(s_0,s_1,t_0,t_1)=(s_1,s_0,t_0,t_1)&\quad\text{if  }\,\oo=5\,,\\
		\sigma_6^\square(s_0,s_1,t_0,t_1)=(t_1,t_0,s_1,s_0)&\quad\text{if  }\,\oo=6\,,\\
		\sigma_7^\square(s_0,s_1,t_0,t_1)=(s_0,s_1,t_1,t_0)&\quad\text{if  }\,\oo=7\,.\end{cases}\label{eq:orientQuadFace}
\end{equation}
\end{definition*}


As with edges, all that is required is to compose the \textit{quadrilateral face} ancillary functions and their differential form (those with superscript $\square$) with the local-to-global transformation given by $\sigma_\oo^\square$.
This should be done in all 3D shape functions associated to quadrilateral faces.
More concrete examples will be given in the 3D elements as the document progresses.
\newpage
\section{Triangle}
\label{sec:Tri}

\begin{figure}[!ht]
\begin{center}
\includegraphics[scale=0.5]{./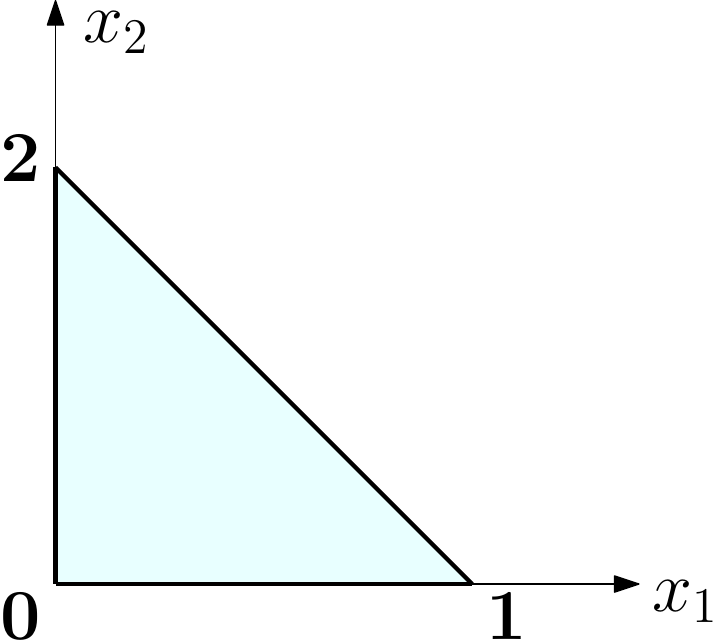}
\caption{Master triangle with numbered vertices.}
\label{fig:MasterTriangle}
\end{center}
\end{figure}

The triangle is the 2D simplex.
The master element for triangles in the $x=(x_1,x_2)$ space is the set $\{x\in\R^2:x_1>0,x_2>0,x_1+x_2<1\}$. 
It is illustrated in Figure \ref{fig:MasterTriangle}.

Denote vertex $a$ by $v_a$, so that $v_0=(0,0)$, $v_1=(1,0)$ and $v_2=(0,1)$.
As described in \S\ref{sec:affinecoordinates}, the 2D affine coordinates, $\nu_0$, $\nu_1$ and $\nu_2$, can be easily calculated for this master triangle:
\begin{equation}
	\nu_0(x)=1-x_1-x_2\,,\qquad
	\nu_1(x)=x_1\,,\qquad
	\nu_2(x)=x_2\,.
\end{equation}
Their gradients are
\begin{equation}
\nabla\nu_0(x)=\Big(\begin{smallmatrix}-1\\[2pt]-1\end{smallmatrix}\Big)\,,\qquad
\nabla\nu_1(x)=\Big(\begin{smallmatrix}1\\[2pt]0\end{smallmatrix}\Big)\,,\qquad
\nabla\nu_2(x)=\Big(\begin{smallmatrix}0\\[2pt]1\end{smallmatrix}\Big)\,.
\end{equation}

Like the quadrilateral and segment, the triangle exhibits a correspondence of its vertices and its affine coordinates.
Looking at the formula $x=\nu_0(x)v_0+\nu_1(x)v_1+\nu_2(x)v_2$ this relation is evident, in the sense that each vertex is linked to \textit{one} affine coordinate (its corresponding weight).
For example $v_1$ is linked to the affine coordinate $\nu_1(x)$, and indeed it takes the value $1$ when $x=v_1$ while it is zero at the other two vertices.

\subsubsection*{Exact Sequence}




As with the quadrilateral, the triangle will have 2D discrete polynomial exact sequences that represent the continuous exact sequence \eqref{eq:2DExactSeq} and its rotated analogue \eqref{eq:2DExactSeqRotated}. 
They are
%
%
\begin{equation}
\begin{alignedat}{4}
    &\mathcal{P}^p \xrightarrow{\,\,\nabla\,\,} &\mathcal{N}^p \xrightarrow{\nabla\times} && \mathcal{P}^{p-1} \,,\\
    &\mathcal{P}^p \xrightarrow{\mathrm{curl}\,} &\mathcal{RT}^p \xrightarrow{\,\nabla\cdot\,} &&\mathcal{P}^{p-1} \,,
    \label{eq:EStriangle}
\end{alignedat}
\end{equation}
where $\mathcal{P}^p =\mathcal{P}^p(x_1,x_2)$ is the space of polynomials of total order $p$.
The spaces $\mathcal{N}^p$ and $\mathcal{RT}^p$ are the N\'{e}d\'{e}lec and Raviart-Thomas spaces for simplices:
\begin{align}
	\label{eq:NedelecSpace}
	\mathcal{N}^p&=(\mathcal{P}^{p-1})^N\oplus\Big\{E\in(\tilde{\mathcal{P}}^{p})^N: x\cdot E(x)=0\,\text{ for all } x\in\R^N\Big\}\,,\\
	\label{eq:RaviartThomasSpace}
	\mathcal{RT}^p&=(\mathcal{P}^{p-1})^N\oplus\Big\{V\in(\tilde{\mathcal{P}}^{p})^N: V(x)=\phi(x)x
		\, \text{ with }\phi\in\tilde{\mathcal{P}}^{p-1}\,\text{ and }x\in\R^N\Big\} \,.
\end{align}
In the case of the triangle, the number of spatial dimensions is $N=2$.
Note the sequence has an overall drop in polynomial order of one. 
This makes it compatible with the construction presented for the quadrilateral.
Moreover, all of the spaces in the exact sequences above are invariant under affine transformations. 
This implies the exact sequence takes the same form for any given triangle (provided it is mapped via an affine transformation from the master triangle, which is always possible).

\subsection{\texorpdfstring{$H^1$}{H1} Shape Functions}

All shape functions defined here will lie in $\mathcal{P}^{p}$, which has dimension $\frac{1}{2}(p+2)(p+1)$. 
Moreover, a careful count of the linearly independent shape functions will coincide with that dimension, so that indeed the space is spanned.

\subsubsection{\texorpdfstring{$H^1$}{H1} Vertices}

In this case, for a given vertex, the associated shape function will be its related affine coordinate.
For example, $v_0$ is linked to $\nu_0(x)$, so its associated vertex function is simply 
\begin{equation*}
	\phi^\mathrm{v}(x)=\nu_0(x)\,.
\end{equation*}
As expected, it vanishes at the disjoint opposite edge 12 and its trace over the adjacent edges is a 1D $H^1$ vertex function associated to the vertex. 
Indeed, the traces are explicitly,
\begin{equation*}
	\nu_0(x)|_{x_2=0}=1-x_1=\mu_0(x_1)\,,\qquad\nu_0(x)|_{1-x_1-x_2=0}=0\,,\qquad\nu_0(x)|_{x_1=0}=1-x_2=\mu_0(x_2)\,. 
\end{equation*}
Lastly, the function decays linearly and is in the lowest order possible space, $\mathcal{P}^1$, so that it respects the hierarchy in $p$.

In general, the vertex functions and their gradient are,
\begin{equation}
    \phi^\mathrm{v}(x)=\nu_a(x)\,,\qquad\quad\nabla\phi^\mathrm{v}(x)=\nabla\nu_a(x)\,,
\end{equation}
for $a=0,1,2$.
There are a total of $3$ vertex functions (one for each vertex).
%
%

\subsubsection{\texorpdfstring{$H^1$}{H1} Edges}
\label{sec:H1edgesTri}

For the construction of the edge functions consider edge 01 as an example.
To satisfy compatibility with the quadrilateral, the shape functions should have $\phi_i^\E(\vec{\mu}_{01}(x_1))$ as the trace over edge 01, and be zero at the two other edges.
Additionally, they should be polynomial (they should be in $\mathcal{P}^p$).

Unfortunately, at first sight the construction is not trivial since many intuitive approaches lead to violating some of the required properties mentioned above.
This is in large part due to the fact that the triangle is \textit{not} a Cartesian product of lower dimensional elements.
Luckily, these issues are all solved by the process of homogenization, which is viewed as a particular extension of the lower dimensional edge functions $\phi_i^\E(\vec{\mu}_{01}(x_1))$.
The idea is to exploit the 2D \textit{triangle} affine coordinates associated to edge 01, which are $\nu_0$ and $\nu_1$ (those linked to the vertices $v_0$ and $v_1$ which compose the edge).
Indeed, let the shape functions for this edge be
\begin{equation*}
    \phi_i^\mathrm{e}(x)=\phi_i^\E(\vec{\nu}_{01}(x))=[L_i](\vec{\nu}_{01}(x))
    	=(\nu_0(x)+\nu_1(x))^i
    		[L_i]\Big(\textstyle{\frac{\nu_0(x)}{\nu_0(x)+\nu_1(x)}},\textstyle{\frac{\nu_1(x)}{\nu_0(x)+\nu_1(x)}}\Big)\,,
\end{equation*}
with $i=2,\ldots,p$.
Here, property \eqref{eq:ScalingProperty} was used.
Notice that $\nu_2=0$ is the equation for edge 01 and similarly with the other edges.
Hence, using the vanishing properties of $\phi_i^\E$ in \eqref{eq:phiEvanishing}, it follows that the trace properties are satisfied:
\begin{alignat*}{3}
    &\phi_i^\mathrm{e}(x)|_{x_2=0}&&=\phi_i^\E(\vec{\nu}_{01}(x))|_{\nu_2=0}=\phi_i^\E(\vec{\mu}_{01}(x_1))\,,\\
    &\phi_i^\mathrm{e}(x)|_{1-x_1-x_2=0}&&=\phi_i^\E(\vec{\nu}_{01}(x))|_{\nu_0=0}=\phi_i^\E(0,\mu_1(x_1))=0\,,\\
  	&\phi_i^\mathrm{e}(x)|_{x_1=0}&&=\phi_i^\E(\vec{\nu}_{01}(x))|_{\nu_1=0}=\phi_i^\E(\mu_0(x_2),0)=0\,.
\end{alignat*}
In this case, the decay of each shape function is represented by the nonlinear function $(\nu_0(x)+\nu_1(x))^i$, which comes hidden within the homogenization.
Additionally, $\phi_i^\E(\vec{\nu}_{01})\in\tilde{\mathcal{P}}^i(\nu_0,\nu_1)$ by homogenization while $\nu_0(x),\nu_1(x)\in\mathcal{P}^1(x)$, meaning that the edge functions $\phi_i^\E(\vec{\nu}_{01}(x))$ are in $\mathcal{P}^i(x)\subseteq\mathcal{P}^p(x)$ as required.

\begin{figure}[!ht]
\begin{center}
\includegraphics[scale=0.55]{./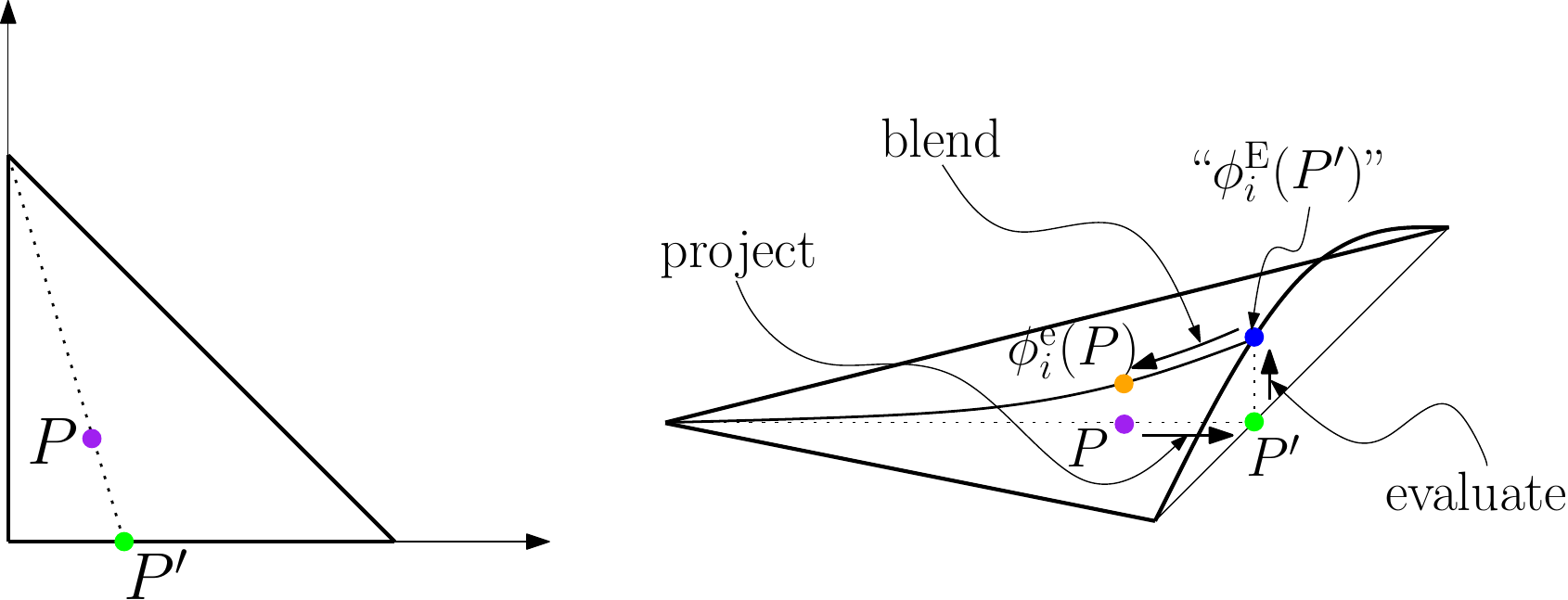}
\caption{Edge projection from $P$ to $P'$, and the logic project$\,\to\,$evaluate$\,\to\,$blend.}
\label{fig:TriangleProjection}
\end{center}
\end{figure}

As with the quadrilateral, there is a geometrical interpretation to these expressions, and again it follows the fundamental logic of project$\,\to\,$evaluate$\,\to\,$blend, which is marked below:
\begin{equation*}
    \phi_i^\mathrm{e}(x)=\phi_i^\E(\vec{\nu}_{01}(x))
    	=\underbrace{(\nu_0(x)+\nu_1(x))^i}_{\text{blend}}
    		\underbrace{\phi_i^\E\Big(\underbrace{\textstyle{\frac{\nu_0(x)}{\nu_0(x)+\nu_1(x)}},
    			\textstyle{\frac{\nu_1(x)}{\nu_0(x)+\nu_1(x)}}}_{\text{project}}\Big)}_{\text{evaluate}}\,.
\end{equation*}
The coordinates $(\frac{\nu_0}{\nu_0+\nu_1},\frac{\nu_1}{\nu_0+\nu_1})$ are projected 1D coordinates, because they sum to $1$ for all $x$. 
Note this is \textit{not} true for the coordinates $(\nu_0,\nu_1)$ in general (only over the edge itself).
As argued before for the quadrilateral, the projected coordinates represent a point over edge 01:
\begin{equation*}
	(x_1,x_2)\;\longmapsto\;(\textstyle{\frac{x_1}{1-x_2}},0)\,.
\end{equation*}
Geometrically, it consists of finding the intersection $P'=(\frac{x_1}{1-x_2},0)$ of the edge with the projecting line passing through the original point $P=(x_1,x_2)$ and the disjoint opposite vertex. 
This projection and the logic of the construction is illustrated in Figure \ref{fig:TriangleProjection}.


The complete list of edge functions with their gradients is
\begin{equation}
	\phi_i^\mathrm{e}(x)=\phi_i^\E(\vec{\nu}_{ab}(x))\,,\qquad\quad\nabla\phi_i^\mathrm{e}(x)=\nabla\phi_i^\E(\vec{\nu}_{ab}(x))\,,
	\label{eq:Triphigeneral}
\end{equation}
with $i=2,\ldots,p$, and $0\leq a<b\leq2$ (so $(a,b)=(0,1),(1,2),(0,2)$). As usual, there are a total of $p-1$ edge functions for every edge, for a total of $3(p-1)$ edge functions.

\subsubsection{\texorpdfstring{$H^1$}{H1} Face Bubbles}

These intuitively involve all three affine coordinates $(\nu_0,\nu_1,\nu_2)$.
The first natural idea is to use the existing edge shape functions and multiply by another function $\hat{\phi}_j$ which vanishes at the remaining edge.
That is,
\begin{equation*}
	\phi_{ij}^\mathrm{f}(x)=\phi^\E_i(\nu_0(x),\nu_1(x))\hat{\phi}_j(\nu_2(x))\,.
\end{equation*}
Here, $\hat{\phi}_j(y)$ should also be an $H^1$ function with domain $y\in[0,1]$ (since $0\leq\nu_2(x)\leq1$) and which vanishes at $y=0$, so that $\hat{\phi}_j(0)=0$. 
Notice that in the quadrilateral, when constructing $\phi_{ij}^\Box$ the choice was naturally $\hat{\phi}_j=L_j$ since it was required that $\hat{\phi}_j(0)=\hat{\phi}_j(1)=0$, so it should vanish at \textit{both} endpoints. 
For the triangle there is much more liberty for the choice of $\hat{\phi}_j$, and indeed it can be chosen such that it has many advantages. 
Following \citet{Beuchler_Schoeberl_06} it is chosen as a Jacobi polynomial, $\hat{\phi}_j=L_j^\alpha$, with $\alpha=2i$, where $i$ is the order of $\phi^\E_i$. This results in
\begin{equation*}
	\phi_{ij}^\mathrm{f}(x)=\phi^\E_i(\nu_0(x),\nu_1(x))L_j^{2i}(\nu_2(x))\,.
\end{equation*}

Thinking ahead to the tetrahedron, this definition can be easily generalized. Just as with $\phi_i^\E$, the generalization is nothing more than the homogenization of the previous formula, as written in \eqref{eq:homogproduct}.

\begin{definition*}
Let $s_0$, $s_1$ and $s_2$ be arbitrary functions of some spatial variable in $\R^N$, with $N=2,3$, and denote by $p_s$ the order in the coordinate triplet $(s_0,s_1,s_2)$. Then
\begin{equation}
    \phi_{ij}^\Tri(s_0,s_1,s_2)=\phi_i^\E(s_0,s_1)[L_j^{2i}](s_0+s_1,s_2)=[L_i,L_j^{2i}](s_0,s_1,s_2)\,,
\end{equation}
for $i\geq2$, $j\geq1$ and $n=i+j=3,\ldots,p_s$. 
The gradients, understood in $\R^N$, are
\begin{equation}
    \begin{aligned}
    \nabla\phi_{ij}^\Tri(s_0,s_1,s_2)&=[L_j^{2i}](s_0+s_1,s_2)\nabla\phi_i^\E(s_0,s_1)
    	+\phi_i^\E(s_0,s_1)\nabla[L_j^{2i}](s_0+s_1,s_2)\,,
    \end{aligned}
\end{equation}
where for any $\alpha$,
\begin{equation}	
	\begin{aligned}
  	\nabla[L_j^\alpha](s_0\!+\!s_1,s_2)&=[P_{j-1}^\alpha](s_0\!+\!s_1,s_2)\nabla s_2
  		+[R_{j-1}^\alpha](s_0\!+\!s_1,s_2)\nabla(s_0\!+\!s_1\!+\!s_2)\\
        &=P_{j-1}^\alpha(s_2;s_0\!+\!s_1\!+\!s_2)\nabla s_2
        		+R_{j-1}^\alpha(s_2;s_0\!+\!s_1\!+\!s_2)\nabla(s_0\!+\!s_1\!+\!s_2)\,.
  \end{aligned}\label{eq:DerivativeofLalpha}
\end{equation}
\end{definition*}

Note the indexing was shown as $i\geq2$, $j\geq1$ and $n=i+j=3,\ldots,p_s$. 
In many $hp$ codes it is useful to enforce hierarchy, so that the shape functions are organized by total order. 
That is, first list all order $3$ bubbles, then all order $4$ bubbles, and so on.
Hence when coding is in mind, it is useful to have an outer loop with numbering $n=3,\ldots,p_s$ and an inner loop indexing with either $i=2,\ldots,n-1$ (so $j=n-i$) or $j=1,\ldots,n-2$ (so $i=n-j$).

Regarding the vanishing properties of this new ancillary function, it suffices to rewrite \eqref{eq:LiLjvanishing} so that for any $s_0,s_1,s_2$, and all $i\geq2$, $j\geq1$,
\begin{equation}
	\phi_{ij}^\Tri(s_0,s_1,0)=\phi_{ij}^\Tri(s_0,0,s_2)=\phi_{ij}^\Tri(0,s_1,s_2)=0\,.\label{eq:phiTrivanishing}
\end{equation}

As expected, when the coordinates are 2D affine coordinates, the formulas for $\phi_{ij}^\Tri$ and its gradient are simplified.
For this, record the next remark.

\begin{remark}
Let $\nu_0=1-\nu_1-\nu_2$, where $\nu_1$ and $\nu_2$ are arbitrary functions of some spatial variable in $\R^N$, $N=2,3$, and where $p$ is the order in the coordinates $(\nu_0,\nu_1,\nu_2)$. Then for all $i\geq2$, $j\geq1$, and $n=i+j=3,\ldots,p$,
\begin{equation}
	\begin{aligned}
    \phi_{ij}^\Tri(\nu_0,\nu_1,\nu_2)&=\phi_i^\E(\nu_0,\nu_1)L_j^{2i}(\nu_2)\,,\\
    \nabla\phi_{ij}^\Tri(\nu_0,\nu_1,\nu_2)&=L_j^{2i}(\nu_2)\nabla\phi_i^\E(\nu_0,\nu_1)
    	+\phi_i^\E(\nu_0,\nu_1)P_{j}^{2i}(\nu_2)\nabla\nu_2\,.
	\end{aligned}
  \label{eq:H1Trispecialcase}
\end{equation}
\end{remark}



In general, the $H^1$ triangle face bubbles and their gradients are
\begin{equation}
	\phi_{ij}^\mathrm{f}(x)=\phi_{ij}^\Tri(\vec{\nu}_{012}(x))\,,\qquad\quad
		\nabla\phi_{ij}^\mathrm{f}(x)=\nabla\phi_{ij}^\Tri(\vec{\nu}_{012}(x))\,,
\label{eq:H1_CountingTriFace}
\end{equation}
where $i\geq2$, $j\geq1$ and $n=i+j=3,\ldots,p$. 
The vanishing conditions are satisfied by construction or simply by looking at \eqref{eq:phiTrivanishing}. 
There are a total of $\frac{1}{2}(p-1)(p-2)$ $H^1$ face bubbles for the triangle.



%
%

\subsection{\texorpdfstring{$H(\mathrm{curl})$}{Hcurl} Shape Functions}


The dimension of $\mathcal{N}^p$ in two dimensions is $p(p+2)$.
A careful count of the linearly independent conforming shape functions to be presented throughout this section will coincide with that dimension. 
Showing that the functions constructed are in $\mathcal{N}^p$ is nontrivial, but will follow from the next instrumental lemma.

\begin{lemma}
\label{lemma:curl}
Let $x\in\R^N$ for $N=2,3$, and $f_n\in\mathcal{P}^n(x)$ be any polynomial of total order $n$ in the coordinates $x=(x_1,\ldots,x_N)$. Given $s_0$ and $s_1$ coordinates in $\mathcal{P}^1(x)$ $($linear functions in $x$$)$, it follows that the N\'ed\'elec space of order $n+1$, $\mathcal{N}^{n+1}$, contains the function
\begin{equation*}
	f_n(\bcdot)\Big(s_0\nabla s_1-s_1\nabla s_0\Big)\in\mathcal{N}^{n+1}\,.
\end{equation*}
\end{lemma}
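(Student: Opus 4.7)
The plan is to recognize that $\mathcal{N}^{n+1}$ in \eqref{eq:NedelecSpace} is built out of two pieces and to match each part of $f_n(s_0\nabla s_1-s_1\nabla s_0)$ to one of them. The whole argument reduces to a linear‐algebra decomposition plus the identity $x\cdot\big((\alpha\cdot x)\beta-(\beta\cdot x)\alpha\big)=0$.

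First I would write $s_0$ and $s_1$ explicitly as affine functions, $s_0(x)=a_0+\alpha\cdot x$ and $s_1(x)=b_0+\beta\cdot x$, with $\nabla s_0=\alpha$ and $\nabla s_1=\beta$ constant vectors. Substituting, one obtains the decomposition
\begin{equation*}
s_0\nabla s_1-s_1\nabla s_0 \;=\; \underbrace{(a_0\beta-b_0\alpha)}_{u\;\in\;\R^N}\;+\;\underbrace{(\alpha\cdot x)\beta-(\beta\cdot x)\alpha}_{v(x)\,\in\,(\tilde{\mathcal{P}}^1)^N},
\end{equation*}
so that the vector field splits into a constant part $u$ and a homogeneous linear part $v(x)$.

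Next, I would split $f_n=f_{n-1}+\tilde f_n$, where $f_{n-1}\in\mathcal{P}^{n-1}$ and $\tilde f_n\in\tilde{\mathcal{P}}^n$ is the homogeneous top‑degree part. Then
\begin{equation*}
f_n\,(s_0\nabla s_1-s_1\nabla s_0)\;=\;\underbrace{f_n\,u+f_{n-1}\,v(x)}_{\in\,(\mathcal{P}^n)^N}\;+\;\tilde f_n\,v(x).
\end{equation*}
Degree counting shows the first two summands have entries of total degree at most $n$ and thus lie in $(\mathcal{P}^n)^N=(\mathcal{P}^{(n+1)-1})^N$, which is the first summand of $\mathcal{N}^{n+1}$.

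Finally, for the remaining term $\tilde f_n\,v(x)$, its entries are products of a homogeneous polynomial of degree $n$ with a homogeneous polynomial of degree $1$, hence lie in $(\tilde{\mathcal{P}}^{n+1})^N$. It then suffices to verify the ``radial'' condition $x\cdot(\tilde f_n v)=0$, which follows from the pointwise identity
\begin{equation*}
x\cdot v(x)\;=\;(\alpha\cdot x)(\beta\cdot x)-(\beta\cdot x)(\alpha\cdot x)\;=\;0.
\end{equation*}
Thus $\tilde f_n\,v$ belongs to the second summand of $\mathcal{N}^{n+1}$ in \eqref{eq:NedelecSpace}, and combining the three pieces gives the claim. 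The only non‑routine step is noticing the affine split of $s_0\nabla s_1-s_1\nabla s_0$; once that is in hand, the antisymmetry of $v(x)$ makes the $x\cdot E=0$ constraint automatic.
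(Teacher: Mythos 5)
Your argument is correct and follows essentially the same route as the paper's own proof: the affine split of $s_0\nabla s_1-s_1\nabla s_0$ into a constant part and a homogeneous linear part annihilated by $x\cdot(\,\cdot\,)$, followed by the decomposition $f_n=f_{n-1}+\tilde f_n$ and sorting the three resulting terms into the two summands of $\mathcal{N}^{n+1}$. Nothing is missing.
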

\begin{proof}
Recall the definition of the N\'ed\'elec space,
\begin{equation*}
	\mathcal{N}^p=(\mathcal{P}^{p-1})^N\oplus\Big\{E\in(\tilde{\mathcal{P}}^{p})^N: x\cdot E(x)=0\,\text{ for all } 
		x\in\R^N\Big\}\,.
\end{equation*}
The (affine) coordinates are linear functions in $x=(x_1,\ldots,x_N)$, so that
\begin{equation*}
	s_k(x)=a_k+b_k\cdot x\,,
\end{equation*}
for $a_k\in\R$, $b_k\in\R^N$ and $k=0,1$. As a result $\nabla s_k(x)=b_k$, and
\begin{equation*}
	s_0(x)\nabla s_1(x)-s_1(x)\nabla s_0(x)=\underbrace{(a_0b_1-a_1b_0)}_{=A}+\underbrace{((b_0\cdot x)b_1-(b_1\cdot x)b_0)}_{=B(x)}\,.
\end{equation*}
Clearly, $A\in(\mathcal{P}^0)^N=\R^N$, while $B\in(\tilde{\mathcal{P}}^1)^N$. Moreover,
\begin{equation*}
	x\cdot B(x)=x\cdot((b_0\cdot x)b_1-(b_1\cdot x)b_0)=(b_0\cdot x)(b_1\cdot x)-(b_1\cdot x)(b_0\cdot x)=0\,,
\end{equation*}
so that $B\in\{E\in(\tilde{\mathcal{P}}^{1})^N: x\cdot E(x)=0\}$, and $s_0\nabla s_1-s_1\nabla s_0\in\mathcal{N}^1$.

Now, $f_n\in\mathcal{P}^n=\mathcal{P}^{n-1}\oplus\tilde{\mathcal{P}}^n$, for $n\geq1$ can always be decoupled into $f_n=f_{n-1}+\tilde{f}_n$, where $f_{n-1}\in\mathcal{P}^{n-1}$ and $\tilde{f}_n\in\tilde{\mathcal{P}}^n$. As a result
\begin{equation*}
	f_n(x)(s_0(x)\nabla s_1(x)-s_1(x)\nabla s_0(x))=f_n(x)A+f_{n-1}(x)B(x)+\tilde{f}_n(x)B(x)\,,
\end{equation*}
where it is clear $f_{n}A+f_{n-1}B\in(\mathcal{P}^n)^N$ and $\tilde{f}_nB\in(\tilde{\mathcal{P}}^{n+1})^N$. Meanwhile, 
\begin{equation*}
x\cdot(\tilde{f}_n(x)B(x))=\tilde{f}_n(x)x\cdot B(x)=0\,.
\end{equation*}
Hence, $f_n(\bcdot)(s_0\nabla s_1-s_1\nabla s_0)\in\mathcal{N}^{n+1}$.
\end{proof}

\subsubsection{\texorpdfstring{$H(\mathrm{curl})$}{Hcurl} Edges}

Having seen what ocurred with $H^1$ edge functions due to the process of homogenization, it is wise to consider the general definition of $H(\mathrm{curl})$ edge functions in \eqref{eq:Hcurledgefunctions}, using 2D affine coordinates as entries. 
Indeed this will suffice.
For instance, for edge 01, the shape functions are
\begin{equation*}
	\begin{aligned}
		E_i^\mathrm{e}(x)&=E_i^\E(\vec{\nu}_{01}(x))=[P_i](\vec{\nu}_{01}(x))\Big(\nu_0(x)\nabla\nu_1(x)-\nu_1(x)\nabla\nu_0(x)\Big)\\
    	&=(\nu_0(x)+\nu_1(x))^i
    		[P_i]\Big(\textstyle{\frac{\nu_0(x)}{\nu_0(x)+\nu_1(x)}},\textstyle{\frac{\nu_1(x)}{\nu_0(x)+\nu_1(x)}}\Big)
    			E_0^\E(\vec{\nu}_{01}(x))\\
    	&=\underbrace{(\nu_0(x)+\nu_1(x))^{i+2}}_{\text{blend}}
    		\underbrace{E_i^\E\Big(\underbrace{\textstyle{\frac{\nu_0(x)}{\nu_0(x)+\nu_1(x)}},
    			\textstyle{\frac{\nu_1(x)}{\nu_0(x)+\nu_1(x)}}}_{\text{project}}\Big)}_{\text{evaluate}}\,,
	\end{aligned}
\end{equation*}
for $i=0,\ldots,p-1$. 
By Lemma \ref{lemma:curl} it easily follows $E_i^\mathrm{e}\in\mathcal{N}^{i+1}\subseteq\mathcal{N}^{p}$ as desired.
The tangential trace properties are also satisfied. 
To see this, it suffices to look at $E_0^\mathrm{e}$, since $E_i^\mathrm{e}(x)=[P_i](\vec{\nu}_{01}(x))E_0^\mathrm{e}(x)$. 
Its traces are
\begin{alignat*}{3}
    &\mathrm{tr}(E_0^\mathrm{e}(x))|_{x_2=0}&&=E_0^\E(\vec{\nu}_{01}(x))|_{\nu_2=0}\cdot(v_1-v_0)
    	=\nabla\nu_1(x)\cdot(v_1-v_0)=1\,,\\
    &\mathrm{tr}(E_0^\mathrm{e}(x))|_{1-x_1-x_2=0}&&=E_0^\E(\vec{\nu}_{01}(x))|_{\nu_0=0}\cdot(v_2-v_1)
    	=-\nu_1(x)\nabla\nu_0(x)\cdot(v_2-v_1)=0\,,\\
  	&\mathrm{tr}(E_0^\mathrm{e}(x))|_{x_1=0}&&=E_0^\E(\vec{\nu}_{01}(x))|_{\nu_1=0}\cdot(v_2-v_0)
  		=\nu_0(x)\nabla\nu_1(x)\cdot(v_2-v_0)=0\,.
\end{alignat*}
Here the vanishing traces follow from the fact that the gradient of a function is always perpendicular to the tangents of its isosurfaces. 
Hence $\nabla\nu_0(x)$ is perpendicular to the tangents of the set where $\nu_0(x)=0$, which is precisely where $v_2-v_1$ lies. 
Regarding the nonzero trace, the formula $\nabla\nu_1(x)\cdot(v_1-v_0)$ follows after replacing $\nu_0=1-\nu_1$, and noting that $\nabla(\nu_0(x)+\nu_1(x))\cdot(v_1-v_0)=-\nabla\nu_2(x)\cdot(v_1-v_0)=0$.

More generally, the edge functions and their curls are
\begin{equation}
	E_i^\mathrm{e}(x)=E_i^\E(\vec{\nu}_{ab}(x))\,,\qquad\quad\nabla\times E_i^\mathrm{e}(x)=\nabla\times E_i^\E(\vec{\nu}_{ab}(x))\,,
	\label{eq:TriEgeneral}
\end{equation}
with $i=0,\ldots,p-1$, and $0\leq a<b\leq2$ (so $(a,b)=(0,1),(1,2),(0,2)$). There are a total of $p$ edge functions for every given edge, giving a total of $3p$ edge functions.

\subsubsection{\texorpdfstring{$H(\mathrm{curl})$}{Hcurl} Face Bubbles}

The construction of $H(\mathrm{curl})$ face bubbles is parallel to that of $H^1$ bubbles. 
The idea is to multiply $H(\mathrm{curl})$ edge functions with an $H^1$ function vanishing at zero. 
Again, it is chosen according to \citet{Beuchler_Pillwein_Zaglmayr_13} as the Jacobi polynomial, $L_j^\alpha$, with $\alpha=2i+1$, where $i$ is the order of $E^\E_i$.
Like the quadrilateral, the triangle has two closely related families generated by the same ancillary operator defined below.

\begin{definition*}
Let $s_0$, $s_1$ and $s_2$ be arbitrary functions of some spatial variable in $\R^N$, with $N=2,3$, and denote by $p_s$ the order in the coordinate triplet $(s_0,s_1,s_2)$. Then
\begin{equation}
    E_{ij}^\Tri(s_0,s_1,s_2)=[L_j^{2i+1}](s_0+s_1,s_2)E_i^\E(s_0,s_1)\,,
\end{equation}
for $i\geq0$, $j\geq1$ and $n=i+j=1,\ldots,p_s-1$.
The curls, understood in $\R^N$, are
\begin{equation}
    \begin{aligned}
    \nabla\!\times\! E_{ij}^\Tri(s_0,s_1,s_2)&\!=\![L_j^{2i+1}](s_0\!+\!s_1,s_2)\nabla\!\times\! E_i^\E(s_0,s_1)
    	\!+\!\nabla[L_j^{2i+1}](s_0\!+\!s_1,s_2)\!\times\! E_i^\E(s_0,s_1)\,,
    \end{aligned}
\end{equation}
where $\nabla[L_j^{2i+1}](s_0+s_1,s_2)$ is computed from \eqref{eq:DerivativeofLalpha}.
\end{definition*}

As with the edge functions, it is clear that when evaluated in any permutation of $(\nu_0,\nu_1,\nu_2)$ the vanishing trace properties are satisfied.
The two families defined below give a total of $p(p-1)$ $H(\mathrm{curl})$ triangle face bubbles (each family has $\frac{1}{2}p(p-1)$).

\subparagraph{Family I:} 
The shape functions for the first family and their curl are
\begin{equation}
	E_{ij}^{\mathrm{f}}(x)=E_{ij}^\Tri(\vec{\nu}_{012}(x))\,,\qquad\quad
		\nabla\times E_{ij}^{\mathrm{f}}(x)=\nabla\times E_{ij}^\Tri(\vec{\nu}_{012}(x))\,,
\end{equation}
for $i\geq0$, $j\geq1$ and $n=i+j=1,\ldots,p-1$. 
By Lemma \ref{lemma:curl}, $E_{ij}^\mathrm{f}\in\mathcal{N}^{n+1}$, where $n=i+j$. 
There are $\frac{1}{2}p(p-1)$ bubble functions in this family.

\subparagraph{Family II:}
The shape functions for the second family and their curl are
\begin{equation}
	E_{ij}^{\mathrm{f}}(x)=E_{ij}^\Tri(\vec{\nu}_{120}(x))\,,\qquad\quad
		\nabla\times E_{ij}^{\mathrm{f}}(x)=\nabla\times E_{ij}^\Tri(\vec{\nu}_{120}(x))\,,
\end{equation}
for $i\geq0$, $j\geq1$ and $n=i+j=1,\ldots,p-1$.
The only difference with the first family is that the entries are permuted to $\vec{\nu}_{120}(x)=(\nu_1(x),\nu_2(x),\nu_0(x))$ instead of $\vec{\nu}_{012}(x)=(\nu_0(x),\nu_1(x),\nu_2(x))$.
Again, they lie in $E_{ij}^\mathrm{f}\in\mathcal{N}^{n+1}$, where $n=i+j$, and there are $\frac{1}{2}p(p-1)$ bubble functions in this family.

\subsection{\texorpdfstring{$H(\mathrm{div})$}{Hdiv} Shape Functions}
\label{sec:TriangleHdiv}

In two dimensions, the $H(\mathrm{div})$ space (see \eqref{eq:Hdiv2Ddef}) is isomorphic to $H(\mathrm{curl})$. 
Indeed, just like with the quadrilateral, given a shape function $E\in H(\mathrm{curl})$ and its curl $\nabla\times E\in L^2$, the corresponding $H(\mathrm{div})$ shape function with its divergence is
\begin{equation}
    V=\begin{pmatrix}0&1\\-1&0\end{pmatrix}E\,,\quad\qquad\nabla\cdot V=\nabla\times E\,.
\end{equation}
Although not immediate, note that in \textit{two} dimensions the original polynomial space $\mathcal{N}^p$ for $H(\mathrm{curl})$ simply becomes the $H(\mathrm{div})$ conforming space $\mathcal{RT}^p$ after the rotation, as required.

\subsection{\texorpdfstring{$L^2$}{L2} Shape Functions}

These span $\mathcal{P}^{p-1}$, so there should be $\frac{1}{2}p(p+1)$ linearly independent shape functions.

\subsubsection{\texorpdfstring{$L^2$}{L2} Face}

Again, carefully chosen Jacobi polynomials are present in this construction. 
The shape functions are
\begin{equation}
\begin{aligned}
	\psi_{ij}^\mathrm{f}(x)&=[P_i](\nu_0(x),\nu_1(x))[P_j^{2i+1}](\nu_0(x)+\nu_1(x),\nu_2(x))\\
		&=[P_i,P_j^{2i+1}](\vec{\nu}_{012}(x))(\nabla\nu_1(x)\!\!\times\!\!\nabla\nu_2(x))\,,
\end{aligned}
\end{equation}
where $i\geq0$, $j\geq0$ and $n=i+j=0,\ldots,p-1$.
Clearly the functions lie in $\mathcal{P}^{p-1}$ and there are $\frac{1}{2}p(p+1)$ such functions. 
The factor $\nabla\nu_1(x)\!\times\!\nabla\nu_2(x)$ makes the expression coordinate free (with the $x$ coordinates it is $1$).

%
%

\subsection{Orientations}
\label{sec:TriaOrientations}

Only edge orientations are required to ensure compatibility of 2D triangles.
However, in 3D elements, triangle \textit{faces} have the notion of orientation, and one needs to consider this to ensure full compatibility.
In \S\ref{sec:QuadEdgeOrientations} it was shown how to apply the principles introduced in \S\ref{sec:edgeorientations} to construct orientation embedded edge functions.
It is convenient to have read those sections.
In what follows, a brief illustration of how it analogously applies to the triangle is given in \S\ref{sec:TriaEdgeOrientations}.
Afterwards, in \S\ref{sec:TriaFaceOrientations}, the triangle face orientations are described.

\subsubsection{Edge Orientations}
\label{sec:TriaEdgeOrientations}

\begin{figure}[!ht]
\begin{center}
\includegraphics[scale=0.5]{./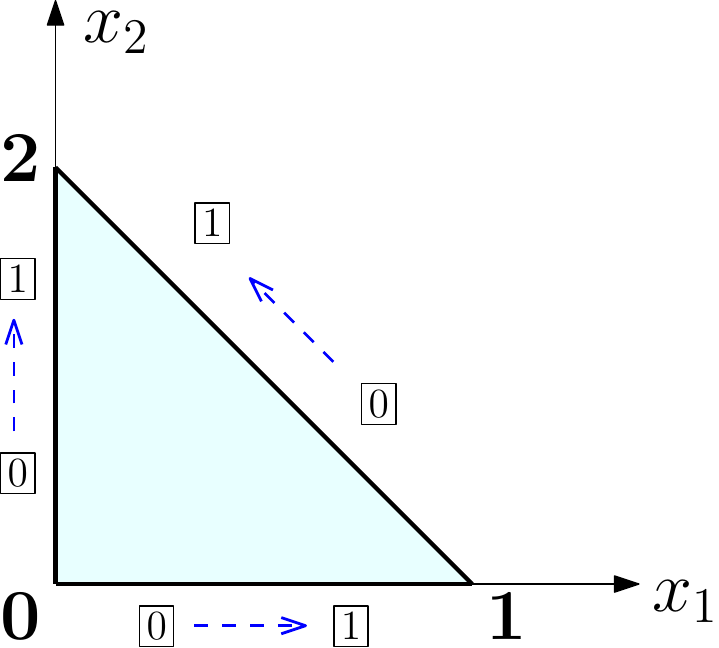}
\caption{Master triangle with local edge orientations.}
\label{fig:MasterTriOrientations}
\end{center}
\end{figure}

The master triangle has a predefined \textit{local} orientation for each edge, which reperesents the $\oo=0$ case. 
These are shown in Figure \ref{fig:MasterTriOrientations}.
They are \textit{our} choices for the local orientations.

As with the quadrilateral, the local orientations induce a master element \textit{local} edge vertex-ordering which then determines a \textit{locally ordered} pair of affine coordinates. 
In the triangle, the correspondence between vertices and affine coordinates is trivial, since all one needs to know is that the vertex $v_a$ is linked to the affine coordinate $\nu_a$, where $a=0,1,2$.
As an example take the local edge orientations shown in Figure \ref{fig:MasterTriOrientations}.
Then, it is clear that the induced \textit{local} edge vertex-orderings are $v_0\tdashto v_1$, $v_1\tdashto v_2$ and $v_0\tdashto v_2$ for edges 01, 12 and 02 respectively.
Therefore, the \textit{locally ordered} pair of coordinates are $(\nu_0,\nu_1)$, $(\nu_1,\nu_2)$ and $(\nu_0,\nu_2)$.
These are then transformed to a \textit{globally ordered} pair via the edge local-to-global transformation $\sigma_\oo^\E$, and inputted into the different ancillary functions to give the orientation embedded shape functions.
For example, for edge 01, the orientation embedded $H^1$ edge functions are
\begin{equation*}
    \phi_i^\mathrm{e}(x)=\phi_i^\E\circ\sigma_\oo^\E(\vec{\nu}_{01}(x))
        =\begin{cases}
            \phi_i^\E\Big(\sigma_0^\E(\vec{\nu}_{01}(x))\Big)
            	=\phi_i^\E(\vec{\nu}_{01}(x))\,\,\,\text{if }\oo=0\,,\\
            \phi_i^\E\Big(\sigma_1^\E(\vec{\nu}_{01}(x))\Big)
            	=\phi_i^{\e}(\vec{\nu}_{10}(x))\,\,\,\text{if }\oo=1\,,
        \end{cases}
\end{equation*}
for $i=2,\ldots,p$.
For a more detailed example which analogously applies to the triangle see \S\ref{sec:QuadEdgeOrientations}.

\subsubsection{Triangle Face Orientations Explained}
\label{sec:TriaFaceOrientations}


In 3D, each face at the mesh must be given its own \textit{global face orientation} to ensure full compatibility across the boundaries.
For triangles, this is represented by the global triangle face coordinates $\Xi^\Tri=(\Xi_1^\Tri,\Xi_2^\Tri)$, or equivalently, by the \textit{global face vertex-ordering}.
For instance, given a triangle face in the mesh, a vertex-ordering of the form $a\tto b\tto c$ means the origin of $\Xi^\Tri$ is located at $a$, $\Xi_1^\Tri$ points from $a$ to $b$, and $\Xi_2^\Tri$ points from $a$ to $c$.
Meanwhile, at the master element, the mapped face has its own fixed \textit{local orientation}.
It is represented by the coordinates $\xi^\Tri=(\xi_1^\Tri,\xi_2^\Tri)$ or equivalently by the fixed \textit{local} ordering of the form $\boxednum{0}\!\tdashto\boxednum{1}\!\tdashto\boxednum{2}$.
In general, the two systems of coordinates will not match, and this mismatch is represented by the orientation parameter $\oo$.
For triangle faces, there are \textit{six} possible orientations, meaning $\oo=0,\ldots,5$.
These are all illustrated in Figure \ref{fig:OrientationsTri}.

\begin{figure}[!ht]
\begin{center}
\includegraphics[scale=0.75]{./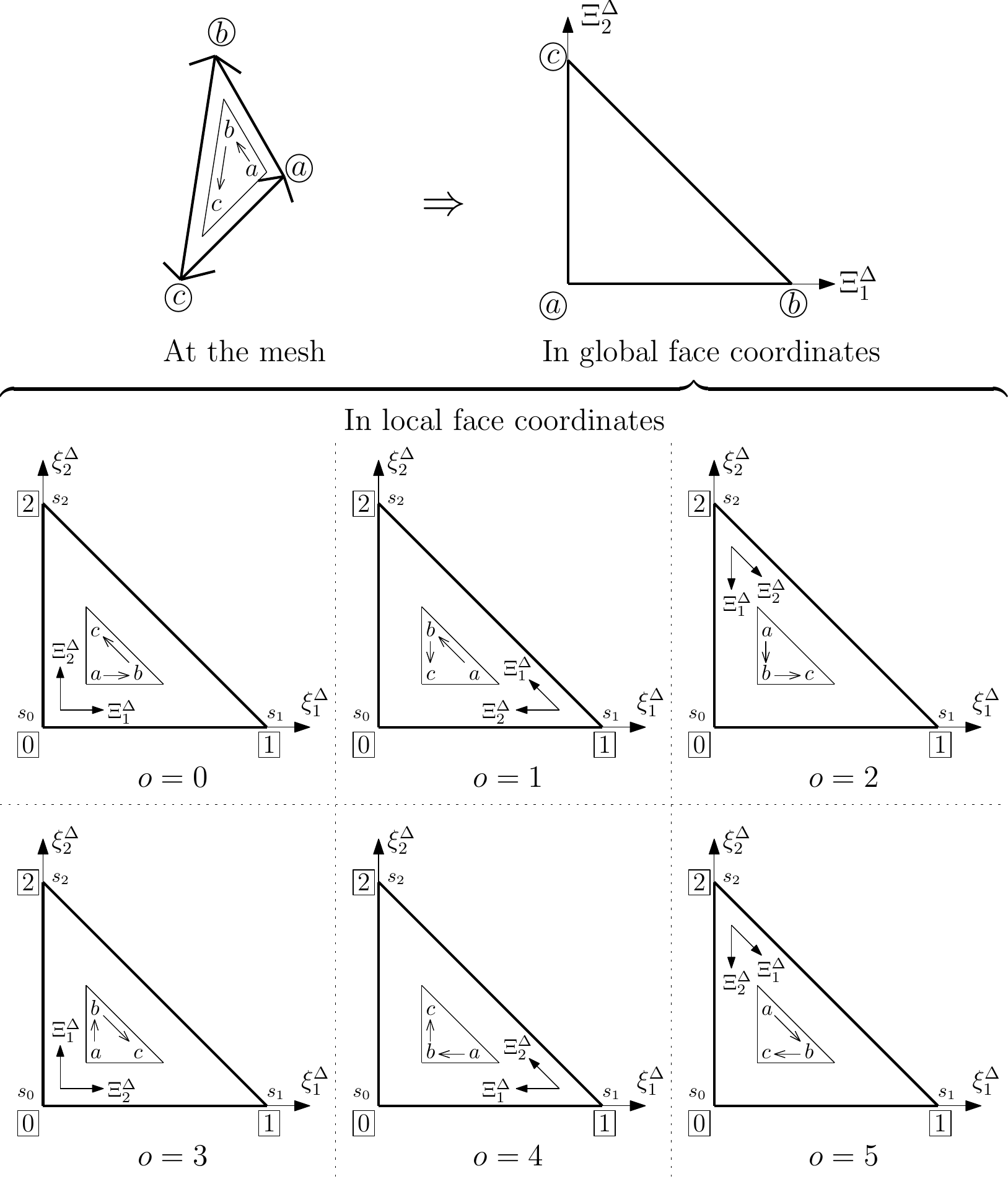}
\caption{Triangle face orientations.}
\label{fig:OrientationsTri}
\end{center}
\end{figure}

Like quadrilaterals and edges, one needs a local-to-global transformation dependent on $\oo$.
For this, the \textit{local} orientation is represented by the \textit{locally ordered} triplet $(s_0,s_1,s_2)$.
The \textit{global} orientation is analogously represented by a \textit{globally ordered} triplet naturally induced from the \textit{global face vertex-ordering}.
For example, looking at Figure \ref{fig:OrientationsTri}, when $\oo=1$, the global ordering $a\tto b\tto c$ corresponds to the ordering $\boxednum{1}\!\tto\boxednum{2}\!\tto\boxednum{0}$, which in turn coresponds to the globally ordered triplet $(s_1,s_2,s_0)$.
Hence, once again the local-to-global transformation can be represented by a permutation, $\sigma_\oo^\Tri$, dependent on $\oo$.
It can be determined by observing Figure \ref{fig:OrientationsTri}.
Subsequently, all that is required is to compose the \textit{triangle face} ancillary functions and their differential form (those with superscript $\Tri$) with the local-to-global transformation $\sigma_\oo^\Tri$.
This should be done in all 3D shape functions associated to triangle faces.
More concrete examples will be given later.
Finally, the permutation function, $\sigma_\oo^\Tri$, is precisely defined next.

\begin{definition*}
Let $s_0$, $s_1$ and $s_2$ be arbitrary variables, and let $\oo=0,1,2,3,4,5$ be the triangle face orientation parameter. 
The triangle face orientation permutation function, $\sigma_\oo^\Tri$, is defined as
\begin{equation}
	\sigma_\oo^\Tri(s_0,s_1,s_2)=\begin{cases}
		\sigma_0^\Tri(s_0,s_1,s_2)=(s_0,s_1,s_2)&\quad\text{if  }\,\oo=0\,,\\
		\sigma_1^\Tri(s_0,s_1,s_2)=(s_1,s_2,s_0)&\quad\text{if  }\,\oo=1\,,\\
		\sigma_2^\Tri(s_0,s_1,s_2)=(s_2,s_0,s_1)&\quad\text{if  }\,\oo=2\,,\\
		\sigma_3^\Tri(s_0,s_1,s_2)=(s_0,s_2,s_1)&\quad\text{if  }\,\oo=3\,,\\
		\sigma_4^\Tri(s_0,s_1,s_2)=(s_1,s_0,s_2)&\quad\text{if  }\,\oo=4\,,\\
		\sigma_5^\Tri(s_0,s_1,s_2)=(s_2,s_1,s_0)&\quad\text{if  }\,\oo=5\,.\end{cases}\label{eq:orientTriFace}
\end{equation}
\end{definition*}

\newpage
\section{Hexahedron}
\label{sec:Hexa}

\begin{figure}[!ht]
\begin{center}
\includegraphics[scale=0.5]{./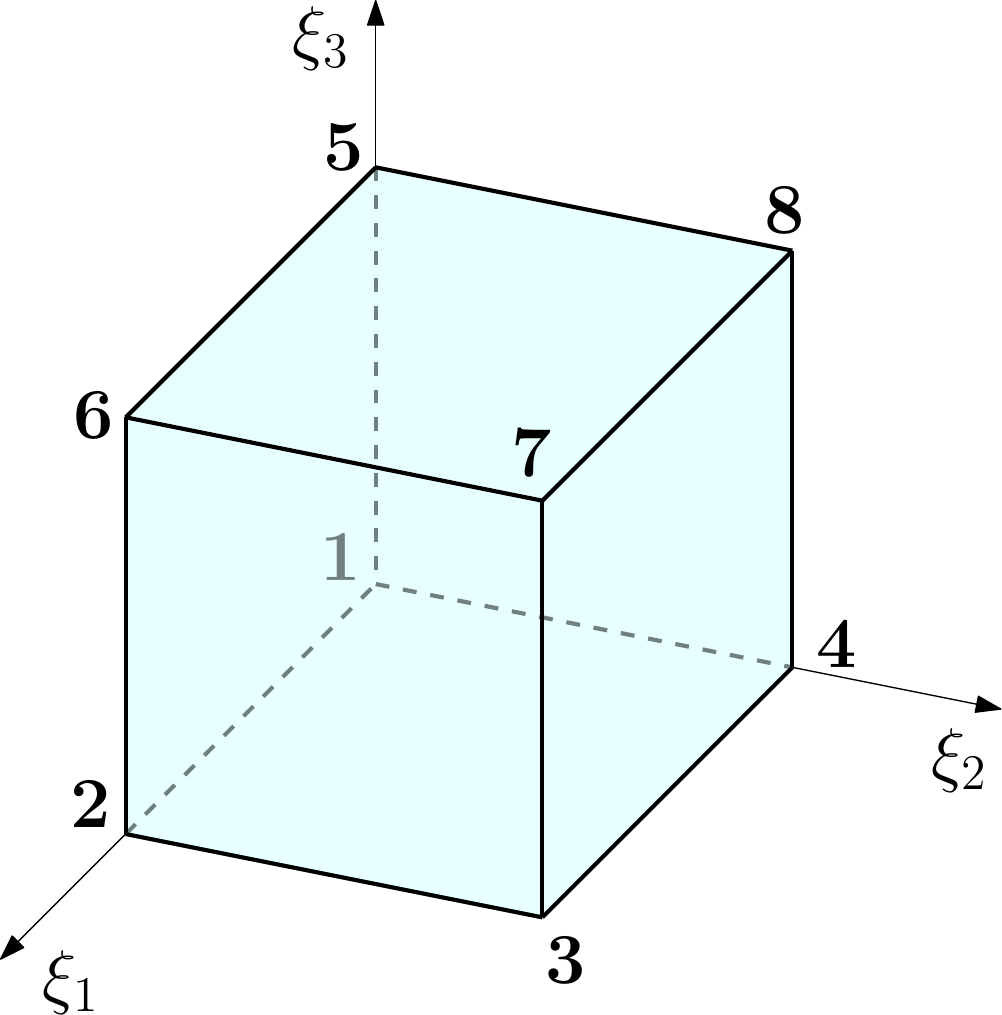}
\caption{Master hexahedron with numbered vertices.}
\label{fig:MasterHexa}
\end{center}
\end{figure}

The master element for hexahedra is $(0,1)^3$. 
It is shown in Figure \ref{fig:MasterHexa} in the $\xi=(\xi_1,\xi_2,\xi_3)$ space.
The master hexahedron is the Cartesian product of three segments.


There are \textit{three} pairs of 1D affine coordinates:
\begin{equation}
	\begin{alignedat}{4}
		\mu_0(\xi_1)&=1-\xi_1\,,\quad \mu_1(\xi_1)=\xi_1\,\qquad&\Rightarrow\qquad
			\nabla\mu_0(\xi_1)&=\bigg(\begin{smallmatrix}-1\\[2pt]0\\[2pt]0\end{smallmatrix}\bigg)\,,\quad
				\nabla\mu_1(\xi_1)=\bigg(\begin{smallmatrix}1\\[2pt]0\\[2pt]0\end{smallmatrix}\bigg)\,,\\
		\mu_0(\xi_2)&=1-\xi_2\,,\quad \mu_1(\xi_2)=\xi_2\,\qquad&\Rightarrow\qquad
			\nabla\mu_0(\xi_2)&=\bigg(\begin{smallmatrix}0\\[2pt]-1\\[2pt]0\end{smallmatrix}\bigg)\,,\quad
				\nabla\mu_1(\xi_2)=\bigg(\begin{smallmatrix}0\\[2pt]1\\[2pt]0\end{smallmatrix}\bigg)\,,\\
		\mu_0(\xi_3)&=1-\xi_3\,,\quad \mu_1(\xi_3)=\xi_3\,\qquad&\Rightarrow\qquad
			\nabla\mu_0(\xi_3)&=\bigg(\begin{smallmatrix}0\\[2pt]0\\[2pt]-1\end{smallmatrix}\bigg)\,,\quad
		\nabla\mu_1(\xi_3)=\bigg(\begin{smallmatrix}0\\[2pt]0\\[2pt]1\end{smallmatrix}\bigg)\,.
	\end{alignedat}
\end{equation}
These will be used explicitly or implicitly in the formulas that follow.

Just as with quadrilaterals, there are natural relationships between vertices, edges and faces, and the affine coordinates.
In fact, each vertex is linked to \textit{three} affine coordinates, each edge is linked to \textit{two} affine coordinates, and each face is linked to \textit{one} affine coordinate.
The linked affine coordinates take the value $1$ at the associated topological entity.
For example, vertex $1$, $v_1=(0,0,0)$, is linked to the affine coordinates $\mu_0(\xi_1)$, $\mu_0(\xi_2)$ and $\mu_0(\xi_3)$, edge 12 is linked to the affine coordinates $\mu_0(\xi_2)$ and $\mu_0(\xi_3)$, and face 1234 is linked to affine coordinate $\mu_0(\xi_3)$.

\subsubsection*{Exact Sequence}

Recall the 3D exact sequence for simply connected domains \eqref{eq:3D_exact_sequence}.
The corresponding discrete polynomial exact sequence is of the form 
\begin{equation}
	W^{p,q,r} \xrightarrow{\,\,\nabla\,\,} Q^{p,q,r} \xrightarrow{\nabla\times} V^{p,q,r} \xrightarrow{\nabla\cdot} Y^{p,q,r} \,,
\end{equation}
where the standard N\'{e}d\'{e}lec's spaces \cite{Nedelec80} of the first type for the hexahedron are utilized:
\begin{equation}
	\begin{aligned}
	W^{p,q,r} & = \mathcal{Q}^{p,q,r}= \mathcal{P}^p(\xi_1)\otimes \mathcal{P}^q (\xi_2)\otimes \mathcal{P}^r (\xi_3)\,,\\
	Q^{p,q,r} & = \mathcal{Q}^{p-1,q,r} \times\mathcal{Q}^{p,q-1,r}\times \mathcal{Q}^{p,q,r-1}\,,\\
	V^{p,q,r} & = \mathcal{Q}^{p,q-1,r-1} \times\mathcal{Q}^{p-1,q,r-1}\times \mathcal{Q}^{p-1,q-1,r}\,,\\
	Y^{p,q,r} & = \mathcal{Q}^{p-1,q-1,r-1}\,.
	\end{aligned}
\end{equation}
As with the quadrilateral, there is a natural anisotropy of the element, which has order $p$ in the $\xi_1$ direction, $q$ in the $\xi_2$ direction and $r$ in the $\xi_3$ direction. 
The hierarchy should be maintained in $p$, $q$ and $r$ separately.  
It will sometimes be convenient to refer to $p_a$ as the order in the $\xi_a$ direction, so that $p_1=p$, $p_2=q$ and $p_3=r$.

\subsection{\texorpdfstring{$H^1$}{H1} Shape Functions}
It will be clear that the $(p+1)(q+1)(r+1)$ shape functions defined in this section lie in $\mathcal{Q}^{p,q,r}$ and span the space.

The ideas in this section are the same as with the quadrilateral (see \S\ref{sec:Quad}) but in three dimensions.
This will simply translate to adding an extra blending function to account for the extra dimension.
Hence, the trace properties will not be analyzed in detail as they easily follow.

\subsubsection{\texorpdfstring{$H^1$}{H1} Vertices}
Without any delays, the vertex shape functions and their gradient are
\begin{equation}
	\begin{aligned}
		\phi^\mathrm{v}(\xi)&=\mu_a(\xi_1)\mu_b(\xi_2)\mu_c(\xi_3)\,,\\
		\nabla\phi^\mathrm{v}(\xi)&=\mu_a(\xi_1)\mu_b(\xi_2)\nabla\mu_c(\xi_3)+\mu_c(\xi_3)\mu_a(\xi_1)\nabla\mu_b(\xi_2)
			+\mu_b(\xi_2)\mu_c(\xi_3)\nabla\mu_a(\xi_1)\,,
	\end{aligned}
\end{equation}
for $a=0,1$, $b=0,1$ and $c=0,1$. 
There are a total of $8$ vertex functions (one for each vertex).

%

\subsubsection{\texorpdfstring{$H^1$}{H1} Edges}

Again, this is analogous to the quadrilateral case, but with an extra blending function. 
Take for example edge 12.
Then, the shape functions are
\begin{equation*}
	\phi_i^\mathrm{e}(\xi)=\underbrace{\mu_0(\xi_3)\mu_0(\xi_2)}_{\text{blend}}
		 \underbrace{\phi_i^\E(\underbrace{\vec{\mu}_{01}(\xi_1)}_{\text{project}})}_{\text{evaluate}}\,,
\end{equation*}
for $i=2,\ldots,p$. 
The projection being implied is:
\begin{equation*}
	(\xi_1,\xi_2,\xi_3)\;\longmapsto\;(\xi_1,\xi_2,0)\;\longmapsto\;(\xi_1,0,0)\,.
\end{equation*}
It is illustrated in Figure \ref{fig:HexaProjection}. 
It consists simply of finding the intersection $P''=(\xi_1,0,0)$ of the edge with the normal plane passing through the original point $P=(\xi_1,\xi_2,\xi_3)$.
Alternatively it can be interpreted in two steps.
First it is projected to the closest point $P'$ in an adjacent face, by using the normal to the face.
Once in the face, it is projected again to the desired edge using the traditional \textit{two} dimensional quadrilateral projection (see Figure \ref{fig:QuadProjection}).

\begin{figure}[!ht]
\begin{center}
\includegraphics[scale=0.6]{./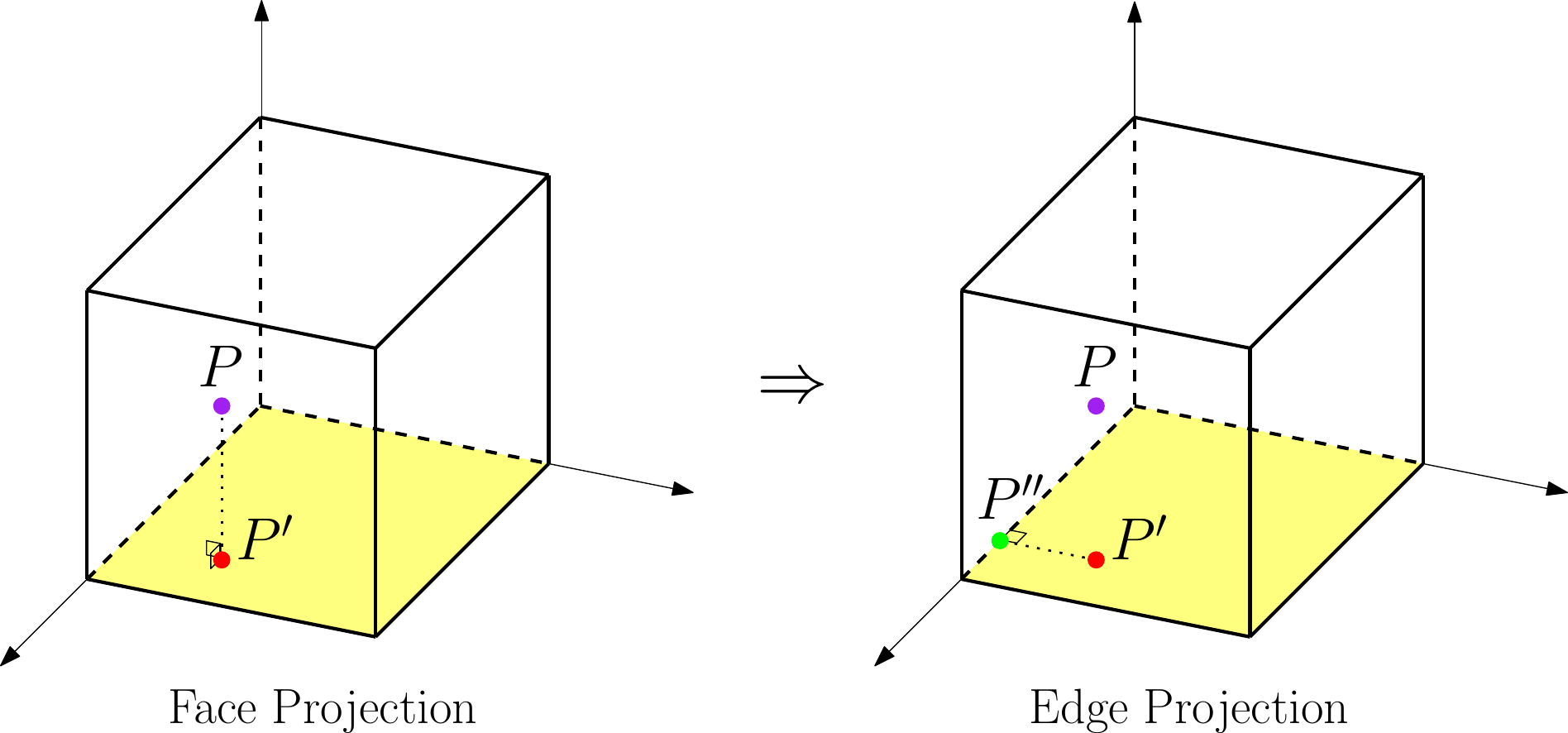}
\caption{Face projection from $P$ to $P'$ followed by an edge projection from $P'$ to $P''$.}
\label{fig:HexaProjection}
\end{center}
\end{figure}



The general edge shape functions and their gradient are
\begin{equation}
	\begin{aligned}
		\phi_i^\mathrm{e}(\xi)&=\mu_e(\xi_c)\mu_d(\xi_b)\phi_i^\E(\vec{\mu}_{01}(\xi_a))\,,\\
		\nabla\phi_i^\mathrm{e}(\xi)&=\mu_e(\xi_c)\mu_d(\xi_b)\nabla\phi_i^\E(\vec{\mu}_{01}(\xi_a))
			+\phi_i^\E(\vec{\mu}_{01}(\xi_a))\Big(\mu_e(\xi_c)\nabla\mu_d(\xi_b)+\mu_d(\xi_b)\nabla\mu_e(\xi_c)\Big)\,,
	\end{aligned}
	\label{eq:Hexaphigeneral}
\end{equation}
where $i=2,\ldots,p_a$, $(a,b,c)=(1,2,3),(2,3,1),(3,1,2)$, $d=0,1$ and $e=0,1$.
For example for edge 12, this would correspond to $(a,b,c)=(1,2,3)$, $d=0$, $e=0$ and $p_a=p$, for edge 23 it is $(a,b,c)=(2,3,1)$, $d=0$, $e=1$ and $p_a=q$, and so on. 
For each edge there are $p_a-1$ shape functions, leading to a total of $4(p-1)+4(q-1)+4(r-1)$ edge functions.


\subsubsection{\texorpdfstring{$H^1$}{H1} Faces}

Again, by adding as a blending factor the associated affine coordinate the construction becomes trivial. 
For example, for face 1234, the shape functions are
\begin{equation*}
	\phi_{ij}^\mathrm{f}(\xi)=\underbrace{\mu_0(\xi_3)}_{\text{blend}}
		\underbrace{\phi_{ij}^\square(\underbrace{\vec{\mu}_{01}(\xi_1),\vec{\mu}_{01}(\xi_2)}_{\text{project}})}_{\text{evaluate}}\,,
\end{equation*}
where $i=2,\ldots,p$ and $j=2,\ldots,q$. 
The projection is already illustrated in Figure \ref{fig:HexaProjection}, and simply consists of finding the intersection $P'=(\xi_1,\xi_2,0)$ of the normal to the face that passes through the original point $P=(\xi_1,\xi_2,\xi_3)$.
%

In general the face shape functions and their gradient are
\begin{equation}
	\begin{aligned}
		\phi_{ij}^\mathrm{f}(\xi)&=\mu_d(\xi_c)\phi_{ij}^\square(\vec{\mu}_{01}(\xi_a),\vec{\mu}_{01}(\xi_b))\,,\\
		\nabla\phi_{ij}^\mathrm{f}(\xi)&=\mu_d(\xi_c)\nabla\phi_{ij}^\square(\vec{\mu}_{01}(\xi_a),\vec{\mu}_{01}(\xi_b))
			+\phi_{ij}^\square(\vec{\mu}_{01}(\xi_a),\vec{\mu}_{01}(\xi_b))\nabla\mu_d(\xi_c)\,,
	\end{aligned}
	\label{eq:Hexaphifacegeneral}
\end{equation}
where $i=2,\ldots,p_a$, $j=2,\ldots,p_b$, $(a,b,c)=(1,2,3),(2,3,1),(3,1,2)$, and $d=0,1$.
For example for face 1234, this would correspond to $(a,b,c)=(1,2,3)$, $d=0$, $p_a=p$ and $p_b=q$, for face 1265 it is $(a,b,c)=(3,1,2)$, $d=0$, $p_a=r$ and $p_b=p$, and so on. 
For each face there are $(p_a-1)(p_b-1)$ shape functions, for a total of $2(p-1)(q-1)+2(q-1)(r-1)+2(r-1)(p-1)$ face functions.

\subsubsection{\texorpdfstring{$H^1$}{H1} Interior Bubbles}

These are constructed like the face functions, but by using edge bubbles $\phi_k^\E$ instead of the linear blending factor $\mu_d$. 
This will ensure the necessary vanishing trace properties.

The interior bubbles and their gradient are
\begin{equation}
	\begin{aligned}
		\phi_{ijk}^\mathrm{b}(\xi)&=L_i(\mu_1(\xi_1))L_j(\mu_1(\xi_2))L_k(\mu_1(\xi_3))
			=\phi_{ij}^\square(\vec{\mu}_{01}(\xi_1),\vec{\mu}_{01}(\xi_2))\phi_k^\E(\vec{\mu}_{01}(\xi_3))\,,\\
		\nabla\phi_{ijk}^\mathrm{b}(\xi)&=
			\phi_{ij}^\square(\vec{\mu}_{01}(\xi_1),\vec{\mu}_{01}(\xi_2))\nabla\phi_k^\E(\vec{\mu}_{01}(\xi_3))
				+\phi_k^\E(\vec{\mu}_{01}(\xi_3))\nabla\phi_{ij}^\square(\vec{\mu}_{01}(\xi_1),\vec{\mu}_{01}(\xi_2))\,,
	\end{aligned}
\end{equation}
for $i=2,\ldots,p$, $j=2,\ldots,q$ and $k=2,\ldots,r$. 
Clearly there will be $(p-1)(q-1)(r-1)$ interior bubbles.

\subsection{\texorpdfstring{$H(\mathrm{curl})$}{Hcurl} Shape Functions}

It will be clear that the $p(q+1)(r+1)+q(r+1)(p+1)+r(p+1)(q+1)$ linearly independent shape functions span $Q^{p,q,r}=\mathcal{Q}^{p-1,q,r}\times\mathcal{Q}^{p,q-1,r}\times\mathcal{Q}^{p,q,r-1}$ as required.

The ideas in this section are the same as with the quadrilateral but in three dimensions. 
This will simply translate to adding an extra blending function to account for the extra dimension.
The structure of projecting, evaluating and blending still holds in $H(\mathrm{curl})$, and the projections (and even the blending functions) are the same as those in $H^1$.
As with $H^1$, the analysis of the trace properties will be superfluous.

\subsubsection{\texorpdfstring{$H(\mathrm{curl})$}{Hcurl} Edges}
These will just be the quadrilateral edge functions with the extra blending factor. They are
\begin{equation}
	\begin{aligned}
		E_i^\mathrm{e}(\xi)&=\mu_e(\xi_c)\mu_d(\xi_b)E_i^\E(\vec{\mu}_{01}(\xi_a))\,,\\
		\nabla\times E_i^\mathrm{e}(\xi)&=
			\Big(\mu_e(\xi_c)\nabla\mu_d(\xi_b)+\mu_d(\xi_b)\nabla\mu_e(\xi_c)\Big)\times E_i^\E(\vec{\mu}_{01}(\xi_a))\,,
	\end{aligned}
\end{equation}
where $i=2,\ldots,p_a$, $(a,b,c)=(1,2,3),(2,3,1),(3,1,2)$, $d=0,1$ and $e=0,1$.
Notice the form is very similar to that of edge $H^1$ functions. 
For each edge there are $p_a$ shape functions, giving a total of $4p+4q+4r$ edge functions.

\subsubsection{\texorpdfstring{$H(\mathrm{curl})$}{Hcurl} Faces}

The pattern goes on, but this time with the two families.
There are a grand total of $2(p(q-1)+q(p-1))+2(q(r-1)+r(q-1))+2(r(p-1)+p(r-1))$ face shape functions.

\subparagraph{Family I:}
The shape functions and their curl are
\begin{equation}
	\begin{aligned}
		E_{ij}^\mathrm{f}(\xi)&=\mu_d(\xi_c)E_{ij}^\square(\vec{\mu}_{01}(\xi_a),\vec{\mu}_{01}(\xi_b))\,,\\
		\nabla\times E_{ij}^\mathrm{f}(\xi)&=\mu_d(\xi_c)\nabla\times E_{ij}^\square(\vec{\mu}_{01}(\xi_a),\vec{\mu}_{01}(\xi_b))
			+\nabla\mu_d(\xi_c)\times E_{ij}^\square(\vec{\mu}_{01}(\xi_a),\vec{\mu}_{01}(\xi_b))\,,
	\end{aligned}
\end{equation}
where $i=0,\ldots,p_a-1$, $j=2,\ldots,p_b$, $(a,b,c)=(1,2,3),(2,3,1),(3,1,2)$, and $d=0,1$.
For each face there are $p_a(p_b-1)$ shape functions in this family.

\subparagraph{Family II:}
The shape functions and their curl are
\begin{equation}
	\begin{aligned}
		E_{ij}^\mathrm{f}(\xi)&=\mu_d(\xi_c)E_{ij}^\square(\vec{\mu}_{01}(\xi_b),\vec{\mu}_{01}(\xi_a))\,,\\
		\nabla\times E_{ij}^\mathrm{f}(\xi)&=\mu_d(\xi_c)\nabla\times E_{ij}^\square(\vec{\mu}_{01}(\xi_b),\vec{\mu}_{01}(\xi_a))
			+\nabla\mu_d(\xi_c)\times E_{ij}^\square(\vec{\mu}_{01}(\xi_b),\vec{\mu}_{01}(\xi_a))\,,
	\end{aligned}
\end{equation}
where $i=0,\ldots,p_b-1$, $j=2,\ldots,p_a$, $(a,b,c)=(1,2,3),(2,3,1),(3,1,2)$, and $d=0,1$.
Again, recall the only difference with the first family is that the entries $(\vec{\mu}_{01}(\xi_a),\vec{\mu}_{01}(\xi_b))$ and their corresponding order $(p_a,p_b)$, are permuted.
For each face there are $p_b(p_a-1)$ shape functions in this family.

\subsubsection{\texorpdfstring{$H(\mathrm{curl})$}{Hcurl} Interior Bubbles}

These can be constructed by using $H^1$ edge bubbles $\phi_k^\E$ as blending functions instead of the linear blending $\mu_d$ in the expressions for the face shape functions.
All permutations of $(a,b,c)$ leading to linearly independent functions must be considered.
In the end, three famillies corresponding to the cyclic permutations of $(1,2,3)$ comprise the interior bubbles.

The interior functions and their curl are
\begin{equation}
	\begin{aligned}
		E_{ijk}^\mathrm{b}(\xi)&=\phi_k^\E(\vec{\mu}_{01}(\xi_c))E_{ij}^\square(\vec{\mu}_{01}(\xi_a),\vec{\mu}_{01}(\xi_b))\,,\\
		\nabla\times E_{ijk}^\mathrm{b}(\xi)&=\phi_k^\E(\vec{\mu}_{01}(\xi_c))\nabla\!\times\!
			E_{ij}^\square(\vec{\mu}_{01}(\xi_a),\vec{\mu}_{01}(\xi_b))
				+\nabla\phi_k^\E(\vec{\mu}_{01}(\xi_c))\!\times\! E_{ij}^\square(\vec{\mu}_{01}(\xi_a),\vec{\mu}_{01}(\xi_b))\,.
	\end{aligned}
\end{equation}
for $i=0,\ldots,p_a-1$, $j=2,\ldots,p_b$, $k=2,\ldots,p_c$, and $(a,b,c)=(1,2,3),(2,3,1),(3,1,2)$.
There will be a grand total of $p(q-1)(r-1)+q(r-1)(p-1)+r(p-1)(q-1)$ interior bubble functions.

\subsection{\texorpdfstring{$H(\mathrm{div})$}{Hdiv} Shape Functions}

It will be clear that all shape functions lie in $V^{p,q,r}=\mathcal{Q}^{p,q-1,r-1}\times\mathcal{Q}^{p-1,q,r-1}\times \mathcal{Q}^{p-1,q-1,r}$, which has dimension $rq(p+1)+pr(q+1)+qp(r+1)$.

This is the first time that the space $H(\mathrm{div})$ is tackled in 3D.
As expected, it requires of some analysis to develop the correct structure at first, but afterwards one can proceed very similarly as the previous spaces.

\subsubsection{\texorpdfstring{$H(\mathrm{div})$}{Hdiv} Faces}


First, recall from \S\ref{sec:dimensionalhierarchy} that the normal trace of the $H(\mathrm{div})$ face functions should be a 2D $L^2$ face function.
For the purposes of motivation, take for instance face 1234.
From \eqref{eq:QuadL2Functions}, the 2D $L^2$ face functions are of the form $[P_i](\vec{\mu}_{01}(\xi_1))[P_j](\vec{\mu}_{01}(\xi_2))$.
Meanwhile, the normal vector to face 1234 is $(0,0,1)=\nabla\mu_1(\xi_1)\times\nabla\mu_1(\xi_2)$.
When coupled with a blending factor, $\mu_0(\xi_3)$, representing a linear decay (like that of $H^1$), this suggests,
\begin{equation*}
    V_i^\mathrm{e}(\xi)=\mu_0(\xi_3)[P_i](\vec{\mu}_{01}(\xi_1))[P_j](\vec{\mu}_{01}(\xi_2))\,\nabla\mu_1(\xi_1)\times
    	\nabla\mu_1(\xi_2)
    		=\mu_0(\xi_3)E_i^\E(\vec{\mu}_{01}(\xi_1))\times E_j^\E(\vec{\mu}_{01}(\xi_2))\,,
\end{equation*}
for $i=0,\ldots,p-1$ and $j=0,\ldots,q-1$.
In fact, this expression makes a lot of sense, since a function normal to the face should be perpendicular to the two tangential $H(\mathrm{curl})$ edge functions. 
The cross product then seems like a natural idea.
Indeed, one can laboriously check that the desired normal trace properties are satified at all faces.
This motivates the definition of a new ancillary operator presented next.

%

\begin{definition*}
Let $(s_0,s_1)$ and $(t_0,t_1)$ be two pairs of coordinates which are arbitrary functions of some spatial variable in $\R^N$, with $N=3$. Let $p_s$ be the order in the $(s_0,s_1)$ coordinates, and $p_t$ be the order in the $(t_0,t_1)$ coordinates. Then
\begin{equation}
	V_{ij}^{\square}(s_0,s_1,t_0,t_1)=E_i^\E(s_0,s_1)\times E_j^\E(t_0,t_1)\,,
\end{equation}
for $i=0,\ldots,p_s-1$ and $j=0,\ldots,p_t-1$. Their divergence, understood in $\R^N$, is
\begin{equation}
	\nabla\cdot V_{ij}^{\square}(s_0,s_1,t_0,t_1)=E_j^\E(t_0,t_1)\cdot(\nabla\times E_i^\E(s_0,s_1))
		-E_i^\E(s_0,s_1)\cdot(\nabla\times E_j^\E(t_0,t_1)) \,.
\end{equation}
\end{definition*}

Record also the next useful remark.
\begin{remark}
Let $\mu_0^{(0)}=1-\mu_1^{(0)}$ and $\mu_0^{(1)}=1-\mu_1^{(1)}$, where $\mu_1^{(0)}$ and $\mu_1^{(1)}$ are arbitrary functions of some spatial variable in $\R^N$ with $N=3$, and where $p_{(0)}$ and $p_{(1)}$ are the orders in the coordinates $(\mu_0^{(0)},\mu_1^{(0)})$ and $(\mu_0^{(1)},\mu_1^{(1)})$ respectively. Then for all $i=0,\ldots,p_{(0)}-1$, and $j=0,\ldots,p_{(1)}-1$,
\begin{equation}
\begin{aligned}
	V_{ij}^\square(\mu_0^{(0)},\mu_1^{(0)},\mu_0^{(1)},\mu_1^{(1)})
		&=P_i(\mu_1^{(0)})P_j(\mu_1^{(1)})\nabla\mu_1^{(0)}\times\nabla\mu_1^{(1)}\,,\\
			\nabla\cdot V_{ij}^\square(\mu_0^{(0)},\mu_1^{(0)},\mu_0^{(1)},\mu_1^{(1)})&=0\,.
\end{aligned}
\label{eq:Vijsimplified}
\end{equation}
\end{remark}

Finally, the shape functions and their divergence are
\begin{equation}
	\begin{aligned}
		V_{ij}^\mathrm{f}(\xi)&=\mu_d(\xi_c)V_{ij}^{\square}(\vec{\mu}_{01}(\xi_a),\vec{\mu}_{01}(\xi_b))\,,\\
		\nabla\cdot V_{ij}^\mathrm{f}(\xi)&=\nabla\mu_d(\xi_c)\cdot V_{ij}^{\square}(\vec{\mu}_{01}(\xi_a),\vec{\mu}_{01}(\xi_b))\,,
	\end{aligned}	
\end{equation}
where $i=0,\ldots,p_a-1$, $j=0,\ldots,p_b-1$, $(a,b,c)=(1,2,3),(2,3,1),(3,1,2)$, and $d=0,1$.
There are $p_ap_b$ face functions for each face, leading to a total of $2pq+2qr+2rp$ face functions.

\subsubsection{\texorpdfstring{$H(\mathrm{div})$}{Hdiv} Interior Bubbles}
Using the same reasoning as with $H(\mathrm{curl})$ interior bubbles, there will essentially be three families of bubbles corresponding to the cyclic permutations of $(1,2,3)$.
The interior bubbles and their divergence are
\begin{equation}
	\begin{aligned}
		V_{ijk}^\mathrm{b}(\xi)&=\phi_k^\E(\vec{\mu}_{01}(\xi_c))V_{ij}^\square(\vec{\mu}_{01}(\xi_a),\vec{\mu}_{01}(\xi_b))\,,\\
		\nabla\cdot V_{ij}^\mathrm{f}(\xi)&=
			\nabla\phi_k^\E(\vec{\mu}_{01}(\xi_c))\cdot V_{ij}^\square(\vec{\mu}_{01}(\xi_a),\vec{\mu}_{01}(\xi_b))\,.
	\end{aligned}
\end{equation}
where $i=0,\ldots,p_a-1$, $j=0,\ldots,p_b-1$, $k=2,\ldots,p_c$, and $(a,b,c)=(1,2,3),(2,3,1),(3,1,2)$.
There are a grand total of $pq(r-1)+qr(p-1)+rp(q-1)$ bubbles.

\subsection{\texorpdfstring{$L^2$}{L2} Shape Functions}

As expected, they are the tensor products of the 1D $L^2$ shape functions, and there are $pqr$ such functions spanning $Y^{p,q,r}=\mathcal{Q}^{p-1,q-1,r-1}$.

\subsubsection{\texorpdfstring{$L^2$}{L2} Interior}

The coordinate free $L^2$ interior functions for the hexahedron are
\begin{equation}
    \psi_{ijk}^\mathrm{b}(\xi)=[P_i](\vec{\mu}_{01}(\xi_1))[P_j](\vec{\mu}_{01}(\xi_2))[P_k](\vec{\mu}_{01}(\xi_3))
    	(\nabla\mu_1(\xi_1)\!\!\times\!\!\nabla\mu_1(\xi_2))\!\cdot\!\nabla\mu_1(\xi_3)\,,
\end{equation}
for $i=0,\ldots,p-1$, $j=0,\ldots,q-1$ and $k=0,\ldots,r-1$. 
There are $pqr$ interior functions.
%

\subsection{Orientations}
\label{sec:HexaOrientations}

In 3D, both edges \textit{and} faces have orientations associated to them, and they need to be considered to ensure full compatibility of shape functions along adjacent elements.
Fortunately, these issues are handled almost effortlessly due to the structure of the formulas for the shape functions, and the local-to-global permutation functions, $\sigma_\oo^\E$, $\sigma_\oo^\square$ and $\sigma_\oo^\Tri$.
In what follows, it is assumed that \S\ref{sec:fulledgeorientations}, \S\ref{sec:QuadOrientations} and \S\ref{sec:TriaOrientations} have been covered.

To construct orientation embedded shape functions, the first step is to predefine a set of \textit{local} orientations for each edge and face at the master element level.
After they are defined, these remain fixed.
The next step is to find the associated \textit{locally ordered} tuples of affine coordinates representing those local orientations.
Once these are found, the orientation embedded shape functions are merely the usual edge and face functions, but with their respective ancillary operator being precomposed with the appropriate local-to-global permutation function.
The only ``burden'' is then to find the \textit{locally ordered} tuples.
This is shown next for the hexahedron.

\begin{figure}[!ht]
\begin{center}
\includegraphics[scale=0.5]{./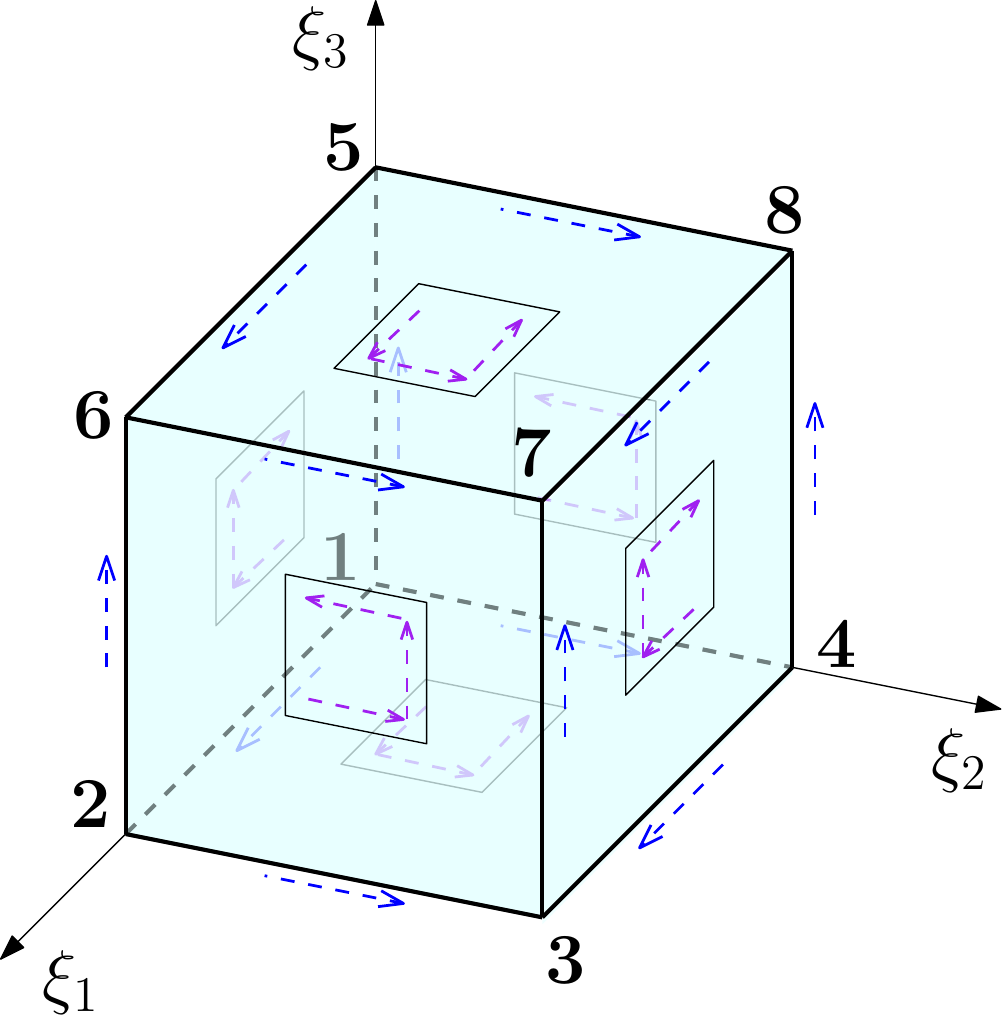}
\caption{Master hexahedron with numbered vertices \textit{and} local edge and face orientations.}
\label{fig:MasterHexaOrientations}
\end{center}
\end{figure}

Figure \ref{fig:MasterHexaOrientations} shows the master hexahedron along with a schematic representing all predefined \textit{local} edge and face orientations.
They represent the $\oo=0$ case.
They are \textit{our} choices for the local orientations (which in fact are the ``lexicographic'' orientations), but others may choose different local orientations to represent their $\oo=0$ case.

To find the locally ordered tuples, the key is being aware of the relationships between the vertices and the affine coordinates.
To illustrate this take as an example edge 12 and face 1234.

Edge 12 is composed of the vertices $v_1$ and $v_2$. 
Here, $v_1$ is linked to $\mu_0(\xi_1)$, $\mu_0(\xi_2)$,$\mu_0(\xi_3)$, while $v_2$ is linked to $\mu_1(\xi_1)$, $\mu_0(\xi_2)$ and $\mu_0(\xi_3)$.
The only difference between the the two vertices is that $v_1$ is linked to $\mu_0(\xi_1)$, while $v_2$ is linked to $\mu_1(\xi_1)$.
Now, the local orientation is represented by the local vertex-ordering $v_1\tdashto v_2$, so quite simply the locally ordered pair is $\vec{\mu}_{01}(\xi_1)=(\mu_0(\xi_1),\mu_0(\xi_1))$ (if the local ordering was $v_2\tdashto v_1$, then the pair would be $\vec{\mu}_{10}(\xi_1)$).
Hence, the orientation embedded edge 12 shape functions in $H^1$ with their gradient are
\begin{equation*}
	\begin{aligned}
		\phi_i^\mathrm{e}(\xi)&=\mu_0(\xi_3)\mu_0(\xi_2)\phi_i^\E(\sigma_\oo^\E(\vec{\mu}_{01}(\xi_1)))\,,\\
		\nabla\phi_i^\mathrm{e}(\xi)&=\mu_0(\xi_c)\mu_0(\xi_b)\nabla\phi_i^\E(\sigma_\oo^\E(\vec{\mu}_{01}(\xi_1)))
			+\phi_i^\E(\sigma_\oo^\E(\vec{\mu}_{01}(\xi_1)))\Big(\mu_0(\xi_3)\nabla\mu_0(\xi_2)+\mu_0(\xi_2)\nabla\mu_0(\xi_3)\Big)\,,
	\end{aligned}
\end{equation*}
where $i=2,\ldots,p$. 
The same applies to the $H(\mathrm{curl})$ edge 12 shape functions and their curl.
Clearly the approach is analogous with any other edge.

Face 1234 is composed of the vertices $v_1$, $v_2$, $v_3$ and $v_4$.
Here, the final goal is to find a locally ordered quadruple composed of two pairs.
The local vertex-ordering corresponding to the local orientation of face 1234 is $v_1\tdashto v_2\tdashto v_3\tdashto v_4$.
All one needs to do is to take the first two elements of the list, $v_1\tdashto v_2$, and the second and third components of the list, namely $v_2\tdashto v_3$.
The former will represent the \textit{first} pair in the quadruple, while the latter represent the \textit{second} pair in the quadruple.
Then one proceeds as if these where edges, so that $v_1\tdashto v_2$ is associated to $\vec{\mu}_{01}(\xi_1)$, while $v_2\tdashto v_3$ is associated to $\vec{\mu}_{01}(\xi_2)$.
Finally the locally ordered quadruple is then the ordered succession of these two pairs, $(\vec{\mu}_{01}(\xi_1),\vec{\mu}_{01}(\xi_2))$.
Hence, the orientation embedded face 1234 shape functions in $H^1$ with their gradient are
\begin{equation*}
	\begin{aligned}
		\phi_{ij}^\mathrm{f}(\xi)&=\mu_0(\xi_3)\phi_{ij}^\square(\sigma_\oo^\square(\vec{\mu}_{01}(\xi_1),\vec{\mu}_{01}(\xi_2)))\,,\\
		\nabla\phi_{ij}^\mathrm{f}(\xi)&=\mu_0(\xi_3)\nabla\phi_{ij}^\square(\sigma_\oo^\square(\vec{\mu}_{01}(\xi_1),\vec{\mu}_{01}(\xi_2)))
			+\phi_{ij}^\square(\sigma_\oo^\square(\vec{\mu}_{01}(\xi_1),\vec{\mu}_{01}(\xi_2)))\nabla\mu_0(\xi_3)\,,
	\end{aligned}
\end{equation*}
where $i=2,\ldots,p_a$, $j=2,\ldots,p_b$, where $(p_a,p_b)$ are the orders of the first and second coordinate pairs in the quadruple $\sigma_\oo^\square(\vec{\mu}_{01}(\xi_1),\vec{\mu}_{01}(\xi_2))$.
The same applies to the $H(\mathrm{curl})$ and $H(\mathrm{div})$ face 1234 shape functions and their differential forms.
Once again, the approach is analogous with any other face.
\newpage
\section{Tetrahedron}
\label{sec:Tet}

\begin{figure}[!ht]
\begin{center}
\includegraphics[scale=0.5]{./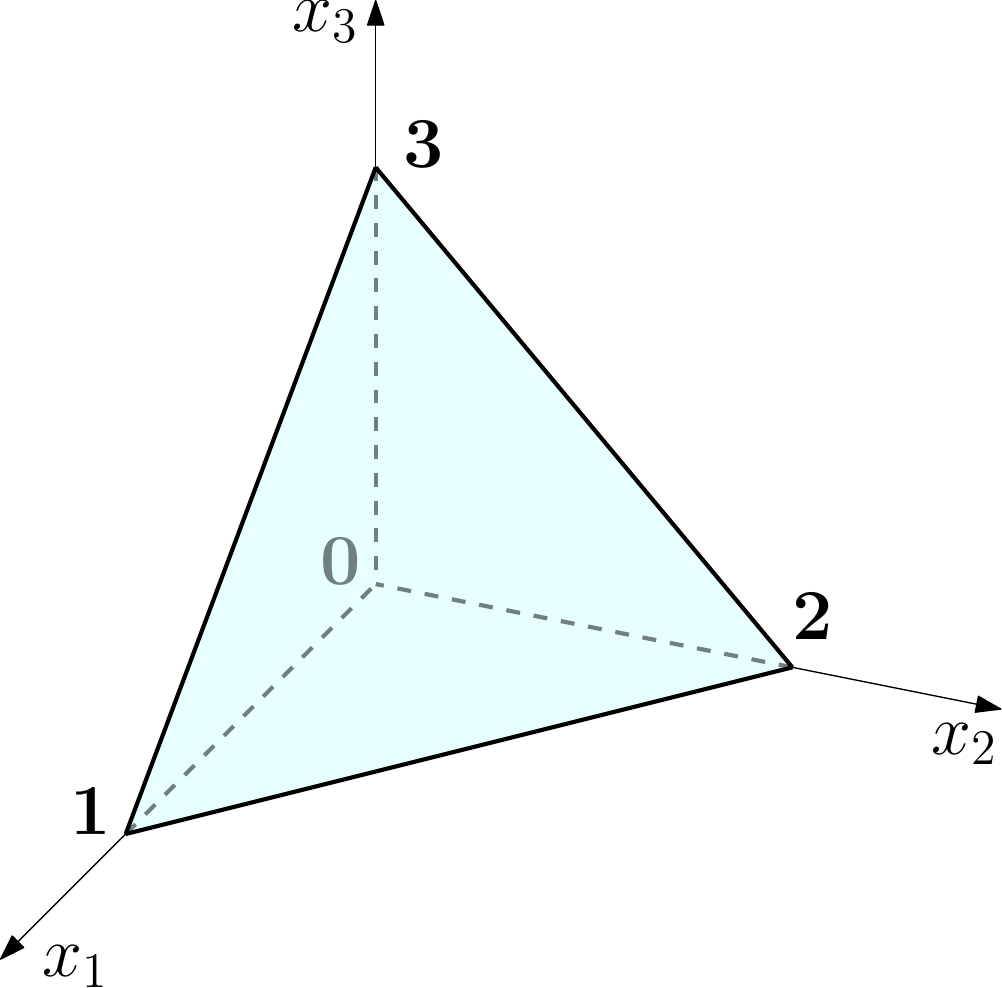}
\caption{Master tetrahedron with numbered vertices.}
\label{fig:MasterTet}
\end{center}
\end{figure}

The 3D simplex is the tetrahedron. 
The master element for tetrahedra is illustrated in Figure \ref{fig:MasterTet} in the $x=(x_1,x_2,x_3)$ space.
More precisely, it is the set $\{x\in\R^3:x_1>0,x_2>0,x_3>0,x_1+x_2+x_3<1\}$.

Denote vertex $a$ by $v_a$, so that $v_0=(0,0,0)$, $v_1=(1,0,0)$, $v_2=(0,1,0)$ and $v_3=(0,0,1)$.
As described in \S\ref{sec:affinecoordinates}, the 3D affine coordinates, $\lambda_0$, $\lambda_1$, $\lambda_2$ and $\lambda_3$, are easily calculated for this master tetrahedron:
\begin{equation}
	\lambda_0(x)=1-x_1-x_2-x_3\,,\qquad
	\lambda_1(x)=x_1\,,\qquad
	\lambda_2(x)=x_2\,,\qquad
	\lambda_3(x)=x_3\,.
\end{equation}
Their gradients are
\begin{equation}
	\nabla\lambda_0(x)=\bigg(\begin{smallmatrix}-1\\[2pt]-1\\[2pt]-1\end{smallmatrix}\bigg)\,,\qquad
	\nabla\lambda_1(x)=\bigg(\begin{smallmatrix}1\\[2pt]0\\[2pt]0\end{smallmatrix}\bigg)\,,\qquad
	\nabla\lambda_2(x)=\bigg(\begin{smallmatrix}0\\[2pt]1\\[2pt]0\end{smallmatrix}\bigg)\,,\qquad
	\nabla\lambda_3(x)=\bigg(\begin{smallmatrix}0\\[2pt]0\\[2pt]1\end{smallmatrix}\bigg)\,.
\end{equation}
These will be used explicitly or implicitly in what follows.

Just like the triangle and the segment, the tetrahedron has a very natural correspondence of its vertices and its affine coordinates.
Quite simply, each vertex $v_a$ is linked to the affine coordinate $\lambda_a$, for $a=0,1,2,3$.
Indeed, $\lambda_a$ takes the value $1$ at the associated vertex.

\subsubsection*{Exact Sequence}

As with the hexahedron, the tetrahedron will have a 3D discrete polynomial exact sequence that represents the continuous exact sequence \eqref{eq:3D_exact_sequence}. 
It is
\begin{equation}
\mathcal{P}^p\,\xrightarrow{\nabla}\,{\mathcal{N}}^p\,\xrightarrow{\nabla\times}\,{\mathcal{RT}}^p\,\xrightarrow{\nabla\cdot}
	\,\mathcal{P}^{p-1} \, ,
\end{equation}
where $\mathcal{P}^p =\mathcal{P}^p(x_1,x_2,x_3)$ is the space of polynomials of total order $p$.
Meanwhile, the N\'{e}d\'{e}lec and Raviart-Thomas spaces for the tetrahedron where already defined by \eqref{eq:NedelecSpace} and \eqref{eq:RaviartThomasSpace}, where $N=3$ in those definitions.

Like the triangle, the tetrahedron sequence has an overall drop in polynomial order of one, which makes it compatible with the construction of the hexahedron.
Also, as noted before, all of the spaces in the exact sequence are invariant under affine transformations.

\subsection{\texorpdfstring{$H^1$}{H1} Shape Functions}
%

It will be clear that all the $\frac{1}{6}(p+3)(p+2)(p+1)$ shape functions lie in $\mathcal{P}^{p}$ and span the space.

The ideas in this section are completely parallel to those presented for the triangle (see \S\ref{sec:Tri}) but in three dimensions.
Therefore, the trace properties will not be analyzed in detail, since they follow analogously.

\subsubsection{\texorpdfstring{$H^1$}{H1} Vertices}

The vertex shape functions and their gradients are simply the affine coordinates themselves,
\begin{equation}
	\phi^\mathrm{v}(x)= \lambda_a(x)\,,\qquad\quad
	\nabla\phi^\mathrm{v}(x)=\nabla\lambda_a(x)\,,
\end{equation}
for $a=0,1,2,3$.
There are a total of $4$ vertex functions (one for each vertex).


\subsubsection{\texorpdfstring{$H^1$}{H1}  Edges}

These are treated just like triangle edges. 
Hence, one can recur to $\phi_i^\E$ directly. 
Take for instance edge 01. In this case, the shape functions are simply
\begin{equation*}
      \phi_i^\mathrm{e}(x)=\phi_i^\E(\vec{\lambda}_{01}(x))
    	=\underbrace{(\lambda_0(x)+\lambda_1(x))^i}_{\text{blend}}
    		\underbrace{\phi_i^\E\Big(\underbrace{\textstyle{\frac{\lambda_0(x)}{\lambda_0(x)+\lambda_1(x)}},
    			\textstyle{\frac{\lambda_1(x)}{\lambda_0(x)+\lambda_1(x)}}}_{\text{project}}\Big)}_{\text{evaluate}}\,,
\end{equation*}
for $i=2,\ldots,p$. 
The projection being implied is
\begin{equation*}
	(x_1,x_2,x_3)\;\longmapsto\;(\textstyle{\frac{x_1}{1-x_3}},\textstyle{\frac{x_2}{1-x_3}},0)
		\;\longmapsto\;(\textstyle{\frac{x_1}{1-x_2-x_3}},0,0)\,.
\end{equation*}
It consists of finding the intersection $P''=(\frac{x_1}{1-x_2-x_3},0,0)$ of the edge with the projecting plane passing through the original point $P=(x_1,x_2,x_3)$ and the opposite nonadjacent edge.
It is illustrated in Figure \ref{fig:TetProjection}. 
Alternatively it can be interpreted in two steps.
First it is projected to a point $P'$ in an adjacent face, using the projecting line passing through $P$ and the disjoint vertex to the face.
Once in the face, it is projected again to the desired edge using the traditional \textit{two} dimensional triangle projection (see Figure \ref{fig:TriangleProjection}).

\begin{figure}[!ht]
\begin{center}
\includegraphics[scale=0.6]{./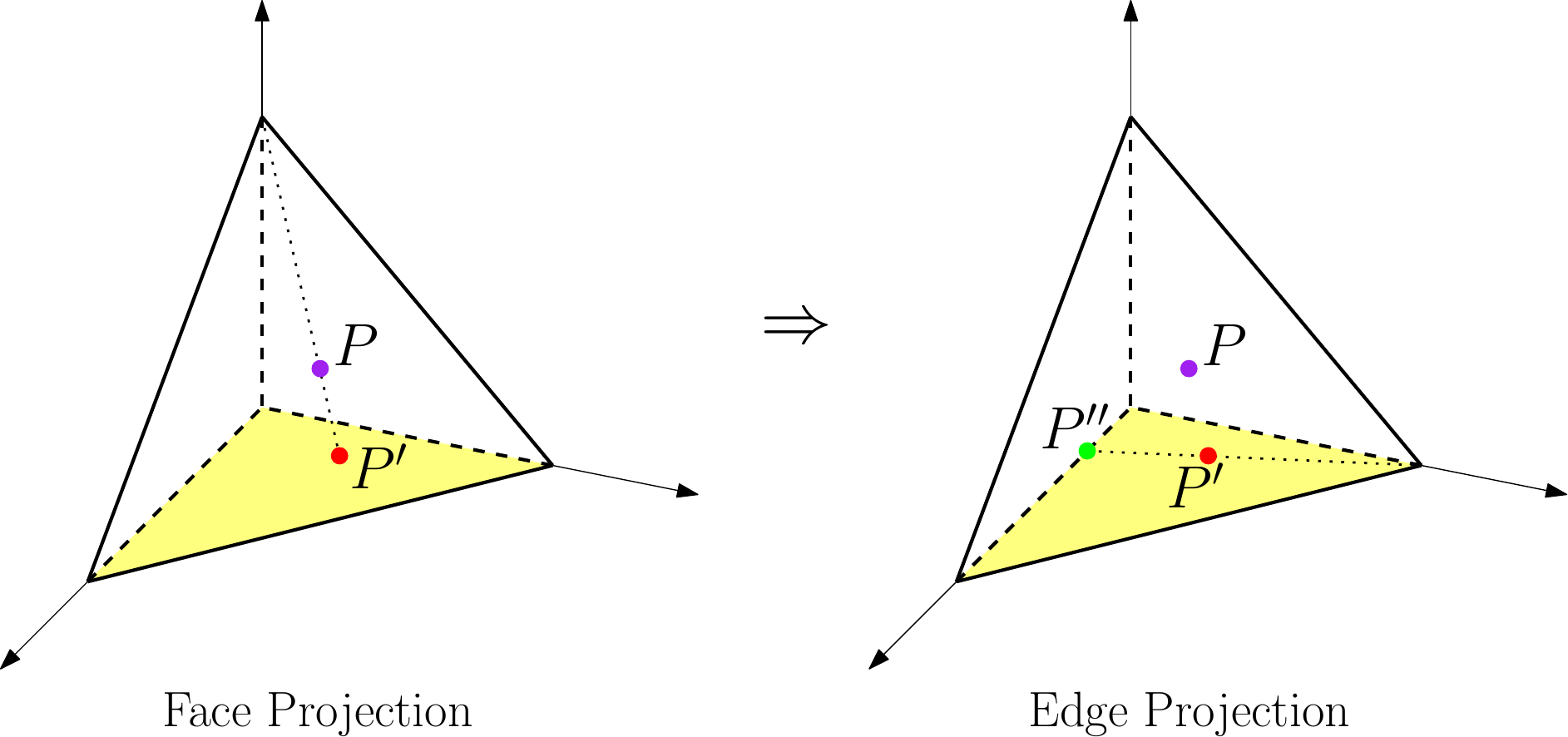}
\caption{Face projection from $P$ to $P'$ followed by an edge projection from $P'$ to $P''$.}
\label{fig:TetProjection}
\end{center}
\end{figure}

More generally, the edge functions and their gradients are
\begin{equation}
	\phi_i^\mathrm{e}(x)=\phi_i^\E(\vec{\lambda}_{ab}(x))\,,\qquad\quad
		\nabla\phi_i^\mathrm{e}(x)=\nabla\phi_i^\E(\vec{\lambda}_{ab}(x))\,,
	\label{eq:Tetphigeneral}
\end{equation}
with $i=2,\ldots,p$, $0\leq a<b\leq3$. 
There are a total of $p-1$ edge functions for every given edge, leading to a total of $6(p-1)$ edge functions.

\subsubsection{\texorpdfstring{$H^1$}{H1} Faces}

The construction of these shape functions follows simply by homogenizing the $H^1$ triangle face bubbles, since this will represent a polynomial extension preserving the desired vanishing properties.
This explains the definition of $\phi_{ij}^\Tri$ in terms of homogenized polynomials.
As an example, consider face 012, where the shape functions are
\begin{equation*}
	\phi_{ij}^\mathrm{f}(x)=\phi_{ij}^\Tri(\vec{\lambda}_{012}(x))=
		\underbrace{(\lambda_0(x)+\lambda_1(x)+\lambda_2(x))^{i+j}}_{\text{blend}}\underbrace{\phi_{ij}^\Tri
			\Big(\underbrace{\textstyle{\frac{1}{\lambda_0(x)+\lambda_1(x)+\lambda_2(x)}}
				\vec{\lambda}_{012}(x)}_{\text{project}}\Big)}_{\text{evaluate}}\,,
\end{equation*}
for $i\geq2$ and $j\geq1$.
The projection is already illustrated in Figure \ref{fig:TetProjection} and consists of finding the intersection $P'=(\frac{x_1}{1-x_3},\frac{x_2}{1-x_3},0)$ of the face with the projecting line passing through the original point $P=(x_1,x_2,x_3)$ and the opposite vertex to the face. 

The full collection of shape functions and their gradient is
\begin{equation}
	\phi_{ij}^\mathrm{f}(x)=\phi_{ij}^\Tri(\vec{\lambda}_{abc}(x))\,,\qquad\quad
			\nabla\phi_{ij}^\mathrm{f}(x)=\nabla\phi_{ij}^\Tri(\vec{\lambda}_{abc}(x))\,,
			\label{eq:H1Tetfaces}
\end{equation}
where $i\geq2$, $j\ge1$, $n=i+j=3,\ldots,p$, and $0\leq a<b<c\leq3$. 
There are $\frac{1}{2}(p-1)(p-2)$ shape functions for each face, leading to a total of $2(p-1)(p-2)$ face functions.

\subsubsection{\texorpdfstring{$H^1$}{H1} Interior Bubbles}

The tetrahedron bubbles are given by blending a face shape function with a polynomial of complementing order which vanishes on the remaining face. 
As with triangles, it is carefully chosen as a Jacobi polynomial $L_k^{2(i+j)}$. 

The interior functions and their gradient are
\begin{equation}
	\begin{aligned}
		\phi_{ijk}^\mathrm{b}(x)&=\phi_{ij}^\Tri(\vec{\lambda}_{012}(x))[L_k^{2(i+j)}](\vec{\mu}_{01}(\lambda_3(x)))\,,\\
			\nabla\phi_{ijk}^\mathrm{b}(x)&=[L_k^{2(i+j)}](\vec{\mu}_{01}(\lambda_3(x)))\nabla\phi_{ij}^\Tri(\vec{\lambda}_{012}(x))
				+\phi_{ij}^\Tri(\vec{\lambda}_{012}(x))\nabla[L_k^{2(i+j)}](\vec{\mu}_{01}(\lambda_3(x)))\,,
	\end{aligned}
\end{equation}
where $i\geq2$, $j\geq1$, $k\geq1$ and $n=i+j+k=4,\ldots,p$, and where $\vec{\mu}_{01}(\lambda_3(x))=(1-\lambda_3(x),\lambda_3(x))$.
There are $\frac{1}{6}(p-1)(p-2)(p-3)$ interior shape functions in total.

%
%
%
%
%

\subsection{\texorpdfstring{$H(\mathrm{curl})$}{Hcurl} Shape Functions}


The dimension of $\mathcal{N}^p$ in three dimensions is $\frac{1}{2}p(p+2)(p+3)$.
A careful count of the linearly independent shape functions to be presented throughout this section will coincide with that dimension. 
Showing that the functions constructed are in $\mathcal{N}^p$ follows from Lemma \ref{lemma:curl}.
The constructions are all analogous to those of the triangle and simply require of an extra extension which is naturally provided by homogenization.

\subsubsection{\texorpdfstring{$H(\mathrm{curl})$}{Hcurl} Edges}

These are just the same as in the triangle case, but using three dimensional affine coordinates for the homogenization.
For example, for edge 01, the shape functions are
\begin{equation*}
	\begin{aligned}
		E_i^\mathrm{e}(x)&=E_i^\E(\vec{\lambda}_{01}(x))=
			[P_i](\vec{\lambda}_{01}(x))\Big(\lambda_0(x)\nabla\lambda_1(x)-\lambda_1(x)\nabla\lambda_0(x)\Big)\\
    	&=(\lambda_0(x)+\lambda_1(x))^i
    [P_i]\Big(\textstyle{\frac{\lambda_0(x)}{\lambda_0(x)+\lambda_1(x)}},\textstyle{\frac{\lambda_1(x)}{\lambda_0(x)+\lambda_1(x)}}\Big)
    			E_0^\E(\vec{\lambda}_{01}(x))\\
    	&=\underbrace{(\lambda_0(x)+\lambda_1(x))^{i+2}}_{\text{blend}}
    		\underbrace{E_i^\E\Big(\underbrace{\textstyle{\frac{\lambda_0(x)}{\lambda_0(x)+\lambda_1(x)}},
    			\textstyle{\frac{\lambda_1(x)}{\lambda_0(x)+\lambda_1(x)}}}_{\text{project}}\Big)}_{\text{evaluate}}\,,
	\end{aligned}
\end{equation*}
for $i=0,\ldots,p-1$.
Regarding the traces, note that they are completely inherited from $E_0^\E(\vec{\lambda}_{01}(x))$, which is a Whitney function known to have the desired vanishing properties and being tracewise compatible with the lower dimensional triangle edge functions.
Therefore, all trace properties are satisfied, including the nonzero decay along the adjacent faces to the edge.

The edge functions with their curl are
\begin{equation}
	E_i^\mathrm{e}(x)=E_i^\E(\vec{\lambda}_{ab}(x))\,,\qquad\quad
		\nabla\times E_i^\mathrm{e}(x)=\nabla\times E_i^\E(\vec{\lambda}_{ab}(x))\,,
	\label{eq:TetEgeneral}
\end{equation}
for $i=0,\ldots,p-1$, and $0\leq a<b\leq3$. 
There are a total of $p$ edge functions for every given edge, for a total of $6p$ edge functions.

\subsubsection{\texorpdfstring{$H(\mathrm{curl})$}{Hcurl} Faces}


%

Like the triangle, the tetrahedron has two families of shape functions for every face.
The trace properties follow from those of the edge functions.
There is a grand total of $4p(p-1)$ face functions.

\subparagraph{Family I:} 
The shape functions and their curls are
\begin{equation}
	E_{ij}^{\mathrm{f}}(x)=E_{ij}^\Tri(\vec{\lambda}_{abc}(x))\,,\qquad\quad
		\nabla\times E_{ij}^{\mathrm{f}}(x)=\nabla\times E_{ij}^\Tri(\vec{\lambda}_{abc}(x))\,,
\end{equation}
for $i\geq0$, $j\geq1$, $n=i+j=1,\ldots,p-1$, and $0\leq a<b<c\leq3$. 
For every face, there are $\frac{1}{2}p(p-1)$ face functions in this family.

\subparagraph{Family II:}
The shape functions and their curls are
\begin{equation}
	E_{ij}^{\mathrm{f}}(x)=E_{ij}^\Tri(\vec{\lambda}_{bca}(x))\,,\qquad\quad
		\nabla\times E_{ij}^{\mathrm{f}}(x)=\nabla\times E_{ij}^\Tri(\vec{\lambda}_{bca}(x))\,,
\end{equation}
for $i\geq0$, $j\geq1$, $n=i+j=1,\ldots,p-1$, and $0\leq a<b<c\leq3$.
The only difference with the first family is that the entries are permuted to $\vec{\lambda}_{bca}(x)$ instead of $\vec{\lambda}_{abc}(x)$.
For every face, there are $\frac{1}{2}p(p-1)$ face functions in this family.

%

\subsubsection{\texorpdfstring{$H(\mathrm{curl})$}{Hcurl} Interior Bubbles}


The construction is completely analogous to that of $H^1$ in the sense that they are obtained by multiplying the face functions by the Jacobi polynomial $L_k^{2(i+j)}$.
One must attempt this for various possible permutations of the entries, but being careful to ensure that they are linearly independent.
Three families arise.

The interior bubbles and their curl are
\begin{equation}
	\begin{aligned}
		E_{ijk}^\mathrm{b}(x)&=[L_k^{2(i+j)}](\vec{\mu}_{01}(\lambda_d(x)))E_{ij}^\Tri(\vec{\lambda}_{abc}(x))\,,\\
		\nabla\!\!\times\! E_{ijk}^\mathrm{b}(x)&=[L_k^{2(i+j)}](\vec{\mu}_{01}(\lambda_d(x)))
			\nabla\!\!\times\! E_{ij}^\Tri(\vec{\lambda}_{abc}(x))
				\!+\!\nabla[L_k^{2(i+j)}](\vec{\mu}_{01}(\lambda_d(x)))\!\times\! E_{ij}^\Tri(\vec{\lambda}_{abc}(x))\,,
	\end{aligned}
\end{equation}
where $i\geq0$, $j\geq1$, $k\geq1$, $n=i+j+k=2,\ldots,p-1$ and $(a,b,c,d)=(0,1,2,3),(1,2,3,0),(2,3,0,1)$, and where $\vec{\mu}_{01}(\lambda_d(x))=(1-\lambda_d(x),\lambda_d(x))$.
There is a grand total of $\frac{1}{2}p(p-1)(p-2)$ interior shape functions.

\subsection{\texorpdfstring{$H(\mathrm{div})$}{Hdiv} Shape Functions}


The dimension of $\mathcal{RT}^p$ in three dimensions is $\frac{1}{2}p(p+1)(p+3)$.
A careful count of the linearly independent shape functions presented here will coincide with that dimension. 
Showing that the functions constructed are in $\mathcal{RT}^p$ is not immediate, but follows from the next lemma, which should be kept in mind.

\begin{lemma}
\label{lemma:div}
Let $x\in\R^N$ for $N=3$, and $f_n\in\mathcal{P}^n(x)$ be any polynomial of total order $n$ in the coordinates $x=(x_1,x_2,x_3)$. Given $s_0$, $s_1$ and $s_2$ affine coordinates in $\R^3$ $($or simply linear functions in $x$$)$, it follows that the Raviart-Thomas space of order $n+1$, $\mathcal{RT}^{n+1}$, contains the function 
\begin{equation*}
	f_n(\bcdot)\Big(s_0\nabla s_1\times\nabla s_2+s_1\nabla s_2\times\nabla s_0+s_2\nabla s_0\times\nabla s_1\Big)\in\mathcal{RT}^{n+1}\,.
\end{equation*}
\end{lemma}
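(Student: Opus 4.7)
The plan is to mirror Lemma~\ref{lemma:curl}, with one new ingredient specific to $\R^3$. Writing each linear function as $s_k(x)=a_k+b_k\cdot x$ with $a_k\in\R$ and $b_k\in\R^3$, the gradients are constants $\nabla s_k=b_k$, and the parenthesized vector field splits naturally as $W(x)=A+B(x)$ where $A:=a_0(b_1\times b_2)+a_1(b_2\times b_0)+a_2(b_0\times b_1)\in\R^3\subseteq(\mathcal{P}^0)^3$ and
\begin{equation*}
B(x):=(b_0\cdot x)(b_1\times b_2)+(b_1\cdot x)(b_2\times b_0)+(b_2\cdot x)(b_0\times b_1)\in(\tilde{\mathcal{P}}^1)^3.
\end{equation*}

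The crucial step, and the main obstacle, will be the vector identity
\begin{equation*}
B(x)=D\,x, \qquad D:=\det[b_0,b_1,b_2]=b_0\cdot(b_1\times b_2).
\end{equation*}
To prove it I would dot $B(x)$ with each $b_k$ and invoke the scalar-triple-product rule $b_k\cdot(b_i\times b_j)=\det[b_k,b_i,b_j]$, which equals $D$ on cyclic permutations of $(0,1,2)$ and vanishes whenever two indices coincide; this yields $b_k\cdot B(x)=D\,(b_k\cdot x)$ for $k=0,1,2$. If $D\neq 0$ the $b_k$ form a basis of $\R^3$ and the identity follows immediately; the degenerate case $D=0$ is then recovered by polynomial continuation in the nine entries of $(b_0,b_1,b_2)$, since $B(x)-Dx$ is polynomial in those entries and vanishes on the open dense subset $\{D\neq 0\}$. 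With this identity in hand, $W(x)=A+Dx$ already lies in $(\mathcal{P}^0)^3\oplus\{\phi(x)\,x:\phi\in\tilde{\mathcal{P}}^0\}=\mathcal{RT}^1$.

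To finish, I would decompose $f_n=f_{n-1}+\tilde f_n$ with $f_{n-1}\in\mathcal{P}^{n-1}$ and $\tilde f_n\in\tilde{\mathcal{P}}^n$, and expand
\begin{equation*}
f_n(x)W(x)=\bigl(f_{n-1}(x)A+\tilde f_n(x)A+f_{n-1}(x)Dx\bigr)+\tilde f_n(x)\,D\,x.
\end{equation*}
The first three terms have total degree at most $n$ and sit in $(\mathcal{P}^n)^3$, while the last is of the form $\phi(x)\,x$ with $\phi:=\tilde f_n D\in\tilde{\mathcal{P}}^n$, placing it in the homogeneous Raviart--Thomas enrichment $\{\phi\,x:\phi\in\tilde{\mathcal{P}}^n\}$. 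Hence $f_nW\in\mathcal{RT}^{n+1}$, as required. Once $B(x)=Dx$ is secured the rest is routine degree bookkeeping, entirely parallel to the end of Lemma~\ref{lemma:curl}.
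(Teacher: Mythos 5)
Your proof is correct and follows essentially the same route as the paper: the same splitting of the vector field into a constant part $A$ and a homogeneous linear part $B(x)=\big(b_0\cdot(b_1\times b_2)\big)x$, followed by the decomposition $f_n=f_{n-1}+\tilde f_n$ and the same degree bookkeeping. The only difference is that the paper merely asserts the identity $B(x)=Dx$ ``follows from various identities,'' whereas you supply an actual proof of it (dotting with each $b_k$, using the scalar triple product, and handling the degenerate case $D=0$ by density), which is a welcome but not structurally different addition.
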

\begin{proof}
Recall the definition of the Raviart-Thomas space in three dimensions,
\begin{equation*}
	\mathcal{RT}^p=(\mathcal{P}^{p-1})^3\oplus\Big\{V\in(\tilde{\mathcal{P}}^{p})^3:V(x)=\phi(x)x
		=\phi(x)\Big(\begin{smallmatrix}x_1\\x_2\\x_3\end{smallmatrix}\Big) \, \text{ with }\phi\in\tilde{\mathcal{P}}^{p-1}\Big\}\,.
\end{equation*}
Affine coordinates are linear functions in $x=(x_1,x_2,x_3)$, so that
\begin{equation*}
	s_k(x)=a_k+b_k\cdot x\,,
\end{equation*}
for $a_k\in\R$, $b_k\in\R^3$ and $k=0,1,2$. Then $\nabla s_k(x)=b_k$ and
\begin{align*}
	V(x)&=s_0(x)\nabla s_1(x)\times\nabla s_2(x)+s_1(x)\nabla s_2(x)\times\nabla s_0(x)+s_2(x)\nabla s_0(x)\times\nabla s_1(x)\\
		&=(a_0+b_0\cdot x)(b_1\times b_2)+(a_1+b_1\cdot x)(b_2\times b_0)+(a_2+b_2\cdot x)(b_0\times b_1)\\
		&=\underbrace{\Big(a_0(b_1\times b_2)+a_1(b_2\times b_0)+a_2(b_0\times b_1)\Big)}_{=A}
			+\underbrace{\Big(b_0\cdot(b_1\times b_2)\Big)x}_{=B(x)}\,,			
\end{align*}
where the last term follows from various identities. Clearly, $A\in(\mathcal{P}^0)^3=\R^3$ and $b_0\cdot(b_1\times b_2)\in\tilde{\mathcal{P}}^0=\R$, so that $B\in\{V\in(\tilde{\mathcal{P}}^{1})^3: V(x)=\phi(x)x\}$. Hence, $V\in\mathcal{RT}^1$.

Now, $f_n\in\mathcal{P}^n=\mathcal{P}^{n-1}\oplus\tilde{\mathcal{P}}^n$, for $n\geq1$ can always be decoupled into $f_n=f_{n-1}+\tilde{f}_n$, where $f_{n-1}\in\mathcal{P}^{n-1}$ and $\tilde{f}_n\in\tilde{\mathcal{P}}^n$. As a result
\begin{equation*}
	f_n(x)V(x)=f_n(x)A+f_{n-1}(x)B(x)+\tilde{f}_n(x)B(x)\,,
\end{equation*}
where it is clear $f_{n}A+f_{n-1}B\in(\mathcal{P}^n)^3$ and $\tilde{f}_nB\in\{V\in(\tilde{\mathcal{P}}^{n+1})^3: V(x)=\phi(x)x\}$. Therefore, $f_nV\in\mathcal{RT}^{n+1}$.
\end{proof}

\subsubsection{\texorpdfstring{$H(\mathrm{div})$}{Hdiv} Faces}

The general formula for these functions is motivated by the well known first order Whitney form for $H(\mathrm{div})$, along with the fact that the normal trace of the faces should span the two dimensional $L^2$ space. The general definition is presented next.

\begin{definition*}
Let $s_0$, $s_1$ and $s_2$ be arbitrary functions of some spatial variable in $\R^N$, with $N=3$, and denote by $p$ the order in the coordinate triplet $(s_0,s_1,s_2)$. Then
\begin{equation}
	V_{ij}^{\Tri}(s_0,s_1,s_2)=[P_i,P_j^{2i+1}](s_0,s_1,s_2)\Big(s_0\nabla s_1\times\nabla s_2
		+s_1\nabla s_2\times\nabla s_0+s_2\nabla s_0\times\nabla s_1\Big)\,,
\end{equation}
for $i=n-j$, $j=0,\ldots,n$ and $n=0,\ldots,p-1$ $($or equivalently $i\geq0$, $j\geq0$ and $n=i+j=0,\ldots,p-1$$)$. The divergence is
\begin{equation}
	\nabla\cdot V_{ij}^{\Tri}(s_0,s_1,s_2)=(i+j+3)[P_i,P_j^{2i+1}](s_0,s_1,s_2)\nabla s_0\cdot(\nabla s_1\times\nabla s_2) \,.
	\label{eq:VdeltaDiv}
\end{equation}
\end{definition*}

The formula for the divergence follows from the following lemma, because $[P_i,P_j^{2i+1}](s_0,s_1,s_2)$ is a homogeneous polynomial of total order $n=i+j$ in $s_0$, $s_1$ and $s_2$.

\begin{lemma}
\label{lem:divformula}
Let $\psi_n(s_0,s_1,s_2)\in\tilde{\mathcal{P}}^n(s_0,s_1,s_2)$ be a homogeneous polynomial of total order $n$ in $s_0$, $s_1$ and $s_2$, where $s_0$, $s_1$ and $s_2$ are arbitrary functions of some spatial variable in $\R^N$, with $N=3$. Then
\begin{equation*}
    \nabla\cdot\Big(\psi_n(s_0,s_1,s_2)(s_0\nabla s_1\times\nabla s_2
			+s_1\nabla s_2\times\nabla s_0+s_2\nabla s_0\times\nabla s_1)\Big)
				\!=\!(n+3)\psi_n(s_0,s_1,s_2)\nabla s_0\cdot(\nabla s_1\times\nabla s_2)\,.
\end{equation*}
\end{lemma}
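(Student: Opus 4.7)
The plan is to mirror the structure of the proof of Lemma \ref{lem:curlformula} (the curl formula for $E_i^\E$): first establish the identity for the ``bare'' vector field $\mathbf{W} = s_0\nabla s_1\times\nabla s_2 + s_1\nabla s_2\times\nabla s_0 + s_2\nabla s_0\times\nabla s_1$, then check the formula on an arbitrary monomial $s_0^a s_1^b s_2^c$ with $a+b+c=n$, and finally invoke linearity.

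First I would compute $\nabla\cdot\mathbf{W}$ directly. Using the product rule $\nabla\cdot(f\mathbf{v})=f\,\nabla\cdot\mathbf{v}+\nabla f\cdot\mathbf{v}$ together with the identity $\nabla\cdot(\nabla a\times\nabla b)=0$ (since $\nabla\times\nabla a=\nabla\times\nabla b=0$), each of the three terms contributes only its gradient piece. Cyclic symmetry of the scalar triple product then yields
\begin{equation*}
\nabla\cdot\mathbf{W}=\nabla s_0\cdot(\nabla s_1\times\nabla s_2)+\nabla s_1\cdot(\nabla s_2\times\nabla s_0)+\nabla s_2\cdot(\nabla s_0\times\nabla s_1)=3\,\nabla s_0\cdot(\nabla s_1\times\nabla s_2).
\end{equation*}

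Next, for a monomial $s_0^a s_1^b s_2^c$ I would apply the product rule one more time:
\begin{equation*}
\nabla\cdot\bigl(s_0^a s_1^b s_2^c\,\mathbf{W}\bigr)=3\,s_0^a s_1^b s_2^c\,\nabla s_0\cdot(\nabla s_1\times\nabla s_2)+\nabla(s_0^a s_1^b s_2^c)\cdot\mathbf{W}.
\end{equation*}
The key observation is that each of $\nabla s_0\cdot\mathbf{W}$, $\nabla s_1\cdot\mathbf{W}$, $\nabla s_2\cdot\mathbf{W}$ collapses to a single term because two of the three triple products vanish (they contain a repeated vector). Specifically,
\begin{equation*}
\nabla s_0\cdot\mathbf{W}=s_0\,\nabla s_0\cdot(\nabla s_1\times\nabla s_2),\qquad \nabla s_1\cdot\mathbf{W}=s_1\,\nabla s_0\cdot(\nabla s_1\times\nabla s_2),\qquad \nabla s_2\cdot\mathbf{W}=s_2\,\nabla s_0\cdot(\nabla s_1\times\nabla s_2),
\end{equation*}
where the last two equalities again use the cyclic property. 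Substituting these into $\nabla(s_0^a s_1^b s_2^c)\cdot\mathbf{W}=a s_0^{a-1}s_1^b s_2^c\,\nabla s_0\cdot\mathbf{W}+\cdots$ produces the factor $(a+b+c)s_0^a s_1^b s_2^c\,\nabla s_0\cdot(\nabla s_1\times\nabla s_2)$. Adding this to the $3\,s_0^a s_1^b s_2^c$ term gives exactly $(n+3)s_0^a s_1^b s_2^c\,\nabla s_0\cdot(\nabla s_1\times\nabla s_2)$.

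Finally, since any homogeneous polynomial $\psi_n\in\tilde{\mathcal{P}}^n(s_0,s_1,s_2)$ is a linear combination of such monomials of fixed total degree $n$, linearity of the divergence extends the identity to $\psi_n$, completing the proof. There is no real obstacle here beyond bookkeeping; the only subtle point is recognizing that the off-diagonal triple products vanish and that the cyclic symmetry reduces everything to the single scalar $\nabla s_0\cdot(\nabla s_1\times\nabla s_2)$.
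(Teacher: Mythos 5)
Your proof is correct and follows essentially the same route as the paper's: compute $\nabla\cdot\mathbf{W}=3\,\nabla s_0\cdot(\nabla s_1\times\nabla s_2)$, verify the identity on a monomial $s_0^as_1^bs_2^c$ using the collapse of the triple products with repeated vectors and cyclic symmetry, then extend by linearity over monomials of fixed total degree $n$. No gaps.
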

\begin{proof}
Let $V_{00}^{\Tri}(s_0,s_1,s_2)=(s_0\nabla s_1\times\nabla s_2
			+s_1\nabla s_2\times\nabla s_0+s_2\nabla s_0\times\nabla s_1)$. First notice that
\begin{align*}
	\nabla\cdot \Big(s_0(\nabla s_1\times\nabla s_2)\Big)
		&=\nabla s_0\cdot(\nabla s_1\times\nabla s_2)
			+s_0\Big(\nabla s_2\cdot\nabla\times\nabla s_1-\nabla s_1\cdot\nabla\times\nabla s_2\Big)\\
		&=\nabla s_0\cdot(\nabla s_1\times\nabla s_2)\,,
\end{align*}
and similarly with $\nabla\cdot(s_1(\nabla s_2\times\nabla s_0))$ and $\nabla\cdot(s_2(\nabla s_1\times\nabla s_2))$. All of them result in a scalar triple product, which is invariant to cyclic permutations. It follows
\begin{equation*}
	\nabla\cdot V_{00}^{\Tri}(s_0,s_1,s_2)=3\nabla s_0\cdot(\nabla s_1\times\nabla s_2)\,.
\end{equation*}
Now, consider a monomial $s_0^as_1^bs_2^c$. Then
\begin{align*}
	\nabla(s_0^as_1^bs_2^c)\cdot V_{00}^{\Tri}(s_0,s_1,s_2)
		&=(as_0^{a-1}s_1^bs_2^c\nabla s_0+bs_0^as_1^{b-1}s_2^c\nabla s_1+cs_0^as_1^bs_2^{c-1}\nabla s_2)
			\cdot V_{00}^{\Tri}(s_0,s_1,s_2)\\
		&=as_0^{a-1}s_1^bs_2^cs_0\nabla s_0\cdot(\nabla s_1\times\nabla s_2)
			+bs_0^as_1^{b-1}s_2^cs_1\nabla s_1\cdot(\nabla s_2\times\nabla s_0)\\
				&\qquad\qquad\qquad\qquad\qquad\qquad\qquad\qquad+cs_0^as_1^bs_2^{c-1}s_2\nabla s_2\cdot(\nabla s_0\times\nabla s_1)\\
		&=(a+b+c)s_0^as_1^bs_2^c\nabla s_0\cdot(\nabla s_1\times\nabla s_2)\,.
\end{align*}
With these last two results it follows
\begin{equation*}
	\nabla\cdot\Big(s_0^as_1^bs_2^cV_{00}^{\Tri}(s_0,s_1,s_2)\Big)=(a+b+c+3)s_0^as_1^bs_2^c\nabla s_0\cdot(\nabla s_1\times\nabla s_2)\,.
\end{equation*}
Then observe that any homogeneous polynomial $\psi_n(s_0,s_1,s_2)$ is composed of monomials of the form $s_0^as_1^bs_2^c$ of \textit{fixed} total order $a+b+c=n$. The result immediately follows.
\end{proof}

The result in \eqref{eq:VdeltaDiv} is quite remarkable in the sense that, as required by the construction, there are no derivatives of $[P_i,P_j^{2i+1}](s_0,s_1,s_2)$ in the expression for the divergence. 
Although not used in this section, record the following useful remark which will be exploited when dealing with the prism and pyramid elements.
\begin{remark}
Let $\nu_0=1-\nu_1-\nu_2$, where $\nu_1$ and $\nu_2$ are arbitrary functions of some spatial variable in $\R^N$ with $N=3$, and where $p$ is the order in the coordinates $(\nu_0,\nu_1,\nu_2)$. Then for all $i\geq0$, $j\geq0$ and $n=i+j=0,\ldots,p-1$,
\begin{equation}
    V_{ij}^\Tri(\nu_0,\nu_1,\nu_2)=[P_i,P_j^{2i+1}](\nu_0,\nu_1,\nu_2)\nabla\nu_1\times\nabla\nu_2\,,
    	\quad\qquad \nabla\cdot V_{ij}^\Tri(\nu_0,\nu_1,\nu_2)=0\,.\label{eq:HdivtriangleRemark}
\end{equation}
\end{remark}

These functions are still represented by the same projection, and follow the logic of projecting, evaluating and blending.
For example, take face 012, so the shape functions are
\begin{equation*}
	V_{ij}^\mathrm{f}(x)=V_{ij}^\Tri(\vec{\lambda}_{012}(x))=
		\underbrace{(\lambda_0(x)+\lambda_1(x)+\lambda_2(x))^{i+j+3}}_{\text{blend}}\underbrace{V_{ij}^\Tri
			\Big(\underbrace{\textstyle{\frac{1}{\lambda_0(x)+\lambda_1(x)+\lambda_2(x)}}
				\vec{\lambda}_{012}(x)}_{\text{project}}\Big)}_{\text{evaluate}}\,,
\end{equation*}
for $i\geq0$ and $j\geq0$.
It can be readily checked that they satisfy the required vanishing properties by focusing on the lowest order function $V_{00}^\mathrm{f}(x)$.
Moreover, by Lemma \ref{lemma:div} it is clear $V_{ij}^\mathrm{f}\in\mathcal{RT}^{n+1}$ for $n=i+j$. 

More generally, the shape functions and their divergence are
\begin{equation}
	V_{ij}^\mathrm{f}(x)=V_{ij}^\Tri(\vec{\lambda}_{abc}(x))\,,\qquad\quad
		\nabla\cdot V_{ij}^\mathrm{f}(x)=\nabla \cdot V_{ij}^\Tri(\vec{\lambda}_{abc}(x))\,,
\end{equation}
where $i\geq0$, $j\geq0$, $n=i+j=0,\ldots,p-1$, and $0\leq a<b<c\leq3$. 
Clearly, for each face there are $\frac{1}{2}p(p+1)$ functions, giving a total of $2p(p+1)$ face functions.

\subsubsection{\texorpdfstring{$H(\mathrm{div})$}{Hdiv} Interior Bubbles}

The construction is just like in $H^1$ and $H(\mathrm{curl})$, and there are three resulting families to consider.
The polynomial chosen to vanish is the Jacobi polynomial, $L_k^{2(i+j+1)}$. 

The interior functions with their divergence are
\begin{equation}
	\begin{aligned}
		V_{ijk}^\mathrm{b}(x)&=[L_k^{2(i\!+\!j\!+\!1)}](\vec{\mu}_{01}(\lambda_d(x)))V_{ij}^\Tri(\vec{\lambda}_{abc}(x))\,,\\
		\nabla\!\!\cdot\! V_{ijk}^\mathrm{b}(x)&=[L_k^{2(i\!+\!j\!+\!1)}](\vec{\mu}_{01}(\lambda_d(x)))
			\nabla\!\!\cdot\! V_{ij}^\Tri(\vec{\lambda}_{abc}(x))
				\!+\!\nabla[L_k^{2(i\!+\!j\!+\!1)}](\vec{\mu}_{01}(\lambda_d(x)))\!\cdot\! V_{ij}^\Tri(\vec{\lambda}_{abc}(x))\,,
	\end{aligned}
\end{equation}
where $i\geq0$, $j\geq0$, $k\geq1$, $n=i+j+k=1,\ldots,p-1$ and $(a,b,c,d)=(0,1,2,3),(1,2,3,0),(2,3,0,1)$, and where $\vec{\mu}_{01}(\lambda_d(x))=(1-\lambda_d(x),\lambda_d(x))$.
There is a grand total of $\frac{1}{2}p(p-1)(p+1)$ interior shape functions.

\subsection{\texorpdfstring{$L^2$}{L2} Shape Functions}
The $\frac{1}{6}p(p+2)(p+1)$ shape functions will clearly span $\mathcal{P}^{p-1}$.

\subsubsection{\texorpdfstring{$L^2$}{L2} Interior}

Similar to the construction of $L^2$ triangle functions, the $L^2$ interior functions for the tetrahedron are
\begin{equation}
	\begin{aligned}
		\psi_{ijk}^\mathrm{b}(x)&=[P_i](\lambda_0(x),\lambda_1(x))[P_j^{2i+1}](\lambda_0(x)\!+\!\lambda_1(x),\lambda_2(x))
			[P_k^{2(i+j+1)}](1\!-\!\lambda_3(x),\lambda_3(x))\\
		&=[P_i,P_j^{2i+1}](\vec{\lambda}_{012}(x))[P_k^{2(i+j+1)}](\vec{\mu}_{01}(\lambda_3(x)))
			(\nabla\lambda_1(x)\!\!\times\!\!\nabla\lambda_2(x))\!\cdot\!\nabla\lambda_3(x)\,,
	\end{aligned}
\end{equation}
where $i\geq0$, $j\geq0$, $k\geq0$, and $n=i+j+k=0,\ldots,p-1$, and where $\vec{\mu}_{01}(\lambda_3(x))=(1-\lambda_3(x),\lambda_3(x))$. There is a total of $\frac{1}{6}p(p+2)(p+1)$ interior functions.

%
%

\subsection{Orientations}
\label{sec:TetOrientations}

\begin{figure}[!ht]
\begin{center}
\includegraphics[scale=0.5]{./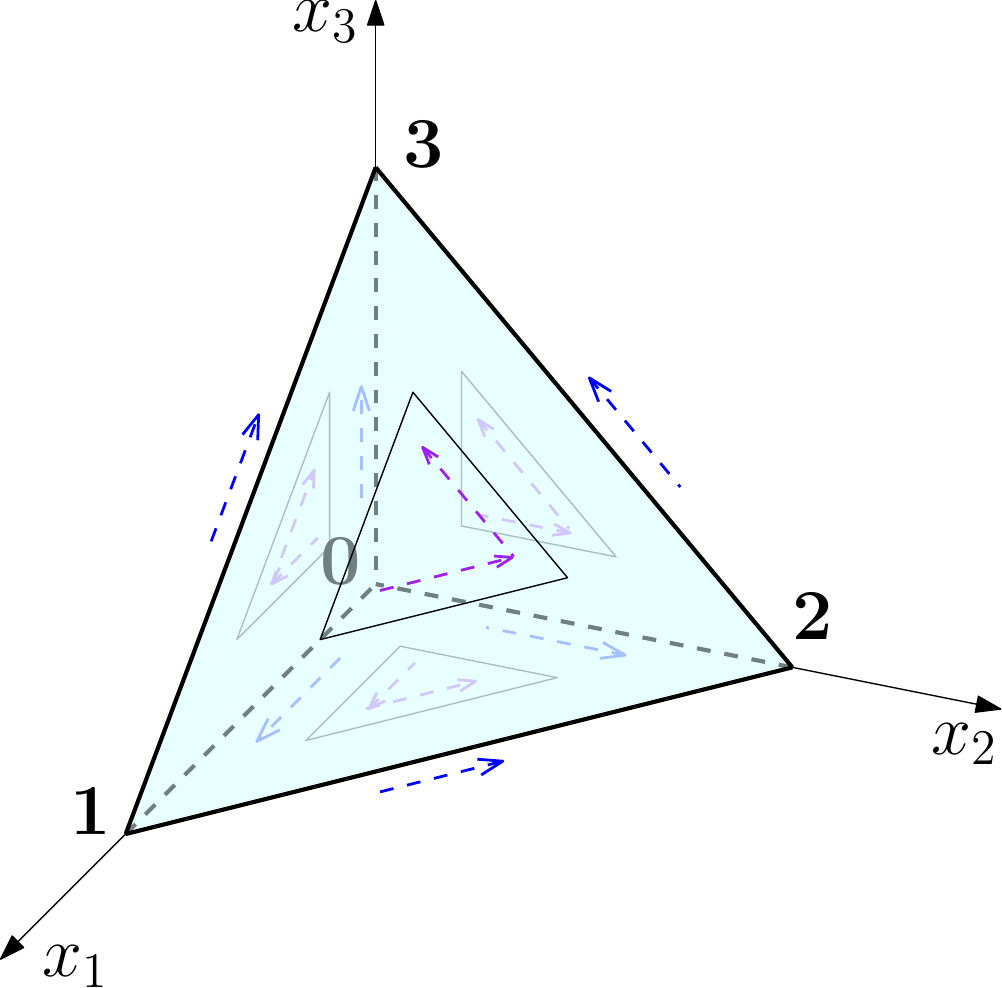}
\caption{Master tetrahedron with numbered vertices \textit{and} local edge and face orientations.}
\label{fig:MasterTetOrientations}
\end{center}
\end{figure}

To construct orientation embedded shape functions for the tetrahedron, it is recommended to have read at least the first part of \S\ref{sec:HexaOrientations}.
The predefined \textit{local} edge and face orientations for the tetrahedron are illustrated in Figure \ref{fig:MasterTetOrientations}.
They represent the $\oo=0$ case.
The task at hand is to find the associated \textit{locally ordered} tuples of affine coordinates representing those local orientations.
As examples take edge 01 and face 012.

Edge 01 is composed of the vertices $v_0$ and $v_1$, which are linked uniquely to $\lambda_0$ and $\lambda_1$ respectively.
The local orientation for edge 01 is represented by the local vertex-ordering $v_0\tdashto v_1$.
As a result, the locally ordered pair for edge 01 is $\vec{\lambda}_{01}=(\lambda_0,\lambda_1)$.
It follows that the orientation embedded edge 01 shape functions in $H^1$ with their gradient are
\begin{equation*}
	\phi_i^\mathrm{e}(x)=\phi_i^\E(\sigma_\oo^\E(\vec{\lambda}_{01}(x)))\,,\qquad\quad
		\nabla\phi_i^\mathrm{e}(x)=\nabla\phi_i^\E(\sigma_\oo^\E(\vec{\lambda}_{01}(x)))\,,
\end{equation*}
with $i=2,\ldots,p$.
The same applies to the $H(\mathrm{curl})$ edge 01 shape functions and their curl.
The approach is analogous with any other edge.

Face 012 is composed of the vertices $v_0$, $v_1$ and $v_2$, which are linked to $\lambda_0$, $\lambda_1$ and $\lambda_2$ respectively.
The local orientation for the face is represented by the local vertex-ordering $v_0\tdashto v_1\tdashto v_2$, and as a result the locally ordered triplet for face 012 is $\vec{\lambda}_{012}=(\lambda_0,\lambda_1,\lambda_2)$.
Hence, for example the $H^1$ orientation embedded face 012 shape functions with their gradient are
\begin{equation*}
	\phi_{ij}^\mathrm{f}(x)=\phi_{ij}^\Tri(\sigma_\oo^\Tri(\vec{\lambda}_{012}(x)))\,,\qquad\quad
		\nabla\phi_{ij}^\mathrm{f}(x)=\nabla\phi_{ij}^\Tri(\sigma_\oo^\Tri(\vec{\lambda}_{012}(x)))\,,
\end{equation*}
with $i\geq2$, $j\geq1$ and $n=i+j=3,\ldots,p$.
The same applies to the $H(\mathrm{curl})$ and $H(\mathrm{div})$ face 012 shape functions and their differential forms.
Naturally, the approach is analogous with any other face.

\newpage
\section{Prism}
\label{sec:Prism}

\begin{figure}[!ht]
\begin{center}
\includegraphics[scale=0.5]{./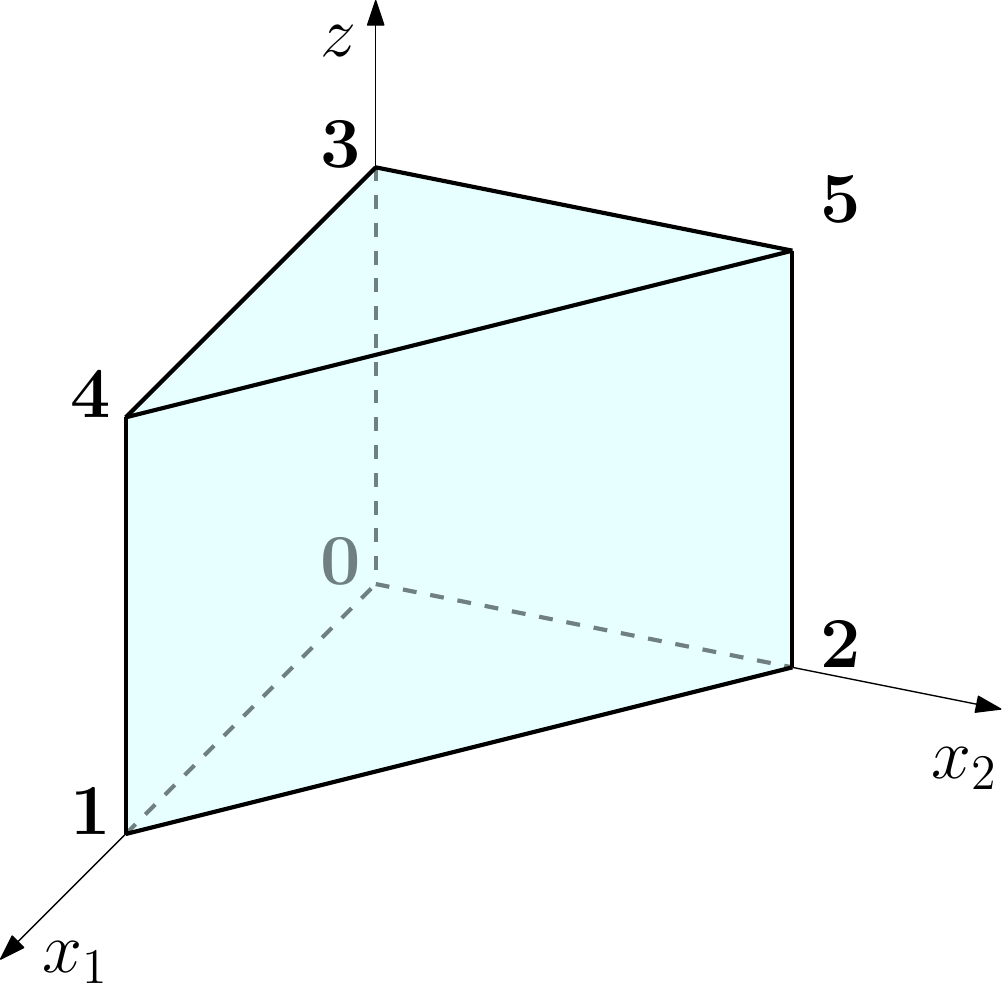}
\caption{Master prism with numbered vertices.}
\label{fig:MasterPrism}
\end{center}
\end{figure}

The master prism is shown in Figure \ref{fig:MasterPrism} in the $(x,z)=(x_1,x_2,z)$ space.
It is the Cartesian product of a triangle and a segment.
More specifically, it is $\{x\in\R^2:x_1>0,x_2>0,x_1+x_2<1\}\times(0,1)$.
Here, $x$ represents the triangle element coordinates, while $z$ represents the 1D segment element coordinate.


The prism has both quadrilateral and triangle faces. 
Similarly, there are two types of edges. 
Those edges which are adjacent to both a quadrilateral face and a triangle face are called \textit{mixed edges}, while those edges only shared by quadrilateral faces are simply called \textit{quadrilateral edges}. 
These distinctions are important, and the form of the shape functions will differ for the different types of edges and faces.

Due to the Cartesian product structure, it is natural to consider the 2D affine coordinates for the triangle (dependent on $x$) and the 1D affine coordinates for the segment (dependent on $z$). 
For this master element they are
\begin{equation}
	\begin{gathered}
		\nu_0(x)=1-x_1-x_2\,,\qquad\nu_1(x)=x_1\,,\qquad\nu_2(x)=x_2\,,\\
		\mu_0(z)=1-z\,,\qquad\quad\mu_1(z)=z\,.
	\end{gathered}
\end{equation}
Their gradients in 3D are
\begin{equation}
	\begin{gathered}
		\nabla\nu_0(x)=\bigg(\begin{smallmatrix}-1\\[2pt]-1\\[2pt]0\end{smallmatrix}\bigg)\,,\qquad
			\nabla\nu_1(x)=\bigg(\begin{smallmatrix}1\\[2pt]0\\[2pt]0\end{smallmatrix}\bigg)\,,\qquad
				\nabla\nu_2(x)=\bigg(\begin{smallmatrix}0\\[2pt]1\\[2pt]0\end{smallmatrix}\bigg)\,,\\
		\nabla\mu_0(z)=\bigg(\begin{smallmatrix}0\\[2pt]0\\[2pt]-1\end{smallmatrix}\bigg)\,,\qquad\quad
			\nabla\mu_1(z)=\bigg(\begin{smallmatrix}0\\[2pt]0\\[2pt]1\end{smallmatrix}\bigg)\,.
	\end{gathered}
\end{equation}
These are important and are used explicitly or implicitly in many of the oncoming calculations.

As usual, there are natural relationships between vertices, edges and faces, and the affine coordinates.
The related coordinates are those which take the value $1$ at the given topological entity.
In fact, for the prism each vertex is linked to \textit{two} affine coordinates, one of them a 2D affine coordinate and the other a 1D affine coordinate.
Moreover, each edge is linked to \textit{one} affine coordinate.
For mixed edges it is a 1D affine coordinate, while for quadrilateral edges it is a 2D affine coordinate.
Lastly, the triangle faces are also linked to \textit{one} 1D affine coordinate.
For example, vertex $0$, $v_0=(0,0,0)$, is linked to the affine coordinates $\nu_0(x)$ and $\mu_0(z)$.
Meanwhile, mixed edge 01 is linked to the affine coordinate $\mu_0(z)$, while quadrilateral edge 03 is linked to the affine coordinate $\nu_0(x)$.
Finally, face 012 is linked to $\mu_0(z)$.

\subsubsection*{Exact Sequence}

%
The product structure of the prism suggests that the discrete polynomial exact sequence approximating \eqref{eq:3D_exact_sequence} is intimately related to the discrete sequences for the triangle and segment (see \eqref{eq:EStriangle} and \eqref{eq:ESsegment}).
Indeed, this is the case. 
The sequence is of the form
\begin{equation}
	W^{p,q} \xrightarrow{\,\,\nabla\,\,} Q^{p,q} \xrightarrow{\nabla\times} V^{p,q} \xrightarrow{\nabla\cdot} Y^{p,q} \,,
\end{equation}
where
\begin{equation}
	\begin{aligned}
	W^{p,q}&=\mathcal{P}^p\otimes\mathcal{P}^q=\mathcal{P}^p(x_1,x_2)\otimes\mathcal{P}^q(z)\,,\\
	Q^{p,q}&=(\mathcal{N}^p\otimes\mathcal{P}^q)\times(\mathcal{P}^p\otimes\mathcal{P}^{q-1})\,,\\
	V^{p,q}&=(\mathcal{RT}^p\otimes\mathcal{P}^{q-1})\times(\mathcal{P}^{p-1}\otimes\mathcal{P}^q)\,,\\
	Y^{p,q}&=\mathcal{P}^{p-1}\otimes\mathcal{P}^{q-1}\,.
	\end{aligned}
\end{equation}
Here, the spaces $\mathcal{N}^p$ and $\mathcal{RT}^p$ correspond to the $N=2$ case (see \eqref{eq:NedelecSpace} and	\eqref{eq:RaviartThomasSpace}), so that the space $\mathcal{N}^p\otimes\mathcal{P}^q=\{\phi E:\phi\in\mathcal{P}^q,E\in\mathcal{N}^p\}$ has two components and dimension $p(p+2)(q+1)$. The same applies to $\mathcal{RT}^p\otimes\mathcal{P}^{q-1}$ which has dimension $p(p+2)q$.

\subsection{\texorpdfstring{$H^1$}{H1} Shape Functions}
It will be clear that all shape functions lie in $\mathcal{P}^p\otimes\mathcal{P}^q$, which has dimension $\frac{1}{2}(p+2)(p+1)(q+1)$. 
Moreover, a judicious count of the shape functions constructed in this section coincides with that dimension, ensuring that the span of these is precisely $\mathcal{P}^{p}\otimes\mathcal{P}^q$.

Notice that for the triangle there are $H^1$ vertex, edge \textit{and} face shape functions, while for the segment there are $H^1$ vertex and edge functions. 
The six possible tensor products of these will precisely give the $H^1$ shape functions for the prism.
The vanishing properties are naturally inherited from each of the components of the tensor product structure of the shape functions, so they follow easily.

\subsubsection {\texorpdfstring{$H^1$}{H1} Vertices}
\label{sec:PrismH1vertices}

As usual with Cartesian product structures, the vertex functions are simply the product of the affine coordinates associated to the given vertex. 
Hence, they are the tensor product of lower dimensional vertex functions which inherit all the desired vanishing properties and the decays along adjacent faces to the vertex.

The vertex shape functions and their gradients are
\begin{equation} 
	\begin{aligned}	
		\phi^\mathrm{v}(x,z)&=\nu_a(x)\mu_c(z)\,,\\
		\nabla\phi^\mathrm{v}(x,z)&=\mu_c(z)\nabla\nu_a(x)+\nu_a(x)\nabla\mu_c(z)\,,
	\end{aligned}		
\end{equation}
where $a=0,1,2$ and $c=0,1$. 
Notice $c=0$ represents the ``bottom'' face (vertices 0, 1 and 2), while $c=1$ represents the ``top'' face (vertices 3, 4 and 5).
Also $\nu_a$ represents vertices $a$ and $a+3$. 
There is a total of $6$ vertex functions (one for each vertex).



\subsubsection {\texorpdfstring{$H^1$}{H1} Edges}
 
\paragraph{\texorpdfstring{$H^1$}{H1} Mixed Edges.} 
These functions are the tensor product of $H^1$ triangle edge functions with 1D $H^1$ vertex shape functions. 
For instance, take the edge 01, which is in the bottom triangle face (associated to $\mu_0(z)$). 
The shape functions then take the form
\begin{equation*}
    \phi_i^\mathrm{e}(x)=\mu_0(z)\phi_i^\E(\vec{\nu}_{01}(x))
    	=\underbrace{\mu_0(z)(\nu_0(x)+\nu_1(x))^i}_{\text{blend}}
    		\underbrace{\phi_i^\E\Big(\underbrace{\textstyle{\frac{\nu_0(x)}{\nu_0(x)+\nu_1(x)}},
    			\textstyle{\frac{\nu_1(x)}{\nu_0(x)+\nu_1(x)}}}_{\text{project}}\Big)}_{\text{evaluate}}\,,
\end{equation*}
for $i=2,\ldots,p$.
The trace properties are naturally inherited along the edge, its adjacent faces (including the nonlinear decay in the triangle face when $\mu_0(z)=1$) and all the other faces where it is supposed to vanish.
Like the triangle, the projection is of the form
\begin{equation*}
	(x_1,x_2,z)\;\longmapsto\;\begin{matrix}(x_1,x_2,0)\\(\frac{x_1}{1-x_2},0,z)\end{matrix}
		\;\longmapsto\;(\textstyle{\frac{x_1}{1-x_2}},0,0)\,.
\end{equation*}
It consists of finding the intersection $P''=(\frac{x_1}{1-x_2},0,0)$ of the edge with the plane passing through the original point $P=(x_1,x_2,x_3)$ and the opposite disjoint quadrilateral edge. 
Alternatively, it can be interpreted as a two step projection, where it is first projected to an adjacent face, and then projected to the desired edge. 
This projection is shown in Figures \ref{fig:PrismProjectionTriangle} and \ref{fig:PrismProjectionQuad}.

\begin{figure}[!ht]
\begin{center}
\includegraphics[scale=0.6]{./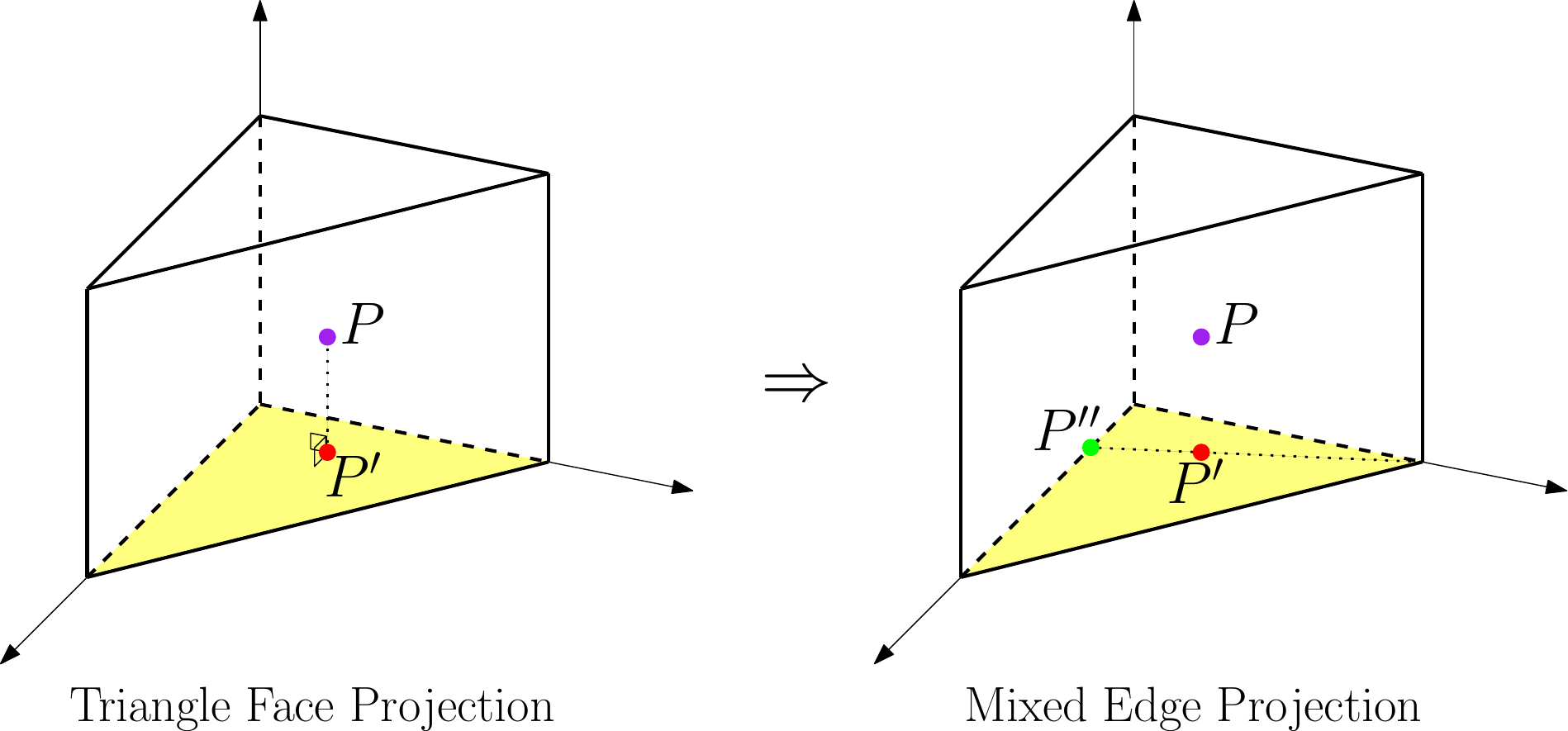}
\caption{Triangle face projection from $P$ to $P'$ followed by a mixed edge projection from $P'$ to $P''$.}
\label{fig:PrismProjectionTriangle}
\end{center}
\end{figure}

The shape functions with their gradient are
\begin{equation}
	\begin{aligned}	
		\phi_i^\mathrm{e}(x,z)&=\mu_c(z)\phi_i^\E(\vec{\nu}_{ab}(x))\,,\\
		\nabla\phi_i^\mathrm{e}(x,z)&=\mu_c(z)\nabla\phi_i^\E(\vec{\nu}_{ab}(x))+\phi_i^\E(\vec{\nu}_{ab}(x))\nabla\mu_c(z)\,,
	\end{aligned}\label{eq:PrismH1mixededges}
\end{equation}
where $i=2,\ldots,p$, $0\leq a<b\leq2$, and $c=0,1$. 
There are $p-1$ shape functions for each mixed edge, for a total of $6(p-1)$ mixed edge functions.

\paragraph{\texorpdfstring{$H^1$}{H1} Quadrilateral Edges.}
These are the tensor product of $H^1$ triangle vertex shape functions with the 1D $H^1$ edge functions. 
For edge 03, the shape functions are
\begin{equation*}
	\phi_i^\mathrm{e}(x,z)=\underbrace{\nu_0(x)}_{\text{blend}}
		\underbrace{\phi_i^\E(\underbrace{\mu_0(z),\mu_1(z)}_{\text{project}})}_{\text{evaluate}}\,,
\end{equation*}
for $i=2,\ldots,p$.
As expected, there is a linear edge blending towards both of the adjacent quadrilateral faces, given by $\nu_0(x)$, while all other trace properties are also inherited.
The implied projection is simply
\begin{equation*}
	(x_1,x_2,z)\;\longmapsto\;(\textstyle{\frac{x_1}{1-x_2}},0,z)\;\longmapsto\;(0,0,z)\,.
\end{equation*}
It consists of finding the intersection $P'''=(0,0,z)$ of the edge with the normal plane passing through the original point $P=(x_1,x_2,x_3)$. 
Alternaltively it can be interpreted as a two step projection.
This is illustrated in Figure \ref{fig:PrismProjectionQuad}.

\begin{figure}[!ht]
\begin{center}
\includegraphics[scale=0.6]{./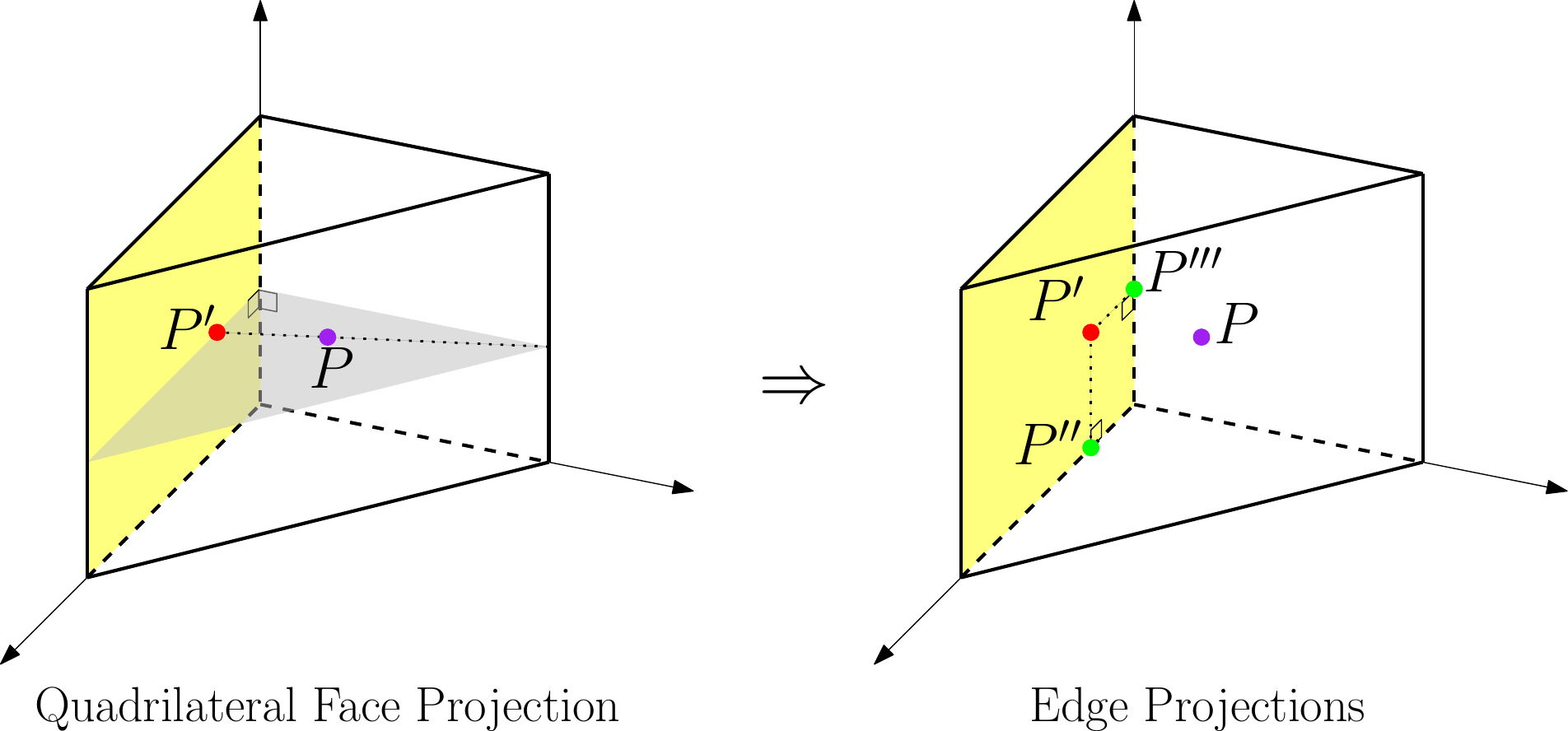}
\caption{Quadrilateral face projection from $P$ to $P'$ followed by a mixed edge projection from $P'$ to $P''$ and a quadrilateral edge projection from $P'$ to $P'''$.}
\label{fig:PrismProjectionQuad}
\end{center}
\end{figure}

The shape functions and their gradient are
\begin{equation}
	\begin{aligned}	
		\phi_i^\mathrm{e}(x,z)&=\nu_a(x)\phi_i^\E(\vec{\mu}_{01}(z))\,,\\
		\nabla\phi_i^\mathrm{e}(x,z)&=\nu_a(x)\nabla\phi_i^\E(\vec{\mu}_{01}(z))+\phi_i^\E(\vec{\mu}_{01}(z))\nabla\nu_a(x)\,,
	\end{aligned}\label{eq:PrismH1quadedges}
\end{equation}
where $i=2,\ldots,q$ and $a=0,1,2$. 
There are $q-1$ shape functions for each quadrilateral edge, for a total of $3(q-1)$ quadrilateral edge functions.


%

\subsubsection{\texorpdfstring{$H^1$}{H1} Faces} 

\paragraph{\texorpdfstring{$H^1$}{H1} Triangle Faces.} 
These are tensor products of $H^1$ triangle face bubbles and 1D $H^1$ vertex shape functions. 
For instance, for face 012 the shape functions are
\begin{equation*}
	\phi_{ij}^\mathrm{f}(x,z)=\underbrace{\mu_0(z)}_{\text{blend}}
		\underbrace{\phi_{ij}^\Tri(\underbrace{\nu_0(x),\nu_1(x),\nu_2(x)}_{\text{project}})}_{\text{evaluate}}\,,
\end{equation*}
for $i\geq2$ and $j\geq1$.
The trace properties are trivially inherited from each of the components.
The triangle face projection is illustrated in Figure \ref{fig:PrismProjectionTriangle} and consists of finding the intersection $P'=(x_1,x_2,0)$ of the face with the normal line passing through the original point $P=(x_1,x_2,x_3)$.

In general, the shape functions and their gradient are
\begin{equation}
	\begin{aligned}	
		\phi_{ij}^\mathrm{f}(x,z)&=\mu_c(z)\phi_{ij}^\Tri(\vec{\nu}_{012}(x))\,,\\
		\nabla\phi_{ij}^\mathrm{f}(x,z)&=\mu_c(z)\nabla\phi_{ij}^\Tri(\vec{\nu}_{012}(x))
			+\phi_{ij}^\Tri(\vec{\nu}_{012}(x))\nabla\mu_c(z)\,,
	\end{aligned}\label{eq:PrismH1TriFace}
\end{equation}
where $i\geq2$, $j\geq1$, $n=i+j=3,\ldots,p$, and $c=0,1$. 
As with any $H^1$ triangle face, there are $\frac{1}{2}(p-1)(p-2)$ shape functions per face, for a total of $(p-1)(p-2)$ triangle face  functions.



\paragraph{\texorpdfstring{$H^1$}{H1} Quadrilateral Faces.} 
The quadrilateral face functions are simply the product $H^1$ triangle edge functions and 1D $H^1$ edge functions, so they conveniently fall into the general definition of $\phi_{ij}^\square$.
For face 0143, they take the form
\begin{equation*}
	\phi_{ij}^\mathrm{f}(x,z)=\phi_{ij}^\square(\vec{\nu}_{01}(x),\vec{\mu}_{01}(z))
		=\underbrace{(\nu_0(x)+\nu_1(x))^i}_{\text{blend}}
			\underbrace{\phi_{ij}^\square\Big(\underbrace{\textstyle{\frac{1}{\nu_0(x)+\nu_1(x)}}
				\vec{\nu}_{01}(x),\vec{\mu}_{01}(z)}_{\text{project}}\Big)}_{\text{evaluate}}\,,
\end{equation*}
where $i=2,\ldots,p$ and $j=2,\ldots,q$.
Clearly, the desired trace properties are inherited.
The implied projection, already illustrated in Figure \ref{fig:PrismProjectionQuad}, consists of finding the intersection $P'=(\frac{x_1}{1-x_2},0,z)$ of the face with the projecting line lying in the horizontal plane and passing through the original point $P=(x_1,x_2,x_3)$ and the opposite disjoint quadrilateral edge. 

The shape functions and their gradient are
\begin{equation}
		\phi_{ij}^\mathrm{f}(x,z)=\phi_{ij}^\square(\vec{\nu}_{ab}(x),\vec{\mu}_{01}(z))\,,\qquad\quad
		\nabla\phi_{ij}^\mathrm{f}(x,z)=\nabla\phi_{ij}^\square(\vec{\nu}_{ab}(x),\vec{\mu}_{01}(z))\,,
\end{equation}
where $i=2,\ldots,p$, $j=2,\ldots,q$, and $0\leq a<b\leq2$. 
There are $(p-1)(q-1)$ shape functions per face, leading to a total of $3(p-1)(q-1)$ quadrilateral face functions.



\subsubsection{\texorpdfstring{$H^1$}{H1} Interior Bubbles} 
The $H^1$ bubble functions are the tensor product of the $H^1$ triangle face functions and 1D $H^1$ edge functions.
Due to their structure, the zero trace properties are trivially satisfied along the whole boundary.

The interior bubbles and their gradient are
\begin{equation}
	\begin{aligned}	
		\phi_{ijk}^\mathrm{b}(x,z)&=\phi_{ij}^\Tri(\vec{\nu}_{012}(x))\phi_k^\E(\vec{\mu}_{01}(z))\,,\\
		\nabla\phi_{ijk}^\mathrm{b}(x,z)&=\phi_k^\E(\vec{\mu}_{01}(z))\nabla\phi_{ij}^\Tri(\vec{\nu}_{012}(x))
			+\phi_{ij}^\Tri(\vec{\nu}_{012}(x))\nabla\phi_k^\E(\vec{\mu}_{01}(z))\,,
	\end{aligned}
\end{equation}
where $i\geq2$, $j\geq1$, $n=i+j=3,\ldots,p$, and $k=2,\ldots,q$. 
There are a total of $\frac{1}{2}(p-1)(p-2)(q-1)$ bubble shape functions in $H^1$.



\subsection{\texorpdfstring{$H(\mathrm{curl})$}{Hcurl} Shape Functions}

The linearly independent shape functions presented here are shown to belong and span the $H(\mathrm{curl})$ conforming space $(\mathcal{N}^p\otimes\mathcal{P}^q)\times(\mathcal{P}^p\otimes\mathcal{P}^{q-1})$, which has dimension $p(p+2)(q+1)+\frac{1}{2}(p+2)(p+1)q$.


These shape functions, as expected, are composed of combinations of $H^1$ and $H(\mathrm{curl})$ components. 
Intuitively, they involve the affine coordinates and at least $E_i^\E$, $E_{ij}^\Tri$ and $E_{ij}^\square$.
All shape functions continue to respect the logic of projecting, evaluating and blending, and for a given topological entity, the projections are the same as those in $H^1$ (see Figures \ref{fig:PrismProjectionTriangle} and \ref{fig:PrismProjectionQuad}).


\subsubsection{\texorpdfstring{$H(\mathrm{curl})$}{Hcurl} Edges}

\paragraph{\texorpdfstring{$H(\mathrm{curl})$}{Hcurl} Mixed Edges.} 
These are tensor products of triangle $H(\text{curl})$ edge functions and 1D $H^1$ vertex functions.
For example, take edge 01. 
Note that $E_i^\E(\vec{\nu}_{01}(x))$ in three dimensions is a three component vector whose last component is zero, since it is independent of the $z$ coordinate. 
Indeed, if considered in \textit{two} dimensions, it is just the well defined triangle $H(\text{curl})$ edge function. 
As such, it is an element of $\mathcal{N}^p$ (in two dimensions). 
Due to this important fact, the first two components of $E_i^\E$ are denoted by $(E_i^\E(\vec{\nu}_{01}(x)))_x\in\mathcal{N}^p$. 
For this edge, the shape functions are
\begin{equation*}
	\begin{aligned}
		E_i^\mathrm{e}(x,z)&=\mu_0(z)E_i^\E(\vec{\nu}_{01}(x))
				=\begin{pmatrix}\mu_0(z)(E_i^\E(\vec{\nu}_{01}(x)))_x\\0\end{pmatrix}
					\in(\mathcal{N}^p\otimes\mathcal{P}^1)\times\{0\}\\
		  &=\underbrace{\mu_0(z)(\nu_0(x)+\nu_1(x))^{i+2}}_{\text{blend}}
    		\underbrace{E_i^\E\Big(\underbrace{\textstyle{\frac{1}{\nu_0(x)+\nu_1(x)}\vec{\nu}_{01}(x)}}_{\text{project}}
    			\Big)}_{\text{evaluate}}\,,
	\end{aligned}
\end{equation*}
for $i=0,\ldots,p-1$.
Hence, $E_i^\mathrm{e}\in(\mathcal{N}^p\otimes\mathcal{P}^q)\times(\mathcal{P}^p\otimes\mathcal{P}^{q-1})$.
For the trace properties, note that over faces 1254 and 0253 both tangential components are zero, since the $z$ component is zero and the tangent to edges 12 and 02 is zero by inheritance of $E_i^\E(\vec{\nu}_{01}(x))$.
In the top face, the tangent is also zero because $\mu_0(z)$ is zero there.
Finally, by construction, over face 012 its tangent is precisely the triangle $H(\text{curl})$ edge shape function $(E_i^\E(\vec{\nu}_{01}(x)))_x$, while over face 0143 its tangent behaves like a quadrilateral $H(\text{curl})$ edge function $\mu_0(z)(E_i^\E(\vec{\mu}_{01}(x_1)))_x$.

Now, the general shape functions and their curl are
\begin{equation}
	\begin{aligned}	
		E_i^\mathrm{e}(x,z)&=\mu_c(z)E_i^\E(\vec{\nu}_{ab}(x))\,,\\
		\nabla\times E_i^\mathrm{e}(x,z)&=\mu_c(z)\nabla\times E_i^\E(\vec{\nu}_{ab}(x))
			+\nabla\mu_c(z)\times E_i^\E(\vec{\nu}_{ab}(x))\,,
	\end{aligned}
\end{equation}
with $i=0,\ldots,p-1$, $0\leq a<b\leq2$ and $c=0,1$. 
There are $p$ shape functions for each mixed edge, giving a total of $6p$ mixed edge functions.

\paragraph{\texorpdfstring{$H(\mathrm{curl})$}{Hcurl} Quadrilateral Edges.} 
These are tensor products of triangle $H^1$ vertex shape functions and $E_i^\E(\vec{\mu}_{01}(z))$, which act as 1D $H(\mathrm{curl})$ functions (even though these do not formally exist for the segment element).
For instance, take edge 03. 
Now, $E_i^\E(\vec{\mu}_{01}(z))$ in three dimensions is a three component vector whose first two components are zero, since it is only dependent on the $z$ coordinate. 
Indeed, if considered in \textit{one} dimension, it is just a segment $L^2$ function belonging to $\mathcal{P}^{q-1}$. 
This last component of $E_i^\E$ is denoted by $(E_i^\E(\vec{\mu}_{01}(z)))_z\in\mathcal{P}^{q-1}$. 
For this edge, the shape functions are
\begin{equation*}
	E_i^\mathrm{e}(x,z)=\underbrace{\nu_0(x)}_{\text{blend}}
			\underbrace{E_i^\E(\underbrace{\vec{\mu}_{01}(z)}_{\text{project}})}_{\text{evaluate}}
		=\begin{pmatrix}0\\0\\\nu_0(x)(E_i^\E(\vec{\mu}_{01}(z)))_{z}\end{pmatrix}
			\in\{0\}\times\{0\}\times(\mathcal{P}^1\otimes\mathcal{P}^{q-1})\,,
\end{equation*}
for $i=0,\ldots,q-1$.
Clearly $E_i^\mathrm{e}\in(\mathcal{N}^p\otimes\mathcal{P}^q)\times(\mathcal{P}^p\otimes\mathcal{P}^{q-1})$.
The trace properties follow easily for the triangle faces, since the vector field points normal to those faces, while in the nonadjacent quadrilateral face 1254 it is zero due to $\nu_0(x)$ being zero.
In the adjacent faces, $E_i^\E(\vec{\mu}_{01}(z))$ is unaffected by the restictions (it is already tangent to the faces and independent of $x$), so the tangential trace is a quadrilateral $H(\text{curl})$ edge function simply because the restriction of $\nu_0(x)$ is a linear blending function over that face.

In view of \eqref{eq:Hcurl1Dspecialcase}, the shape functions and their curl are
\begin{equation}
		E_i^\mathrm{e}(x,z)=\nu_a(x)E_i^\E(\vec{\mu}_{01}(z))\,,\qquad\quad
		\nabla\times E_i^\mathrm{e}(x,z)=\nabla\nu_a(x)\times E_i^\E(\vec{\mu}_{01}(z))\,,
\end{equation}
with $i=0,\ldots,q-1$, and $a=0,1,2$. 
There are $q$ shape functions for each quadrilateral edge, for a total of $3q$ quadrilateral edge functions.

\subsubsection{\texorpdfstring{$H(\mathrm{curl})$}{Hcurl} Faces}

\paragraph{\texorpdfstring{$H(\mathrm{curl})$}{Hcurl} Triangle Faces.} 
These are tensor products of triangle $H(\text{curl})$ face functions and $H^1$ vertex shape functions. 
As usual, there are two families. 
Proceeding as with $H(\text{curl})$ mixed edge functions, it follows the shape functions lie in $(\mathcal{N}^p\otimes\mathcal{P}^q)\times(\mathcal{P}^p\otimes\mathcal{P}^{q-1})$ and that the trace properties are satisfied.
As accustomed, there are $p(p-1)$ functions per triangle face, and a grand total of $2p(p-1)$ triangle face functions.

\subparagraph{Family I:} 
The shape functions and their curl are
\begin{equation}
	\begin{aligned}
		E_{ij}^{\mathrm{f}}(x,z)&=\mu_c(z)E_{ij}^\Tri(\vec{\nu}_{012}(x))\,,\\
		\nabla\times E_{ij}^{\mathrm{f}}(x,z)&=\mu_c(z)\nabla\times E_{ij}^\Tri(\vec{\nu}_{012}(x))
			+\nabla\mu_c(z)\times E_{ij}^\Tri(\vec{\nu}_{012}(x))\,,
	\end{aligned}
\end{equation}
for $i\geq0$, $j\geq1$, $n=i+j=1,\ldots,p-1$, and $c=0,1$. 
For every face, there are $\frac{1}{2}p(p-1)$ functions in this family.

\subparagraph{Family II:}
The shape functions and their curl are
\begin{equation}
	\begin{aligned}
		E_{ij}^{\mathrm{f}}(x,z)&=\mu_c(z)E_{ij}^\Tri(\vec{\nu}_{120}(x))\,,\\
		\nabla\times E_{ij}^{\mathrm{f}}(x,z)&=\mu_c(z)\nabla\times E_{ij}^\Tri(\vec{\nu}_{120}(x))
			+\nabla\mu_c(z)\times E_{ij}^\Tri(\vec{\nu}_{120}(x))\,,
	\end{aligned}
\end{equation}
for $i\geq0$, $j\geq1$, $n=i+j=1,\ldots,p-1$, and $c=0,1$.
Note the fact that the entries are $\vec{\nu}_{120}(x)$ as opposed to $\vec{\nu}_{012}(x)$.
For every face, there are $\frac{1}{2}p(p-1)$ functions in this family.

\paragraph{\texorpdfstring{$H(\mathrm{curl})$}{Hcurl} Quadrilateral Faces.} 
These are tensor products of $H(\text{curl})$ triangle edge functions and $H^1$ 1D edge shape functions, and viceversa, so that it is clear there are two families. 
Both fall naturally into the general definition of $E_{ij}^\square$.
There are a total of $p(q-1)+q(p-1)$ functions per quadrilateral face, for a grand total of $3(p(q-1)+q(p-1))$ quadrilateral face functions.

\subparagraph{Family I:}
To begin, take for example face 0143. 
The shape functions for the first family are of the form $E_{ij}^\square(\vec{\nu}_{01}(x),\vec{\mu}_{01}(z))$, so proceeding exactly as with the \textit{mixed} edges it is easily shown that it lies in $(\mathcal{N}^p\otimes\mathcal{P}^q)\times\{0\}$ and more importantly that the trace properties hold.

The general shape functions and their curl are
\begin{equation}
		E_{ij}^{\mathrm{f}}(x,z)=E_{ij}^\square(\vec{\nu}_{ab}(x),\vec{\mu}_{01}(z))\,,\qquad\quad
		\nabla\times E_{ij}^{\mathrm{f}}(x,z)=\nabla\times E_{ij}^\square(\vec{\nu}_{ab}(x),\vec{\mu}_{01}(z))\,,
\end{equation}
for $i=0,\ldots,p-1$, $j=2,\ldots,q$, and $0\leq a<b\leq2$. 
There are $p(q-1)$ shape functions in this family per quadrilateral face.

\subparagraph{Family II:}
Again, taking face 0143 as an example, the second family of shape functions has the form $E_{ij}^\square(\vec{\mu}_{01}(z),\vec{\nu}_{01}(x))$.
This time, proceeding as with \textit{quadrilateral} edges, one can show that the trace properties hold and that the functions lie in $\{0\}\times\{0\}\times(\mathcal{P}^p\otimes\mathcal{P}^{q-1})$.

The general shape functions and their curl are
\begin{equation}
		E_{ij}^{\mathrm{f}}(x,z)=E_{ij}^\square(\vec{\mu}_{01}(z),\vec{\nu}_{ab}(x))\,,\qquad\quad
		\nabla\times E_{ij}^{\mathrm{f}}(x,z)=\nabla\times E_{ij}^\square(\vec{\mu}_{01}(z),\vec{\nu}_{ab}(x))\,,
\end{equation}
for $i=0,\ldots,q-1$, $j=2,\ldots,p$, and $0\leq a<b\leq2$. 
Note the entries are permuted with respect to the first family.
There are $q(p-1)$ shape functions in this family per quadrilateral face.

\subsubsection{\texorpdfstring{$H(\mathrm{curl})$}{Hcurl} Interior Bubbles}

There are three families of interior bubble functions.
They involve tensor products of $\phi_i^\E$, $\phi_{ij}^\Tri$, $E_i^\E$ and  $E_{ij}^\Tri$.
The first two families have elements lying in $(\mathcal{N}^p\otimes\mathcal{P}^q)\times\{0\}$, while the last family has elements in $\{0\}\times\{0\}\times(\mathcal{P}^p\otimes\mathcal{P}^{q-1})$.
The trace properties are also satisfied by using similar arguments to those used for the face and edge functions. 
There is a grand total of $p(p-1)(q-1)+\frac{1}{2}(p-1)(p-2)q$ interior bubble functions.


\subparagraph{Family I:}
The shape functions and their curl are
\begin{equation}
	\begin{aligned}	
		E_{ijk}^\mathrm{b}(x,z)&=\phi_k^\E(\vec{\mu}_{01}(z))E_{ij}^\Tri(\vec{\nu}_{012}(x))\,,\\
		\nabla\times E_{ijk}^\mathrm{b}(x,z)&=\phi_k^\E(\vec{\mu}_{01}(z))\nabla\times E_{ij}^\Tri(\vec{\nu}_{012}(x))
			+\nabla\phi_k^\E(\vec{\mu}_{01}(z))\times E_{ij}^\Tri(\vec{\nu}_{012}(x))\,,
	\end{aligned}
\end{equation}
with $i\geq0$, $j\geq1$, $n=i+j=1,\ldots,p-1$, and $k=2,\ldots,q$. 
The family has $\frac{1}{2}p(p-1)(q-1)$ functions.

\subparagraph{Family II:}
The shape functions and their curl are
\begin{equation}
	\begin{aligned}	
		E_{ijk}^\mathrm{b}(x,z)&=\phi_k^\E(\vec{\mu}_{01}(z))E_{ij}^\Tri(\vec{\nu}_{120}(x))\,,\\
		\nabla\times E_{ijk}^\mathrm{b}(x,z)&=\phi_k^\E(\vec{\mu}_{01}(z))\nabla\times E_{ij}^\Tri(\vec{\nu}_{120}(x))
			+\nabla\phi_k^\E(\vec{\mu}_{01}(z))\times E_{ij}^\Tri(\vec{\nu}_{120}(x))\,,
	\end{aligned}
\end{equation}
with $i\geq0$, $j\geq1$, $n=i+j=1,\ldots,p-1$, and $k=2,\ldots,q$. 
The only difference with the first family is that the permutation $\vec{\nu}_{120}(x)$ is used instead of $\vec{\nu}_{012}(x)$.
The family has $\frac{1}{2}p(p-1)(q-1)$ functions.

\subparagraph{Family III:}
Due to \eqref{eq:Hcurl1Dspecialcase}, the shape functions and their curl are
\begin{equation}
	\begin{aligned}	
		E_{ijk}^\mathrm{b}(x,z)&=\phi_{ij}^\Tri(\vec{\nu}_{012}(x))E_k^\E(\vec{\mu}_{01}(z))\,,\\
		\nabla\times E_{ijk}^\mathrm{b}(x,z)&=\nabla\phi_{ij}^\Tri(\vec{\nu}_{012}(x))\times E_k^\E(\vec{\mu}_{01}(z))\,,
	\end{aligned}
\end{equation}
with $i\geq2$, $j\geq1$, $n=i+j=3,\ldots,p$, and $k=0,\ldots,q-1$. 
The family has $\frac{1}{2}(p-1)(p-2)q$ functions.

\subsection{\texorpdfstring{$H(\mathrm{div})$}{Hdiv} Shape Functions}
The linearly independent shape functions presented here are shown to belong and span the $H(\mathrm{div})$ conforming space $(\mathcal{RT}^p\otimes\mathcal{P}^{q-1})\times(\mathcal{P}^{p-1}\otimes\mathcal{P}^q)$, which has dimension $p(p+2)q+\frac{1}{2}p(p+1)(q+1)$.

As expected, the shape functions are composed of combinations of the lower dimensional $H^1$, $H(\mathrm{curl})$ and $H(\mathrm{div})$ components. 
Intuitively, they involve the affine coordinates and at least $V_{ij}^\Tri$ and $V_{ij}^\square$.
Again, the shape functions respect the logic of projecting, evaluating and blending, and for a given face, the projections are the same as those in $H^1$ (see Figures \ref{fig:PrismProjectionTriangle} and \ref{fig:PrismProjectionQuad}).




\subsubsection{\texorpdfstring{$H(\mathrm{div})$}{Hdiv} Faces}

\paragraph{\texorpdfstring{$H(\mathrm{div})$}{Hdiv} Triangle Faces.} 
These are the tensor product of $H(\mathrm{div})$ triangle face functions and 1D $H^1$ vertex functions. 
The $H(\mathrm{div})$ triangle functions are of the form $V_{ij}^\Tri(\vec{\nu}_{012}(x))$, so by \eqref{eq:HdivtriangleRemark}, it follows that their first two components are zero. 
The last component, which corresponds to the normal trace, by construction is an $L^2$ triangle face function $[P_i,P_j^\alpha](\vec{\nu}_{012}(x))$, which is known to lie in $\mathcal{P}^{p-1}$.
For face 012, the shape functions have the form $\mu_0(z)V_{ij}^\Tri(\vec{\nu}_{012}(x))$, meaning that they lie in $\{0\}\times\{0\}\times(\mathcal{P}^{p-1}\otimes\mathcal{P}^1)$.
The trace properties also follow since the function points normal to the triangle faces, meaning that they are tangent to the quadrilateral faces, an as a result have zero normal trace along those faces.
Meanwhile at the opposite triangle face 345, the function is also zero because $\mu_0(z)$ is zero there.
Lastly, at the face itself, $\mu_0(z)$ is unity, so along the face the normal component is precisely an $L^2$ triangle face function.

Due to \eqref{eq:HdivtriangleRemark}, the triangle shape functions and their divergence are
\begin{equation}
		V_{ij}^{\mathrm{f}}(x,z)=\mu_c(z)V_{ij}^\Tri(\vec{\nu}_{012}(x))\,,\qquad\quad
		\nabla\cdot V_{ij}^{\mathrm{f}}(x,z)=\nabla\mu_c(z)\cdot V_{ij}^\Tri(\vec{\nu}_{012}(x))\,,
\end{equation}
where $i\geq0$, $j\geq0$, $n=i+j=0,\ldots,p-1$ and $c=0,1$. There are $\frac{1}{2}p(p+1)$ shape functions per triangle face, leading to a total of $p(p+1)$ triangle face functions.

\paragraph{\texorpdfstring{$H(\mathrm{div})$}{Hdiv} Quadrilateral Faces.} 
These are cross products of $H(\mathrm{curl})$ edge functions. 
They naturally fall into the definition of $V_{ij}^\square$.
For instance, take face 0143.
Now, $E_i^\E(\vec{\nu}_{01}(x))$ in 3D is a three component vector whose last component is zero, since it is independent of the $z$ coordinate. 
Similarly, $E_i^\E(\vec{\mu}_{01}(z))$ in 3D is a three component vector whose first two components are zero, since it is only dependent on the $z$ coordinate.
It then makes sense to speak of $(E_i^\E(\vec{\nu}_{01}(x)))_x\in\mathcal{N}^p$ and $(E_i^\E(\vec{\mu}_{01}(z)))_z\in\mathcal{P}^{q-1}$. 
Moreover, as discussed in \S\ref{sec:TriangleHdiv}, $(\begin{smallmatrix}0&1\\-1&0\end{smallmatrix})(E_i^\E(\vec{\nu}_{01}(x)))_x\in\mathcal{RT}^p$.
For this face, the shape functions can be written as
\begin{equation*}
	\begin{aligned}
	V_{ij}^\mathrm{f}(x,z)&=V_{ij}^\square(\vec{\nu}_{01}(x),\vec{\mu}_{01}(z))
		=\begin{pmatrix}(E_i^\E(\vec{\nu}_{01}(x)))_x\\0\end{pmatrix}
		\times \begin{pmatrix}0\\(E_i^\E(\vec{\mu}_{01}(z)))_z\end{pmatrix}\\
				&=\begin{pmatrix}(E_i^\E(\vec{\mu}))_z
					\Big(\begin{smallmatrix}0&1\\-1&0\end{smallmatrix}\Big)(E_i^\E(\vec{\nu}_{01}))_x\\0\end{pmatrix}
					\in(\mathcal{RT}^p\otimes\mathcal{P}^{q-1})\times\{0\}\,,
	\end{aligned}
\end{equation*}
for $i\geq0$ and $j\geq0$. 
Hence, $V_{ij}^\mathrm{f}(x,z)\in(\mathcal{RT}^p\otimes\mathcal{P}^{q-1})\times(\mathcal{P}^{p-1}\otimes\mathcal{P}^q)$.
The trace properties are also satisfied because $E_i^\E(\vec{\mu}_{01}(z))$ is normal to the triangle faces, so $V_{ij}^\mathrm{f}(x,z)$ must be tangential, and as a result has zero normal trace.
At the quadrilateral faces one only has to look at the tangential component of $E_i^\E(\vec{\nu}_{01}(x))$ along the mixed edges, and it follows that the normal traces of $V_{ij}^\mathrm{f}(x,z)$ to the faces 1254 and 0253 are zero, while at face 0143 it takes the form of an $L^2$ quadrilateral face function.

The quadrilateral face functions and their divergence are
\begin{equation}
		V_{ij}^{\mathrm{f}}(x,z)=V_{ij}^\square(\vec{\nu}_{ab}(x),\vec{\mu}_{01}(z))\,,\qquad\quad
		\nabla\cdot V_{ij}^{\mathrm{f}}(x,z)=\nabla\cdot V_{ij}^\square(\vec{\nu}_{ab}(x),\vec{\mu}_{01}(z))\,,
\end{equation}
where $i=0,\ldots,p-1$, $j=0,\ldots,q-1$, and $0\leq a<b\leq2$. 
There are $pq$ shape functions per quadrilateral face, for a total of $3pq$ quadrilateral face functions.

\subsubsection{\texorpdfstring{$H(\mathrm{div})$}{Hdiv} Interior Bubbles}

There are three families of $H(\mathrm{div})$ bubble functions. 
Two of them are closely related to $V_{ij}^\square$ and have elements in $(\mathcal{RT}^p\otimes\mathcal{P}^{q-1})\times\{0\}$, while the third family is related to $V_{ij}^\Tri$ an has elements in the space $\{0\}\times\{0\}\times(\mathcal{P}^{p-1}\otimes\mathcal{P}^q)$.
Naturally, the trace properties are satisfied by using similar arguments to those used for the face functions. 
There is a grand total of $p(p-1)q+\frac{1}{2}p(p+1)(q-1)$ interior bubble functions.


\subparagraph{Family I:}
Using \eqref{eq:Hcurl1Dspecialcase}, the shape functions and their divergence are
\begin{equation}
	\begin{aligned}	
		V_{ijk}^\mathrm{b}(x,z)&=E_{ij}^\Tri(\vec{\nu}_{012}(x))\times E_k^\E(\vec{\mu}_{01}(z))\,,\\
		\nabla\cdot V_{ijk}^\mathrm{b}(x,z)&=E_k^\E(\vec{\mu}_{01}(z))\cdot\Big(\nabla\times E_{ij}^\Tri(\vec{\nu}_{012}(x))\Big)\,,
	\end{aligned}
\end{equation}
where $i\geq0$, $j\geq1$, $n=i+j=1,\ldots,p-1$, and $k=0,\ldots,q-1$. 
The family has $\frac{1}{2}p(p-1)q$ functions.

\subparagraph{Family II:}
Using \eqref{eq:Hcurl1Dspecialcase}, the shape functions and their divergence are
\begin{equation}
	\begin{aligned}	
		V_{ijk}^\mathrm{b}(x,z)&=E_{ij}^\Tri(\vec{\nu}_{120}(x))\times E_k^\E(\vec{\mu}_{01}(z))\,,\\
		\nabla\cdot V_{ijk}^\mathrm{b}(x,z)&=E_k^\E(\vec{\mu}_{01}(z))\cdot\Big(\nabla\times E_{ij}^\Tri(\vec{\nu}_{120}(x))\Big)\,,
	\end{aligned}
\end{equation}
where $i\geq0$, $j\geq1$, $n=i+j=1,\ldots,p-1$, and $k=0,\ldots,q-1$.
The only difference with the first family is that the permutation $\vec{\nu}_{120}(x)$ is used instead of $\vec{\nu}_{012}(x)$.
The family has $\frac{1}{2}p(p-1)q$ functions.

\subparagraph{Family III:}
Due to \eqref{eq:HdivtriangleRemark}, the shape functions and their divergence are
\begin{equation}
	\begin{aligned}	
		V_{ijk}^\mathrm{b}(x,z)&=\phi_k^\E(\vec{\mu}_{01}(z))V_{ij}^\Tri(\vec{\nu}_{012}(x))\,,\\
		\nabla\cdot V_{ijk}^\mathrm{b}(x,z)&=\nabla\phi_k^\E(\vec{\mu}_{01}(z))\cdot V_{ij}^\Tri(\vec{\nu}_{012}(x))\,,
	\end{aligned}
\end{equation}
with $i\geq0$, $j\geq0$, $n=i+j=0,\ldots,p-1$, and $k=2,\ldots,q$. 
The family has $\frac{1}{2}p(p+1)(q-1)$ functions.

\subsection{\texorpdfstring{$L^2$}{L2} Shape Functions}
In this case, the space $\mathcal{P}^{p-1}\otimes\mathcal{P}^q$ is spanned by the $\frac{1}{2}p(p+1)q$ linearly independent shape functions.

\subsubsection{\texorpdfstring{$L^2$}{L2} Interior}

These are the tensor products of $L^2$ triangle functions and $L^2$ edge functions. 
They are
\begin{equation}
 \begin{aligned}
	 \psi_{ijk}^\mathrm{b}(x,z)&=[P_i](\nu_0(x),\nu_1(x))[P_j^{2i+1}](\nu_0(x)+\nu_1(x),\nu_2(x))[P_k](\mu_0(z),\mu_1(z))\\
		 &=[P_i,P_j^{2i+1}](\vec{\nu}_{012}(x))[P_k](\vec{\mu}_{01}(z))
		 	(\nabla\nu_1(x)\!\!\times\!\!\nabla\nu_2(x))\!\cdot\!\nabla\mu_1(z)\,,
	\end{aligned}
\end{equation}
where $i\geq0$, $j\geq0$, $n=i+j=0,\ldots,p-1$ and $k=0,\ldots,q-1$. 
There is a total of $\frac{1}{2}p(p+1)q$ interior functions.



\subsection{Orientations}
\label{sec:PrismOrientations}

\begin{figure}[!ht]
\begin{center}
\includegraphics[scale=0.5]{./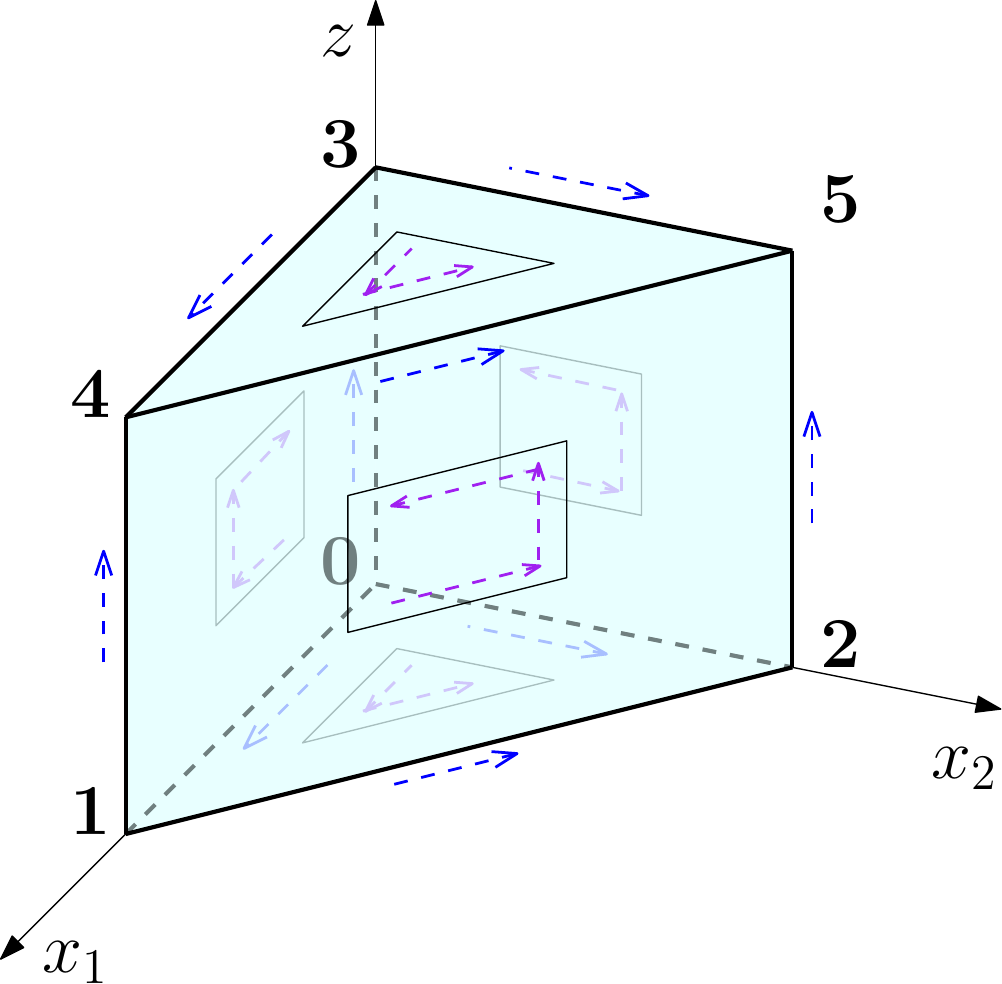}
\caption{Master prism with numbered vertices \textit{and} local edge and face orientations.}
\label{fig:MasterPrismOrientations}
\end{center}
\end{figure}

To construct orientation embedded shape functions for the prism, it is recommended to have read \S\ref{sec:HexaOrientations} and \S\ref{sec:TetOrientations}.
The predefined \textit{local} edge and face orientations for the prism are illustrated in Figure \ref{fig:MasterPrismOrientations}.
They represent the $\oo=0$ case.
The task at hand is to find the associated \textit{locally ordered} tuples of affine coordinates representing those local orientations.
As usual, the key is being aware of the relationships between the vertices and the affine coordinates.
As examples take edges 01 and 03, and faces 012 and 0143.

For mixed edge 01, the vertices are $v_0$ and $v_1$, which are linked to $(\nu_0(x),\mu_0(z))$ and $(\nu_1(x),\mu_0(z))$ respectively.
The only difference is that $v_0$ is linked to $\nu_0(x)$, while $v_1$ is linked to $\nu_1(x)$.
The local orientation for edge 01 is represented by the local vertex-ordering $v_0\tdashto v_1$.
Therefore, the locally ordered pair for edge 01 is $\vec{\nu}_{01}(x)=(\nu_0(x),\nu_1(x))$.
The orientation embedded shape functions for mixed edge 01 are simply the usual shape functions, but with their respective ancillary operator and differential form (that is $\phi_i^\E$, $E_i^\E$, $\nabla\phi_i^\E$ and $\nabla\times E_i^\E$) being precomposed with $\sigma_\oo^\E$ and evaluated at the locally ordered pair.

For quadrilalteral edge 03, composed of vertices $v_0$ and $v_3$, the differing affine coordinates are $\mu_0(z)$ and $\mu_1(z)$ respectively.
Since the local vertex-ordering is $v_0\tdashto v_3$, it follows the locally ordered pair for this edge is $\vec{\mu}_{01}(z)$.
Then, the orientation embedded shape functions are constructed like those of mixed edge 01.
That is, precomposing the ancillary operators with $\sigma_\oo^\E$ and evaluating at the locally ordered pair.

For triangle face 012, composed of vertices $v_0$, $v_1$ and $v_2$, the differing affine coordinates are $\nu_0(x)$, $\nu_1(x)$ and $\nu_2(x)$ respectively.
The local vertex-ordering is $v_0\tdashto v_1\tdashto v_2$, so the locally ordered triplet for this edge is $\vec{\nu}_{012}(x)$.
Then, the orientation embedded shape functions are the usual shape functions but with the ancillary operators ($\phi_{ij}^\Tri$, $E_{ij}^\Tri$, $V_{ij}^\Tri$, $\nabla\phi_{ij}^\Tri$, $\nabla\times E_{ij}^\Tri$ and $\nabla\cdot V_{ij}^\Tri$) precomposed with $\sigma_\oo^\Tri$ and evaluated at the locally oriented triplet.

Finally, quadrilateral face 0143 has local vertex-ordering $v_0\tdashto v_1\tdashto v_4\tdashto v_3$, so one only needs to look at $v_0\tdashto v_1$ and $v_1\tdashto v_4$ as if they were edges.
This leads to the locally ordered pairs $\vec{\nu}_{01}(x)$ and $\vec{\mu}_{01}(z)$ respectively, so the locally ordered quadruple is $(\vec{\nu}_{01}(x),\vec{\mu}_{01}(z))$.
Again, the orientation embedded shape functions are simply the shape functions but with the ancillary operators ($\phi_{ij}^\square$, $E_{ij}^\square$, $V_{ij}^\square$, $\nabla\phi_{ij}^\square$, $\nabla\times E_{ij}^\square$ and $\nabla\cdot V_{ij}^\square$) precomposed with $\sigma_\oo^\square$ and evaluated at the locally oriented quadruple.

\newpage
\section{Pyramid}
\label{sec:Pyramid}

\begin{figure}[!ht]
\begin{center}
\includegraphics[scale=0.5]{./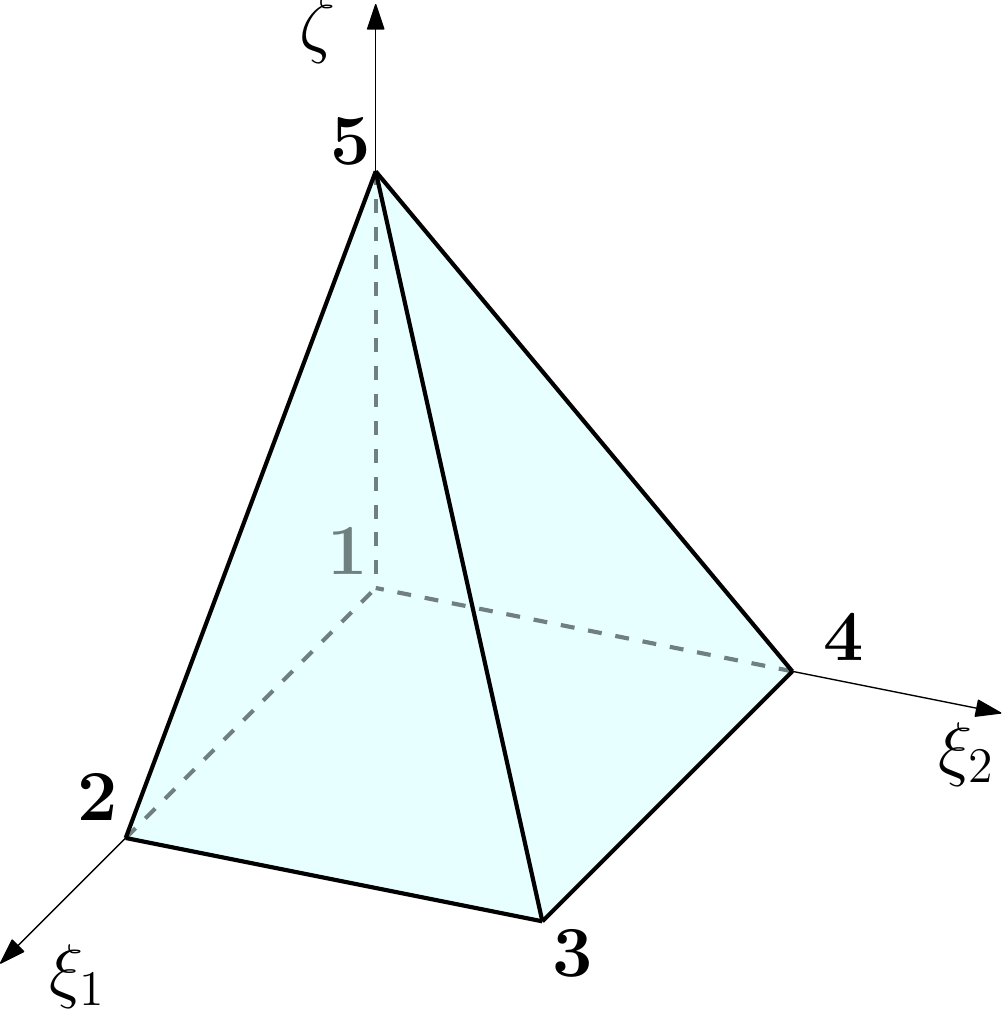}
\caption{Master pyramid with numbered vertices.}
\label{fig:MasterPyramid}
\end{center}
\end{figure}

The master pyramid is shown in Figure \ref{fig:MasterPyramid} in the $(\xi,\zeta)=(\xi_1,\xi_2,\zeta)$ space.
More specifically, the definition is $\{(\xi_1,\xi_2,\zeta)\in\R^3:\xi_1>0,\xi_2>0,\zeta>0,\xi_1+\zeta<1,\xi_2+\zeta<1\}$.

Clearly, the pyramid is neither a simplex nor a Cartesian product, but it captures features of both quadrilaterals and triangles.
Indeed, it has both quadrilateral and triangle faces. 
Similarly, it has two types of edges. 
Those edges which are adjacent to both a quadrilateral face and a triangle face are called \textit{mixed edges}, while those edges only shared by triangle faces alone are called \textit{triangle edges}. 
These distinctions are fundamental, and the form of the shape functions will differ for the different types of edges and faces.

Due to the virtually unknown structure of the pyramid, at first it seems almost like an insurmountable task to be able to find representative functions that resemble affine coordinates for this 3D element. 
Surprisingly, there is in fact such a set (or rather set\textit{s}) of coordinates.
However, to reach that point, it is better to start by elementary means.
With this in mind, the idea is to separately analyze the affine coordinates of the quadrilateral face and the triangle faces.

In truth, nothing is getting in the way of explicitly computing the 2D triangle affine coordinates of each of the four triangle faces as described in \S\ref{sec:affinecoordinates}.
There turns out to be two independent sets of such coordinates which are
\begin{equation}
	\begin{gathered}
		\nu_0(\xi_1,\zeta)=1-\xi_1-\zeta\,,\qquad\nu_1(\xi_1,\zeta)=\xi_1\,,\qquad\nu_2(\zeta)=\zeta\,,\\
		\nu_0(\xi_2,\zeta)=1-\xi_2-\zeta\,,\qquad\nu_1(\xi_2,\zeta)=\xi_2\,,\qquad\nu_2(\zeta)=\zeta\,.
	\end{gathered}
	\label{eq:PyramidTriCoord}
\end{equation}
Indeed, the triplet $\vec{\nu}_{012}(\xi_1,\zeta)$ represents triangle faces 125 and 435, while the triplet $\vec{\nu}_{012}(\xi_2,\zeta)$ represents triangle faces 145 and 235.
Their gradient is
\begin{equation}
	\begin{gathered}
		\nabla\nu_0(\xi_1,\zeta)=\bigg(\begin{smallmatrix}-1\\[2pt]0\\[2pt]-1\end{smallmatrix}\bigg)\,,\qquad
			\nabla\nu_1(\xi_1,\zeta)=\bigg(\begin{smallmatrix}1\\[2pt]0\\[2pt]0\end{smallmatrix}\bigg)\,,\qquad
				\nabla\nu_2(\zeta)=\bigg(\begin{smallmatrix}0\\[2pt]0\\[2pt]1\end{smallmatrix}\bigg)\,,\\
		\nabla\nu_0(\xi_2,\zeta)=\bigg(\begin{smallmatrix}0\\[2pt]-1\\[2pt]-1\end{smallmatrix}\bigg)\,,\qquad
			\nabla\nu_1(\xi_2,\zeta)=\bigg(\begin{smallmatrix}0\\[2pt]1\\[2pt]0\end{smallmatrix}\bigg)\,,\qquad
				\nabla\nu_2(\zeta)=\bigg(\begin{smallmatrix}0\\[2pt]0\\[2pt]1\end{smallmatrix}\bigg)\,.
	\end{gathered}
	\label{eq:PyramidTriCoordGrad}
\end{equation}

Now, the quadrilateral face can undergo a similar treatment, resulting in the standard two sets of 1D affine coordinates,
\begin{equation*}
	\begin{gathered}
		\mu_0(\xi_1)=1-\xi_1\,,\quad\qquad\mu_1(\xi_1)=\xi_1\,,\\
		\mu_0(\xi_2)=1-\xi_2\,,\quad\qquad\mu_1(\xi_2)=\xi_2\,.
	\end{gathered}
\end{equation*}
However, these are convenient only when restricted to the 2D quadrilateral face, and not in 3D.
The reason is that they do \textit{not} act as blending functions to the \textit{faces}.
For example, $\mu_0(\xi_2)$ is unity at face 125, but it does \textit{not} vanish at the opposite face, which is face 435.
This inconvenience does not occur with the hexahedron or prism due to the Cartesian product structure of those elements.
Despite this setback, it is possible to fix this by considering \textit{scaled} coordinates which additionally depend on $\zeta$.
The sets of quadrilateral scaled 1D affine coordinates are
\begin{equation}
	\begin{gathered}
		\mu_0(\sxi)=1-\sxi\,,\quad\qquad\mu_1(\sxi)=\sxi\,,\\
		\mu_0(\sxii)=1-\sxii\,,\quad\qquad\mu_1(\sxii)=\sxii\,.
	\end{gathered}
	\label{eq:PyramidQuadCoord}
\end{equation}
These can be readily checked to act as \textit{face} blending functions between the opposite triangular faces, which is precisely what was desired.
Moreover, when restricted to the quadrilateral face, so $\zeta=0$, they coincide with the usual sets of affine coordinates for the 2D quadrilateral faces.
Their gradient is
\begin{equation}
	\begin{gathered}
		\nabla\mu_0(\sxi)=\frac{1}{(1-\zeta)^2}\bigg(\begin{smallmatrix}-(1-\zeta)\\[2pt]0\\[2pt]-\xi_1\end{smallmatrix}\bigg)\,,\quad\qquad
    	\nabla\mu_1(\sxi)=\frac{1}{(1-\zeta)^2}\bigg(\begin{smallmatrix}(1-\zeta)\\[2pt]0\\[2pt]\xi_1\end{smallmatrix}\bigg)\,,\\
		\nabla\mu_0(\sxii)=\frac{1}{(1-\zeta)^2}\bigg(\begin{smallmatrix}0\\[2pt]-(1-\zeta)\\[2pt]-\xi_2\end{smallmatrix}\bigg)\,,\quad\qquad
    	\nabla\mu_1(\sxii)=\frac{1}{(1-\zeta)^2}\bigg(\begin{smallmatrix}0\\[2pt](1-\zeta)\\[2pt]\xi_2\end{smallmatrix}\bigg)\,.
	\end{gathered}
	\label{eq:PyramidQuadCoordGrad}
\end{equation}

Lastly, the 1D affine coordinates associated to the nonquadrilateral (top) vertex and the perpendicularly projected point to the quadrilateral face are
\begin{equation}
		\mu_0(\zeta)=1-\zeta\,,\quad\qquad\mu_1(\zeta)=\zeta\,.
		\label{eq:PyramidZCoord}
\end{equation}
Their gradient is
\begin{equation}
		\nabla\mu_0(\zeta)=\bigg(\begin{smallmatrix}0\\[2pt]0\\[2pt]-1\end{smallmatrix}\bigg)\,,\quad\qquad
			\nabla\mu_1(\zeta)=\bigg(\begin{smallmatrix}0\\[2pt]0\\[2pt]1\end{smallmatrix}\bigg)\,.
		\label{eq:PyramidZCoordGrad}			
\end{equation}

With these tools in the arsenal, it is possible to find the desired 3D affine-like coordinates.
The first key observation is that each vertex in the quadrilateral face is associated to \textit{four} lower dimensional affine coordinates.
The associated coordinates are those which take the value $1$ at the given vertex.
For example, vertex $v_1=(0,0,0)$ is linked to the coordinates $\nu_0(\xi_1,\zeta)$, $\nu_0(\xi_2,\zeta)$, $\mu_0(\sxi)$ and $\mu_0(\sxii)$.
To find a global coordinate associated to any vertex, the idea is to combine these components such that they vanish at all disjoint edges and faces.
One possibility is to consider the product of all four coordinates.
However, this gives a high order function, which is somewhat inconsistent with what one would expect.
Hence, the global coordinate should look as ``simple'' as possible.
Fortunately, there is such a coordinate, which in fact has a dual interpretation with respect to its associated coordinates.
It is the product of a 1D scaled affine coordinate and the complementing 2D affine coordinate.
For vertex $v_0$, it would either be $\mu_0(\sxi)\nu_0(\xi_2,\zeta)$ or $\mu_0(\sxii)\nu_0(\xi_1,\zeta)$. 
These two interpretations coincide and define the pyramid affine-related coordinates.
For the nonquadrilateral vertex, $v_5=(0,0,1)$, there is an already existing affine-related coordinate which is merely $\nu_2(\zeta)=\mu_1(\zeta)$.
In summary, the pyramid affine-related coordinates are
\begin{equation}
	\begin{alignedat}{5}
		&\lambda_1(\xi,\zeta)&&=\mu_0(\sxi)\nu_0(\xi_2,\zeta)&&=\mu_0(\sxii)\nu_0(\xi_1,\zeta)
			&&=\textstyle{\frac{(1-\xi_1-\zeta)(1-\xi_2-\zeta)}{1-\zeta}}\,,\\
		&\lambda_2(\xi,\zeta)&&=\mu_1(\sxi)\nu_0(\xi_2,\zeta)&&=\mu_0(\sxii)\nu_1(\xi_1,\zeta)
			&&=\textstyle{\frac{\xi_1(1-\xi_2-\zeta)}{1-\zeta}}\,,\\
		&\lambda_3(\xi,\zeta)&&=\mu_1(\sxi)\nu_1(\xi_2,\zeta)&&=\mu_1(\sxii)\nu_1(\xi_1,\zeta)
			&&=\textstyle{\frac{\xi_1\xi_2}{1-\zeta}}\,,\\
		&\lambda_4(\xi,\zeta)&&=\mu_0(\sxi)\nu_1(\xi_2,\zeta)&&=\mu_1(\sxii)\nu_0(\xi_1,\zeta)
			&&=\textstyle{\frac{(1-\xi_1-\zeta)\xi_2}{1-\zeta}}\,,\\
		&\lambda_5(\zeta)&&=\nu_2(\zeta)&&=\mu_1(\zeta)&&=\zeta\,.
	\end{alignedat}
	\label{eq:PyramidAffineCoord}
\end{equation}
Their gradient is
\begin{equation}
	\begin{gathered}
    \nabla\lambda_1(\xi,\zeta)=\begin{pmatrix}\frac{-(1-\xi_2-\zeta)}{1-\zeta}\\\frac{-(1 -\xi_1-\zeta)}{1-\zeta}\\
        \frac{\xi_1\xi_2}{(1-\zeta)^2}-1\end{pmatrix}\,,\quad
    \nabla\lambda_2(\xi,\zeta)=\begin{pmatrix}\frac{(1-\xi_2-\zeta)}{1-\zeta}\\\frac{-\xi_1}{1-\zeta}\\
        \frac{-\xi_1\xi_2}{(1-\zeta)^2}\end{pmatrix}\,,\quad
    \nabla\lambda_3(\xi,\zeta)=\begin{pmatrix}\frac{\xi_2}{1-\zeta}\\\frac{\xi_1}{1-\zeta}\\
        \frac{\xi_1\xi_2}{(1-\zeta)^2}\end{pmatrix}\,,\\
    \nabla\lambda_4(\xi,\zeta)=\begin{pmatrix}\frac{-\xi_2}{1-\zeta}\\\frac{(1-\xi_1-\zeta)}{1-\zeta}\\
        \frac{-\xi_1\xi_2}{(1-\zeta)^2}\end{pmatrix}\,,\quad
    \nabla\lambda_5(\zeta)=\begin{pmatrix}0\\0\\1\end{pmatrix}\,.
	\end{gathered}
	\label{eq:PyramidAffineCoordGrad}
\end{equation}

Apart from being products of lower dimensional affine coordinates, the pyramid affine-related coordinates truly do behave in many ways like 3D affine coordinates.
Firstly, notice that by construction the traces over adjacent faces and edges are the corresponding vertex functions of those lower dimensional topological entities.
For example, the trace of $\lambda_1$ over faces 125 and 145 is a 2D triangle affine coordinate associated to that vertex, while that of face 1234 is a bilinear quadrilateral vertex function.
Secondly, note that every $(\xi,\zeta)$ in the pyramid can be expressed as a convex combination of the vertices with the affine coordinates being the weights,
\begin{equation}
	\bigg(\begin{smallmatrix}\xi_1\\[2pt]\xi_2\\[2pt]\zeta\end{smallmatrix}\bigg)=\sum_{a=1}^5 \lambda_a(\xi,\zeta)v_a\,,
		\qquad\quad\text{ with }\quad\sum_{a=1}^5 \lambda_a(\xi,\zeta)=1\,\;\text{ and }\; 0\leq\lambda_a(\xi,\zeta)\leq1\,,
\end{equation}
and where $v_a$ are the coordinates of vertex $a$. 
The main difference with the legitimate simplex affine coordinates radicates in the fact the the pyramid affine-related coordinates are not \textit{defined} by the properties above (see \S\ref{sec:affinecoordinates}).
Indeed, even though they have polynomial traces at the boundary, they involve rational polynomials in the interior, and this is an inherently new property.
Nevertheless, for many practical purposes, they can be thought of as affine coordinates, and from now on will be referred to as \textit{the} pyramid affine coordinates.

An important remark is that all the results associated to the definitions of the ancillary functions were proved in a very general setting that encompasses the pyramid affine coordinates and the fact they can be rational. 
In particular, the proofs of Lemmas \ref{lem:curlformula} and \ref{lem:divformula} hold.

Note that all the affine coordinates illustrated can be computed for pyramids with a parallelogram base.
In fact, it is very easy to make these calculations for pyramids with an arbitrarily placed top vertex and whose rectangular base is normal to the vertical $\zeta$ direction and aligned with the $\xi$ coordinates.
This assertion includes any of the typical master pyramids found in the literature.
With the affine coordinates computed, it is just a matter of substituting them (and their gradient) into the expressions for the shape functions to be presented throughout this section, so that in fact these expressions are independent of the choice of the master pyramid.
Hopefully, this motivates other researchers to communicate their results in terms of affine coordinates as well.

Finally, by construction, there are natural relationships between the topological entities and the different types of affine coordinates defined.
The related affine coordinates are those which take the value $1$ at the prescribed topological entity.
The top vertex, $v_5$, is linked to $\nu_2(\zeta)=\mu_1(\zeta)=\lambda_5(\zeta)$.
The quadrilateral vertices are each associated to \textit{two} 1D scaled affine coordinates, \textit{two} 2D triangle affine coordinates and \textit{one} 3D pyramid affine coordinate.
Meanwhile, triangle edges are linked to \textit{two} 1D scaled affine coordinates, while mixed edges are associated to \textit{one} 1D scaled affine coordinate and the vertical 1D affine coordinate $\mu_0(\zeta)$.
Lastly, triangle faces are linked to \textit{one} 1D scaled affine coordinate, while the quadrilateral face is linked to the vertical 1D affine coordinate $\mu_0(\zeta)$.
As usual, these associated affine coordinates can act as natural blending functions.

\subsubsection*{Exact Sequence}

It should be clear by now that the pyramid has a fundamentally different structure than the previous elements, and one would expect this to have an impact on the discrete spaces that attempt to approximate the energy spaces in \eqref{eq:3D_exact_sequence}.

Firstly, note that an absolute requirement is that the trace of the spaces over the faces span the lower dimensional discrete polynomial spaces for the triangle and quadrilateral respectively.
This is what ensures that the shape functions are compatible over adjacent elements.
However, any attempt at finding a three dimensional polynomial space satisying those properties is futile, since one can find counterexamples mathematically showing that this task is impossible.

Hence, the use of \textit{rational} polynomial spaces is the next natural step.
This issue already arised, at least intuitively, while analyzing the desired properties of affine coordinates, because the use of \textit{scaled} coordinates was required.
Nevertheless, dealing with rational polynomial spaces is difficult, and finding finite dimensional higher order spaces satisfying all the desired trace, exact sequence and approximability properties is a far from trivial task.
In fact, only until recently did such constructions started to appear in the literature.
In the context of this work, perhaps the best suited set of such spaces is that proposed by \citet{Nigam_Phillips_11}, which is consistent with the ``natural'' first order spaces analyzed first by \citet{Hiptmair99}.

Respecting the notation of \citet{Nigam_Phillips_11}, the discrete rational polynomial spaces approximating \eqref{eq:3D_exact_sequence} are,
\begin{equation}
	\mathcal{U}^{(0),p} \xrightarrow{\,\,\nabla\,\,} \mathcal{U}^{(1),p} \xrightarrow{\nabla\times} 
	\mathcal{U}^{(2),p} \xrightarrow{\nabla\cdot} \mathcal{U}^{(3),p} \,,
	\label{eq:pyramidsequence}
\end{equation}
where the $m$ in $\mathcal{U}^{(m),p}$ corresponds to the order of the differential form in 3D, so that the elements in $\mathcal{U}^{(0),p}$ are $0$-forms, and so on.
The precise definitions of these spaces are somewhat technical and will be postponed to Appendix \ref{app:pyrappendix}.
In fact, the proofs that the shape funtions lie in the desired space are also technical and inconveniently load the readibility of the document, so they are presented in Appendix \ref{app:pyrappendix} as well. 
This by no means implies that the spaces are not important and do not play a role in the construction.
In fact, quite the opposite.
The spaces are so well suited to the pyramid, that most of the time they impose little restrictions on the intuitive constructions presented here.
Hence, in many ways, despite looking complicated, they are ``natural''.

Finally, it is worth emphasizing that the goal in this section (and in general in this work) is to motivate the construction of the shape functions through geometrical arguments (via the affine coordinates defined before) combined with the carefully chosen ancillary operators defined throughout the document.
This approach leads to shape functions satisfying the desired trace properties and which either are in the desired space or can be naturally tweaked to lie in the space.
The notable exception is that of the $H(\mathrm{div})$ triangle faces, in which the space truly plays a nontrivial role and forces to consider a more intricate yet consistent construction.

\subsection{\texorpdfstring{$H^1$}{H1} Shape Functions}

The dimension of the space $\mathcal{U}^{(0),p}$ is $p^3+3p+1$.
The number of linearly independent shape functions will coincide with that dimension.

\subsubsection {\texorpdfstring{$H^1$}{H1} Vertices}

The vertex shape functions will be precisely the associated 3D pyramid affine coordinates.
Indeed, take for example vertex $v_1$, so that the vertex function is
\begin{equation*}
	\phi^\mathrm{v}(\xieta)=\lambda_1(\xieta)\,.
\end{equation*}
The trace properties are satisfied by construction and are shown explicitly next,
\begin{equation*}
	\begin{alignedat}{5}
    &\phi^\mathrm{v}(\xieta)|_{\xi_2=0}&&=\lambda_1(\xieta)|_{\mu_0(\frac{\xi_1}{1-\zeta})=1}
    	&&=\mu_0(\sxi)\nu_0(\xi_2,\zeta)|_{\mu_0(\frac{\xi_1}{1-\zeta})=1}&&=\nu_0(\xi_2,\zeta)\,,\\
    &\phi^\mathrm{v}(\xieta)|_{\xi_1=1-\zeta}&&=\lambda_1(\xieta)|_{\mu_0(\frac{\xi_2}{1-\zeta})=0}
    	&&=\mu_0(\sxii)\nu_0(\xi_1,\zeta)|_{\mu_0(\frac{\xi_2}{1-\zeta})=0}&&=0\,,\\
  	&\phi^\mathrm{v}(\xieta)|_{\xi_2=1-\zeta}&&=\lambda_1(\xieta)|_{\mu_0(\frac{\xi_1}{1-\zeta})=0}
    	&&=\mu_0(\sxi)\nu_0(\xi_2,\zeta)|_{\mu_0(\frac{\xi_1}{1-\zeta})=0}&&=0\,,\\
    &\phi^\mathrm{v}(\xieta)|_{\xi_2=0}&&=\lambda_1(\xieta)|_{\mu_0(\frac{\xi_2}{1-\zeta})=1}
    	&&=\mu_0(\sxii)\nu_0(\xi_1,\zeta)|_{\mu_0(\frac{\xi_2}{1-\zeta})=1}&&=\nu_0(\xi_1,\zeta)\,,\\
    &\phi^\mathrm{v}(\xieta)|_{\zeta=0}&&=\lambda_1(\xieta)|_{\zeta=0}
    	&&=\mu_0(\xi_1)\mu(\xi_2)\,.&&\\
	\end{alignedat}
\end{equation*}
The function is also in the lowest order space $\mathcal{U}^{(0),1}$.
Similar arguments apply to all other quadrilateral vertices and the top vertex as well.

More generally, the vertex functions and their gradient are,
\begin{equation}
    \phi^\mathrm{v}(\xieta)=\lambda_a(\xieta)\,,\qquad\quad\nabla\phi^\mathrm{v}(\xieta)=\nabla\lambda_a(\xieta)\,,
    \label{eq:PyrH1Vertex}
\end{equation}
for $a=1,2,3,4,5$.
There are a total of $5$ vertex functions (one for each vertex).

\subsubsection{\texorpdfstring{$H^1$}{H1} Edges}

\paragraph{\texorpdfstring{$H^1$}{H1} Mixed Edges.} 
Take for example mixed edge 12.
The first naive approach is to use the 3D pyramid affine coordinates directly on $\phi_i^\E$, which gives
\begin{equation*}
	\phi_i^\mathrm{e}(\xieta)=\phi_i^\E(\vec{\lambda}_{12}(\xieta))
		=\phi_i^\E\Big(\mu_0(\sxii)\nu_0(\xi_1,\zeta),\mu_0(\sxii)\nu_1(\xi_1,\zeta)\Big)
			=\mu_0(\sxii)^i\phi_i^\E(\vec{\nu}_{01}(\xi_1,\zeta))\,,
\end{equation*}
for $i=2,\ldots,p$.
This attempt almost works because it is in the correct space, satisfies the vanishing conditions, and even has the right form at the edge itself.
Indeed, at triangle faces 235 and 145, $\nu_0(\xi_1,\zeta)=0$ and $\nu_1(\xi_1,\zeta)=0$ respectively, while $\mu_0(\sxii)=0$ at face 435.
However, the nonzero trace over the adjacent quadrilateral face is not of the correct form, since it blends nonlinearly with the factor $\mu_0(\xi_2)^i$ instead of linearly like $\mu_0(\xi_2)$.
Therefore the function violates dimensional hierarchy and does not work for our purposes.
Nevertheless this analysis ellucidates how to fix the issue.
The idea is to have the factor $\mu_0(\sxii)$ separated as a blending factor, so that the effects of $\mu_0(\sxii)$ are essentially separated from those of $\vec{\nu}_{01}(\xi_1,\zeta)$ in $\vec{\lambda}_{12}(\xieta)$.
Hence, the shape functions for this edge are
\begin{equation*}
	\phi_i^\mathrm{e}(\xieta)=\mu_0(\sxii)\phi_i^\E(\vec{\nu}_{01}(\xi_1,\zeta))=
		\underbrace{\mu_0(\sxii)(\nu_0(\xi_1,\zeta)+\nu_1(\xi_1,\zeta))^i}_{\text{blend}}
    	\underbrace{\phi_i^\E\Big(\underbrace{\textstyle{\frac{1}{\nu_0(\xi_1,\zeta)+\nu_1(\xi_1,\zeta)}}
    		\vec{\nu}_{01}(\xi_1,\zeta)}_{\text{project}}\Big)}_{\text{evaluate}}\,,
\end{equation*}
for $i=2,\ldots,p$.
As with the previous candidate all vanishing properties are satisfied, but this time the nonzero trace properties are also easily seen to hold.
The projection being implied is
\begin{equation*}
	(\xi_1,\xi_2,\zeta)\;\longmapsto\;\begin{matrix}(\xi_1,0,\zeta)\\(\sxi,\sxii,0)\end{matrix}
		\;\longmapsto\;(\sxi,0,0)\,.
\end{equation*}
It is a two step projection, where the first step is to project to an adjacent face and the second is to project along that face to the given edge via the standard 2D edge projections (see Figures \ref{fig:QuadProjection} and \ref{fig:TriangleProjection}).
If the face projection is chosen as the triangle, then the projection at play is called the \textit{horizontal} triangle face projection and consists of finding the intersection $P'=(\xi_1,0,\zeta)$ of the face with the projecting line parallel to the $\xi_2$ direction and passing through the original point $P=(\xi_1,\xi_2,\zeta)$. 
This is shown in Figure \ref{fig:PyramidProjectionHorizontalTriangle}.
If the face projection is chosen as the quadrilateral, then the projection is simply the intersection $P'=(\sxi,\sxii,0)$ of the face with the projecting line passing through the top vertex and the original point $P=(\xi_1,\xi_2,\zeta)$. 
This is shown in Figure \ref{fig:PyramidProjectionQuad}.

\begin{figure}[!ht]
\begin{center}
\includegraphics[scale=0.6]{./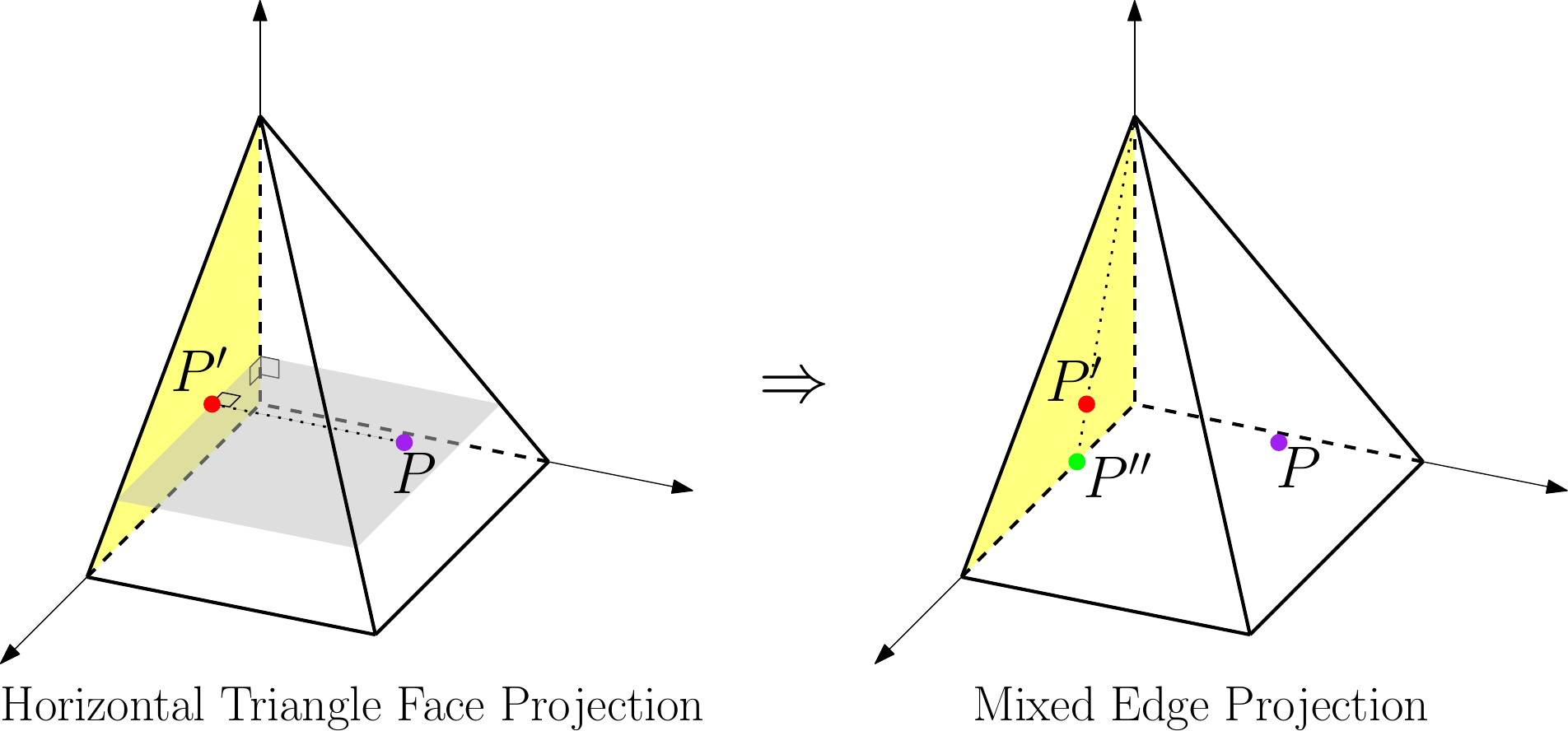}
\caption{Horizontal triangle face projection from $P$ to $P'$ followed by a mixed edge projection from $P'$ to $P''$.}
\label{fig:PyramidProjectionHorizontalTriangle}
\end{center}
\end{figure}

In general, the shape functions and their gradient are
\begin{equation}
	\begin{aligned}
		\phi_i^\mathrm{e}(\xieta)&=\mu_c(\sxib)\phi_i^\E(\vec{\nu}_{01}(\xi_a,\zeta))\,,\\
    	\nabla\phi_i^\mathrm{e}(\xieta)&=\mu_c(\sxib)\nabla\phi_i^\E(\vec{\nu}_{01}(\xi_a,\zeta))
        +\phi_i^\E(\vec{\nu}_{01}(\xi_a,\zeta))\nabla\mu_c(\sxib)\,,	
	\end{aligned}
	\label{eq:PyrH1MixedEdge}
\end{equation}
where $i=2,\ldots,p$, $(a,b)=(1,2),(2,1)$ and $c=0,1$.
There are $p-1$ edge function for each edge, for a total of $4(p-1)$ mixed edge functions.

\paragraph{\texorpdfstring{$H^1$}{H1} Triangle Edges.}
For instance, take triangle edge 15.
Again, the naive approach is to use the 3D pyramid affine coordinates on $\phi_i^\E$, leading to the shape functions,
\begin{equation*}
	\phi^\mathrm{e}(\xieta)=\phi_i^\E(\vec{\lambda}_{15}(\xieta))=
		\underbrace{(\lambda_1(\xieta)+\lambda_5(\zeta))^i}_{\text{blend}}
    	\underbrace{\phi_i^\E\Big(\underbrace{\textstyle{\frac{1}{\lambda_1(\xieta)+\lambda_5(\zeta)}}
    		\vec{\lambda}_{15}(\xieta)}_{\text{project}}\Big)}_{\text{evaluate}}\,,
\end{equation*}
for $i=2,\ldots,p$.
In this case it works perfectly well, with the trace properties being satisfied.
Indeed, $\lambda_1(\xieta)=0$ over faces 235 and 435, while $\lambda_5(\xieta)=0$ over the quadrilateral face.
Moreover the restriction of $\vec{\lambda}_{15}(\xieta)$ over the faces 125 and 145 gives $\vec{\nu}_{02}(\xi_1,\zeta)$ and $\vec{\nu}_{02}(\xi_2,\zeta)$ respectively, so the nonzero traces are the appropriate triangle traces.
The projection being implied here is highly nontrivial. 
It is a two step projection given by
\begin{equation*}
	(\xi_1,\xi_2,\zeta)\;\longmapsto\;\Big(\textstyle{\frac{\xi_1(1-\xi_2-\zeta)}{(1-\xi_2)(1-\zeta)}},
		0,\textstyle{\frac{\zeta}{1-\xi_2}}\Big)\;\longmapsto\;
			\Big(0,0,\textstyle{\frac{\zeta(1-\zeta)}{(1-\xi_1-\zeta)(1-\xi_2-\zeta)+\zeta(1-\zeta)}}\Big)\,.
\end{equation*}
The first step is called an \textit{oblique} triangle face projection and consists of running a plane through the original point $P=(\xi_1,\xi_2,\zeta)$ and the opposite bottom edge to the face (edge 43), followed by finding the intersection of this plane with the planes passing through the other two adjacent triangular faces (faces 235 and 415 with equations $\xi_1=1-\zeta$ and $\xi_1=0$ respectively).
Call this intersection $C=(0,-(\frac{1-\xi_2-\zeta}{\zeta}),1)$.
Finally, the intersection of face 125 with the projecting line from the original point $P$ to the intersection $C$ is found and labeled as $P'=(\frac{\xi_1(1-\xi_2-\zeta)}{(1-\xi_2)(1-\zeta)},0,\frac{\zeta}{1-\xi_2})$.
This projection is illustrated in Figure \ref{fig:PyramidProjectionObliqueTriangle}.
The final step is simply to project as usual along the 2D triangle face to the point $P''=(0,0,\frac{\zeta(1-\zeta)}{(1-\xi_1-\zeta)(1-\xi_2-\zeta)+\zeta(1-\zeta)})$.

\begin{figure}[!ht]
\begin{center}
\includegraphics[scale=0.6]{./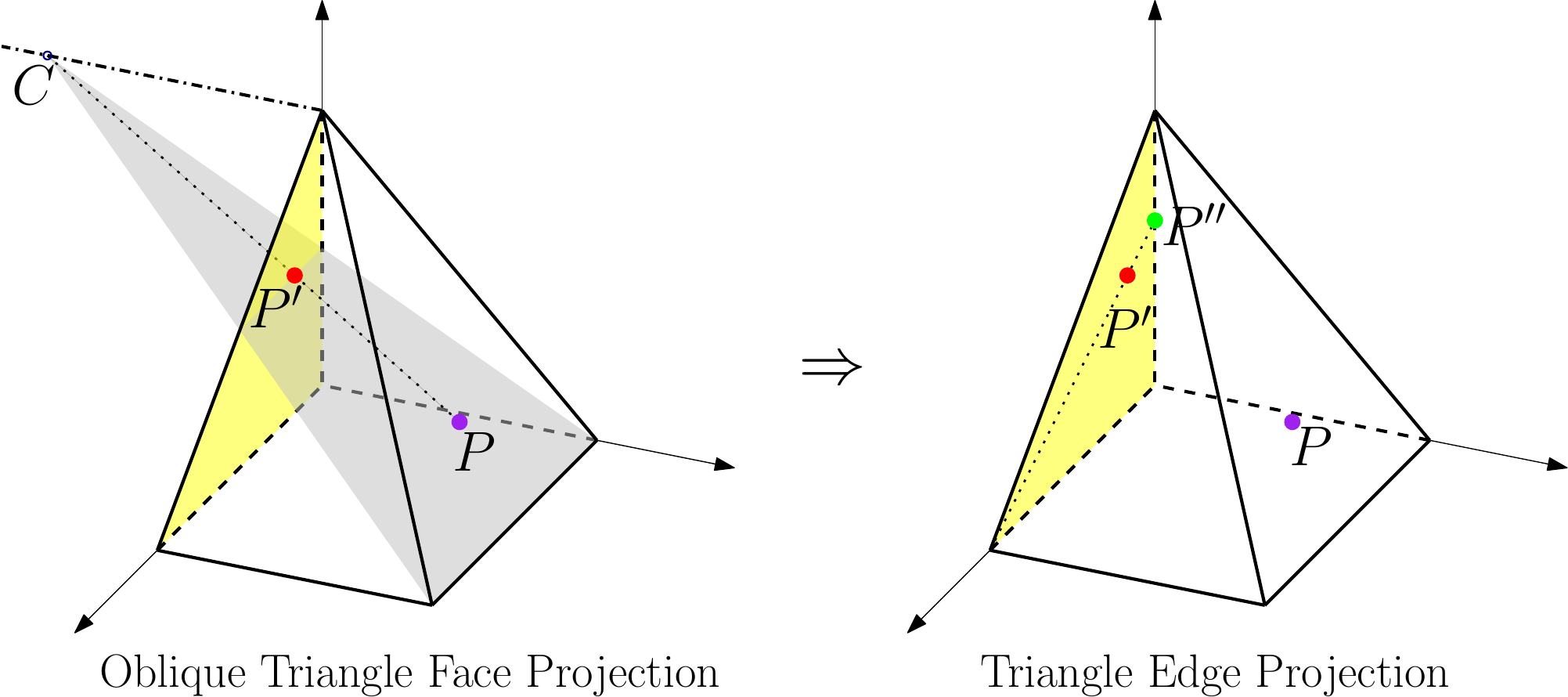}
\caption{Oblique triangle face projection from $P$ to $P'$ followed by a triangle edge projection from $P'$ to $P''$.}
\label{fig:PyramidProjectionObliqueTriangle}
\end{center}
\end{figure}

In general, the shape functions and their gradient are
\begin{equation}
	\phi_i^\mathrm{e}(\xieta)=\phi_i^\E(\vec{\lambda}_{a5}(\xieta))\,,\qquad\quad
	\nabla\phi_i^\mathrm{e}(\xieta)=\nabla\phi_i^\E(\vec{\lambda}_{a5}(\xieta))\,,
	\label{eq:PyrH1TriaEdge}
\end{equation}
for $i=2,\ldots,p$ and $a=1,2,3,4$.
There are $p-1$ edge functions for each edge, giving a total of $4(p-1)$ triangle edge functions.

\subsubsection{\texorpdfstring{$H^1$}{H1} Faces}

\begin{figure}[!ht]
\begin{center}
\includegraphics[scale=0.6]{./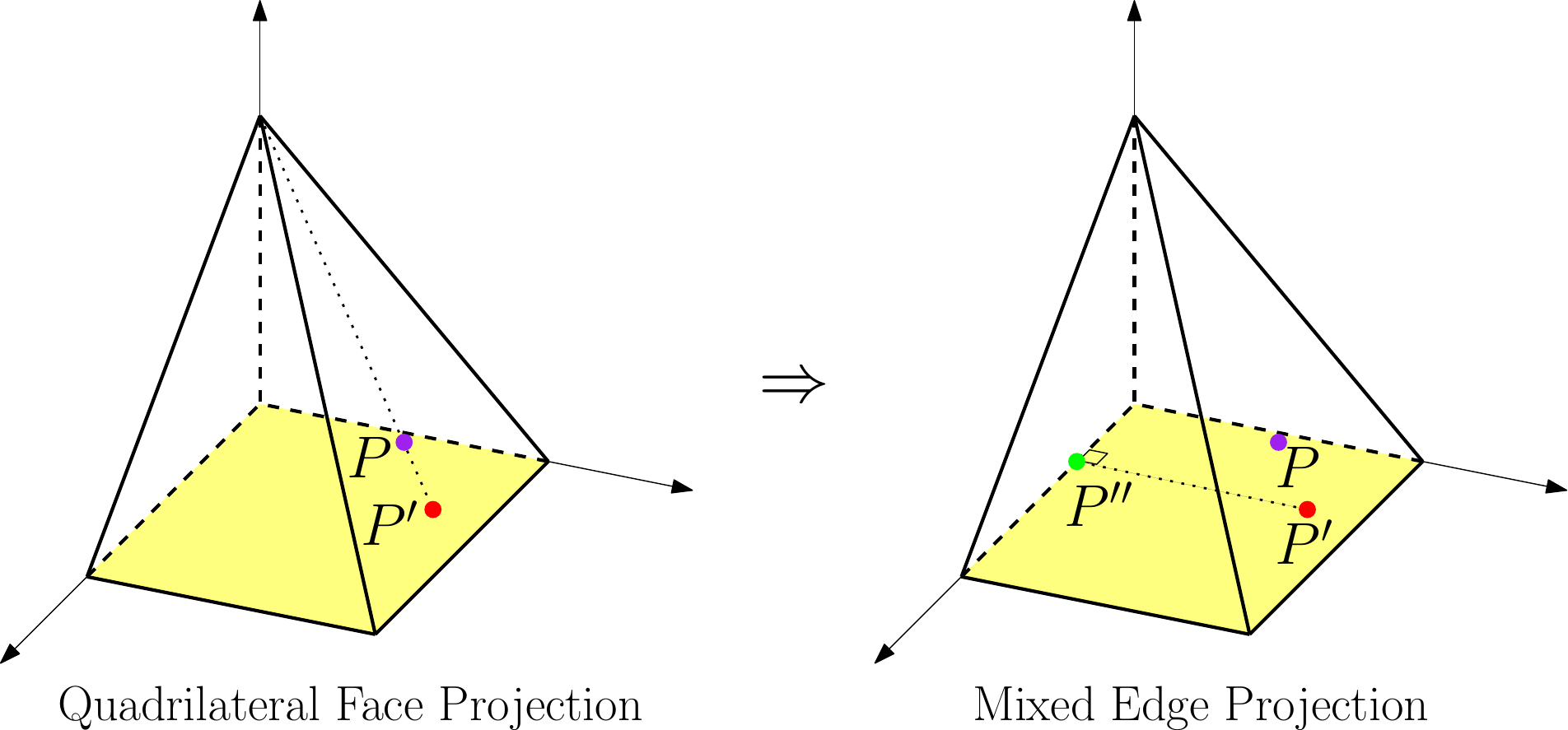}
\caption{Quadrilateral face projection from $P$ to $P'$ followed by a mixed edge projection from $P'$ to $P''$.}
\label{fig:PyramidProjectionQuad}
\end{center}
\end{figure}

\paragraph{\texorpdfstring{$H^1$}{H1} Quadrilateral Face.} 
It was already mentioned that the quadrilateral face projection, illustrated in Figure \ref{fig:PyramidProjectionQuad}, takes an arbitrary point $P=(\xi_1,\xi_2,\zeta)$ to the point $P'=(\sxi,\sxii,0)$ along the face. This projected point is actually represented by the affine coordinate quadruple $(\vec{\mu}_{01}(\sxi),\vec{\mu}_{01}(\sxii))$. 
Hence, the natural choice is to use the quadruple with the ancillary function $\phi_{ij}^\square$. 
This already satisfies all the necessary trace properties, except at the top vertex itself, where there might be a singularity.
This is corrected by adding a factor of $\mu_0(\zeta)$, which also ensures the function is in the correct space.

The shape functions and their gradient are
\begin{equation}
	\begin{aligned}
		\phi_{ij}^\mathrm{f}(\xieta)&=\mu_0(\zeta)\phi_{ij}^\square\Big(\vec{\mu}_{01}(\sxi),\vec{\mu}_{01}(\sxii)\Big)\,,\\
    	\nabla\phi_{ij}^\mathrm{f}(\xieta)&=\mu_0(\zeta)\nabla\phi_{ij}^\square\Big(\vec{\mu}_{01}(\sxi),\vec{\mu}_{01}(\sxii)\Big)
        +\phi_{ij}^\square\Big(\vec{\mu}_{01}(\sxi),\vec{\mu}_{01}(\sxii)\Big)\nabla\mu_0(\zeta)\,,	
	\end{aligned}
	\label{eq:PyrH1QuadFace}
\end{equation}
where $i=2,\ldots,p$ and $j=2,\ldots,p$. 
Naturally, there are $(p-1)^2$ shape functions for the quadrilateral face.

\paragraph{\texorpdfstring{$H^1$}{H1} Triangle Faces.} 
Similar to the mixed edges, there are two possibilities.
Obviously they both involve $\phi_{ij}^\Tri$.
Take for example triangle face 125.
The first alternative is to use the 3D pyramid affine coordinates directly, yielding as a result
\begin{equation*}
	\phi_{ij}^\mathrm{f}(\xieta)=\phi_{ij}^\Tri(\vec{\lambda}_{125}(\xieta))=
		\underbrace{(\lambda_1(\xieta)+\lambda_2(\xieta)+\lambda_5(\zeta))^{i+j}}_{\text{blend}}
    	\underbrace{\phi_{ij}^\Tri\Big(\underbrace{\textstyle{\frac{1}{\lambda_1(\xieta)+\lambda_2(\xieta)+\lambda_5(\zeta)}}
    		\vec{\lambda}_{125}(\xieta)}_{\text{project}}\Big)}_{\text{evaluate}}\,,
\end{equation*}
where $i\geq2$ and $j\geq1$.
In this case the projection implied is precisely the oblique triangle face projection illustrated in Figure \ref{fig:PyramidProjectionObliqueTriangle}.
This function lies in the correct space and is easily seen to satisfy the necessary trace properties (see \eqref{eq:phiTrivanishing}).
Hence, it is a perfectly valid candidate.

A second candidate relies in the same approach taken for the mixed edges, in which the effects of the components of $\vec{\lambda}_{12}(\xieta)$ are separated. 
In that case, the functions are
\begin{equation*}
		\phi_{ij}^\mathrm{f}(\xieta)=\underbrace{\mu_0(\sxii)}_{\text{blend}}
    	\underbrace{\phi_{ij}^\Tri\Big(\underbrace{\vec{\nu}_{012}(\xi_1,\zeta)}_{\text{project}}\Big)}_{\text{evaluate}}\,,
\end{equation*}
for $i\geq2$ and $j\geq1$.
Here, the projection implied is the horizontal triangle face projection shown in Figure \ref{fig:PyramidProjectionHorizontalTriangle}.
Again, the function is in the correct space and satisfies the required trace properties, so it is also a valid candidate.

The second alternative is chosen, so the general shape functions and their gradient are
\begin{equation}
	\begin{aligned}
		\phi_{ij}^\mathrm{f}(\xieta)&=\mu_c(\sxib)\phi_{ij}^\Tri(\vec{\nu}_{012}(\xi_a,\zeta))\,,\\
    	\nabla\phi_{ij}^\mathrm{f}(\xieta)&=\mu_c(\sxib)\nabla\phi_{ij}^\Tri(\vec{\nu}_{012}(\xi_a,\zeta))
        +\phi_{ij}^\Tri(\vec{\nu}_{012}(\xi_a,\zeta))\nabla\mu_c(\sxib)\,,	
	\end{aligned}
	\label{eq:PyrH1TriaFace}
\end{equation}
where $i\geq2$, $j\geq1$, $n=i+j=3,\ldots,p$, $(a,b)=(1,2),(2,1)$ and $c=0,1$.
As with all $H^1$ triangle edges, there are $\frac{1}{2}(p-1)(p-2)$ face functions for each face, for a total of $2(p-1)(p-2)$ triangle face functions.

\subsubsection{\texorpdfstring{$H^1$}{H1} Interior Bubbles}

The interior bubble functions resemble closely the case of the hexahedron bubbles, and they are deduced from the quadrilateral face functions, where the factor $\phi_k^\E(\vec{\mu}_{01}(\zeta))$ is used instead of $\mu_0(\zeta)$.
This ensures all the vanishing properties are satisfied.

The bubble functions and their gradient are
\begin{equation}
	\begin{aligned}
		\phi_{ijk}^\mathrm{b}(\xieta)&\!=\!
			\phi_{ij}^\square\Big(\vec{\mu}_{01}(\sxi),\vec{\mu}_{01}(\sxii)\Big)\phi_k^\E(\vec{\mu}_{01}(\zeta))\,,\\
    		\nabla\phi_{ijk}^\mathrm{b}(\xieta)&\!=\!\phi_k^\E(\vec{\mu}_{01}(\zeta))
    			\nabla\phi_{ij}^\square\Big(\vec{\mu}_{01}(\sxi),\vec{\mu}_{01}(\sxii)\Big)
        		\!+\!\phi_{ij}^\square\Big(\vec{\mu}_{01}(\sxi),\vec{\mu}_{01}(\sxii)\Big)\nabla\phi_k^\E(\vec{\mu}_{01}(\zeta))\,,	
	\end{aligned}
	\label{eq:PyrH1Interior}
\end{equation}
where $i=2,\ldots,p$, $j=2,\ldots,p$ and $k=2,\ldots,p$.
Clearly there is a total of $(p-1)^3$ interior bubble functions.

\subsection{\texorpdfstring{$H(\mathrm{curl})$}{Hcurl} Shape Functions}

The dimension of the space $\mathcal{U}^{(1),p}$ is $3p^3+5p$.
The same number of shape functions will span the space.

The construction of the shape functions for $H(\mathrm{curl})$ is completely parallel to that of $H^1$, and they involve the same underlying projections.

\subsubsection{\texorpdfstring{$H(\mathrm{curl})$}{Hcurl} Edges}

\paragraph{\texorpdfstring{$H(\mathrm{curl})$}{Hcurl} Mixed Edges.} 
Take for example mixed edge 12.
As in $H^1$, using $\mu_0(\sxii)$ and $\vec{\nu}_{01}(\xi_1,\zeta)$ separately instead of $\vec{\lambda}_{12}(\xieta)$, it follows the edge shape functions are
\begin{equation*}
	E_i^\mathrm{e}(\xieta)=\mu_0(\sxii)E_i^\E(\vec{\nu}_{01}(\xi_1,\zeta))=
		\underbrace{\mu_0(\sxii)(\nu_0(\xi_1,\zeta)\!+\!\nu_1(\xi_1,\zeta))^{i+2}}_{\text{blend}}
    	\underbrace{E_i^\E\Big(\underbrace{\textstyle{\frac{1}{\nu_0(\xi_1,\zeta)+\nu_1(\xi_1,\zeta)}}
    		\vec{\nu}_{01}(\xi_1,\zeta)}_{\text{project}}\Big)}_{\text{evaluate}}\,,
\end{equation*}
for $i=0,\ldots,p-1$.
From the $H(\mathrm{curl})$ edge triangle functions, it follows that along the edges 15 and 25 the tangential component of $E_i^\E(\vec{\nu}_{01}(\xi_1,\zeta))$ vanishes, and due to its independence from $\xi_2$ it immediately follows that the same is true for the faces 235 and 145.
Along face 435, it holds that $\mu_0(\sxii)=0$ so that it also vanishes there.
Along face 125 $\mu_0(\sxii)=1$, and the function becomes $E_i^\E(\vec{\nu}_{01}(\xi_1,\zeta))$, which as desired is the triangle 2D trace for the edge functions.
Finally, at the quadrilateral face, $\mu_0(\sxii)=\mu_0(\xi_2)$, while the tangential component of $E_i^\E(\vec{\nu}_{01}(\xi_1,\zeta))$ is the corresponding segment 1D $L^2$ edge function, meaning that the trace along this face is $\mu_0(\xi_2)E_i^\E(\vec{\mu}_{01}(\xi_1))$, as required.
Hence, all trace properties hold.
The projection implied is the mixed edge projection depicted in Figures \ref{fig:PyramidProjectionHorizontalTriangle} and \ref{fig:PyramidProjectionQuad}.
Lastly, the shape functions are in the correct space.

The shape functions and their curl are
\begin{equation}
	\begin{aligned}
		E_i^\mathrm{e}(\xieta)&=\mu_c(\sxib)E_i^\E(\vec{\nu}_{01}(\xi_a,\zeta))\,,\\
    	\nabla\times E_i^\mathrm{e}(\xieta)&=\mu_c(\sxib)\nabla\times E_i^\E(\vec{\nu}_{01}(\xi_a,\zeta))
        +\nabla\mu_c(\sxib)\times E_i^\E(\vec{\nu}_{01}(\xi_a,\zeta))\,,	
	\end{aligned}
	\label{eq:PyrHcurlMixedEdge}
\end{equation}
where $i=0,\ldots,p-1$, $(a,b)=(1,2),(2,1)$ and $c=0,1$.
There are $p$ edge functions for each edge, for a total of $4p$ mixed edge functions.

\paragraph{\texorpdfstring{$H(\mathrm{curl})$}{Hcurl} Triangle Edges.}
For instance, take triangle edge 15.
Like in $H^1$, one can directly use the 3D pyramid affine coordinates on $E_i^\E$.
The resulting shape functions are
\begin{equation*}
	E^\mathrm{e}(\xieta)=E_i^\E(\vec{\lambda}_{15}(\xieta))=
		\underbrace{(\lambda_1(\xieta)+\lambda_5(\zeta))^{i+2}}_{\text{blend}}
    	\underbrace{E_i^\E\Big(\underbrace{\textstyle{\frac{1}{\lambda_1(\xieta)+\lambda_5(\zeta)}}
    		\vec{\lambda}_{15}(\xieta)}_{\text{project}}\Big)}_{\text{evaluate}}\,,
\end{equation*}
for $i=0,\ldots,p-1$.
To argue the nonzero traces have the correct form, take for example face 125, and the decoupling $\lambda_1(\xieta)=\mu_0(\sxii)\nu_0(\xi_1,\zeta)$.
Then, using that $\nu_2(\zeta)=\lambda_5(\zeta)$, the lowest order element is
\begin{equation*}
\begin{aligned}
	E_0^\E(\vec{\lambda}_{15}(\xieta))
		&=\mu_0(\sxii)\nu_0(\xi_1,\zeta)\nabla\lambda_5(\zeta)-\lambda_5(\zeta)\nabla\Big(\mu_0(\sxii)\nu_0(\xi_1,\zeta)\Big)\\
			&=\mu_0(\sxii)E_0^\E(\vec{\nu}_{02}(\xi_1,\zeta))-\nu_0(\xi_1,\zeta)\nu_2(\zeta)\nabla\mu_0(\sxii)\,.
\end{aligned}
\end{equation*}
When evaluated at face 125, $\mu_0(\sxii)=1$, and as a result $\nabla\mu_0(\sxii)$ is orthogonal to the face (it is an isosurface), so that the tangential component at the face is precisely the nonzero components of $E_0^\E(\vec{\nu}_{02}(\xi_1,\zeta))$.
Hence, its nonzero trace on the face is a triangle 2D $H(\mathrm{curl})$ edge function, as expected.
Using the same argument but at face 435, where $\mu_0(\sxii)=0$, this time the tangential component vanishes completely.  
Symmetric arguments apply to faces 145 and 235.
Finally, at the quadrilateral face, where $\lambda_5(\zeta)=0$, the lowest order function takes the form $\lambda_1(\xieta)\nabla\lambda_5(\zeta)$ which is normal to the face (it is an isosurface of $\lambda_5(\zeta)$), so the tangential component is zero.
These arguments confirm that the trace properties hold.
Lastly, the projection is a triangle edge projection as illustrated in Figure \ref{fig:PyramidProjectionObliqueTriangle}.

The shape functions and their curl are
\begin{equation}
	E_i^\mathrm{e}(\xieta)=E_i^\E(\vec{\lambda}_{a5}(\xieta))\,,\qquad\quad
	\nabla\times E_i^\mathrm{e}(\xieta)=\nabla \times E_i^\E(\vec{\lambda}_{a5}(\xieta))\,,
	\label{eq:PyrHcurlTriaEdge}
\end{equation}
for $i=0,\ldots,p-1$ and $a=1,2,3,4$.
There are $p$ edge functions for each edge, for a total of $4p$ triangle edge functions.

\subsubsection{\texorpdfstring{$H(\mathrm{curl})$}{Hcurl} Faces}

\paragraph{\texorpdfstring{$H(\mathrm{curl})$}{Hcurl} Quadrilateral Face.} 
As expected, the projection implied in these expressions will be the same as that of $H^1$. 
It is a quadrilateral face projection as depicted in Figure \ref{fig:PyramidProjectionQuad}.
However, in this case there will be two families.
Both families will easily satisfy the vanishing trace properties by use of \eqref{eq:phiEvanishing}, \eqref{eq:Hcurl1Dspecialcase} and that $\nabla\mu_c(\sxia)$ is orthogonal to the triangle faces where $\mu_c(\sxia)=1$.
The only difference with $H^1$ radicates in the use of the higher order blending function $\mu_0(\zeta)^2$, which is used in order to be in the correct space. 
There is a grand total of $2p(p-1)$ quadrilateral face functions.

\subparagraph{Family I:}
The shape functions and their curl are
\begin{equation}
	\begin{aligned}
		E_{ij}^{\mathrm{f}}(\xieta)&=\mu_0(\zeta)^2 E_{ij}^\square\Big(\vec{\mu}_{01}(\sxi),\vec{\mu}_{01}(\sxii)\Big)\,,\\
		\nabla\times E_{ij}^{\mathrm{f}}(\xieta)&=\mu_0(\zeta)^2\nabla\times
			E_{ij}^\square\Big(\vec{\mu}_{01}(\sxi),\vec{\mu}_{01}(\sxii)\Big)\\
				&\qquad\quad+2\mu_0(\zeta)\nabla\mu_0(\zeta)\times E_{ij}^\square\Big(\vec{\mu}_{01}(\sxi),\vec{\mu}_{01}(\sxii)\Big)\,,
	\end{aligned}
	\label{eq:PyrHcurlQuadFaceI}
\end{equation}
for $i=0,\ldots,p-1$ and $j=2,\ldots,p$. 
There are $p(p-1)$ shape functions in this family.

\subparagraph{Family II:}
The shape functions and their curl are
\begin{equation}
	\begin{aligned}
		E_{ij}^{\mathrm{f}}(\xieta)&=\mu_0(\zeta)^2 E_{ij}^\square\Big(\vec{\mu}_{01}(\sxii),\vec{\mu}_{01}(\sxi)\Big)\,,\\
		\nabla\times E_{ij}^{\mathrm{f}}(\xieta)&=\mu_0(\zeta)^2\nabla\times
			E_{ij}^\square\Big(\vec{\mu}_{01}(\sxii),\vec{\mu}_{01}(\sxi)\Big)\\
				&\qquad\quad+2\mu_0(\zeta)\nabla\mu_0(\zeta)\times E_{ij}^\square\Big(\vec{\mu}_{01}(\sxii),\vec{\mu}_{01}(\sxi)\Big)\,,
	\end{aligned}
	\label{eq:PyrHcurlQuadFaceII}
\end{equation}
for $i=0,\ldots,p-1$ and $j=2,\ldots,p$. 
Note the fact that the entries are permuted with respect to the first family.
There are $p(p-1)$ shape functions in this family.

\paragraph{\texorpdfstring{$H(\mathrm{curl})$}{Hcurl} Triangle Faces.} 
As with $H^1$ there will be two valid alternatives.
Take for example face 125.
The first alternative is to use the pyramid affine coordinates directly on $E_{ij}^\Tri$ \textit{and} multiply by $\mu_0(\sxii)^{-1}$.
This approach is discarded in favor of separating the effects of $\mu_0(\sxii)$ and $\vec{\nu}_{01}(\xi_1,\zeta)$ directly from $\vec{\lambda}_{12}(\xieta)$.
The resulting functions satisfy the necessary vanishing conditions using similar arguments to those used for mixed and triangle edges.
Also, the projection implied is the horizontal triangle face projection shown in Figure \ref{fig:PyramidProjectionHorizontalTriangle}.
There are $p(p-1)$ functions per triangle face, for a grand total of $4p(p-1)$ triangle face functions.

\subparagraph{Family I:} 
The shape functions and their curl are
\begin{equation}
	\begin{aligned}
		E_{ij}^{\mathrm{f}}(\xieta)&=\mu_c(\sxib)E_{ij}^\Tri(\vec{\nu}_{012}(\xi_a,\zeta))\,,\\
		\nabla\times E_{ij}^{\mathrm{f}}(\xieta)&=\mu_c(\sxib)\nabla\times E_{ij}^\Tri(\vec{\nu}_{012}(\xi_a,\zeta))
			+\nabla\mu_c(\sxib)\times E_{ij}^\Tri(\vec{\nu}_{012}(\xi_a,\zeta))\,,
	\end{aligned}
	\label{eq:PyrHcurlTriaFaceI}
\end{equation}
for $i\geq0$, $j\geq1$, $n=i+j=1,\ldots,p-1$, $(a,b)=(1,2),(2,1)$, and $c=0,1$. 
Every face has $\frac{1}{2}p(p-1)$ functions in this family.

\subparagraph{Family II:} 
The shape functions and their curl are
\begin{equation}
	\begin{aligned}
		E_{ij}^{\mathrm{f}}(\xieta)&=\mu_c(\sxib)E_{ij}^\Tri(\vec{\nu}_{120}(\xi_a,\zeta))\,,\\
		\nabla\times E_{ij}^{\mathrm{f}}(\xieta)&=\mu_c(\sxib)\nabla\times E_{ij}^\Tri(\vec{\nu}_{120}(\xi_a,\zeta))
			+\nabla\mu_c(\sxib)\times E_{ij}^\Tri(\vec{\nu}_{120}(\xi_a,\zeta))\,,
	\end{aligned}
	\label{eq:PyrHcurlTriaFaceII}
\end{equation}
for $i\geq0$, $j\geq1$, $n=i+j=1,\ldots,p-1$, $(a,b)=(1,2),(2,1)$, and $c=0,1$. 
Note the fact that the entries are $\vec{\nu}_{120}(\xi_a,\zeta)$ as opposed to $\vec{\nu}_{012}(\xi_a,\zeta)$.
Every face has $\frac{1}{2}p(p-1)$ functions in this family.

\subsubsection{\texorpdfstring{$H(\mathrm{curl})$}{Hcurl} Interior Bubbles}

These will be separated according to a Helmholtz decomposition.
Indeed, there are four families of interior bubbles, and the first family corresponds precisely to the gradients of $H^1$ interior bubble functions, so they have zero curl.
The other three families will essentially be generated by quadrilateral face functions in $H(\mathrm{curl})$ and $H^1$.
In all cases, the trace properties follow easily.
There is a grand total of $3p(p-1)^2$ interior bubble functions.

\subparagraph{Family I:}
These are the gradients of $H^1$ interior functions.
The shape functions and their curl are
\begin{equation}
	\begin{aligned}
		E_{ijk}^\mathrm{b}(\xieta)&=\phi_k^\E(\vec{\mu}_{01}(\zeta))
    			\nabla\phi_{ij}^\square\Big(\vec{\mu}_{01}(\sxi),\vec{\mu}_{01}(\sxii)\Big)\\
        		&\quad\qquad+\phi_{ij}^\square\Big(\vec{\mu}_{01}(\sxi),\vec{\mu}_{01}(\sxii)\Big)\nabla\phi_k^\E(\vec{\mu}_{01}(\zeta))\,,\\
    \nabla\times E_{ijk}^\mathrm{b}(\xieta)&=0\,,
	\end{aligned}
	\label{eq:PyrHcurlInteriorI}
\end{equation}
where $i=2,\ldots,p$, $j=2,\ldots,p$ and $k=2,\ldots,p$.
There are $(p-1)^3$ interior bubble functions in this family.

\subparagraph{Family II:}
The shape functions and their curl are
\begin{equation}
	\begin{aligned}
		E_{ijk}^{\mathrm{b}}(\xieta)&=\mu_0(\zeta)\phi_k^\E(\vec{\mu}_{01}(\zeta))
			E_{ij}^\square\Big(\vec{\mu}_{01}(\sxi),\vec{\mu}_{01}(\sxii)\Big)\,,\\
		\nabla\times E_{ijk}^{\mathrm{b}}(\xieta)&=\mu_0(\zeta)\phi_k^\E(\vec{\mu}_{01}(\zeta))\nabla\times
			E_{ij}^\square\Big(\vec{\mu}_{01}(\sxi),\vec{\mu}_{01}(\sxii)\Big)\\
				&\quad+\Big(\mu_0(\zeta)\nabla\phi_k^\E(\vec{\mu}_{01}(\zeta))+\phi_k^\E(\vec{\mu}_{01}(\zeta))\nabla\mu_0(\zeta)\Big)
					\times E_{ij}^\square\Big(\vec{\mu}_{01}(\sxi),\vec{\mu}_{01}(\sxii)\Big)\,,
	\end{aligned}
	\label{eq:PyrHcurlInteriorII}
\end{equation}
for $i=0,\ldots,p-1$, $j=2,\ldots,p$, and $k=2,\ldots,p$.
There are $p(p-1)^2$ shape functions in this family.

\subparagraph{Family III:}
The shape functions and their curl are
\begin{equation}
	\begin{aligned}
		E_{ijk}^{\mathrm{b}}(\xieta)&=\mu_0(\zeta)\phi_k^\E(\vec{\mu}_{01}(\zeta))
			E_{ij}^\square\Big(\vec{\mu}_{01}(\sxii),\vec{\mu}_{01}(\sxi)\Big)\,,\\
		\nabla\times E_{ijk}^{\mathrm{b}}(\xieta)&=\mu_0(\zeta)\phi_k^\E(\vec{\mu}_{01}(\zeta))\nabla\times
			E_{ij}^\square\Big(\vec{\mu}_{01}(\sxii),\vec{\mu}_{01}(\sxi)\Big)\\
				&\quad+\Big(\mu_0(\zeta)\nabla\phi_k^\E(\vec{\mu}_{01}(\zeta))+\phi_k^\E(\vec{\mu}_{01}(\zeta))\nabla\mu_0(\zeta)\Big)
					\times E_{ij}^\square\Big(\vec{\mu}_{01}(\sxii),\vec{\mu}_{01}(\sxi)\Big)\,,
	\end{aligned}
	\label{eq:PyrHcurlInteriorIII}
\end{equation}
for $i=0,\ldots,p-1$, $j=2,\ldots,p$, and $k=2,\ldots,p$.
Note the entries are permuted with respect to the second family of interior bubbles.
There are $p(p-1)^2$ shape functions in this family.

\subparagraph{Family IV:}
The shape functions for the final family of interior bubbles and their curl are
\begin{equation}
	\begin{aligned}
		E_{ij}^{\mathrm{b}}(\xieta)&=\phi_{ij}^\square\Big(\vec{\mu}_{01}(\sxii),\vec{\mu}_{01}(\sxi)\Big)
			n\mu_0(\zeta)^{n-1}\nabla\mu_0(\zeta)\,,\\
		\nabla\times E_{ij}^{\mathrm{b}}(\xieta)&=n\mu_0(\zeta)^{n-1}
			\nabla\phi_{ij}^\square\Big(\vec{\mu}_{01}(\sxii),\vec{\mu}_{01}(\sxi)\Big)\times\nabla\mu_0(\zeta)\,,
	\end{aligned}
	\label{eq:PyrHcurlInteriorIV}
\end{equation}
for $i=2,\ldots,p$, $j=2,\ldots,p$, and $n=\max\{i,j\}$.
There are $(p-1)^2$ shape functions in this family.

\subsection{\texorpdfstring{$H(\mathrm{div})$}{Hdiv} Shape Functions}

The dimension of the space $\mathcal{U}^{(2),p}$ is $3p^3+2p$.
The number of linearly independent shape functions coincides with that dimension.

\subsubsection{\texorpdfstring{$H(\mathrm{div})$}{Hdiv} Faces}

\paragraph{\texorpdfstring{$H(\mathrm{div})$}{Hdiv} Quadrilateral Face.} 
These are constructed exactly the same way as the $H^1$ and $H(\mathrm{curl})$ counterparts, but with the higher order blending function $\mu_0(\zeta)^3$ so that the resulting functions lie in the space.
In view of \eqref{eq:Vijsimplified}, it is clear that the function will only have a tangential component along the triangle faces, so that the normal trace vanishes at these faces, as required.
Moreover, again through \eqref{eq:Vijsimplified}, it is evident that the nonzero trace on the face is a quadrilateral $L^2$ face function.
The projection is once again depicted in Figure \ref{fig:PyramidProjectionQuad}.

In view of \eqref{eq:Vijsimplified}, the shape functions and their divergence are
\begin{equation}
	\begin{aligned}
		V_{ij}^\mathrm{f}(\xieta)&=\mu_0(\zeta)^3V_{ij}^\square\Big(\vec{\mu}_{01}(\sxi),\vec{\mu}_{01}(\sxii)\Big)\,,\\
    	\nabla\cdot V_{ij}^\mathrm{f}(\xieta)&=3\mu_0(\zeta)^2\nabla\mu_0(\zeta)\cdot
    		V_{ij}^\square\Big(\vec{\mu}_{01}(\sxi),\vec{\mu}_{01}(\sxii)\Big)\,,	
	\end{aligned}
	\label{eq:PyrHdivQuadFace}
\end{equation}
where $i=0,\ldots,p-1$ and $j=0,\ldots,p-1$.
Clearly, there are $p^2$ quadrilateral face functions.

\paragraph{\texorpdfstring{$H(\mathrm{div})$}{Hdiv} Triangle Faces.} 
The construction of the $H(\mathrm{div})$ triangle face functions is highly nontrivial.
Indeed, an analogous construction to the case of $H^1$ or $H(\mathrm{curl})$ does not work here.
This radicates in the definition of the space itself.
The issue is even present for the lowest order space, and in fact it is by looking at this space in detail that the problem is solved.

To summarize the construction, take for example face 125.
The detailed calculations are in Appendix \ref{app:pyrappendix}.
Proceeding as in the $H^1$ or $H(\mathrm{curl})$ case, one obtains the following two disheartening facts for the lowest order candidate functions,
\begin{equation*}
	\mu_0\big(\sxii\big)V_{00}^\Tri(\vec{\nu}_{012}(\xi_1,\zeta))\notin\mathcal{U}^{(2),1}\,,\qquad\quad
	\mu_0\big(\sxii\big)^{-1}V_{00}^\Tri(\vec{\lambda}_{125}(\xieta))\notin\mathcal{U}^{(2),1}\,.
\end{equation*}
However, they both satisfy the desired trace properties.
One could alternatively attempt a more direct construction, but issues arise constantly, either because the functions are not in the space, or because there are ``illegal'' derivatives of Legendre polynomials $P_i$, which in theory should not exist as they are elements of $L^2$ and are intended to approximate elements of that space (for example, a discontinuous function).
These issues do not arise when using $V_{ij}^\Tri$ in view of Lemma \ref{lem:divformula}, and this is one of the reasons why it is so convenient to use it.
Fortunately, there is a way to make this happen.
The key is to look at the unique lowest order function for a given face. 
The explicit formulas are given in \citet{Hiptmair99} and \citet{Nigam_Phillips_11}.
After scrupulous observation, one obtains
\begin{equation*}
	V_{00}^\mathrm{f}(\xieta)=\frac{1}{2}\Big(\mu_0\big(\sxii\big)V_{00}^\Tri(\vec{\nu}_{012}(\xi_1,\zeta))+
		\mu_0\big(\sxii\big)^{-1}V_{00}^\Tri(\vec{\lambda}_{125}(\xieta))\Big)\in\mathcal{U}^{(2),1}\,.
\end{equation*}
Hence, this suggests the following general formula for the face 125 shape functions,
\begin{equation*}
	V_{ij}^\mathrm{f}(\xieta)=\frac{1}{2}\Big(\mu_0\big(\sxii\big)V_{ij}^\Tri(\vec{\nu}_{012}(\xi_1,\zeta))+
		\mu_0\big(\sxii\big)^{-1}V_{ij}^\Tri(\vec{\lambda}_{125}(\xieta))\Big)\,,
\end{equation*}
for $i\geq0$, $j\geq0$, and $n=i+j=0,\ldots,p-1$. 
In Appendix \ref{app:pyrappendix} it is shown that these high order functions are in the correct space and that they sastisfy the trace properties.
Some could worry when seeing the factor $\mu_0\big(\sxii\big)^{-1}$, but this is in fact not a real singularity.
Indeed it is shown in Appendix \ref{app:pyrappendix} how to avoid it explicitly, along with an alternative formula convenient for computations.
Lastly, note the inherent projection is not unique, and in fact is a combination of horizontal and oblique triangle face projections.

Finally, in view of \eqref{eq:HdivtriangleRemark} the general shape functions are 
\begin{equation}
	\begin{aligned}
		V_{ij}^\mathrm{f}(\xieta)&=\frac{1}{2}\Big(\mu_c\big(\sxib\big)V_{ij}^\Tri(\vec{\nu}_{012}(\xi_a,\zeta))
			+\mu_c\big(\sxib\big)^{-1}V_{ij}^\Tri(\vec{\lambda}_{de5}(\xieta))\Big)\,,\\
    \nabla\cdot V_{ij}^\mathrm{f}(\xieta)&=\frac{1}{2}\Big(\nabla\mu_c\big(\sxib\big)\cdot V_{ij}^\Tri(\vec{\nu}_{012}(\xi_a,\zeta))
    	+\mu_c\big(\sxib\big)^{-1}\nabla\cdot V_{ij}^\Tri(\vec{\lambda}_{de5}(\xieta))\\
    		&\qquad\qquad\qquad\qquad\qquad\qquad\quad
    			-\mu_c\big(\sxib\big)^{-2}\nabla\mu_c(\sxib)\cdot V_{ij}^\Tri(\vec{\lambda}_{de5}(\xieta))\Big)\,,	
	\end{aligned}
	\label{eq:PyrHdivTriaFace}
\end{equation}
where $i\geq0$, $j\geq0$, $n=i+j=0,\ldots,p-1$, $(a,b)=(1,2),(2,1)$, $c=0,1$ and where $(d,e)$ depends on $(a,b,c)$ (in fact $\vec{\lambda}_{de5}(\xieta)=\mu_c\big(\sxib\big)\vec{\nu}_{012}(\xi_a,\zeta)$).
There are $\frac{1}{2}p(p+1)$ functions for each face, leading to a total of $2p(p-1)$ triangle face functions.

\subsubsection{\texorpdfstring{$H(\mathrm{div})$}{Hdiv} Interior Bubbles}

Like the $H(\mathrm{curl})$ interior bubbles, these will be separated according to a Helmholtz decomposition.
Indeed, the first three out of seven families of interior bubbles are precisely the curl of $H(\mathrm{curl})$ interior bubble functions, so they have zero divergence.
In all cases, the trace properties follow easily.
There is a grand total of $3p^2(p-1)$ interior bubble functions.

\subparagraph{Family I:}
The shape functions and their divergence are
\begin{equation}
	\begin{aligned}
		V_{ijk}^{\mathrm{b}}(\xieta)&=\mu_0(\zeta)\phi_k^\E(\vec{\mu}_{01}(\zeta))\nabla\times
			E_{ij}^\square\Big(\vec{\mu}_{01}(\sxi),\vec{\mu}_{01}(\sxii)\Big)\\
				&\quad+\Big(\mu_0(\zeta)\nabla\phi_k^\E(\vec{\mu}_{01}(\zeta))+\phi_k^\E(\vec{\mu}_{01}(\zeta))\nabla\mu_0(\zeta)\Big)
					\times E_{ij}^\square\Big(\vec{\mu}_{01}(\sxi),\vec{\mu}_{01}(\sxii)\Big)\,,\\
		\nabla\cdot V_{ijk}^{\mathrm{b}}(\xieta)&=0\,,
	\end{aligned}
	\label{eq:PyrHdivInteriorI}
\end{equation}
for $i=0,\ldots,p-1$, $j=2,\ldots,p$, and $k=2,\ldots,p$.
There are $p(p-1)^2$ shape functions in this family.

\subparagraph{Family II:}
The shape functions and their divergence are
\begin{equation}
	\begin{aligned}
		V_{ijk}^{\mathrm{b}}(\xieta)&=\mu_0(\zeta)\phi_k^\E(\vec{\mu}_{01}(\zeta))\nabla\times
			E_{ij}^\square\Big(\vec{\mu}_{01}(\sxii),\vec{\mu}_{01}(\sxi)\Big)\\
				&\quad+\Big(\mu_0(\zeta)\nabla\phi_k^\E(\vec{\mu}_{01}(\zeta))+\phi_k^\E(\vec{\mu}_{01}(\zeta))\nabla\mu_0(\zeta)\Big)
					\times E_{ij}^\square\Big(\vec{\mu}_{01}(\sxii),\vec{\mu}_{01}(\sxi)\Big)\,,\\
		\nabla\cdot V_{ijk}^{\mathrm{b}}(\xieta)&=0\,,
	\end{aligned}
	\label{eq:PyrHdivInteriorII}
\end{equation}
for $i=0,\ldots,p-1$, $j=2,\ldots,p$, and $k=2,\ldots,p$.
Note the entries are permuted with respect to the first family of interior bubbles.
There are $p(p-1)^2$ shape functions in this family.

\subparagraph{Family III:}
The shape functions and their divergence are
\begin{equation}
	\begin{aligned}
		V_{ij}^{\mathrm{b}}(\xieta)&=n\mu_0(\zeta)^{n-1}
			\nabla\phi_{ij}^\square\Big(\vec{\mu}_{01}(\sxii),\vec{\mu}_{01}(\sxi)\Big)\times\nabla\mu_0(\zeta)\,,\\
		\nabla\cdot V_{ij}^{\mathrm{b}}(\xieta)&=0\,,
	\end{aligned}
	\label{eq:PyrHdivInteriorIII}
\end{equation}
for $i=2,\ldots,p$, $j=2,\ldots,p$, and $n=\max\{i,j\}$.
There are $(p-1)^2$ shape functions in this family.

\subparagraph{Family IV:}
These have nonzero divergence and are generated by the quadrilateral face functions, but using $\mu_0(\zeta)^2\phi_k^\E(\vec{\mu}_{01}(\zeta))$ as a factor instead of $\mu_0(\zeta)^3$. 
In view of \eqref{eq:Vijsimplified}, the shape functions and their divergence are
\begin{equation}
	\begin{aligned}
		V_{ijk}^{\mathrm{b}}(\xieta)&\!=\!\mu_0(\zeta)^2\phi_k^\E(\vec{\mu}_{01}(\zeta))
			V_{ij}^\square\Big(\vec{\mu}_{01}(\sxi),\vec{\mu}_{01}(\sxii)\Big)\,,\\
		\nabla\!\cdot\! V_{ijk}^{\mathrm{b}}(\xieta)&\!=\!
				\Big(\mu_0(\zeta)^2\nabla\phi_k^\E(\vec{\mu}_{01}(\zeta))
					\!+\!2\mu_0(\zeta)\phi_k^\E(\vec{\mu}_{01}(\zeta))\nabla\mu_0(\zeta)\Big)
						\!\cdot\! V_{ij}^\square\Big(\vec{\mu}_{01}(\sxi),\vec{\mu}_{01}(\sxii)\Big)\,,
	\end{aligned}
	\label{eq:PyrHdivInteriorIV}
\end{equation}
for $i=0,\ldots,p-1$, $j=0,\ldots,p-1$, and $k=2,\ldots,p$.
There are $p^2(p-1)$ shape functions in this family.

\subparagraph{Family V:}
These have nonzero divergence and are expressed as the product of a power of $\mu_1(\zeta)$ with a curl. 
As a first step, define
\begin{equation}
\begin{aligned}
		V_{ij}^\trianglelefteq(\vec{s}_{01}^{\,x},\vec{s}_{01}^{\,y},t_0)&\!=\!\nabla\!\times\!\Big(\textstyle{\frac{1}{2}}t_0^2
			\Big(\phi_i^\E(\vec{s}_{01}^{\,x})\nabla\phi_j^\E(\vec{s}_{01}^{\,y})
					\!-\!\phi_j^\E(\vec{s}_{01}^{\,y})\nabla\phi_i^\E(\vec{s}_{01}^{\,x})\Big)\Big)\,,\\
			&\!=\!t_0^2\Big(\!\nabla\phi_i^\E(\vec{s}_{01}^{\,x})\!\times\!\nabla\phi_j^\E(\vec{s}_{01}^{\,y})\!\Big)
				\!+\!t_0\nabla t_0\!\times\!\!\Big(\!\phi_i^\E(\vec{s}_{01}^{\,x})\nabla\phi_j^\E(\vec{s}_{01}^{\,y})
					\!-\!\phi_j^\E(\vec{s}_{01}^{\,y})\nabla\phi_i^\E(\vec{s}_{01}^{\,x})\!\Big)\,,
	\end{aligned}
	\label{eq:PyrHdivBubblesAuxI}
\end{equation}
for $i=2,\ldots,p$, and $j=2,\ldots,p$.
Clearly, $\nabla\cdot V_{ij}^\trianglelefteq(\vec{s}_{01}^{\,x},\vec{s}_{01}^{\,y},t_0)=0$.
The shape functions and their divergence are
\begin{equation}
	\begin{aligned}
		V_{ij}^{\mathrm{b}}(\xieta)&=\mu_1(\zeta)^{n-1}V_{ij}^\trianglelefteq(\vec{\mu}_{01}(\sxi),\vec{\mu}_{01}(\sxii),\mu_0(\zeta))\,,\\
		\nabla\cdot	V_{ij}^{\mathrm{b}}(\xieta)&=(n-1)\mu_1(\zeta)^{n-2}\nabla\mu_1(\zeta)
			\cdot V_{ij}^\trianglelefteq(\vec{\mu}_{01}(\sxi),\vec{\mu}_{01}(\sxii),\mu_0(\zeta))\,,
	\end{aligned}
	\label{eq:PyrHdivInteriorV}
\end{equation}
for $i=2,\ldots,p$, $j=2,\ldots,p$, and with $n=\max\{i,j\}$. 
There are $(p-1)^2$ shape functions in this family.

\subparagraph{Family VI:}
These have nonzero divergence and are expressed as the product of a power of $\mu_1(\zeta)$ with a curl.
First define
\begin{equation}
		V_i^\trianglerighteq(\vec{s}_{01},\mu_1,t_0)=
			\nabla\Big(t_0^2\phi_i^\E(\vec{s}_{01})\Big)\times\nabla\mu_1
				=\Big(t_0^2\nabla\phi_i^\E(\vec{s}_{01})+2t_0\phi_i^\E(\vec{s}_{01})\nabla t_0\Big)\times\nabla\mu_1\,,
	\label{eq:PyrHdivBubblesAuxII}
\end{equation}
for $i=2,\ldots,p$.
Obviously, $\nabla\cdot V_i^\trianglerighteq(\vec{s}_{01},\mu_1,t_0)=0$.
The shape functions and their divergence are
\begin{equation}
	\begin{aligned}
		V_{i}^{\mathrm{b}}(\xieta)&=\mu_1(\zeta)^{i-1}V_i^\trianglerighteq(\vec{\mu}_{01}(\sxi),\mu_1(\sxii),\mu_0(\zeta))\,,\\
		\nabla\cdot	V_{i}^{\mathrm{b}}(\xieta)&=(i-1)\mu_1(\zeta)^{i-2}\nabla\mu_1(\zeta)
			\cdot V_i^\trianglerighteq(\vec{\mu}_{01}(\sxi),\mu_1(\sxii),\mu_0(\zeta))\,,
	\end{aligned}
	\label{eq:PyrHdivInteriorVI}
\end{equation}
for $i=2,\ldots,p$. 
There is a total of $(p-1)$ shape functions in this family.

\subparagraph{Family VII:}
Using \eqref{eq:PyrHdivBubblesAuxII}, the shape functions and their divergence are
\begin{equation}
	\begin{aligned}
		V_{j}^{\mathrm{b}}(\xieta)&=\mu_1(\zeta)^{j-1}V_j^\trianglerighteq(\vec{\mu}_{01}(\sxii),\mu_1(\sxi),\mu_0(\zeta))\,,\\
		\nabla\cdot	V_{j}^{\mathrm{b}}(\xieta)&=(j-1)\mu_1(\zeta)^{j-2}\nabla\mu_1(\zeta)
			\cdot V_j^\trianglerighteq(\vec{\mu}_{01}(\sxii),\mu_1(\sxi),\mu_0(\zeta))\,,
	\end{aligned}
	\label{eq:PyrHdivInteriorVII}
\end{equation}
for $j=2,\ldots,p$.
Note the entries are permuted with respect to the sixth family of interior bubbles.
There is a total of $(p-1)$ shape functions in this family.

\subsection{\texorpdfstring{$L^2$}{L2} Shape Functions}

The dimension of the space $\mathcal{U}^{(3),p}$ is $p^3$.
The same number of shape functions will span the space.

\subsubsection{\texorpdfstring{$L^2$}{L2} Interior}
Again, these are reminiscent of the shape functions for the hexahedron.
They are,
\begin{equation}
    \psi_{ijk}^\mathrm{b}(\xieta)=[P_i](\vec{\mu}_{01}(\sxi))[P_j](\vec{\mu}_{01}(\sxii))[P_k](\vec{\mu}_{01}(\zeta))
    	(\nabla\nu_1(\xi_1,\zeta)\!\!\times\!\!\nabla\nu_1(\xi_2,\zeta))\!\cdot\!\nabla\mu_1(\zeta)\,,
    \label{eq:PyrL2Interior}
\end{equation}
for $i=0,\ldots,p-1$, $j=0,\ldots,p-1$ and $k=0,\ldots,p-1$. 
There are $p^3$ interior functions.

\subsection{Orientations}

\begin{figure}[!ht]
\begin{center}
\includegraphics[scale=0.5]{./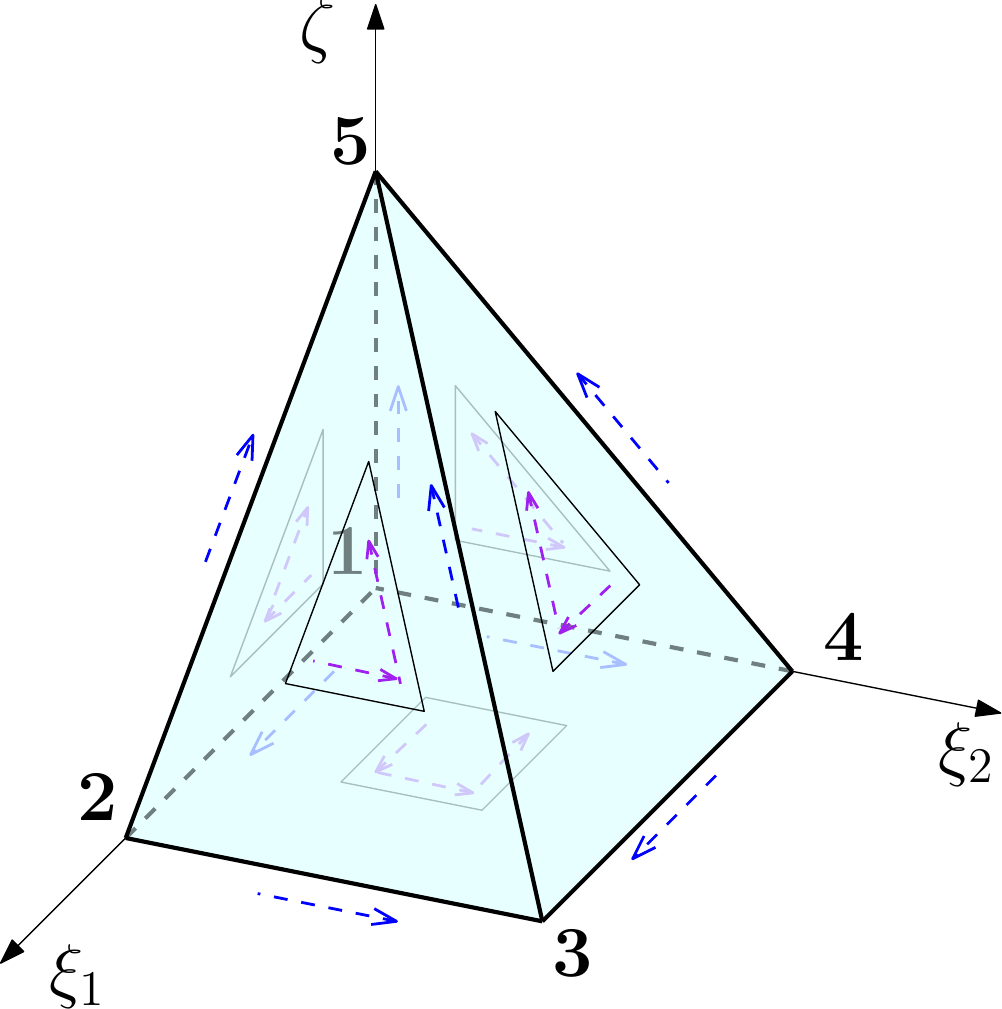}
\caption{Master pyramid with numbered vertices \textit{and} local edge and face orientations.}
\label{fig:MasterPyramidOrientations}
\end{center}
\end{figure}

The predefined \textit{local} edge and face orientations for the pyramid are illustrated in Figure \ref{fig:MasterPyramidOrientations}.
They represent the $\oo=0$ case.
The task at hand is to find the associated \textit{locally ordered} tuples of affine coordinates representing those local orientations.
The key is being aware of the relationships between the vertices and the affine coordinates, which have been explained in detail at the beginning of this section. 
Hence, once \S\ref{sec:HexaOrientations}, \S\ref{sec:TetOrientations} and \S\ref{sec:PrismOrientations} are consulted, it should be clear how to construct the orientation embedded shape functions. 
Consult Appendix \ref{app:pyrappendix} if the reader wants to avoid computational instabilities for the $H(\mathrm{div})$ triangle face functions.

\newpage
\section{Conclusions}
\label{sec:Conclusions}

We have presented here a full systematic construction of hierarchical higher order shape functions for elements of ``all shapes'' (see Figure \ref{fig:elementsallshapes}) using the exact sequence logic.
Compatibility of the shape functions at the interelement boundaries is based on the idea of having a known fixed trace at the boundary which is extended (or lifted) to the rest of the element.
Hence, the shape functions can be used in hybrid meshes containing elements of all shapes.
Furthermore, due to the properties of the discrete spaces, interpolation estimates are ensured in any hybrid master element mesh and for all energy spaces.

The unified construction is based at its core in considering tensor products of polynomials.
This has positive implications from the computational point of view, and could result in the successful implementation of fast integration techniques as described in Appendix \ref{app:Integration}.

Also, the shape functions allow the polynomial order $p$ to vary accross a given mesh. 
For example, the quadrilateral, hexahedron and prism shape functions are naturally anisotropic and can have different orders in each direction.
More so, each edge and face in the mesh can have their own order $p$, independent of the order of the neighboring edges, faces and interiors in the mesh.
Hence, techniques that exploit the use of local $p$ adaptivity can be implemented using these shape functions.

As polynomial building blocks, Legendre and Jacobi polynomials are used in this work.
Our results show that the recursive formulas presented in this document to implement those polynomials seem to be the most accurate in comparison to other recursive formulas. 
The choices of Jacobi and Legendre polynomials are known to have extremely good sparsity and conditioning properties for typical projection problems \citep{Beuchler_Pillwein_Schoeberl_Zaglmayr_12}.
However, there is flexibility in this choice of polynomials, and it might be worth investigating if there are applications where different choices provide useful advantages (see Appendix \ref{app:GeneratingFamilies}).

All constructions are written in terms of affine coordinates and their gradient.
Indeed, the shape functions are valid for any (typical) master element geometry, provided the affine coordinates are computed.
We hope this has convinced the reader that the direct use of affine coordinates for \textit{all} elements (not just simplices) is the ideal approach, and that it motivates other researchers to also communicate in those general terms.
The polynomials we chose are \textit{shifted} to have the domain $(0,1)$ instead of the typical $(-1,1)$.
We claim this is the natural choice for construction of shape functions, since all affine cordinates have range $[0,1]$ and affine coordinates are \textit{the} natural inputs for the polynomials.
Hence, we encourage the implementation of the polynomials in the shifted domain.
The concept of polynomial homogenization was introduced and heavily used.
It is a particular form of scaling which is closely related to affine coordinates, and is a tool that provides natural extensions.
In fact, homogenization provides some level of geometrical intuition, since the projected affine coordinates arise naturally through this process.
Theoretically it is also a convenient tool since it results in homogeneous polynomials, which have many desirable properties.

Moreover, only \textit{eight} ancillary operators effectively generate all shape functions.
These ancillary operators are coordinate free, in the sense that the form of the operators is invariant with respect to any transformation. 
This is important, because it allows to transform nonlinearly to other geometries.
Hence, it suffices to compute the affine coordinates and their gradient in that deformed space.
Then, the shape functions resulting from the substitution of the deformed affine coordinates will precisely be the well defined pullback of the original shape functions.
This has both theoretical and practical implications.
For example, curved physical elements are deformations of the master element domains. 
This has the potential to result in more efficient computations when integrating (see Appendix \ref{app:Integration} for the basics of integration).

For the face and edge shape functions, the logic of projecting, evaluating and blending is used consistently for all elements and all energy spaces.
This provides a firm geometrical intuition of the expressions and formulas for the shape functions, which we hope the reader will appreciate.

Additionally, the shape functions can be converted to orientation embedded shape functions via only \textit{three} local-to-global permutation functions (one for edges, one for triangle faces, and one for quadrilateral faces).
These orientation embedded shape functions are extremely practical in many applications, especially in the implementation of constrained nodes in $hp$ methods.

All the characteristics above prove to be vital in the implementation of a code.
Indeed, the number of important routines which are called repeatedly is very small, and this minimizes the sources of errors, while allowing a very focused optimization of the implementation.
A complete Fortran 90 code supplements this document.\footnote{See the ESEAS library available at https://github.com/libESEAS/ESEAS.}
It provides an excellent guidance if the reader is ever interested in implementing this construction.  
The code has been tested thoroughly by numerically checking polynomial reproducibility and exact sequence properties.
This is described in Appendix \ref{app:verification}.
The shape functions for all elements and all spaces are conveniently summarized in Appendix \ref{app:ShapeFunctionTable}.


Lastly, special attention is given to the successful construction of the pyramid shape functions, which is rare in the literature.
A thorough geometric intuition for the pyramid was described, and the 3D pyramid affine-related coordinates were defined and analyzed.
The set of exact sequence spaces were taken from \citet{Nigam_Phillips_11}, and they are consistent with the fundamental first order elements described by \citet{Hiptmair99}.
We believe it to be the first time that an arbitrary high order construction of $H(\mathrm{div})$ shape functions has been implemented whilst respecting those lower order spaces (there have been others where either the lower order spaces have been larger, or simply different).
This may prove to be very valuable to other researchers even if only for comparison purposes.
For the pyramid, it might be possible to investigate better choices of polynomials for the bubbles, as this may provide better conditioning and sparsity properties.

To finalize, we hope this construction has been useful in its methoodology and that it motivates further research in this very rich area.

\paragraph{Acknowledgements.}
The work of Fuentes, Keith, Demkowicz and Nagaraj was supported with grants by AFOSR (FA9550-12-1-0484), NSF (DMS-1418822) and Sandia National Laboratories (1536119).

\newpage
\phantomsection
\addcontentsline{toc}{section}{References}
\bibliographystyle{kbib}
\bibliography{main}

\appendix
\newpage
\pagenumbering{arabic}
\renewcommand*{\thepage}{\thesection-\arabic{page}}
\section{Polynomial Families}
\label{app:GeneratingFamilies}

Let $\mathcal{P}^i(x)=\mathrm{span}\{x^j:j=0,\ldots,i\}$ be the polynomials of order $i$, and $\mathcal{F}_p=\{P_i:i=0,\ldots,p-1\}$ and $\mathcal{F}_p^\alpha=\{P_j^\alpha:j=0,\ldots,p-1\}$ be sets of polynomials with domain $x\in[0,1]$. 
The families $\mathcal{F}_p$ and $\mathcal{F}_p^\alpha$ should satisfy that $\mathrm{span}(\mathcal{F}_p)=\mathrm{span}(\mathcal{F}_p^\alpha)=\mathcal{P}^{p-1}(x)$, $P_i\in\mathcal{P}^i(x)$ and $P_j^\alpha\in\mathcal{P}^j(x)$.
Moreover, the family $\mathcal{F}_p$ should satisfy the zero average property, $\int_0^1 P_i(x)\,\mathrm{d}x=0$, for all $i\geq1$.

These sets of conditions are very easy to satisfy. 
For example consider any sequence of polynomials of increasing order, say $1,x,x^2,x^3,\ldots,x^{p-1}$.
Then it is only a matter of adding a suitable constant to all polynomials of order $i\geq1$ such that their integral is $0$.
For example, $1,x,x^2,x^3,\ldots,x^p$ becomes $1,x-\frac{1}{2},x^2-\frac{1}{3},\ldots,x^{p-1}-\frac{1}{p}$, and this latter one is already a suitable family $\mathcal{F}_p$.

The elements of $\mathcal{F}_p$ and $\mathcal{F}_p^\alpha$ are thought of being elements of $L^2$, even though they are infinitely differentiable as polynomials.
Indeed, it is often desirable that the elements of $\mathcal{F}_p$ and $\mathcal{F}_p^\alpha$ satisfy certain properties in the $L^2$ (or weighted $L^2$) inner product, since they can result in considerably sparser finite element matrices.
In fact, orthogonality of the $P_i$, is generally seeked.
If there is no weight, orthogonality of the $P_i$ is attained uniquely (up to a constant) by the Legendre polynomials, and this is why they are the typical choice.
Meanwhile, the family $\mathcal{F}_p^\alpha$ can also be chosen wisely by taking into account a weighted $L^2$ space relevant to the triangle element.
If the $P_i$ are Legendre polynomials, the natural choice for the $P_j^\alpha$ is to be Jacobi polynomials with certain weights.

Now, define the polynomials $L_{i}\in\mathcal{P}^{i}(x)$ and $L_{j}^\alpha\in\mathcal{P}^{j}(x)$ from the $P_{i-1}\in\mathcal{F}_p$ and $P_{j-1}^\alpha\in\mathcal{F}_p^\alpha$ as
\begin{equation}
    L_{i}(x)=\int_0^x P_{i-1}(\tilde{x})\,\mathrm{d}\tilde{x}\,,\qquad\qquad 
    	L_{j}^\alpha(x)=\int_0^x P_{j-1}^\alpha(\tilde{x})\,\mathrm{d}\tilde{x}\,,
\end{equation}
for $i=2,\ldots,p$ and $j=1,\ldots,p$.
Clearly, it follows that $\mathcal{P}^{p}(x)=\mathcal{P}^1(x)\cup\{L_i:i=2,\ldots,p\}$ and $\mathcal{P}^{p}(x)=\mathcal{P}^0(x)\cup\{L_j^\alpha:j=1,\ldots,p\}$.
By construction, the $L_i$ and $L_j^\alpha$ are elements of $H^1$ and as a result their pointwise evaluation exists. 
In fact, due to the zero average property, 
\begin{equation}
	\begin{alignedat}{3}
		L_i(0)&=L_i(1)=0\,,\qquad\qquad&&\text{for }i\geq2\,,\\
		L_j^\alpha(0)&=0\,,\qquad\qquad&&\text{for }i\geq1\,.
	\end{alignedat}
	\label{eq:AppLiLjVanishingProperties}
\end{equation}



Now, apply the definition of scaling given in \eqref{eq:scaledpolyomials} to the polynomials, yielding $P_i(x;t)$, $P_j^\alpha(x;t)$, $L_i(x;t)$ and $L_j^\alpha(x;t)$, where $x\in[0,t]$.
In particular, note that
\begin{equation}
	L_{i+1}(x;t)=t^{i+1}L_{i+1}\Big(\frac{x}{t}\Big)=t^{i+1}\int_0^{\frac{x}{t}} P_{i}(\tilde{x})\,\mathrm{d}\tilde{x}
		=t^{i}\int_0^{x} P_{i}\Big(\frac{\tilde{y}}{t}\Big)\,\mathrm{d}\tilde{y}\,.
\end{equation}
Since the scaled polynomials are now bivariate polynomials in $(x,t)$, it is useful to find the derivatives in both of these variables.
Using the latter equality above,
\begin{equation}
	\frac{\partial}{\partial x}L_{i+1}(x;t)=t^iP_i\Big(\frac{x}{t}\Big)=P_i(x;t)\,.
\end{equation}
Meanwhile, using the other equality, it follows
\begin{equation}
	\frac{\partial}{\partial t}L_{i+1}(x;t)
		=(i+1)t^iL_{i+1}\Big(\frac{x}{t}\Big)+t^{i+1}P_{i}\Big(\frac{x}{t}\Big)\Big(\frac{-x}{t^2}\Big)
			=t^i\Big((i+1)L_{i+1}\Big(\frac{x}{t}\Big)-\Big(\frac{x}{t}\Big)P_{i}\Big(\frac{x}{t}\Big)\Big)\,.
\end{equation}
This suggests the definition
\begin{equation}
	R_i(x)=(i+1)L_{i+1}(x) - xP_i(x)\,,
	\label{eq:R_i_eq1}
\end{equation}
for $i\geq1$.
Using the leading order term of $P_i$, the reader may observe that $R_i$ is an order $i$ polynomial (\textit{not} order $i+1$), so the indexing with $i$ makes sense.
More importantly,
\begin{equation}
	\frac{\partial}{\partial t}L_{i+1}(x;t)
			=t^i\Big((i+1)L_{i+1}\Big(\frac{x}{t}\Big)-\Big(\frac{x}{t}\Big)P_{i}\Big(\frac{x}{t}\Big)\Big)
				=R_{i}(x;t)\,.
\end{equation}
Exactly the same analysis applies to the $L_{j+1}^\alpha$, meaning that
\begin{equation}
	\frac{\partial}{\partial x}L_{j+1}\alpha(x;t)=P_j^\alpha(x;t)\,,\qquad\qquad
		\frac{\partial}{\partial x}L_{j+1}^\alpha(x;t)=R_j^\alpha(x;t)\,,
\end{equation}
for $j\geq0$, and where
\begin{equation}
	R_j^\alpha(x)=(j+1)L_{j+1}^\alpha(x) - xP_j^\alpha(x)\,.
\end{equation}

As observed, all these properties are deduced at a very general level, and apply to any families $\mathcal{F}_p$ and $\mathcal{F}_p^\alpha$ satisfying the simple set of properties mentioned initially.
To conclude, it follows all the results deduced throughout the document still hold for these more general polynomial families, including the vanishing properties, the ancillary function properties, etc.
Hence, the reader may decide to change these families if desired.
This could potentially provide better sparsity properties depending on the problem.
Nevertheless, be aware that for the classical Laplace problem (and many related sets of problems), the ideal polynomial families are the Legendre and Jacobi polynomials, which are used throughout this document and are conveniently defined through recursive formulas.


\newpage
\pagenumbering{arabic}
\renewcommand*{\thepage}{\thesection-\arabic{page}}
\section{Pyramid Supplement}
\label{app:pyrappendix}

\begin{figure}[!ht]
\begin{center}
\includegraphics[scale=0.5]{./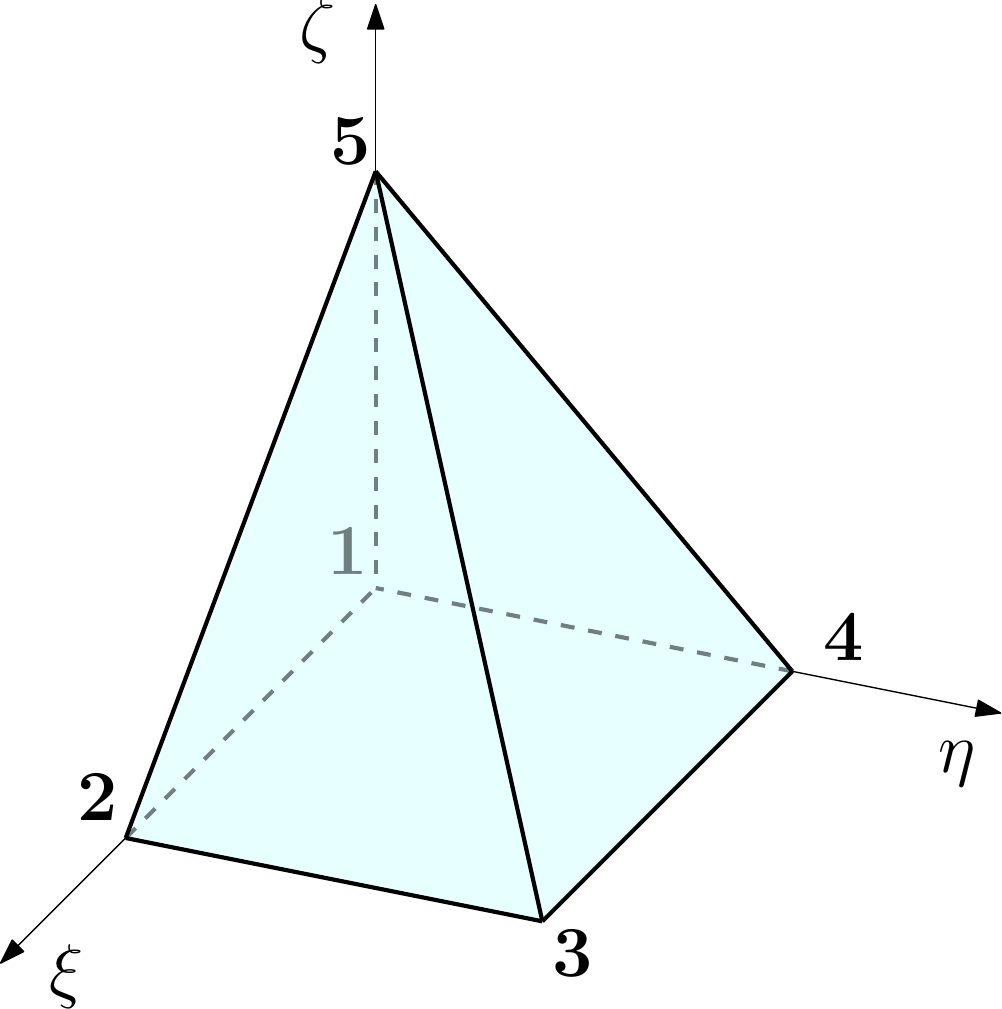}
\caption{Master pyramid with numbered vertices.}
\label{fig:MasterPyramidAppendix}
\end{center}
\end{figure}

The master pyramid is shown in Figure \ref{fig:MasterPyramidAppendix} in the $(\xi,\eta,\zeta)$ space (\textit{not} the $(\xi_1,\xi_2,\zeta)$ space).
More specifically, the definition is $\Omega=\{(\xi,\eta,\zeta)\in\R^3:\xi>0,\eta>0,\zeta>0,\xi+\zeta<1,\eta+\zeta<1\}$.

This supplement provides the proofs that the pyramid shape functions proposed in \S\ref{sec:Pyramid} are in the finite element spaces  defined in the fundamental work by \citet{Nigam_Phillips_11}.
Recall from \eqref{eq:pyramidsequence} that the discrete spaces forming the exact sequence are $\mathcal{U}^{(m),p}$ for $m=0,1,2,3$, where $m$ stands for the differential $m$-forms lying in each space.
The spaces $\mathcal{U}^{(m),p}$ are defined as
\begin{equation}
	\mathcal{U}_\infty^{(m),p}=\mathcal{V}^{(m),p}\cap\Gamma^{(m),p}\,,
\end{equation}
where the spaces $\mathcal{V}^{(m),p}$ are called the \textit{underlying spaces}, and the spaces $\Gamma^{(m),p}$ are called the \textit{compatibility spaces}.
These two families of spaces will be defined next.

First, note that the compatibility spaces ensure the elements in $\mathcal{U}^{(m),p}$ are compatible at the level of spaces with the other elements.
Indeed, they consist of those functions having their face traces lying on the appropriate 2D quadrilateral and triangle spaces in \eqref{eq:QuadES} and \eqref{eq:EStriangle} respectively.
More specifically, let $\mathrm{tr}_\Tri^{(m)}$ and $\mathrm{tr}_\square^{(m)}$ be the trace of the differential $m$-forms over the four pyramid triangle faces and the pyramid quadrilateral face respectively.
Recall that for $m=0$ the trace is the value of the function itself, for $m=1$ the trace is the 2D tangential component, while for $m=2$ the trace is the normal component.
With this in mind, the compatibility spaces are,\footnote{Note that in \eqref{eq:Pyramidcompatilibityspaces}, the very special property that $\mathcal{P}^p$, $\mathcal{N}^p$ and $\mathcal{RT}^p$ are affine invariant is used. If the spaces were different, one would need to take the 2D pullback of each triangle trace to the master triangle. The same holds for the quadrilateral trace, where in this case it is exploited that the quadrilateral face is the master quadrilateral and no transformation is needed.}
\begin{equation}
\begin{aligned}
	\Gamma^{(0),p}&=\{\phi\in H^1:\mathrm{tr}_\Tri^{(0)}(\phi)\in\mathrm{tr}_\Tri^{(0)}(\mathcal{P}^p(\xi,\eta,\zeta)),\,\,
		\mathrm{tr}_\square^{(0)}(\phi)\in\mathcal{Q}^{p,p}(\xi,\eta)\}\,,\\
	\Gamma^{(1),p}&=\{E\in H(\mathrm{curl}):\mathrm{tr}_\Tri^{(1)}(E)\in\mathrm{tr}_\Tri^{(1)}(\mathcal{N}^p(\xi,\eta,\zeta)),\,\,
		\mathrm{tr}_\square^{(1)}(E)\in\mathcal{Q}^{p-1,p}(\xi,\eta)\!\times\!\mathcal{Q}^{p,p-1}(\xi,\eta)\}\,,\\
	\Gamma^{(2),p}&=\{V\in H(\mathrm{div}):\mathrm{tr}_\Tri^{(2)}(V)\in\mathrm{tr}_\Tri^{(2)}(\mathcal{RT}^p(\xi,\eta,\zeta)),\,\,
		\mathrm{tr}_\square^{(2)}(V)\in\mathcal{Q}^{p-1,p-1}(\xi,\eta)\}\,,\\
	\Gamma^{(3),p}&=\{\psi\in L^2\}\,.
\end{aligned}
\label{eq:Pyramidcompatilibityspaces}
\end{equation}

Fortunately, at the level of shape functions, dealing with the compatibility spaces is not as intimidating as it might look.
All that is required is that the shape functions satisfy the dimensional hierarchy, so that their nonzero face traces correspond to the lower dimensional shape functions, which are known to lie in the correct space.
Therefore, if a shape function satisfies the required trace properties, it \textit{automatically} belongs to the appropriate compatibility space.

The main difficulty lies in showing that the shape functions belong to the underlying spaces $\mathcal{V}^{(m),p}$.
In fact, these spaces are not nicely defined directly on the master pyramid.
For this reason, it is more convenient to define them on a deformed space where the symmetries are more evident, and then use the inverse pullback to the master pyramid.
This process is explained in what follows.


\begin{figure}[!ht]
\begin{center}
\includegraphics[scale=0.5]{./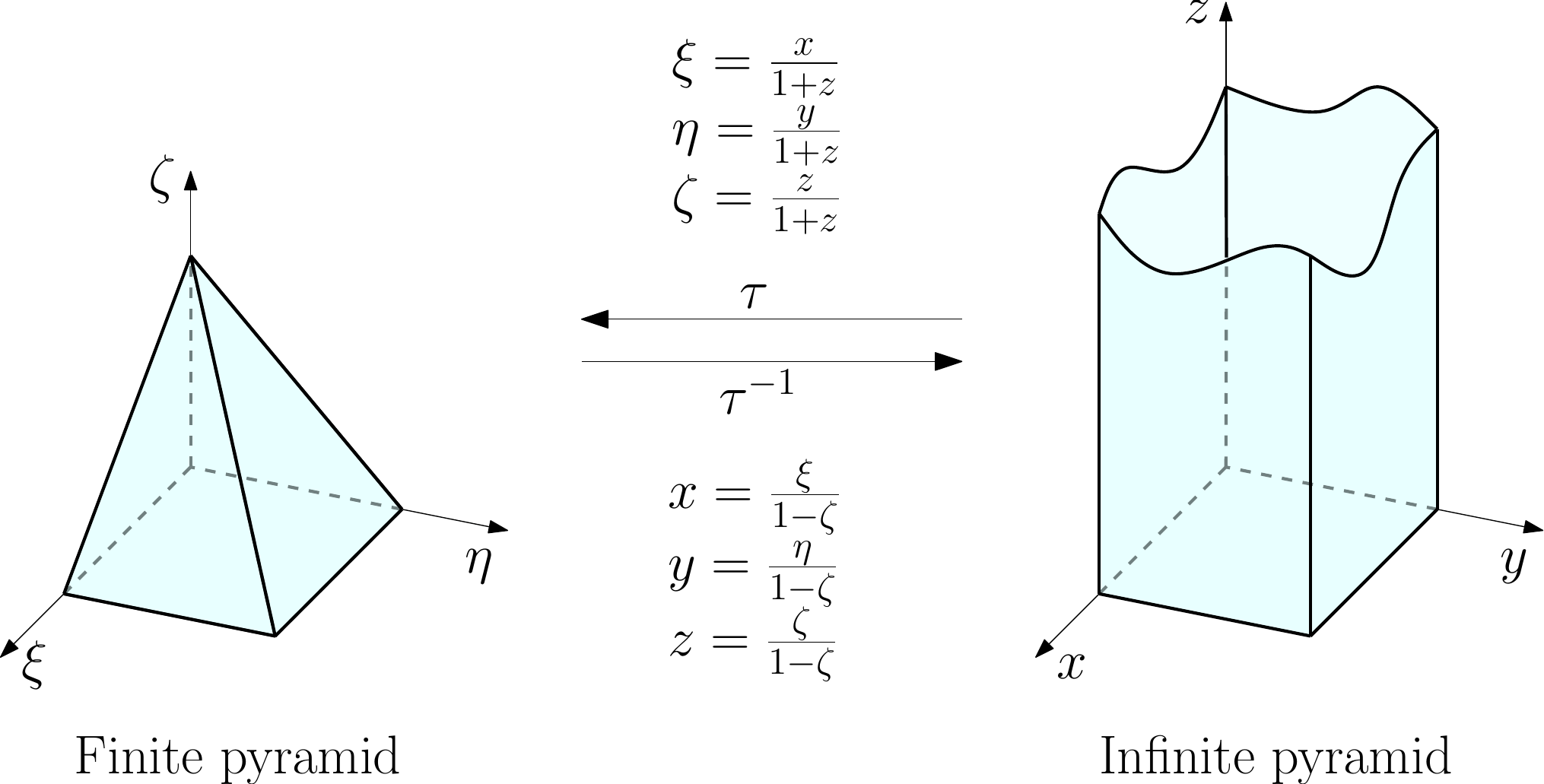}
\caption{Transformation from the infinite pyramid to the master pyramid.}
\label{fig:PyramidSingleTransform}
\end{center}
\end{figure}

The deformed space is usually chosen as a cube, but \citet{Nigam_Phillips_11} chose it to be an \textit{infinite pyramid}, which is defined as $\Omega_\infty=\{(x,y,z)\in\R^3:0<x<1,0<y<1,z>0\}$.
The first step is to consider the transformation $\tau:\Omega_\infty\rightarrow\Omega$ from the intinite pyramid $\Omega_\infty$ to the pyramid $\Omega$, which is given by the component equations,
\begin{equation}
	\xi=\frac{x}{1+z},\quad\qquad\eta=\frac{y}{1+z},\quad\qquad\zeta=\frac{z}{1+z}\,.
\end{equation}
Clearly, $\tau$ is a diffeomorphism between these open sets (\textit{not} when including the boundary).
The inverse $\tau^{-1}:\Omega\rightarrow\Omega_\infty$ is given by the component equations,
\begin{equation}
	x=\frac{\xi}{1-\zeta},\quad\qquad y=\frac{\eta}{1-\zeta},\quad\qquad z=\frac{\zeta}{1-\zeta}\,.
\end{equation}
The transformation is depicted in Figure \ref{fig:PyramidSingleTransform}.
Take note of the following useful expressions resulting from this transformation,
\begin{equation*}
	1-x=\frac{1-\xi-\zeta}{1-\zeta},\quad\quad 1-y=\frac{1-\eta-\zeta}{1-\zeta},\quad\quad
	1+z=\frac{1}{1-\zeta},\quad\quad \frac{1}{1+z}=1-\zeta\,.
\end{equation*}

Next consider the following isomorphic mappings,
\begin{equation}
\begin{aligned}
	\tau^{-*,(m)}:&\mathcal{V}_\infty^{(m),p}\longrightarrow\mathcal{V}^{(m),p}=\tau^{-*,(m)}(\mathcal{V}_\infty^{(m),p})\,,\\
		\tau^{*,(m)}:&\mathcal{V}^{(m),p}\longrightarrow\mathcal{V}_\infty^{(m),p}\,,
\end{aligned}
\end{equation}
where $\tau^{-*,(m)}$ and $\tau^{*,(m)}$ are the inverse pullback and pullback mappings induced by $\tau$.
Since the spaces $\mathcal{V}^{(m),p}$ and $\mathcal{V}_\infty^{(m),p}$ are isomorphic, it is mathematically irrelevant which of the two spaces is actually defined, since the other space can be determined through the corresponfing pullback mapping.
However, sometimes there are practical reasons to explicitly define one set of spaces over the other.
Indeed, it is more opportune to define the deformed underlying spaces $\mathcal{V}_\infty^{(m),p}$, which are\footnote{Note there is misprint in \citet{Nigam_Phillips_11} in (3.8) when presenting the equivalent characterization of $\mathcal{V}_\infty^{(1),p}$. The definition here corrects that, and is consistent with the calculations in \S3.3 of \citet{Nigam_Phillips_11}.}
\begin{equation}
\begin{aligned}
	\mathcal{V}_\infty^{(0),p}&=\{\phi\!\in\!\mathcal{Q}_p^{p,p,p}:
		\nabla\phi\!\in\!\mathcal{Q}_p^{p-1,p,p-1}\!\times\!\mathcal{Q}_p^{p,p-1,p-1}\!\times\!\mathcal{Q}_{p+1}^{p,p,p-1}\}\,,\\
	\mathcal{V}_\infty^{(1),p}&=\{E\!\in\!
		\mathcal{Q}_{p+1}^{p-1,p,p}\!\times\!\mathcal{Q}_{p+1}^{p,p-1,p}\!\times\!\mathcal{Q}_{p+1}^{p,p,p-1}:
			\nabla\!\times\! E\!\in\!
				\mathcal{Q}_{p+2}^{p,p-1,p-1}\!\times\!\mathcal{Q}_{p+2}^{p-1,p,p-1}\!\times\!\mathcal{Q}_{p+1}^{p-1,p-1,p-1}\}\,,\\
	\mathcal{V}_\infty^{(2),p}&=\{V\!\in\!
		\mathcal{Q}_{p+2}^{p,p-1,p-1}\!\times\!\mathcal{Q}_{p+2}^{p-1,p,p-1}\!\times\!\mathcal{Q}_{p+2}^{p-1,p-1,p}:
			\nabla\cdot V\!\in\!\mathcal{Q}_{p+3}^{p-1,p-1,p-1}\}\,,\\
	\mathcal{V}_\infty^{(3),p}&=\{\psi\!\in\!\mathcal{Q}_{p+3}^{p-1,p-1,p-1}\}\,,
\end{aligned}
\end{equation}
where $\mathcal{Q}_{n}^{p,q,r}=\mathcal{Q}_{n}^{p,q,r}(x,y,z)$ are the $n$-weighted tensor polynomial spaces. 
They are defined as
\begin{equation}
	\mathcal{Q}_{n}^{p,q,r}(x,y,z)
		=\Big\{\frac{\psi(x,y,z)}{(1+z)^n}:\psi(x,y,z)\in\mathcal{Q}^{p,q,r}(x,y,z)\Big\}\,.
\end{equation}
Note the useful inclusion $\mathcal{Q}_{n}^{p,q,r}\subseteq\mathcal{Q}_{n+1}^{p,q,r+1}$ which holds for these rational polynomial spaces.

Lastly, note that the pullbacks $\tau^{*,(m)}$ take different forms depending on $m$.
Indeed, if $J_\tau$ is the Jacobian of the transformation $\tau$, the pullbacks are,
\begin{equation}
\begin{alignedat}{3}
	\tau^{*,(0)}\phi&=\phi\circ\tau\,,\quad&&\phi\in H^1\,,\\
	\tau^{*,(1)}E&=J_\tau^{\T}(E\circ\tau)\,,\quad && E\in H(\mathrm{curl})\,,\\
	\tau^{*,(2)}V&=\det(J_\tau)J_\tau^{-1}(V\circ\tau)\,,\quad && V\in H(\mathrm{div})\,,\\
	\tau^{*,(3)}\psi&=\det(J_\tau)(\psi\circ\tau)\,,\quad && \psi\in L^2\,.
\end{alignedat}
\label{eq:pullbacksgeneral}
\end{equation}
The same relations hold for the inverse pullbacks $\tau^{-*,(m)}$, but replacing $\tau$ by $\tau^{-1}$, $J_\tau$ by $J_{\tau^{-1}}$, and the domains by their isometrically isomorphic counterparts,\footnote{The spaces are iso\textit{metrically} isomorphic if the appropriate weights are added to the definition of the norm.} $H_\infty^1=\tau^{*,(0)}(H^1)$, $H(\mathrm{curl})_\infty=\tau^{*,(1)}(H(\mathrm{curl}))$, $H(\mathrm{div})_\infty=\tau^{*,(2)}(H(\mathrm{div}))$, and $L_\infty^2=\tau^{*,(3)}(L^2)$.
Even though they will be unnecessary, in the interest of completeness these pullback mappings are written explicitly below,
\begin{equation}
\begin{alignedat}{3}
	\tau^{*,(0)}\phi&=\phi\circ\tau\,,\quad
		&&\phi\in H^1\,,\\
	\tau^{*,(1)}E&=\frac{1}{(1+z)^2}\begin{pmatrix}1+z&0&0\\0&1+z&0\\-x&-y&1\end{pmatrix}(E\circ\tau)\,,\quad
		&& E\in H(\mathrm{curl})\,,\\
	\tau^{*,(2)}V&=\frac{1}{(1+z)^3}\begin{pmatrix}1&0&x\\0&1&y\\0&0&1+z\end{pmatrix}(V\circ\tau)\,,\quad
		&& V\in H(\mathrm{div})\,,\\
	\tau^{*,(3)}\psi&=\frac{1}{(1+z)^4}(\psi\circ\tau)\,,\quad
		&&\psi\in L^2\,.
\end{alignedat}
\label{eq:Pyramidpullbacks}
\end{equation}
The inverse pullbacks explicitly are,
\begin{equation}
\begin{alignedat}{3}
	\tau^{-*,(0)}\phi&=\phi\circ\tau^{-1}\,,\quad 
		&&\phi\in H_\infty^1\,,\\
	\tau^{-*,(1)}E&=\frac{1}{(1-\zeta)^2}\begin{pmatrix}1-\zeta&0&0\\0&1-\zeta&0\\\xi&\eta&1\end{pmatrix}(E\circ\tau^{-1})\,,\quad 
		&& E\in H(\mathrm{curl})_\infty\,,\\
	\tau^{-*,(2)}V&=\frac{1}{(1-\zeta)^3}\begin{pmatrix}1&0&-\xi\\0&1&-\eta\\0&0&1-\zeta\end{pmatrix}(V\circ\tau^{-1})\,,\quad 
		&& V\in H(\mathrm{div})_\infty\,,\\
	\tau^{-*,(3)}\psi&=\frac{1}{(1-\zeta)^4}(\psi\circ\tau^{-1})\,,\quad 
		&&\psi\in L_\infty^2\,.
\end{alignedat}
\label{eq:Pyramidinversepullbacks}
\end{equation}

To prove the shape functions defined in \S\ref{sec:Pyramid} are in the underlying spaces, in general one would pull them back (using \eqref{eq:Pyramidpullbacks}) and check if they belong to the spaces $\mathcal{V}_\infty^{(m),p}$.
However, due to the \textit{coordinate free} definitions of the ancillary functions and the shape functions in general, it is not necessary to find the pullback explicitly.
Instead, one simply finds the trivial pullback of all the sets of affine coordinates and their gradients in the $(x,y,z)$ coordinates.
Then, the only task is to evaluate all the shape functions with these affine coordinates and the result will be the desired pullback of the original shape function.
This is why the mappings \eqref{eq:Pyramidpullbacks} and \eqref{eq:Pyramidinversepullbacks} become redundant for our shape functions.

In view of these comments, it is useful to have all the affine coordinates and their gradients in the $(x,y,z)$ space.
The triangle affine coordinates (see \eqref{eq:PyramidTriCoord}) are
\begin{equation}
	\begin{gathered}
		\nu_0(x,z)=\sxxz\,,\qquad\nu_1(x,z)=\sxz\,,\qquad\nu_2(z)=\szzz\,,\\
		\nu_0(y,z)=\syyz\,,\qquad\nu_1(y,z)=\syz\,,\qquad\nu_2(z)=\szzz\,.
	\end{gathered}
\end{equation}
Their gradient (see \eqref{eq:PyramidTriCoordGrad}) is
\begin{equation}
	\begin{gathered}
		\nabla\nu_0(x,z)=\textstyle{\frac{1}{(1+z)^2}}\bigg(\begin{smallmatrix}-(1+z)\\[2pt]0\\[2pt]-(1-x)\end{smallmatrix}\bigg)\,,\qquad
			\nabla\nu_1(x,z)=\textstyle{\frac{1}{(1+z)^2}}\bigg(\begin{smallmatrix}(1+z)\\[2pt]0\\[2pt]-x\end{smallmatrix}\bigg)\,,\qquad
				\nabla\nu_2(z)=\textstyle{\frac{1}{(1+z)^2}}\bigg(\begin{smallmatrix}0\\[2pt]0\\[2pt]1\end{smallmatrix}\bigg)\,,\\
		\nabla\nu_0(y,z)=\textstyle{\frac{1}{(1+z)^2}}\bigg(\begin{smallmatrix}0\\[2pt]-(1+z)\\[2pt]-(1-y)\end{smallmatrix}\bigg)\,,\qquad
			\nabla\nu_1(y,z)=\textstyle{\frac{1}{(1+z)^2}}\bigg(\begin{smallmatrix}0\\[2pt](1+z)\\[2pt]-y\end{smallmatrix}\bigg)\,,\qquad
				\nabla\nu_2(z)=\textstyle{\frac{1}{(1+z)^2}}\bigg(\begin{smallmatrix}0\\[2pt]0\\[2pt]1\end{smallmatrix}\bigg)\,.
	\end{gathered}
\end{equation}
The sets of quadrilateral scaled 1D affine coordinates (see \eqref{eq:PyramidQuadCoord}) are
\begin{equation}
	\begin{gathered}
		\mu_0(x)=1-x\,,\quad\qquad\mu_1(x)=x\,,\\
		\mu_0(y)=1-y\,,\quad\qquad\mu_1(y)=y\,.
	\end{gathered}
\end{equation}
Their gradient (see \eqref{eq:PyramidQuadCoordGrad}) is
\begin{equation}
	\begin{gathered}
		\nabla\mu_0(x)=\bigg(\begin{smallmatrix}-1\\[2pt]0\\[2pt]0\end{smallmatrix}\bigg)\,,\quad\qquad
    	\nabla\mu_1(x)=\bigg(\begin{smallmatrix}1\\[2pt]0\\[2pt]0\end{smallmatrix}\bigg)\,,\\
		\nabla\mu_0(y)=\bigg(\begin{smallmatrix}0\\[2pt]-1\\[2pt]0\end{smallmatrix}\bigg)\,,\quad\qquad
    	\nabla\mu_1(y)=\bigg(\begin{smallmatrix}0\\[2pt]1\\[2pt]0\end{smallmatrix}\bigg)\,.
	\end{gathered}
\end{equation}
The vertical 1D affine coordinates (see \eqref{eq:PyramidZCoord}) are
\begin{equation}
		\mu_0(\szzz)=1-\szzz=\szz\,,\quad\qquad\mu_1(\szzz)=\szzz\,.
\end{equation}
Their gradient (see \eqref{eq:PyramidZCoordGrad}) is
\begin{equation}
		\nabla\mu_0(\szzz)=\textstyle{\frac{1}{(1+z)^2}}\bigg(\begin{smallmatrix}0\\[2pt]0\\[2pt]-1\end{smallmatrix}\bigg)\,,\quad\qquad
			\nabla\mu_1(\szzz)=\textstyle{\frac{1}{(1+z)^2}}\bigg(\begin{smallmatrix}0\\[2pt]0\\[2pt]1\end{smallmatrix}\bigg)\,.		
\end{equation}
Finally, the pyramid 3D affine coordinates (see \eqref{eq:PyramidAffineCoord}) are
\begin{equation}
	\begin{gathered}
    \lambda_1(x,y,z)=\textstyle{\frac{(1-x)(1-y)}{1+z}}\,,\qquad\qquad
    \lambda_2(x,y,z)=\textstyle{\frac{x(1-y)}{1+z}}\,,\qquad\qquad
    \lambda_3(x,y,z)=\textstyle{\frac{xy}{1+z}}\,,\\
    \lambda_4(x,y,z)=\textstyle{\frac{(1-x)y}{1+z}}\,,\qquad\qquad
    \lambda_5(x,y,z)=\textstyle{\frac{z}{1+z}}\,.
	\end{gathered}
\end{equation}
Their gradient (see \eqref{eq:PyramidAffineCoordGrad}) is
\begin{equation}
	\begin{gathered}
    \nabla\lambda_1(x,y,z)=\begin{pmatrix}\frac{-(1-y)}{1+z}\\\frac{-(1-x)}{1+z}\\
        \frac{-(1-x)(1-y)}{(1+z)^2}\end{pmatrix}\,,\;
    \nabla\lambda_2(x,y,z)=\begin{pmatrix}\frac{(1-y)}{1+z}\\\frac{-x}{1+z}\\
        \frac{-x(1-y)}{(1+z)^2}\end{pmatrix}\,,\;
    \nabla\lambda_3(x,y,z)=\begin{pmatrix}\frac{y}{1+z}\\\frac{x}{1+z}\\
        \frac{-xy}{(1+z)^2}\end{pmatrix}\,,\\
    \nabla\lambda_4(x,y,z)=\begin{pmatrix}\frac{-y}{1+z}\\\frac{(1-x)}{1+z}\\
        \frac{-(1-x)y}{(1+z)^2}\end{pmatrix}\,,\;
    \nabla\lambda_5(z)=\begin{pmatrix}0\\0\\\frac{1}{(1+z)^2}\end{pmatrix}\,.
	\end{gathered}
\end{equation}

Before beginning with the proofs, take note of the following useful results.
\begin{lemma}
\label{lem:LegendreDecomp}
Let $L_i$ and $L_j^\alpha$ be the integrated Legendre and Jacobi polynomials of order $i\geq2$ and $j\geq1$.
Then there exist homogeneous polynomials $[\psi_{i-2}]\in\tilde{\mathcal{P}}^{i-2}(s_0,s_1)$ and $[\chi_{j-1}]\in\tilde{\mathcal{P}}^{j-1}(t_0,t_1)$ such that\footnote{This result is not limited to Legendre and Jacobi polynomials and, as reflected in the proof, applies as well to the general polynomial families presented in Appendix \ref{app:GeneratingFamilies} (see \eqref{eq:AppLiLjVanishingProperties}).}
\begin{equation}
	\begin{aligned}
		{}[L_i](s_0,s_1)&=s_0s_1[\psi_{i-2}](s_0,s_1)\,,\\
		[L_j^\alpha](t_0,t_1)&=t_1[\chi_{j-1}](t_0,t_1)\,.\\
	\end{aligned}	
\end{equation}
\end{lemma}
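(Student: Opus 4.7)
The plan is to exploit the vanishing properties of the integrated Legendre and Jacobi polynomials at the endpoints, combined with the fact that homogenization produces a homogeneous polynomial whose total degree is preserved. Throughout, I use that $[L_i](s_0,s_1)\in\tilde{\mathcal{P}}^i(s_0,s_1)$ and $[L_j^\alpha](t_0,t_1)\in\tilde{\mathcal{P}}^j(t_0,t_1)$ by construction of the homogenization operator, together with the univariate formula $[\psi_k](s_0,s_1)=\psi_k(s_1;s_0+s_1)$ from \eqref{eq:univariate}.

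First I would treat $[L_i](s_0,s_1)$. Using \eqref{eq:Lhomogvanishatendpoints}, which is precisely the restatement of \eqref{eq:Lvanishatendpoints} under homogenization, we have $[L_i](s_0,0)=0$ and $[L_i](0,s_1)=0$ for all $s_0,s_1$ and all $i\geq 2$. Expanding the homogeneous polynomial as $[L_i](s_0,s_1)=\sum_{a+b=i}c_{ab}\,s_0^a s_1^b$, the vanishing at $s_1=0$ forces $c_{i,0}=0$, so every surviving monomial carries at least one factor of $s_1$; the vanishing at $s_0=0$ forces $c_{0,i}=0$, so every surviving monomial carries at least one factor of $s_0$. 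Therefore every monomial is divisible by $s_0 s_1$, and collecting the common factor yields $[L_i](s_0,s_1)=s_0 s_1\,[\psi_{i-2}](s_0,s_1)$, where $[\psi_{i-2}]$ is a polynomial of total degree exactly $i-2$; its homogeneity follows at once from the scaling property \eqref{eq:ScalingProperty} applied to both sides, since dividing a homogeneous polynomial of degree $i$ by the homogeneous polynomial $s_0 s_1$ of degree $2$ produces a homogeneous polynomial of degree $i-2$. Hence $[\psi_{i-2}]\in\tilde{\mathcal{P}}^{i-2}(s_0,s_1)$ as claimed.

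The second identity is strictly simpler because only one endpoint vanishing is available. By \eqref{eq:Lalphavanishzero}, for $j\geq 1$ we have $L_j^\alpha(0;t_0+t_1)=0$, which in view of \eqref{eq:univariate} reads $[L_j^\alpha](t_0,0)=L_j^\alpha(0;t_0)=0$. Expanding again $[L_j^\alpha](t_0,t_1)=\sum_{a+b=j}d_{ab}\,t_0^a t_1^b$, the condition $d_{j,0}=0$ eliminates the monomial with no $t_1$, so $t_1$ divides every surviving term. Factoring out $t_1$ gives $[L_j^\alpha](t_0,t_1)=t_1\,[\chi_{j-1}](t_0,t_1)$, and the same scaling argument as before, now with a single factor of $t_1$ of degree $1$, certifies that $[\chi_{j-1}]\in\tilde{\mathcal{P}}^{j-1}(t_0,t_1)$.

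No step looks genuinely hard: the only subtlety is making sure the homogeneity of the quotient is argued cleanly, rather than asserted. The cleanest way is the monomial expansion above, which makes the factorization manifest without invoking any unique factorization in a polynomial ring. Note also that the argument uses only the two ingredients $[L_i](0,\cdot)=[L_i](\cdot,0)=0$ and $[L_j^\alpha](\cdot,0)=0$ together with the homogeneity of the homogenization, so the lemma extends verbatim to any polynomial families satisfying the vanishing conditions recorded in Appendix \ref{app:GeneratingFamilies}.
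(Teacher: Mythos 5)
Your proof is correct. It rests on the same two ingredients as the paper's — the endpoint vanishing of $L_i$ at $0$ and $1$ and of $L_j^\alpha$ at $0$, together with the homogeneity of the homogenized polynomials — but you execute the factorization in the opposite order. The paper factors first in one variable, writing $L_i(x)=x(1-x)\psi_{i-2}(x)$ from the roots at $x=0,1$, and then pushes this factorization through the homogenization, where $x\mapsto \frac{s_1}{s_0+s_1}$ and $1-x\mapsto \frac{s_0}{s_0+s_1}$ turn $x(1-x)$ into $\frac{s_0s_1}{(s_0+s_1)^2}$ and the powers of $(s_0+s_1)$ recombine to give $s_0s_1[\psi_{i-2}](s_0,s_1)$. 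You instead homogenize first and factor the bivariate homogeneous polynomial by inspecting its monomial expansion, killing the coefficients $c_{i,0}$ and $c_{0,i}$ via \eqref{eq:Lhomogvanishatendpoints}. Each route has a small advantage: yours makes the homogeneity of the quotient manifest (it is visibly a sum of monomials of total degree $i-2$, so no separate scaling argument is even needed — your appeal to \eqref{eq:ScalingProperty} is redundant), while the paper's directly produces the quotient as the homogenization $[\psi_{i-2}]$ of an explicit univariate polynomial, which matches the bracket notation in the statement; in your version one would note that any homogeneous polynomial of degree $i-2$ in $(s_0,s_1)$ arises this way, so nothing is lost. Your treatment of the Jacobi case and the closing remark about generalizing to the families of Appendix~\ref{app:GeneratingFamilies} are both accurate.
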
 
\begin{proof}
The integrated Legendre polynomials $L_i(s_1)$ for $i\geq2$ vanish at $s_1=0$ and $s_1=1$ (see \eqref{eq:Lvanishatendpoints}).
Hence, they must take the form
\begin{equation*}
	L_i(s_1)=s_1(1-s_1)\psi_{i-2}(s_1)\,,
\end{equation*}
for some polynomial $\psi_{i-2}\in\mathcal{P}^{i-2}(s_1)$.
After homogenization one obtains the desired result,
\begin{equation*}
\begin{aligned}
	{}[L_i](s_0,s_1)&=L_i\Big(\frac{s_1}{s_0+s_1}\Big)(s_0+s_1)^{i}
		=\frac{s_1}{s_0+s_1}\Big(1-\frac{s_1}{s_0+s_1}\Big)\psi_{i-2}\Big(\frac{s_1}{s_0+s_1}\Big)(s_0+s_1)^i\\
	&=\frac{s_0s_1}{(s_0+s_1)^2}(s_0+s_1)^2\psi_{i-2}\Big(\frac{s_1}{s_0+s_1}\Big)(s_0+s_1)^{i-2}
			=s_0s_1[\psi_{i-2}](s_0,s_1)\,,
\end{aligned}
\end{equation*}
where $[\psi_{i-2}]\in\tilde{\mathcal{P}}^{i-2}(s_0,s_1)$ is the homogenization of $\psi_{i-2}$.
The result involving $L_j^\alpha(t_1)$ follows using exactly the same reasoning and that it vanishes at $t_1=0$ (see \eqref{eq:Lalphavanishzero}). 
\end{proof}

\begin{remark}
Let $L_i$ and $L_j^\alpha$ be the integrated Legendre and Jacobi polynomials of order $i\geq2$ and $j\geq1$. 
Then there exists a homogeneous polynomial $[\psi_{i-2},\chi_{j-1}]\in\tilde{\mathcal{P}}^{i+j-3}(s_0,s_1,s_2)$ such that
\begin{equation}
	[L_i,L_j^\alpha](s_0,s_1,s_2)=[L_i](s_0,s_1)[L_j^\alpha](s_0+s_1,s_2)=s_0s_1s_2[\psi_{i-2},\chi_{j-1}](s_0,s_1,s_2)\,.
	\label{eq:LiLjDecomp}
\end{equation}
\end{remark} 

\begin{remark}
Let $L_k$ be the integrated Legendre polynomials of order $k\geq2$. Then, 
\begin{equation}
		[L_k](\vec{\mu}_{01}(\szzz))\!=\![L_k](\szz,\szzz)\!=\!\frac{z}{(1+z)^2}[\psi_{k-2}](\szz,\szzz)
			\!=\!\frac{z}{(1+z)^k}[\psi_{k-2}](1,z)\!\in\!\mathcal{Q}_k^{0,0,k-1}\,,
	\label{eq:PyrphikZSpace}
\end{equation}
where $[\psi_{k-2}]\in\tilde{\mathcal{P}}^{k-2}(s_0,s_1)$ is the homogeneous polynomial in Lemma \ref{lem:LegendreDecomp}.
\end{remark}

\subsection{\texorpdfstring{$H^1$}{H1} Shape Functions}

For ease of reference, note the deformed underlying space for $m=0$ is
\begin{equation*}
	\mathcal{V}_\infty^{(0),p}=\{\phi\!\in\!\mathcal{Q}_p^{p,p,p}:
		\nabla\phi\!\in\!\mathcal{Q}_p^{p-1,p,p-1}\!\times\!\mathcal{Q}_p^{p,p-1,p-1}\!\times\!\mathcal{Q}_{p+1}^{p,p,p-1}\}\,.
\end{equation*}

\subsubsection {\texorpdfstring{$H^1$}{H1} Vertices}

The shape functions for the vertices are given by \eqref{eq:PyrH1Vertex}, and they are precisely the 3D pyramid affine coordinates.
Take for example any quadrilateral vertex, say $v_1$, and the top vertex $v_5$, which merits its own attention.
Their corresponding shape functions already satisfy the trace properties as discussed when they were defined, so they lie in the lowest order compatibility space $\Gamma^{(0),1}$.
Also, the vertex function for $v_1$ satisfies
\begin{equation}
\begin{aligned}
	\phi^\mathrm{v}(x,y,z)&=\lambda_1(x,y,z)=\textstyle{\frac{(1-x)(1-y)}{1+z}}\in\mathcal{Q}_1^{1,1,1}\,,\\
		\nabla\phi^\mathrm{v}(x,y,z)&=\nabla\lambda_1(x,y,z)
			=\Bigg(\begin{smallmatrix}\frac{-(1-y)}{1+z}\\\frac{-(1-x)}{1+z}\\\frac{-(1-x)(1-y)}{(1+z)^2}\end{smallmatrix}\Bigg)
				\in\Bigg(\begin{smallmatrix}\mathcal{Q}_1^{0,1,0}\\[2pt]\mathcal{Q}_1^{1,0,0}\\[2pt]
					\mathcal{Q}_2^{1,1,0}\end{smallmatrix}\Bigg)\,.
	\label{eq:PyrQuadVertexProof}
\end{aligned}
\end{equation}
The same holds analogously for the other three quadrilateral vertices.
Similarly, the top vertex function satisfies
\begin{equation}
\begin{aligned}
	\phi^\mathrm{v}(x,y,z)&=\lambda_5(x,y,z)=\textstyle{\frac{z}{1+z}}\in\mathcal{Q}_1^{1,1,1}\,,\\
		\nabla\phi^\mathrm{v}(x,y,z)&=\nabla\lambda_5(x,y,z)
			=\Bigg(\begin{smallmatrix}0\\[2pt]0\\[2pt]\frac{1}{(1+z)^2}\end{smallmatrix}\Bigg)
				\in\Bigg(\begin{smallmatrix}0\\[2pt]0\\[2pt]
					\mathcal{Q}_2^{0,0,0}\end{smallmatrix}\Bigg)
						\subseteq\Bigg(\begin{smallmatrix}\mathcal{Q}_1^{0,1,0}\\[2pt]\mathcal{Q}_1^{1,0,0}\\[2pt]
							\mathcal{Q}_2^{1,1,0}\end{smallmatrix}\Bigg)\,.
	\label{eq:PyrTopVertexProof}
\end{aligned}
\end{equation}
Therefore, all deformed vertex functions lie in the lowest order underlying space $\mathcal{V}_\infty^{(0),1}$, so that the original shape functions are in the lowest order space $\mathcal{U}^{(0),1}$.

\subsubsection {\texorpdfstring{$H^1$}{H1} Edges}

\paragraph{\texorpdfstring{$H^1$}{H1} Mixed Edges.}
The shape functions for mixed edges are presented in \eqref{eq:PyrH1MixedEdge}.
For every mixed edge they are labeled as $\phi_i^\mathrm{e}$, where $i=2,\ldots,p$.
As discussed when they were defined, they satisfy the desired trace properties, so they lie in the compatibility space $\Gamma^{(0),i}$.
To see they also lie in the underlying space take as an example mixed edge 12, and note that
\begin{equation}
\begin{aligned}
	\phi_i^\mathrm{e}(x,y,z)&=\mu_0(y)\phi_i^\E(\vec{\nu}_{01}(x,z))
		\!=\!\mu_0(y)[L_i](\textstyle{\frac{1-x}{1+z}},\textstyle{\frac{x}{1+z}})
			\!=\!\textstyle{\frac{1}{(1+z)^i}}\underbrace{\mu_0(y)[L_i](1-x,x)}_{A(x,y)\in\mathcal{Q}^{i,i}(x,y)}
				\!\in\!\mathcal{Q}_i^{i,i,i}\,,\\
	\nabla\phi_i^\mathrm{e}(x,y,z)&=
		\textstyle{\frac{1}{(1+z)^i}}\underbrace{\nabla A(x,y)}_{\in\mathcal{Q}^{i-1,i,0}\!\times\!\mathcal{Q}^{i,i-1,0}\!\times\!\{0\}}
			+A(x,y)\underbrace{\nabla\textstyle{\frac{1}{(1+z)^i}}}_{\in\{0\}\!\times\!\{0\}\!\times\!\mathcal{Q}_{i+1}^{0,0,0}}
				\!\in\!\Bigg(\begin{smallmatrix}\mathcal{Q}_i^{i-1,i,i-1}\\[2pt]\mathcal{Q}_i^{i,i-1,i-1}\\[2pt]	
					\mathcal{Q}_{i+1}^{i,i,i-1}\end{smallmatrix}\Bigg)\,.
\end{aligned}
\end{equation}
Here it was used that $[L_i]$ is a homogenized polynomial, so in particular \eqref{eq:ScalingProperty} holds.
Therefore, the deformed shape functions are in $\mathcal{V}_\infty^{(0),i}$, so that the shape functions lie in $\mathcal{U}^{(0),i}$.
Naturally, the same calculations hold for all other mixed edges.

\paragraph{\texorpdfstring{$H^1$}{H1} Triangle Edges.}
The shape functions for triangle edges are defined in \eqref{eq:PyrH1TriaEdge}, and are labeled as $\phi_i^\mathrm{e}$, where $i=2,\ldots,p$.
They satisfy the desired trace properties, so they lie in the compatibility space $\Gamma^{(0),i}$.
As an example take triangle edge 15.
Then it follows
\begin{equation}
\begin{aligned}
	\phi_i^\mathrm{e}(x,y,z)&=\phi_i^\E(\vec{\lambda}_{15}(x,y,z))
		\!=\![L_i](\textstyle{\frac{(1-x)(1-y)}{1+z}},\textstyle{\frac{z}{1+z}})
			\!=\!\textstyle{\frac{1}{(1+z)^i}}
				\underbrace{[L_i]((1-x)(1-y),z)}_{A(x,y,z)\in\mathcal{Q}^{i-1,i-1,i-1}}\!\in\!\mathcal{Q}_i^{i,i,i}\,,\\
	\nabla\phi_i^\mathrm{e}(x,y,z)&=\textstyle{\frac{1}{(1+z)^i}}\!\!\!\!\!\!
		\underbrace{\nabla A(x,y,z)}_{\in\mathcal{Q}^{i-2,i-1,i-1}\!\times\!\mathcal{Q}^{i-1,i-2,i-1}
			\!\times\!\mathcal{Q}^{i-1,i-1,i-2}}\!\!\!\!\!\!
				+A(x,y,z)\underbrace{\nabla\textstyle{\frac{1}{(1+z)^i}}}_{\in\{0\}\!\times\!\{0\}\!\times\!\mathcal{Q}_{i+1}^{0,0,0}}
			\!\in\!\Bigg(\begin{smallmatrix}\mathcal{Q}_i^{i-1,i,i-1}\\[2pt]\mathcal{Q}_i^{i,i-1,i-1}\\[2pt]
				\mathcal{Q}_{i+1}^{i,i,i-1}\end{smallmatrix}\Bigg)\,,
\end{aligned}
\end{equation}
where $[L_i]((1-x)(1-y),z)=A(x,y,z)\in\mathcal{Q}^{i-1,i-1,i-1}$ due to Lemma \ref{lem:LegendreDecomp}, and where it is used that $\mathcal{Q}_i^{i-1,i-1,i-2}\subseteq\mathcal{Q}_{i+1}^{i-1,i-1,i-1}$.
Hence, the shape functions are in the correct underlying space and they belong to $\mathcal{U}^{(0),i}$.
Analogous calculations hold for the other triangle edges.

\subsubsection {\texorpdfstring{$H^1$}{H1} Faces}

\paragraph{\texorpdfstring{$H^1$}{H1} Quadrilateral Face.}
The quadrilateral face functions are defined in \eqref{eq:PyrH1QuadFace} and identified as $\phi_{ij}^\mathrm{f}$, for $i=2,\ldots,p$ and $j=2,\ldots,p$.
Again, the functions are known to satisfy the desired trace properties, so they lie in the compatibility space $\Gamma^{(0),n}$, where $n=\max\{i,j\}$.
To see they are in the underlying space, note
\begin{equation}
\begin{aligned}
	\phi_{ij}^\mathrm{f}(x,y,z)&=\mu_0(\szzz)\phi_{ij}^\square(\vec{\mu}_{01}(x),\vec{\mu}_{01}(y))
		\!=\!\textstyle{\frac{1}{1+z}}
			\underbrace{\phi_{ij}^\square(\vec{\mu}_{01}(x),\vec{\mu}_{01}(y))}_{A(x,y)\in\mathcal{Q}^{i,j}(x,y)}
				\!\in\!\mathcal{Q}_1^{i,j,0}\subseteq\mathcal{Q}_n^{n,n,n}\,,\\
	\nabla\phi_{ij}^\mathrm{f}(x,y,z)&=\textstyle{\frac{1}{1+z}}\!\!\!\!\!\!
		\underbrace{\nabla A(x,y)}_{\in\mathcal{Q}^{i-1,j,0}\!\times\!\mathcal{Q}^{i,j-1,0}\!\times\!\{0\}}\!\!\!\!\!\!
				+A(x,y)\underbrace{\nabla\textstyle{\frac{1}{1+z}}}_{\in\{0\}\!\times\!\{0\}\!\times\!\mathcal{Q}_{2}^{0,0,0}}
			\!\in\!\Bigg(\begin{smallmatrix}\mathcal{Q}_n^{n-1,n,n-1}\\[2pt]\mathcal{Q}_n^{n,n-1,n-1}\\[2pt]
				\mathcal{Q}_{n+1}^{n,n,n-1}\end{smallmatrix}\Bigg)\,,
\end{aligned}
\end{equation}
where the inclusions of the type $\mathcal{Q}_1^{i,j,0}\subseteq\mathcal{Q}_n^{i,j,n-1}\subseteq\mathcal{Q}_n^{n,n,n}$ are used repeatedly.
Note that the factor $\mu_0(\szzz)$ was \textit{required} in order for the function to be in the correct space (see the first two components of the gradient).
It follows the shape functions belong to $\mathcal{U}^{(0),n}$, where $n=\max\{i,j\}$.

\paragraph{\texorpdfstring{$H^1$}{H1} Triangle Faces.} 
The triangle face functions are defined in \eqref{eq:PyrH1TriaFace} and are labeled as $\phi_{ij}^\mathrm{f}$, for $i\geq2$, $j\geq1$, and $n=i+j=3,\ldots,p$.
They satisfy the trace properties, so they lie in $\Gamma^{(0),n}$.
Take for instance face 125, and observe the functions also lie in the corresponding underlying space, because
\begin{equation}
\begin{aligned}
	\phi_{ij}^\mathrm{f}(x,y,z)&=\mu_0(y)\phi_{ij}^\Tri(\vec{\nu}_{012}(x,z))
			\!=\!\textstyle{\frac{1}{(1+z)^n}}
				\underbrace{\mu_0(y)[L_i,L_j^\alpha](1-x,x,z)}_{A(x,y,z)\in\mathcal{Q}^{n-1,n-1,n-2}}\!\in\!\mathcal{Q}_n^{n,n,n}\,,\\
	\nabla\phi_{ij}^\mathrm{f}(x,y,z)&=\textstyle{\frac{1}{(1+z)^n}}\!\!\!\!\!\!\!\!\!\!\!\!\!\!
		\underbrace{\nabla A(x,y,z)}_{\in\mathcal{Q}^{n-2,n-1,n-2}\!\times\!\mathcal{Q}^{n-1,n-2,n-2}
			\!\times\!\mathcal{Q}^{n-1,n-1,n-3}}\!\!\!\!\!\!\!\!\!\!\!\!\!\!
				+A(x,y,z)\underbrace{\nabla\textstyle{\frac{1}{(1+z)^n}}}_{\in\{0\}\!\times\!\{0\}\!\times\!\mathcal{Q}_{n+1}^{0,0,0}}
			\!\!\!\in\!\Bigg(\begin{smallmatrix}\mathcal{Q}_n^{n-1,n,n-1}\\[2pt]\mathcal{Q}_n^{n,n-1,n-1}\\[2pt]
				\mathcal{Q}_{n+1}^{n,n,n-1}\end{smallmatrix}\Bigg)\,.
\end{aligned}
\end{equation}
Here, $[L_i,L_j^\alpha](1-x,x,z)=A(x,y,z)\in\mathcal{Q}^{n-1,n-1,n-2}$ by \eqref{eq:LiLjDecomp}.
Hence, $\phi_{ij}^\mathrm{f}$ is in the correct space $\mathcal{U}^{(0),n}$, where $n=i+j$.
The same follows for the other triangle faces.

\subsubsection{\texorpdfstring{$H^1$}{H1} Interior Bubbles}
The interior bubbles are defined in \eqref{eq:PyrH1Interior}, and identified as $\phi_{ijk}^\mathrm{b}$, where $i=2,\ldots,p$, $j=2,\ldots,p$ and $k=2,\ldots,p$.
They satisfy the vanishing properties along the whole boundary so they are trivially in $\Gamma^{(0),n}$, for $n=\max\{i,j,k\}$.
They also belong to the underlying spaces $\mathcal{V}_\infty^{(0),n}$, since
\begin{equation}
\begin{aligned}
	\phi_{ij}^\mathrm{f}(x,y,z)&=
		\underbrace{\phi_k^\E(\vec{\mu}_{01}(\szzz))}_{B(z)\in\mathcal{Q}_k^{0,0,k-1}}
			\underbrace{\phi_{ij}^\square(\vec{\mu}_{01}(x),\vec{\mu}_{01}(y))}_{A(x,y)\in\mathcal{Q}^{i,j}(x,y)}
				\!\in\!\mathcal{Q}_k^{i,j,k-1}\subseteq\mathcal{Q}_n^{n,n,n}\,,\\
	\nabla\phi_{ij}^\mathrm{f}(x,y,z)&=B(z)\!\!\!\!\!\!
		\underbrace{\nabla A(x,y)}_{\in\mathcal{Q}^{i-1,j,0}\!\times\!\mathcal{Q}^{i,j-1,0}\!\times\!\{0\}}\!\!\!\!\!\!
				+A(x,y)\underbrace{\nabla B(z)}_{\in\{0\}\!\times\!\{0\}\!\times\!\mathcal{Q}_{k+1}^{0,0,k-1}}
			\!\in\!\Bigg(\begin{smallmatrix}\mathcal{Q}_n^{n-1,n,n-1}\\[2pt]\mathcal{Q}_n^{n,n-1,n-1}\\[2pt]
				\mathcal{Q}_{n+1}^{n,n,n-1}\end{smallmatrix}\Bigg)\,.
	\label{eq:PyrH1InteriorProof}
\end{aligned}
\end{equation}
Here, \eqref{eq:PyrphikZSpace} was explicitly used.
It follows all the interior bubbles lie in $\mathcal{U}^{(0),n}$, where $n=\max\{i,j,k\}$.

\subsection{\texorpdfstring{$H(\mathrm{curl})$}{Hcurl} Shape Functions}

Recall the deformed underlying space for $m=1$ is
\begin{equation*}
		\mathcal{V}_\infty^{(1),p}=\{E\!\in\!
		\mathcal{Q}_{p+1}^{p-1,p,p}\!\times\!\mathcal{Q}_{p+1}^{p,p-1,p}\!\times\!\mathcal{Q}_{p+1}^{p,p,p-1}:
			\nabla\!\times\! E\!\in\!
				\mathcal{Q}_{p+2}^{p,p-1,p-1}\!\times\!\mathcal{Q}_{p+2}^{p-1,p,p-1}\!\times\!\mathcal{Q}_{p+1}^{p-1,p-1,p-1}\}\,.
\end{equation*}

\subsubsection{\texorpdfstring{$H(\mathrm{curl})$}{Hcurl} Edges}

\paragraph{\texorpdfstring{$H(\mathrm{curl})$}{Hcurl} Mixed Edges.}
The mixed edge shape functions are given in \eqref{eq:PyrHcurlMixedEdge}, and are labeled as $E_i^\mathrm{e}$ for $i=0,\ldots,p-1$.
As discussed before, the trace properties are satisfied, meaning that they belong to $\Gamma^{(1),i+1}$.
To see they are in the underlying space, consider edge 12 and note that
\begin{equation}
\begin{aligned}
	E_i^\mathrm{e}(x,y,z)&\!=\!\mu_0(y)E_i^\E(\textstyle{\frac{1-x}{1+z}},\textstyle{\frac{x}{1+z}})
		\!=\!\textstyle{\frac{1}{(1+z)^{i+2}}}\mu_0(y)E_i^\E(\overbrace{1-x}^{\tilde{s}_0(x)},\overbrace{x}^{\tilde{s}_1(x)})\\
	&\!=\!\textstyle{\frac{1}{(1+z)^{i+2}}}\underbrace{\mu_0(y)[P_i](\tilde{s}_0(x),\tilde{s}_1(x))
			(\overbrace{\tilde{s}_0(x)\nabla\tilde{s}_1(x)\!-\!\tilde{s}_1(x)\nabla\tilde{s}_0(x)}^{
				\in\mathcal{Q}^{0,0,0}\!\times\!\mathcal{Q}^{0,0,0}\!\times\!\{0\}})}_{
					A(x,y)\in\mathcal{Q}^{i,1,0}\!\times\!\mathcal{Q}^{1,i,0}\!\times\!\{0\}}
						\!\in\!\!\Bigg(\begin{smallmatrix}\mathcal{Q}_{i+2}^{i,i+1,i+1}\\[2pt]\mathcal{Q}_{i+2}^{i+1,i,i+1}\\[2pt]	
							\mathcal{Q}_{i+2}^{i+1,i+1,i}\end{smallmatrix}\Bigg)\,,\\
	\nabla\!\!\times\!\! E_i^\mathrm{e}(x,y,z)&\!=\!
		\textstyle{\frac{1}{(1+z)^{i+2}}}\underbrace{\nabla\!\times\! A(x,y)}_{
			\in\{0\}\!\times\!\{0\}\!\times\!\mathcal{Q}^{i,i,0}}
			+\underbrace{\nabla\textstyle{\frac{1}{(1+z)^{i+2}}}\!\times\! A(x,y)}_{
				\in\mathcal{Q}_{i+3}^{1,i,0}\!\times\!\mathcal{Q}_{i+3}^{i,1,0}\!\times\!\{0\}}
					\!\in\!\!\Bigg(\begin{smallmatrix}\mathcal{Q}_{i+3}^{i+1,i,i}\\[2pt]\mathcal{Q}_{i+3}^{i,i+1,i}\\[2pt]	
						\mathcal{Q}_{i+2}^{i,i,i}\end{smallmatrix}\Bigg)\,.
\end{aligned}
\end{equation}
Hence, the pullback of the shape functions lies in $\mathcal{V}_\infty^{(1),i+1}$, so the original shape functions lie in $\mathcal{U}^{(1),i+1}$ as desired.
The same calculations hold for the other mixed edges.

\paragraph{\texorpdfstring{$H(\mathrm{curl})$}{Hcurl} Triangle Edges.}
The triangle edge functions are defined in \eqref{eq:PyrHcurlTriaEdge}, and are identified as $E_i^\mathrm{e}$, where $i=0,\ldots,p-1$.
Again, the trace properties are satisfied, so that they are inside $\Gamma^{(1),i+1}$.
Take for instance edge 15.
Making use of \eqref{eq:curlsEiE}, \eqref{eq:PyrQuadVertexProof} and \eqref{eq:PyrTopVertexProof}, notice that
\begin{equation}
\begin{aligned}
	E_i^\mathrm{e}(x,y,z)&\!=\!E_i^\E(\lambda_1(x,y,z),\lambda_5(z))
			\!=\!\textstyle{\frac{1}{(1+z)^{i+2}}}E_i^\E(\overbrace{(1-x)(1-y)}^{\tilde{s}_0(x,y)},\overbrace{z}^{\tilde{s}_1(z)})\\
	&\!=\!\textstyle{\frac{1}{(1+z)^{i+2}}}[P_i](\tilde{s}_0(x,y),\tilde{s}_1(z))
			(\overbrace{\tilde{s}_0(x,y)\nabla\tilde{s}_1(z)\!-\!\tilde{s}_1(z)\nabla\tilde{s}_0(x,y)}^{
				\in\mathcal{Q}^{0,1,1}\!\times\!\mathcal{Q}^{1,0,1}\!\times\!\mathcal{Q}^{1,1,0}})
						\!\in\!\!\Bigg(\begin{smallmatrix}\mathcal{Q}_{i+2}^{i,i+1,i+1}\\[2pt]\mathcal{Q}_{i+2}^{i+1,i,i+1}\\[2pt]	
							\mathcal{Q}_{i+2}^{i+1,i+1,i}\end{smallmatrix}\Bigg)\,,\\
	\nabla\!\!\times\!\! E_i^\mathrm{e}(x,y,z)&\!=\!
		\textstyle{\frac{(i+2)}{(1+z)^{i}}}
			[P_i](\tilde{s}_0(x,y),\tilde{s}_1(z))\underbrace{\overbrace{\nabla\lambda_1(x,y,z)}^{
				\in\mathcal{Q}_1^{0,1,0}\!\times\!\mathcal{Q}_1^{1,0,0}\!\times\!\mathcal{Q}_2^{1,1,0}}
					\!\times\!\overbrace{\nabla\lambda_5(z)}^{
					\in\{0\}\!\times\!\{0\}\!\times\!\mathcal{Q}_2^{0,0,0}}}_{
						\in\mathcal{Q}_3^{1,0,0}\!\times\!\mathcal{Q}_3^{0,1,0}\!\times\!\{0\}}
							\!\in\!\!\Bigg(\begin{smallmatrix}\mathcal{Q}_{i+3}^{i+1,i,i}\\[2pt]\mathcal{Q}_{i+3}^{i,i+1,i}\\[2pt]	
								\mathcal{Q}_{i+2}^{i,i,i}\end{smallmatrix}\Bigg)\,.
\end{aligned}
\end{equation}
Therefore, the shape functions belong to $\mathcal{V}_\infty^{(1),i+1}$, and as a result lie in $\mathcal{U}^{(1),i+1}$.
The same reasoning is attached to the other triangle edges.

\subsubsection{\texorpdfstring{$H(\mathrm{curl})$}{Hcurl} Faces}

\paragraph{\texorpdfstring{$H(\mathrm{curl})$}{Hcurl} Quadrilateral Face.}
The two closely related families of quadrilateral face functions are presented in \eqref{eq:PyrHcurlQuadFaceI} and \eqref{eq:PyrHcurlQuadFaceII}. 
They are labeled as $E_{ij}^\mathrm{f}$, for $i=0,\ldots,p-1$ and $j=2,\ldots,p$.
As usual, the functions are shown to satisfy the desired trace properties, so they lie in the compatibility space $\Gamma^{(1),n}$, where $n=\max\{i+1,j\}$.
They lie in the underlying space because
\begin{equation}
\begin{aligned}
	E_{ij}^\mathrm{f}(x,y,z)&\!=\!\textstyle{\frac{1}{(1+z)^2}}
			\underbrace{E_{ij}^\square(\vec{\mu}_{01}(x),\vec{\mu}_{01}(y))}_{A(x,y)
				\in\mathcal{Q}^{i,j,0}\!\times\!\mathcal{Q}^{j,i,0}\!\times\!\{0\}}
					\!\in\!\Bigg(\begin{smallmatrix}\mathcal{Q}_2^{n-1,n,0}\\[2pt]\mathcal{Q}_2^{n,n-1,0}\\[2pt]0\end{smallmatrix}\Bigg)
						\subseteq\Bigg(\begin{smallmatrix}\mathcal{Q}_{n+1}^{n-1,n,n}\\[2pt]\mathcal{Q}_{n+1}^{n,n-1,n}\\[2pt]
							\mathcal{Q}_{n+1}^{n,n,n-1}\end{smallmatrix}\Bigg)\,,\\
	\nabla\!\times\! E_{ij}^\mathrm{f}(x,y,z)&\!=\!\textstyle{\frac{1}{(1+z)^2}}\!\!\!\!\!\!
		\underbrace{\nabla\times A(x,y)}_{\in\{0\}\!\times\!\{0\}\!\times\!\mathcal{Q}^{n-1,n-1,0}}\!\!\!\!\!\!
				+\underbrace{\overbrace{\nabla\textstyle{\frac{1}{(1+z)^2}}}^{\in\{0\}\!\times\!\{0\}\!\times\!\mathcal{Q}_{3}^{0,0,0}}
					\!\!\!\!\times\,\, A(x,y)}_{\in\mathcal{Q}_{3}^{j,i,0}\!\times\!\mathcal{Q}_{3}^{i,j,0}\!\times\!\{0\}}
						\!\in\!\Bigg(\begin{smallmatrix}\mathcal{Q}_{n+2}^{n,n-1,n-1}\\[2pt]\mathcal{Q}_{n+2}^{n-1,n,n-1}\\[2pt]
							\mathcal{Q}_{n+1}^{n-1,n-1,n-1}\end{smallmatrix}\Bigg)\,,
	\label{eq:PyrHcurlQuadFaceProof}
\end{aligned}
\end{equation}
where the inclusions of the type $\mathcal{Q}_2^{i,j,0}\subseteq\mathcal{Q}_{n+1}^{i,j,n-1}\subseteq\mathcal{Q}_{n+1}^{n-1,n,n}$ are used repeatedly.
Notice that the factor $\mu_0(\szzz)^2$ was \textit{required} in order for the function to be in the correct space (see the last component of the curl).
It follows the shape functions belong to $\mathcal{U}^{(1),n}$, where $n=\max\{i+1,j\}$.
The calculations are invariant to permutations, so the result holds for both families.

\paragraph{\texorpdfstring{$H(\mathrm{curl})$}{Hcurl} Triangle Faces.}
The two closely related families of triangle face functions are defined in \eqref{eq:PyrHcurlTriaFaceI} and \eqref{eq:PyrHcurlTriaFaceII}. 
They are identified as $E_{ij}^\mathrm{f}$, for $i\geq0$, $j\geq1$, and $n=i+j=1,\ldots,p-1$.
They satisfy the trace properties, so they lie in $\Gamma^{(1),n}$.
Take for instance face 125.
Using Lemma \ref{lem:LegendreDecomp} it follows that
\begin{equation}
\begin{aligned}
	E_{ij}^\mathrm{f}(x,y,z)&=\mu_0(y)E_{ij}^\Tri(\vec{\nu}_{012}(x,z))
		=\mu_0(y)[P_i,L_{j}^\alpha](\vec{\nu}_{012}(x,z))E_{0}^\E(\vec{\nu}_{01}(x,z))\\
			&=\underbrace{\overbrace{[P_i,\chi_{j-1}](\vec{\nu}_{012}(x,z))}^{\in\mathcal{Q}_{n-1}^{n-1,n-1,n-1}}
				\overbrace{\mu_0(y)\nu_2(x,z)E_0^\E(\vec{\nu}_{01}(x,z))}^{
					\in\mathcal{Q}_3^{1,1,1}\!\times\!\mathcal{Q}_3^{1,1,1}\!\times\!\mathcal{Q}_3^{2,2,0}}}_{
						\in\mathcal{Q}_{n+2}^{n,n,n}\!\times\!\mathcal{Q}_{n+2}^{n,n,n}\!\times\!\mathcal{Q}_{n+2}^{n+1,n+1,n-1}}
							\in\Bigg(\begin{smallmatrix}\mathcal{Q}_{n+2}^{n,n+1,n+1}\\[2pt]\mathcal{Q}_{n+2}^{n+1,n,n+1}\\[2pt]
								\mathcal{Q}_{n+2}^{n+1,n+1,n}\end{smallmatrix}\Bigg)\,,\\
	\nabla \!\times\! E_{ij}^\mathrm{f}(x,y,z)
		&\in\nabla\!\times\!\Bigg(\begin{smallmatrix}\mathcal{Q}_{n+2}^{n,n,n}\\[2pt]\mathcal{Q}_{n+2}^{n,n,n}\\[2pt]
			\mathcal{Q}_{n+2}^{n+1,n+1,n-1}\end{smallmatrix}\Bigg)
				\subseteq\Bigg(\begin{smallmatrix}\mathcal{Q}_{n+3}^{n+1,n,n}\\[2pt]\mathcal{Q}_{n+3}^{n,n+1,n}\\[2pt]
					\mathcal{Q}_{n+2}^{n,n,n}\end{smallmatrix}\Bigg)\,.
\end{aligned}
\end{equation}
Note the calculations hold regardless of permutations in $\vec{\nu}_{012}(x,z)$, meaning that both families lie in the underlying space $\mathcal{V}_\infty^{(1),n+1}$.
Hence, the shape functions $E_{ij}^\mathrm{f}$ are in the correct space $\mathcal{U}^{(1),n+1}$, where $n=i+j$.
The same follows for the other triangle faces.

\subsubsection{\texorpdfstring{$H(\mathrm{curl})$}{Hcurl} Interior Bubbles}

\subparagraph{Family I:}
The first family is defined by \eqref{eq:PyrHcurlInteriorI} and it is the gradient of the $H^1$ bubbles.
They are labeled as $E_{ijk}^\mathrm{b}$, where $i=2,\ldots,p$, $j=2,\ldots,p$, and $k=2,\ldots,p$. 
They satisfy the trace properties and lie in $\Gamma^{(1),n}$, for $n=\max\{i,j,k\}$.
As gradients of $\mathcal{U}^{(0),n}$ they lie in the underlying space. 
Indeed, from \eqref{eq:PyrH1InteriorProof} it follows
\begin{equation}
\begin{aligned}
	E_{ijk}^\mathrm{b}(x,y,z)&=\nabla\Big(\phi_k^\E(\vec{\mu}_{01}(\szzz))\phi_{ij}^\square(\vec{\mu}_{01}(x),\vec{\mu}_{01}(y))\Big)
		\!\in\!\Bigg(\begin{smallmatrix}\mathcal{Q}_n^{n-1,n,n-1}\\[2pt]\mathcal{Q}_n^{n,n-1,n-1}\\[2pt]
			\mathcal{Q}_{n+1}^{n,n,n-1}\end{smallmatrix}\Bigg)
				\subseteq\Bigg(\begin{smallmatrix}\mathcal{Q}_{n+1}^{n-1,n,n}\\[2pt]\mathcal{Q}_{n+1}^{n,n-1,n}\\[2pt]
					\mathcal{Q}_{n+1}^{n,n,n-1}\end{smallmatrix}\Bigg)\,,\\
	\nabla\!\times\! E_{ijk}^\mathrm{b}(x,y,z)&=0
		\!\in\!\Bigg(\begin{smallmatrix}\mathcal{Q}_{n+2}^{n,n-1,n-1}\\[2pt]\mathcal{Q}_{n+2}^{n-1,n,n-1}\\[2pt]
			\mathcal{Q}_{n+1}^{n-1,n-1,n-1}\end{smallmatrix}\Bigg)\,.
\end{aligned}
\end{equation}
Hence, the interior bubbles lie in $\mathcal{U}^{(1),n}$, for $n=\max\{i,j,k\}$.

\subparagraph{Families II and III:}
These families are presented in \eqref{eq:PyrHcurlInteriorII} and \eqref{eq:PyrHcurlInteriorIII}, and are identified as $E_{ijk}^\mathrm{b}$, with $i=0,\ldots,p-1$, $j=2,\ldots,p$, and $k=2,\ldots,p$.
They satisfy the vanishing trace properties and belong to $\Gamma^{(1),n}$, for $n=\max\{i+1,j,k\}$.
Using \eqref{eq:PyrHcurlQuadFaceProof} and \eqref{eq:PyrphikZSpace} it follows
\begin{equation}
\begin{aligned}
	E_{ijk}^\mathrm{b}(x,y,z)&\!=\!
		\underbrace{\textstyle{\frac{1}{1+z}}\phi_k^\E(\vec{\mu}_{01}(\szzz))}_{B(z)\in\mathcal{Q}_{k+1}^{0,0,k-1}} 
		\underbrace{E_{ij}^\square(\vec{\mu}_{01}(x),\vec{\mu}_{01}(y))}_{A(x,y)
			\in\mathcal{Q}^{i,j,0}\!\times\!\mathcal{Q}^{j,i,0}\!\times\!\{0\}}
				\!\in\!\Bigg(\begin{smallmatrix}\mathcal{Q}_{n+1}^{n-1,n,n-1}\\[2pt]\mathcal{Q}_{n+1}^{n,n-1,n-1}\\[2pt]0\end{smallmatrix}\Bigg)
					\subseteq\Bigg(\begin{smallmatrix}\mathcal{Q}_{n+1}^{n-1,n,n}\\[2pt]\mathcal{Q}_{n+1}^{n,n-1,n}\\[2pt]
						\mathcal{Q}_{n+1}^{n,n,n-1}\end{smallmatrix}\Bigg)\,,\\
	\nabla\!\times\! E_{ijk}^\mathrm{b}(x,y,z)&\!=\!B(z)\!\!\!\!\!\!
		\underbrace{\nabla\times A(x,y)}_{\in\{0\}\!\times\!\{0\}\!\times\!\mathcal{Q}^{n-1,n-1,0}}\!\!\!\!\!\!
				+\underbrace{\overbrace{\nabla B(z)}^{\in\{0\}\!\times\!\{0\}\!\times\!\mathcal{Q}_{k+2}^{0,0,k-1}}
					\!\!\!\!\!\!\times\,\,\, A(x,y)}_{\in\mathcal{Q}_{k+2}^{j,i,k-1}\!\times\!\mathcal{Q}_{k+2}^{i,j,k-1}\!\times\!\{0\}}
						\in\Bigg(\begin{smallmatrix}\mathcal{Q}_{n+2}^{n,n-1,n-1}\\[2pt]\mathcal{Q}_{n+2}^{n-1,n,n-1}\\[2pt]
							\mathcal{Q}_{n+1}^{n-1,n-1,n-1}\end{smallmatrix}\Bigg)\,.
\end{aligned}
\label{eq:PyrHcurlInteriorIIProof}
\end{equation}
The analysis is independent of permutations of the entries, so it holds for both families.
Therefore, the functions of both families are elements of $\mathcal{U}^{(1),n}$, for $n=\max\{i+1,j,k\}$.

\subparagraph{Family IV:}
The family is shown in \eqref{eq:PyrHcurlInteriorIV}, with the functions being identified as $E_{ij}^{\mathrm{b}}$, for $i=2,\ldots,p$, and $j=2,\ldots,p$.
They have vanishing trace, so they lie in $\Gamma^{(1),n}$ for $n=\max\{i,j\}$.
Moreover,
\begin{equation}
\begin{aligned}
	E_{ij}^\mathrm{b}(x,y,z)&=
		\underbrace{\phi_{ij}^\square(\vec{\mu}_{01}(x),\vec{\mu}_{01}(y))}_{A(x,y)\in\mathcal{Q}^{i,j}(x,y)}
			\underbrace{\nabla\textstyle{\frac{1}{(1+z)^n}}}_{B(z)\in\{0\}\!\times\!\{0\}\!\times\!\mathcal{Q}_{n+1}^{0,0,0}}
				\!\in\!\Bigg(\begin{smallmatrix}0\\[2pt]0\\[2pt]\mathcal{Q}_{n+1}^{i,j,0}\end{smallmatrix}\Bigg)
					\subseteq\Bigg(\begin{smallmatrix}\mathcal{Q}_{n+1}^{n-1,n,n}\\[2pt]\mathcal{Q}_{n+1}^{n,n-1,n}\\[2pt]
						\mathcal{Q}_{n+1}^{n,n,n-1}\end{smallmatrix}\Bigg)\,,\\
	\nabla\!\times\! E_{ij}^\mathrm{b}(x,y,z)&=
		\underbrace{\nabla A(x,y)}_{\in\mathcal{Q}^{i-1,j,0}\!\times\!\mathcal{Q}^{i,j-1,0}\!\times\!\{0\}}\!\!\!\!\!\!\times\,\,B(z)
			\!\in\!\Bigg(\begin{smallmatrix}\mathcal{Q}_{n+1}^{i,j-1,0}\\[2pt]\mathcal{Q}_{n+1}^{i-1,j,0}\\[2pt]0\end{smallmatrix}\Bigg)
				\subseteq\Bigg(\begin{smallmatrix}\mathcal{Q}_{n+2}^{n,n-1,n-1}\\[2pt]\mathcal{Q}_{n+2}^{n-1,n,n-1}\\[2pt]
					\mathcal{Q}_{n+1}^{n-1,n-1,n-1}\end{smallmatrix}\Bigg)\,.
\end{aligned}
\label{eq:PyrHcurlInteriorIVProof}
\end{equation}
Hence, the functions belong to $\mathcal{U}^{(1),n}$, for $n=\max\{i,j\}$.

\subsection{\texorpdfstring{$H(\mathrm{div})$}{Hdiv} Shape Functions}

Recall the deformed underlying space for $m=2$ is
\begin{equation*}
	\mathcal{V}_\infty^{(2),p}=\{V\!\in\!
		\mathcal{Q}_{p+2}^{p,p-1,p-1}\!\times\!\mathcal{Q}_{p+2}^{p-1,p,p-1}\!\times\!\mathcal{Q}_{p+2}^{p-1,p-1,p}:
			\nabla\cdot V\!\in\!\mathcal{Q}_{p+3}^{p-1,p-1,p-1}\}\,.
\end{equation*}

\subsubsection{\texorpdfstring{$H(\mathrm{div})$}{Hdiv} Faces}

\paragraph{\texorpdfstring{$H(\mathrm{div})$}{Hdiv} Quadrilateral Face.}
The quadrilateral face functions are defined in \eqref{eq:PyrHdivQuadFace}, and they are labeled as $V_{ij}^\mathrm{f}$, for $i=0,\ldots,p-1$ and $j=0,\ldots,p-1$.
The functions satisfy the desired trace properties, so they lie in the compatibility space $\Gamma^{(2),n}$, where $n=\max\{i+1,j+1\}$.
They lie in the underlying space because
\begin{equation}
\begin{aligned}
	V_{ij}^\mathrm{f}(x,y,z)&\!=\!\textstyle{\frac{1}{(1+z)^3}}
		\underbrace{V_{ij}^\square(\vec{\mu}_{01}(x),\vec{\mu}_{01}(y))}_{A(x,y)\in\{0\}\!\times\!\{0\}\!\times\!\mathcal{Q}^{i,j,0}}
			\!\in\!\Bigg(\begin{smallmatrix}0\\[2pt]0\\[2pt]\mathcal{Q}_3^{n-1,n-1,0}\end{smallmatrix}\Bigg)
				\subseteq\Bigg(\begin{smallmatrix}\mathcal{Q}_{n+2}^{n,n-1,n-1}\\[2pt]\mathcal{Q}_{n+2}^{n-1,n,n-1}\\[2pt]
					\mathcal{Q}_{n+2}^{n-1,n-1,n}\end{smallmatrix}\Bigg)\,,\\
	\nabla\!\cdot\! V_{ij}^\mathrm{f}(x,y,z)&\!=
		\underbrace{\nabla\textstyle{\frac{1}{(1+z)^3}}}_{\in\{0\}\!\times\!\{0\}\!\times\!\mathcal{Q}_{4}^{0,0,0}}
			\!\!\!\!\cdot\,\,\, A(x,y)\in\mathcal{Q}_{4}^{i,j,0}
				\subseteq\mathcal{Q}_{n+3}^{n-1,n-1,n-1}\,.
\end{aligned}
\label{eq:PyrHdivQuadFaceProof}
\end{equation}
Notice that the factor $\mu_0(\szzz)^3$ was \textit{required} in order for the function to be in the correct space (see the divergence).
It follows the shape functions belong to $\mathcal{U}^{(2),n}$, where $n=\max\{i+1,j+1\}$.

\paragraph{\texorpdfstring{$H(\mathrm{div})$}{Hdiv} Triangle Faces.}
This is by far the most difficult construction (and proof).
Since only a sketch was shown before, more details will be given here in relation to those functions.
As an example consider face 125.

The typical approach is to use the ancillary functions with appropriately chosen affine coordinates as their input, and possibly some blending factor to ensure the trace properties are satisfied.
Indeed, this directly yields all the shape functions up to this point.
However, the two most sensible options for the shape functions fail to be even in the lowest order space,
\begin{equation}
\begin{aligned}
	\mu_0(y)V_{00}^\Tri(\vec{\nu}_{012}(x,z))
		&=\textstyle{\frac{1}{(1+z)^3}}\bigg(\begin{smallmatrix}0\\[2pt]-(1-y)\\[2pt]0\end{smallmatrix}\bigg)
			\in\Bigg(\begin{smallmatrix}\mathcal{Q}_{3}^{1,0,0}\\[2pt]\mathcal{Q}_{3}^{0,1,0}\\[2pt]
				\mathcal{Q}_{3}^{0,0,1}\end{smallmatrix}\Bigg)\,,\\
	\nabla\!\cdot\!\Big(\mu_0(y)V_{00}^\Tri(\vec{\nu}_{012}(x,z))\Big)
		&=\textstyle{\frac{1}{(1+z)^3}}=\textstyle{\frac{1+z}{(1+z)^4}}\notin\mathcal{Q}_{4}^{0,0,0}\,,\\
	\textstyle{\frac{1}{\mu_0(y)}}V_{00}^\Tri(\vec{\lambda}_{125}(x,y,z))
		&=\textstyle{\frac{1}{(1+z)^3}}\bigg(\begin{smallmatrix}0\\[2pt]-(1-y)\\[2pt]z\end{smallmatrix}\bigg)
			\in\Bigg(\begin{smallmatrix}\mathcal{Q}_{3}^{1,0,0}\\[2pt]\mathcal{Q}_{3}^{0,1,0}\\[2pt]
				\mathcal{Q}_{3}^{0,0,1}\end{smallmatrix}\Bigg)\,,\\
	\nabla\!\cdot\!\Big(\textstyle{\frac{1}{\mu_0(y)}}V_{00}^\Tri(\vec{\lambda}_{125}(x,y,z))\Big)
		&=\textstyle{\frac{2-z}{(1+z)^4}}\notin\mathcal{Q}_{4}^{0,0,0}\,.		
\end{aligned}
\label{eq:PyrFailedAttempts}
\end{equation}

One option is to depart from the spaces defined by \citet{Nigam_Phillips_11}, by adding elements to $\mathcal{U}^{(2),p}$.
However, this has the negative effect that the discrete $L^2$ space, $\mathcal{U}^{(3),p}$, also has to be modified to satisfy the exact sequence property.
Thus, one could have scenarios where the lowest order discrete space for $L^2$ is the constants plus other functions, as opposed to simply the constants.
Although valid, this deviates from the other elements, where the lowest order discrete $L^2$ space is always the constants.
Moreover, this option adds degrees of freedom to the construction.
Hence, modifying the spaces $\mathcal{U}^{(2),p}$ and $\mathcal{U}^{(3),p}$ is a last resort, and should be avoided when possible.

A second option, is not to use the ancillary functions.
However, using the ancillary functions directly is not a capricious decision.
The main reason is that they automatically guarantee that no ``illegal'' derivatives are present in the divergence of the shape functions.
Indeed, the traces should be of the form of 2D $L^2$ functions for a given face, so intuitively, elements of $L^2$ should be involved in the expressions for the shape functions.
However, the divergence of the shape functions cannot involve derivatives of those traces, since they are elements of $L^2$, which in general do not have derivatives.
Indeed, the traces for the triangle faces in our shape functions involve combinations of Legendre and Jacobi polynomials, $P_i$ and $P_j^\alpha$, which are the representatives of $L^2$. 
As expected, up to now the divergence of $H(\mathrm{div})$ face functions has not involved any derivatives of $P_i$ and $P_j^\alpha$.
This is not a triviality, since in general one would expect the derivatives to be there, but due to Lemma \ref{lem:divformula}, the derivatives disappear if the ancillary function $V_{ij}^\Tri$ is utilized.
Therefore, amongst other reasons, it is highly desirable to use the ancillary functions.

With these facts in mind, the message is then to at least try to persist in the use of the ancillary functions without modifying the space.
The key to making this possible relies in observing closely the lowest order space.
Here, the first order shape functions are known explicitly for each face and have been deduced and presented by \citet{Hiptmair99} and \citet{Nigam_Phillips_11} amongst others.
For face 125, the pullback of that shape function is
\begin{equation}
	V_{00}^\mathrm{f}(x,y,z)=\textstyle{\frac{1}{(1+z)^3}}\bigg(\begin{smallmatrix}0\\[2pt]-(1-y)\\[2pt]\frac{1}{2}z\end{smallmatrix}\bigg)
			\in\Bigg(\begin{smallmatrix}\mathcal{Q}_{3}^{1,0,0}\\[2pt]\mathcal{Q}_{3}^{0,1,0}\\[2pt]
				\mathcal{Q}_{3}^{0,0,1}\end{smallmatrix}\Bigg)\,,\quad\qquad
	\nabla\!\cdot\!V_{00}^\mathrm{f}(x,y,z)=\textstyle{\frac{3}{2(1+z)^4}}\in\mathcal{Q}_{4}^{0,0,0}\,.
\end{equation}
As a result, any proposed shape functions should match this expression in the lowest order case.
Even though this was not the case for the previous attempts shown in \eqref{eq:PyrFailedAttempts}, a close observation reveals that a linear combination of those attempts \textit{does} lead to the desired lowest order shape function. 
That is,\footnote{In fact, the case $\frac{1}{2}(V_{00}^\Tri(\nu_0(x,z),\lambda_2(x,y,z),\lambda_5(z))+V_{00}^\Tri(\lambda_1(x,y,z),\nu_1(x,z),\lambda_5(z)))$ also gives the desired lowest order shape function. However, the higher order version of this expression unfortunately is not in the space $\mathcal{U}^{(2),p}$.}
\begin{equation}
\begin{aligned}
	\textstyle{\frac{1}{2}}\Big(\mu_0(y)V_{00}^\Tri(\vec{\nu}_{012}(x,z))
		+\textstyle{\frac{1}{\mu_0(y)}}V_{00}^\Tri(\vec{\lambda}_{125}(x,y,z))\Big)
			&=\textstyle{\frac{1}{2(1+z)^3}}\bigg(\begin{smallmatrix}0\\[2pt]-(1-y)\\[2pt]0\end{smallmatrix}\bigg)
				+\textstyle{\frac{1}{2(1+z)^3}}\bigg(\begin{smallmatrix}0\\[2pt]-(1-y)\\[2pt]z\end{smallmatrix}\bigg)\\
	&=\textstyle{\frac{1}{(1+z)^3}}\bigg(\begin{smallmatrix}0\\[2pt]-(1-y)\\[2pt]\frac{1}{2}z\end{smallmatrix}\bigg)
		=V_{00}^\mathrm{f}(x,y,z)\,.
\end{aligned}
\end{equation}
Immediately, this suggests the higher order expression for the shape functions,
\begin{equation}
\begin{aligned}
	V_{ij}^\mathrm{f}(x,y,z)&\!=\!\textstyle{\frac{1}{2}}\Big(\mu_0(y)V_{ij}^\Tri(\vec{\nu}_{012}(x,z))
		+\textstyle{\frac{1}{\mu_0(y)}}V_{ij}^\Tri(\vec{\lambda}_{125}(x,y,z))\Big)\,,\\
	\nabla\!\cdot\! V_{ij}^\mathrm{f}(x,y,z)&\!=\!\textstyle{\frac{1}{2}}\Big(\nabla\mu_0(y)\cdot V_{ij}^\Tri(\vec{\nu}_{012}(x,z))
    	+\textstyle{\frac{1}{\mu_0(y)}}\nabla\!\cdot\!V_{ij}^\Tri(\vec{\lambda}_{125}(x,y,z))\\
    		&\qquad\qquad\qquad\qquad\qquad\qquad\quad
    			-\textstyle{\frac{1}{\mu_0(y)^2}}\nabla\mu_0(y)\cdot V_{ij}^\Tri(\vec{\lambda}_{125}(x,y,z))\Big)\,,
\end{aligned}
\label{eq:PyrHdivTriaFaceAppen}
\end{equation}
where $i\geq0$, $j\geq0$ and $n=i+j=0,\ldots,p-1$.
Here, it was used that $\nabla\cdot V_{ij}^\Tri(\vec{\nu}_{012}(x,z))=0$ by \eqref{eq:HdivtriangleRemark}.
Clearly, no derivatives of the $L^2$ representatives are present in the expression for the divergence, since they are not present neither in the terms involving $V_{ij}^\Tri$ directly, nor in the terms involving $\nabla\cdot V_{ij}^\Tri$ in view of Lemma \ref{lem:divformula}.
It remains to show the trace properties, and that the $V_{ij}^\mathrm{f}$ are in the underlying space $\mathcal{V}_\infty^{(2),n+1}$, where $n=i+j$.

In fact, each of the two components of the shape function satisfies the trace properties.
To see this, note that $V_{ij}^\Tri(\vec{\nu}_{012}(x,z))$ has entries independent of $y$, meaning that only the second component is nonzero.
This component is tangent to faces 235, 145 and 1234, so that the normal component vanishes on those three faces.
Meanwhile in the opposite face 435, $\mu_0(y)$ vanishes, so the shape function also vanishes, while at the face itself $\mu_0(y)=1$, and the shape function (in the pyramid $(\xi,\eta,\zeta)$ coordinates) takes the form of a 2D triangle $L^2$ face function, as desired.
Therefore, the component $\mu_0(y)V_{ij}^\Tri(\vec{\nu}_{012}(x,z))$ satisfies the trace properties.
For the other component, notice that
\begin{equation*}
	\textstyle{\frac{1}{\mu_0(y)}}V_{ij}^\Tri(\vec{\lambda}_{125}(x,y,z))
		\!=\![P_i,P_j^\alpha](\vec{\lambda}_{125}(x,y,z))\Big(\mu_0(y)V_{00}^\Tri(\vec{\nu}_{012}(x,z))
			+\lambda_5(z)\nabla\mu_0(y)\times E_{0}^\E(\vec{\nu}_{01}(x,z))\Big)\,.
\end{equation*}
Hence, the second component is further decoupled into two terms, with the first one already satisfying the desired properties by the previous analysis. 
The second term vanishes at all faces as required, because $\nabla\mu_0(y)$ is normal to faces 125 and 435, while $E_{0}^\E(\vec{\nu}_{01}(x,z))$ is normal to the faces 235 and 145, meaning that $\nabla\mu_0(y)\times E_{0}^\E(\vec{\nu}_{01}(x,z))$ is tangent to all the triangle faces, so its normal trace vanishes. 
Finally at the quadrilateral face it vanishes because $\lambda_5(z)=0$ there.
It follows $\mu_0(y)^{-1}V_{ij}^\Tri(\vec{\lambda}_{125}(x,y,z))$ satisfies the trace properties as well, and so $V_{ij}^\mathrm{f}\in\Gamma^{(2),n+1}$ for $n=i+j$.

To prove the shape functions lie in the underlying space, simply recall that $[P_i,P_j^\alpha]\in\tilde{\mathcal{P}}^{i+j}(s_0,s_1,s_2)$ is a homogeneous polynomial, so it suffices to assume it is a monomial $[P_i,P_j^\alpha](s_0,s_1,s_2)=s_0^as_1^bs_2^c$ of order $n=i+j=a+b+c$.
Calculating explicitly for face 125 gives,
\begin{equation}
	\begin{aligned}
		V_{ij}^\mathrm{f}(x,y,z)&=\textstyle{\frac{1}{2(1+z)^{n+3}}}
			\bigg(\begin{smallmatrix}0\\[2pt]-(1-y)\cdot(1-x)^ax^b(1+(1-y)^{a+b})z^c\\[2pt]
				z\cdot(1-x)^ax^b(1-y)^{a+b}z^c\end{smallmatrix}\bigg)
					\in\Bigg(\begin{smallmatrix}\mathcal{Q}_{n+3}^{n+1,n,n}\\[2pt]\mathcal{Q}_{n+3}^{n,n+1,n}\\[2pt]
						\mathcal{Q}_{n+3}^{n,n,n+1}\end{smallmatrix}\Bigg)\,,\\
		\nabla\!\cdot\!V_{ij}^\mathrm{f}(x,y,z)&=\displaystyle{\frac{z^c(1-x)^ax^b((n+2-z)(1-y)^{a+b}+(1+z))}{2(1+z)^{n+4}}}
			\in\mathcal{Q}_{n+4}^{n,n,?}\,.
	\end{aligned}
\end{equation}
In the expression for the divergence, the power of $z$ is either $c$ or $c+1$.
If $a+b=0$, then $c=n$ and $\nabla\!\cdot\!V_{ij}^\mathrm{f}(x,y,z)=\frac{(n+3)z^n}{2(1+z)^{n+4}}\in\mathcal{Q}_{n+4}^{n,n,n}$.
If $a+b\geq1$, then $c+1\leq n$ and again $\nabla\!\cdot\!V_{ij}^\mathrm{f}(x,y,z)\in\mathcal{Q}_{n+4}^{n,n,n}$.
Therefore in all cases, $V_{ij}^\mathrm{f}(x,y,z)\in\mathcal{V}_\infty^{(2),n+1}$, where $n=i+j$, and as a result $V_{ij}^\mathrm{f}\in\mathcal{U}^{(2),n+1}$.
Naturally, an analogous result holds for all other triangle faces.

This part concludes with the observation that the expressions in \eqref{eq:PyrHdivTriaFaceAppen} have factors $\frac{1}{\mu_0(y)}$ and $\frac{1}{\mu_0(y)^2}$, which appear to be singularities (on face 435).
However, they are not real singularities, since
\begin{equation}
\begin{aligned}
	\textstyle{\frac{1}{\mu}}V_{ij}^\Tri(\mu s_0,\mu s_1, s_2)
		&\!=\![P_i,P_j^\alpha](\mu s_0,\mu s_1, s_2)(\mu V_{00}^\Tri(s_0,s_1,s_2)\!+\!s_2\nabla\mu\!\times\! E_0^\E(s_0,s_1))\,,\\
	\nabla\!\cdot\!\Big(\!\textstyle{\frac{1}{\mu}}V_{ij}^\Tri(\mu s_0,\mu s_1, s_2)\!\Big)\!
		&\!=\![P_i,P_j^\alpha](\mu s_0,\mu s_1, s_2)\nabla\mu\!\cdot\!\!
			\Big(\!(i\!+\!j\!+\!3)E_0^\E(s_0,s_1)\!\!\times\!\!\nabla s_2\!-\!V_{00}^\Tri(s_0,s_1,s_2)\!\Big)\,\!,\!\!\!\!
\end{aligned}
\label{eq:SingularityAvoidNoOri}
\end{equation}
for $i\geq0$, $j\geq0$ and $n=i+j=0,\ldots,p-1$.
Here, in the expression for the divergence the term $\mu(\nabla s_0\times\nabla s_1)\cdot\nabla s_2$ is not shown, because it is assumed that $s_0+s_1+s_2=1$, since this is the property $\vec{\nu}_{012}$ satisfies.
This expression can be very useful from a computational standpoint, because the benign singularity can be a problem when computing the shape function at interior points very close to face 125.
Indeed, if orientations are not being taken into account, the expression above is sufficient to avoid those problematic terms.
Nevertheless if orientations are considered, the solution is more technical.
We present an approach that is convenient from a computational point of view.
First define,
\begin{equation}
	\kappa(\oo)=\begin{cases}0\quad&\text{if }\oo=0,1,2\,,\\1\quad&\text{if }\oo=3,4,5\,.\end{cases}
\end{equation}
Then define the following function,
\begin{equation}
\begin{aligned}
	\tilde{V}_{ij}^\Tri(s_0,s_1,s_2,\mu,\oo_0,\oo)&\!=\![P_i,P_j^\alpha](\sigma_\oo^\Tri(\sigma_{\oo_0}^\Tri(\mu s_0,\mu s_1,s_2)))
		\Big(\mu V_{00}^\Tri(\sigma_\oo^\Tri(\sigma_{\oo_0}^\Tri(s_0,s_1,s_2)))\\
			&\qquad\qquad\qquad\qquad\qquad\qquad
				+s_2\nabla\mu\!\times\! E_0^\E(\sigma_{\kappa(\oo)}^\E(\sigma_{\kappa(\oo_0)}^\E(s_0,s_1)))\Big)\,,\\
	\nabla\!\cdot\!\tilde{V}_{ij}^\Tri(s_0,s_1,s_2,\mu,\oo_0,\oo)
		&\!=\![P_i,P_j^\alpha](\sigma_\oo^\Tri(\sigma_{\oo_0}^\Tri(\mu s_0,\mu s_1,s_2)))\nabla\mu\\
			&\qquad\qquad\qquad\qquad
				\cdot\Big(\!(i\!+\!j\!+\!3)E_0^\E(\sigma_{\kappa(\oo)}^\E(\sigma_{\kappa(\oo_0)}^\E(s_0,s_1)))\!\times\!\nabla s_2\\
					&\qquad\qquad\qquad\qquad\qquad\qquad\qquad\qquad
						-V_{00}^\Tri(\sigma_\oo^\Tri(\sigma_{\oo_0}^\Tri(s_0,s_1,s_2)))\!\Big)\,,
\end{aligned}
\end{equation}
for $i\geq0$, $j\geq0$ and $n=i+j=0,\ldots,p-1$.
Then, the \textit{orientation embedded} shape functions for each face (see \eqref{eq:PyrHdivTriaFace}) are
\begin{equation}
	\begin{aligned}
		V_{ij}^\mathrm{f}(\xieta)
			&\!=\!\textstyle{\frac{1}{2}}\Big(\mu_c(\sxib)V_{ij}^\Tri(\sigma_\oo^\Tri(\sigma_{\oo_0}^\Tri(\vec{\nu}_{012}(\xi_a,\zeta))))
				+\tilde{V}_{ij}^\Tri(\vec{\nu}_{012}(\xi_a,\zeta),\mu_c(\sxib),\oo_0,\oo)\Big)\,,\\
    \nabla\!\cdot\! V_{ij}^\mathrm{f}(\xieta)
    	&\!=\!\textstyle{\frac{1}{2}}\Big(\nabla\mu_c(\sxib)
    		\!\cdot\! V_{ij}^\Tri(\sigma_\oo^\Tri(\sigma_{\oo_0}^\Tri(\vec{\nu}_{012}(\xi_a,\zeta))))
    	\!+\!\nabla\!\cdot\!\tilde{V}_{ij}^\Tri(\vec{\nu}_{012}(\xi_a,\zeta),\mu_c(\sxib),\oo_0,\oo)\Big)\,,	
	\end{aligned}
	\label{eq:SingularityAvoidYesOri}
\end{equation}
where $i\geq0$, $j\geq0$, $n=i+j=0,\ldots,p-1$, $(a,b)=(1,2),(2,1)$, $c=0,1$, and where $\oo_0$ is chosen such that $\sigma_{\oo_0}^\Tri(\vec{\nu}_{012}(\xi_a,\zeta))$ is the locally oriented triplet representing that face.
This is necessary because $s_2$ is treated differently than $(s_0,s_1)$ in the definition of $\tilde{V}_{ij}^\Tri$.

\subsubsection{\texorpdfstring{$H(\mathrm{div})$}{Hdiv} Interior Bubbles}

\subparagraph{Families I and II:}
These families are presented in \eqref{eq:PyrHdivInteriorI} and \eqref{eq:PyrHdivInteriorII} and are the curl of $H(\mathrm{curl})$ interior bubbles.
They are identified as $V_{ijk}^\mathrm{b}$, with $i=0,\ldots,p-1$, $j=2,\ldots,p$, and $k=2,\ldots,p$.
They satisfy the vanishing trace properties and belong to $\Gamma^{(2),n}$, for $n=\max\{i+1,j,k\}$.
Using \eqref{eq:PyrHcurlInteriorIIProof} it follows
\begin{equation}
\begin{aligned}
	V_{ijk}^\mathrm{b}(x,y,z)&=
		\underbrace{\nabla\!\times\!\Big(\textstyle{\frac{1}{1+z}}\phi_k^\E(\vec{\mu}_{01}(\szzz))
			E_{ij}^\square(\vec{\mu}_{01}(x),\vec{\mu}_{01}(y))\Big)}_{
				\in\mathcal{Q}_{n+2}^{n,n-1,n-1}\!\times\!\mathcal{Q}_{n+2}^{n-1,n,n-1}\!\times\!\mathcal{Q}_{n+1}^{n-1,n-1,n-1}}
						\in\Bigg(\begin{smallmatrix}\mathcal{Q}_{n+2}^{n,n-1,n-1}\\[2pt]\mathcal{Q}_{n+2}^{n-1,n,n-1}\\[2pt]
							\mathcal{Q}_{n+2}^{n-1,n-1,n}\end{smallmatrix}\Bigg)\,,\\
	\nabla\!\cdot\! V_{ij}^\mathrm{b}(x,y,z)&\!=\!0\in\mathcal{Q}_{n+3}^{n-1,n-1,n-1}\,.
\end{aligned}
\end{equation}
The calculations are invariant to permutations of the entries, so it holds for both families.
Therefore, the functions of both families are elements of $\mathcal{U}^{(2),n}$, where $n=\max\{i+1,j,k\}$.

\subparagraph{Family III:}
This family is defined in \eqref{eq:PyrHdivInteriorIII} and it is the curl of $H(\mathrm{curl})$ interior bubbles.
The functions are labeled as $V_{ij}^{\mathrm{b}}$, for $i=2,\ldots,p$, and $j=2,\ldots,p$.
They have vanishing trace, so they lie in $\Gamma^{(2),n}$ for $n=\max\{i,j\}$.
Moreover, using \eqref{eq:PyrHcurlInteriorIVProof} gives
\begin{equation}
\begin{aligned}
	V_{ij}^\mathrm{b}(x,y,z)&=
		\nabla\!\times\!\Big(\phi_{ij}^\square(\vec{\mu}_{01}(x),\vec{\mu}_{01}(y))\nabla\textstyle{\frac{1}{(1+z)^n}}\Big)
			\in\Bigg(\begin{smallmatrix}\mathcal{Q}_{n+1}^{i,j-1,0}\\[2pt]\mathcal{Q}_{n+1}^{i-1,j,0}\\[2pt]0\end{smallmatrix}\Bigg)
				\subseteq\Bigg(\begin{smallmatrix}\mathcal{Q}_{n+2}^{n,n-1,n-1}\\[2pt]\mathcal{Q}_{n+2}^{n-1,n,n-1}\\[2pt]
					\mathcal{Q}_{n+2}^{n-1,n-1,n}\end{smallmatrix}\Bigg)\,,\\
	\nabla\!\cdot\! V_{ijk}^\mathrm{b}(x,y,z)&=0\in\mathcal{Q}_{n+3}^{n-1,n-1,n-1}\,.
\end{aligned}
\end{equation}
Hence, the functions are in $\mathcal{U}^{(2),n}$, for $n=\max\{i,j\}$.

\subparagraph{Family IV:}
This family is shown in \eqref{eq:PyrHdivInteriorIV} and the functions are identified as $V_{ijk}^\mathrm{b}$, for $i=0,\ldots,p-1$, $j=0,\ldots,p-1$, and $k=2,\ldots,p$.
They satisfy the vanishing trace properties and lie in the compatibility space $\Gamma^{(2),n}$, where $n=\max\{i+1,j+1,k\}$.
Using \eqref{eq:PyrHdivQuadFaceProof} and \eqref{eq:PyrphikZSpace} it follows
\begin{equation}
\begin{aligned}
	V_{ijk}^\mathrm{b}(x,y,z)&\!=\!
		\underbrace{\textstyle{\frac{1}{(1+z)^2}}\phi_k^\E(\vec{\mu}_{01}(\szzz))}_{B(z)\in\mathcal{Q}_{k+2}^{0,0,k-1}}
			\underbrace{V_{ij}^\square(\vec{\mu}_{01}(x),\vec{\mu}_{01}(y))}_{A(x,y)\in\{0\}\!\times\!\{0\}\!\times\!\mathcal{Q}^{i,j,0}}
			\!\in\!\!\Bigg(\!\begin{smallmatrix}0\\[2pt]0\\[2pt]\mathcal{Q}_{n+2}^{n-1,n-1,n-1}\end{smallmatrix}\!\Bigg)
				\!\!\subseteq\!\!\Bigg(\begin{smallmatrix}\mathcal{Q}_{n+2}^{n,n-1,n-1}\\[2pt]\mathcal{Q}_{n+2}^{n-1,n,n-1}\\[2pt]
					\mathcal{Q}_{n+2}^{n-1,n-1,n}\end{smallmatrix}\Bigg)\,,\\
	\nabla\!\cdot\! V_{ijk}^\mathrm{b}(x,y,z)&\!=\!
		\underbrace{\nabla B(z)}_{
			\in\{0\}\!\times\!\{0\}\!\times\!\mathcal{Q}_{k+3}^{0,0,k-1}}
				\!\!\!\!\cdot\,\,\, A(x,y)\in\mathcal{Q}_{k+3}^{i,j,k-1}
					\subseteq\mathcal{Q}_{n+3}^{n-1,n-1,n-1}\,.
\end{aligned}
\end{equation}
Therefore, the functions belong to $\mathcal{U}^{(2),n}$, for $n=\max\{i+1,j+1,k\}$.

\subparagraph{Family V:}
This family is defined in \eqref{eq:PyrHdivInteriorV}, with the functions being labeled as $V_{ij}^\mathrm{b}$, for $i=2,\ldots,p$, and $j=2,\ldots,p$.
They vanish at the pyramid boundary and belong to the compatibility space $\Gamma^{(2),n}$, where $n=\max\{i,j\}$.
Using the expression in \eqref{eq:PyrHdivBubblesAuxI}, it follows
\begin{equation}
\begin{aligned}
	V_{ij}^\mathrm{b}(x,y,z)&\!=\!
		\underbrace{(\textstyle{\frac{z}{1+z}})^{n-1}}_{\in\mathcal{Q}_{n-1}^{0,0,n-1}}
			\underbrace{V_{ij}^\trianglelefteq(\vec{\mu}_{01}(x),\vec{\mu}_{01}(y),\mu_0(\szzz))}_{A(x,y)
				\in\mathcal{Q}_3^{i,j-1,0}\!\times\!\mathcal{Q}_3^{i-1,j,0}\!\times\!\mathcal{Q}_2^{i-1,j-1,0}}
						\!\in\!\Bigg(\begin{smallmatrix}\mathcal{Q}_{n+2}^{n,n-1,n-1}\\[2pt]\mathcal{Q}_{n+2}^{n-1,n,n-1}\\[2pt]
							\mathcal{Q}_{n+2}^{n-1,n-1,n}\end{smallmatrix}\Bigg)\,,\\
	\nabla\!\cdot\! V_{ij}^\mathrm{b}(x,y,z)&\!=\!
		\underbrace{\nabla (\textstyle{\frac{z}{1+z}})^{n-1}}_{
			\in\{0\}\!\times\!\{0\}\!\times\!\mathcal{Q}_{n}^{0,0,n-2}}
				\!\!\!\!\cdot\,\,\, A(x,y)\in\mathcal{Q}_{n+2}^{i-1,j-1,n-2}
					\subseteq\mathcal{Q}_{n+3}^{n-1,n-1,n-1}\,.
\end{aligned}
\end{equation}
As a result, the functions are in $\mathcal{U}^{(2),n}$, for $n=\max\{i,j\}$.

\subparagraph{Families VI and VII:}
These families are presented in \eqref{eq:PyrHdivInteriorVI} and \eqref{eq:PyrHdivInteriorVII}, with the functions being labeled as $V_{i}^\mathrm{b}$ and $V_{j}^\mathrm{b}$ respectively, where $i=2,\ldots,p$, and $j=2,\ldots,p$.
They satisfy the trace properties and therefore are in the space $\Gamma^{(2),i}$ and $\Gamma^{(2),j}$ respectively.
Using the expression in \eqref{eq:PyrHdivBubblesAuxII}, it follows
\begin{equation}
\begin{aligned}
	V_{i}^\mathrm{b}(x,y,z)&\!=\!
		\underbrace{(\textstyle{\frac{z}{1+z}})^{i-1}}_{\in\mathcal{Q}_{i-1}^{0,0,i-1}}
			\underbrace{V_i^\trianglerighteq(\vec{\mu}_{01}(x),\mu_1(y),\mu_0(\szzz))}_{A(x,y)
				\in\mathcal{Q}_3^{i,0,0}\!\times\!\{0\}\!\times\!\mathcal{Q}_2^{i-1,0,0}}
						\!\in\!\Bigg(\begin{smallmatrix}\mathcal{Q}_{i+2}^{i,i-1,i-1}\\[2pt]\mathcal{Q}_{i+2}^{i-1,i,i-1}\\[2pt]
							\mathcal{Q}_{i+2}^{i-1,i-1,i}\end{smallmatrix}\Bigg)\,,\\
	\nabla\!\cdot\! V_{i}^\mathrm{b}(x,y,z)&\!=\!
		\underbrace{\nabla (\textstyle{\frac{z}{1+z}})^{i-1}}_{
			\in\{0\}\!\times\!\{0\}\!\times\!\mathcal{Q}_{i}^{0,0,i-2}}
				\!\!\!\!\cdot\,\,\, A(x,y)\in\mathcal{Q}_{i+2}^{i-1,0,i-2}
					\subseteq\mathcal{Q}_{i+3}^{i-1,i-1,i-1}\,.
\end{aligned}
\end{equation}
Symmetric arguments apply to the seventh family $V_j^\mathrm{b}(x,y,z)$, meaning that the functions are in $\mathcal{U}^{(2),i}$ and $\mathcal{U}^{(2),j}$ respectively.

\subsection{\texorpdfstring{$L^2$}{L2} Shape Functions}

Recall the deformed underlying space for $m=3$ is
\begin{equation*}
	\mathcal{V}_\infty^{(3),p}=\mathcal{Q}_{p+3}^{p-1,p-1,p-1}\,.
\end{equation*}

\subsubsection{\texorpdfstring{$L^2$}{L2} Interior}
The interior shape functions are defined in \eqref{eq:PyrL2Interior} and identified as $\psi_{ijk}^\mathrm{b}$ for $i=0,\ldots,p-1$, $j=0,\ldots,p-1$ and $k=0,\ldots,p-1$. 
They trivially satisfy the compatibility properties, since there are none to satisfy, meaning they lie in $\Gamma^{(3),n}$, for $n=\max\{i+1,j+1,k+1\}$.
In this case the factor making the expression coordinate free is $(\nabla\nu_1(x,z)\!\!\times\!\!\nabla\nu_1(y,z))\!\cdot\!\nabla\mu_1(\szzz)=\frac{1}{(1+z)^4}$.
It follows,
\begin{equation}
	\psi_{ijk}^\mathrm{b}(x,y,z)
		=\textstyle{\frac{1}{(1+z)^4}}\underbrace{[P_i](\vec{\mu}_{01}(x))[P_j](\vec{\mu}_{01}(y))[P_k](\vec{\mu}_{01}(\szzz))}_{
			\in\mathcal{Q}_{k}^{i,j,k}}\in\mathcal{Q}_{k+4}^{i,j,k}\subseteq\mathcal{Q}_{n+3}^{n-1,n-1,n-1}\,.
\end{equation}
Hence, the interior functions lie in $\mathcal{U}^{(3),n}$ for $n=\max\{i+1,j+1,k+1\}$.

\newpage
\pagenumbering{arabic}
\renewcommand*{\thepage}{\thesection-\arabic{page}}
\section{Integration}
\label{app:Integration}

\subsection{Coordinate Changes}

\begin{figure}[!ht]
\begin{center}
\includegraphics[scale=0.45]{./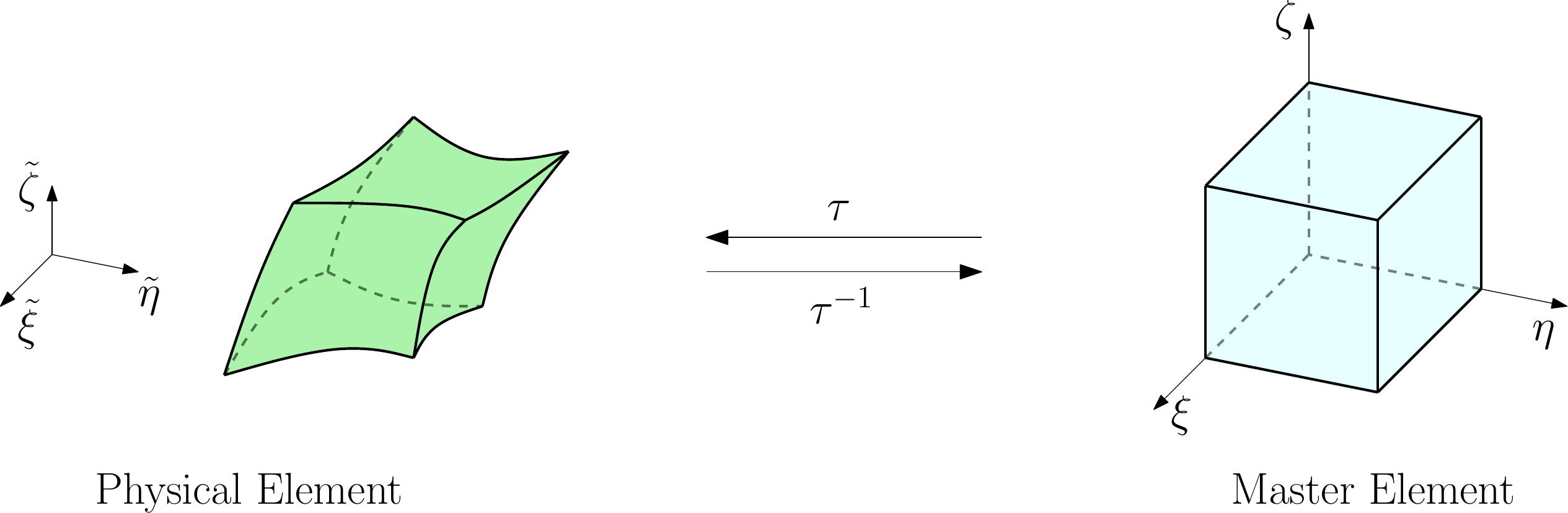}
\caption{Transformation from the master element to the physical element.}\label{fig:curvilineartransform}
\end{center}
\end{figure}

At its core, the finite element method advocates carrying out integration over a master element domain instead of the original physical element.
It makes the method very feasible from a computational standpoint.
This involves a change of variables $\tau:\Omega\rightarrow\tilde{\Omega}$, from the master element domain $\Omega$ to the physical domain $\tilde{\Omega}$, which is assumed to be known.
This is illustrated in Figure \ref{fig:curvilineartransform}.
Note the change of variables $\tau$ is in general a nonlinear mapping.

Indeed, consider a ``physical'' integrand $f$ which is a function of variables in the different energy spaces and their differential form.
These variables are in the physical system of coordinates.
For instance, take $\phi_{\tilde{\Omega}}$, $E_{\tilde{\Omega}}$, $V_{\tilde{\Omega}}$ and $\psi_{\tilde{\Omega}}$ to represent variables in $H^1$, $H(\mathrm{curl})$, $H(\mathrm{div})$ and $L^2$ respectively.
Their corresponding differentials are $\nabla_{\tilde{\Omega}}\phi_{\tilde{\Omega}}$, $\nabla_{\tilde{\Omega}}\!\times\!E_{\tilde{\Omega}}$ and $\nabla_{\tilde{\Omega}}\!\cdot\!V_{\tilde{\Omega}}$.
However, it is their pullbacks to the master element domain, denoted with the subscript $\Omega$, which are known, since the shape functions are defined in the master element domain.\footnote{Unless the affine coordinates and their gradient are written in the physical system of coordinates, in which case one can simply substitute them in the expressions for the shape functions. This is due to the coordinate free nature of the shape functions.}
Making use of the appropriate pullback mapping for each of the variables as written in \eqref{eq:pullbacksgeneral}, this yields\footnote{In 2D, $\nabla_{\Omega}\!\times\!E_{\Omega}$ is in $L^2$, so the correct expression in the last line would be $\frac{1}{\det(J_\tau)}\nabla_{\Omega}\!\times\!E_{\Omega}$ instead of the 3D expression $\frac{1}{\det(J_\tau)}J_\tau\nabla_{\Omega}\!\times\!E_{\Omega}$. In 1D, $E_{\Omega}$ and $V_{\Omega}$ do not even exist, so they would be ignored throughout.}
\begin{equation}
	\begin{aligned}
		\mathcal{I}_{\tilde{\Omega}}&\!=\!\int_{\tilde{\Omega}}\!
			f\Big(\phi_{\tilde{\Omega}},\nabla_{\tilde{\Omega}}\phi_{\tilde{\Omega}},
				E_{\tilde{\Omega}},\nabla_{\tilde{\Omega}}\!\times\!E_{\tilde{\Omega}}, 
					V_{\tilde{\Omega}},\nabla_{\tilde{\Omega}}\!\cdot\!V_{\tilde{\Omega}}, \psi_{\tilde{\Omega}}\Big)\mathrm{d}\tilde{\Omega}\\
			&\!=\!\int_{\Omega}\!f\Big(\phi_{\tilde{\Omega}}\circ\tau, (\nabla_{\tilde{\Omega}}\phi_{\tilde{\Omega}})\circ\tau,
				E_{\tilde{\Omega}}\circ\tau,(\nabla_{\tilde{\Omega}}\!\times\!E_{\tilde{\Omega}})\circ\tau, 
					V_{\tilde{\Omega}}\circ\tau,(\nabla_{\tilde{\Omega}}\!\cdot\!V_{\tilde{\Omega}})\circ\tau,
						\psi_{\tilde{\Omega}}\circ\tau\Big)\!\det(J_\tau)\mathrm{d}\Omega\\
			&\!=\!\int_{\Omega}\!f\Big(\phi_{\Omega},J_\tau^{-\T}\nabla_{\Omega}\phi_{\Omega},
				J_\tau^{-\T}E_{\Omega},\textstyle{\frac{1}{\det(J_\tau)}}J_\tau\nabla_{\Omega}\!\times\!E_{\Omega},\\
					&\qquad\qquad\qquad\qquad\qquad\qquad\qquad\qquad\quad
						\textstyle{\frac{1}{\det(J_\tau)}}J_\tau V_{\Omega},\textstyle{\frac{1}{\det(J_\tau)}}\nabla_{\Omega}\!\cdot\!V_{\Omega},
							\textstyle{\frac{1}{\det(J_\tau)}}\psi_{\Omega}\Big)\!\det(J_\tau)\mathrm{d}\Omega\,,
	\end{aligned}
	\label{eq:integraluptomaster}
\end{equation}
where $J_\tau$ is the Jacobian matrix of the transformation $\tau:\Omega\rightarrow\tilde{\Omega}$.

Now, the integration is at least in a well known master element domain.
However, this is still a ``difficult'' domain over which to integrate (with the exception of the 1D segment, the 2D quadrilateral and the 3D hexahedron).
Hence, it is desirable to make one further change of coordinates to a ``nicer'' integration domain.
This is denoted by $\tau_Q:Q\rightarrow\Omega$, where $Q=(0,1)$ in 1D, $Q=(0,1)^2$ in 2D and $Q=(0,1)^3$ in 3D.
The transformations for each element are nicely depicted in Figure \ref{fig:integrationtransforms}. 
Some readers may have a strong preference for the integration domains $\tilde{Q}=(-1,1)$ in 1D, $\tilde{Q}=(-1,1)^2$ in 2D and $\tilde{Q}=(-1,1)^3$ in 3D.
If that is the case, then simply make the substitutions $x=\frac{\tilde{x}+1}{2}$, $y=\frac{\tilde{y}+1}{2}$ and $z=\frac{\tilde{z}+1}{2}$ in those expressions shown in Figure \ref{fig:integrationtransforms}.


\begin{figure}[!ht]
\begin{center}
\includegraphics[scale=0.45]{./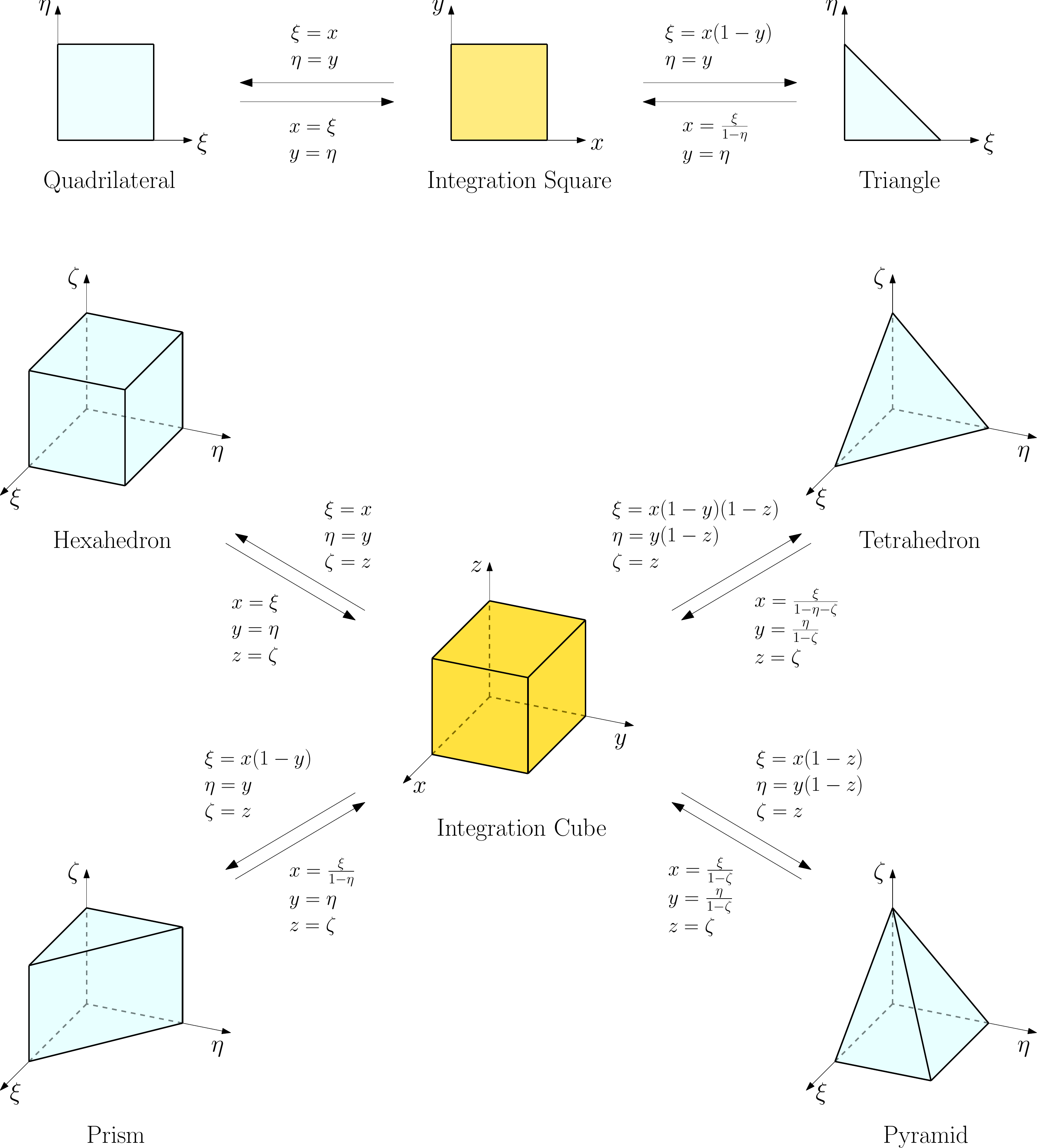}
\caption{Transformations from each element to a nice integration domain.}\label{fig:integrationtransforms}
\end{center}
\end{figure}


The original integral in \eqref{eq:integraluptomaster} finally becomes
\begin{equation}
	\begin{aligned}
		\mathcal{I}_{\tilde{\Omega}}
			&\!=\!\int_{Q}\!f\Big(\phi_{\Omega}\circ\tau_Q,(J_\tau^{-\T}\nabla_{\Omega}\phi_{\Omega})\circ\tau_Q,
				(J_\tau^{-\T}E_{\Omega})\circ\tau_Q,(\textstyle{\frac{1}{\det(J_\tau)}}J_\tau\nabla_{\Omega}\!\times\!E_{\Omega})\circ\tau_Q,\\
					&\qquad\qquad
						(\textstyle{\frac{1}{\det(J_\tau)}}J_\tau V_{\Omega})\circ\tau_Q,
							(\textstyle{\frac{1}{\det(J_\tau)}}\nabla_{\Omega}\!\cdot\!V_{\Omega})\circ\tau_Q,
								(\textstyle{\frac{1}{\det(J_\tau)}}\psi_{\Omega})\circ\tau_Q\Big)\!\det(J_{\tau_Q})\det(J_\tau)\mathrm{d}Q\,,
	\end{aligned}
	\label{eq:integraluptoQ}
\end{equation}
where $J_{\tau_Q}$ is the Jacobian matrix of the transformation $\tau_Q:Q\rightarrow\Omega$.

\subsection{Fast integration}

To actually calculate the integral, the typical approach is to use Gaussian quadrature.
In $Q$ (or $\tilde{Q}$) the quadrature points and weights are well known and taken from the literature, and this is part of the reason why integration over the physical domain was reduced to integration over $Q$ in \eqref{eq:integraluptoQ}.

However, as the number of spatial dimensions $N$ increases from 1D to 3D, the cost grows quickly with $p$.
Indeed, to construct a typical finite element stiffness matrix, integrals usually reduce to the form
\begin{equation*}
	\mathcal{I}_{\tilde{\Omega}}=\int_{Q}u_Iv_J\mathrm{d}Q\,,
\end{equation*}
where $I=1,\ldots,\mathcal{O}(p^N)$ and $J=1,\ldots,\mathcal{O}(p^N)$.
The cost to integrate each term is $\mathcal{O}(p^N)$ as well, because there are $p+1$ quadrature points in each spatial dimension.
Hence, with a straightforward implementation, the cost to integrate all terms is $\mathcal{O}(p^{3N})$, so it is $\mathcal{O}(p^3)$ in 1D, $\mathcal{O}(p^6)$ in 2D and $\mathcal{O}(p^9)$ in 3D.
This constitutes a problem for $N\geq2$ and high $p$, so it is highly desirable to improve the integration cost.

Fortunately, the integration cost can be reduced if there exists a decoupling of either $u_I$ or $v_J$ in $x$, $y$ and $z$ (the variables after transforming to $Q$, not necessarily the variables of the physical or master element domain). 
Assume the decoupling is in $v_J$, where it takes the form $v_J(x,y)=v_{j_x}^x(x,y)v_{j_y}^y(y)$ in 2D and $v_J(x,y,z)=v_{j_x}^x(x,y,z)v_{j_y}^y(y,z)v_{j_z}^z(z)$ in 3D, with $j_x,j_y,j_z=1,\ldots,\mathcal{O}(p)$. 
Then, by reorganizing the operations and storing some coefficients, the cost is reduced to $\mathcal{O}(p^5)$ in 2D, and to $\mathcal{O}(p^7)$ in 3D.
Some of the details are in \citet{hpbook2}.
With the shape functions presented in this text, regardless of the element shape and the the associated topological entity, such a decoupling is to be expected, so this acceleration to $\mathcal{O}(p^{2N+1})$ is possible.
This technique based on a tensor product decoupling is typically called \textit{fast quadrature}.

Naturally, there are other fast integration techniques different from the fast quadrature described above which might also be applicable, but further research is required.

\newpage
\pagenumbering{arabic}
\renewcommand*{\thepage}{\thesection-\arabic{page}}
\section{Verification}
\label{app:verification}


One of the most important tests is to numerically confirm the polynomial approximability properties of the spaces spanned by the shape functions.
Coupled with the exact sequence property of the discrete spaces, this ensures all well known interpolation inequalities.
More specifically, for an affinely transformed master element mesh, let $W^p$ be the span of the $H^1$ basis functions of order $p$ (being composed piecewise by shape functions), and similarly with $Q^p$, $V^p$ and $Y^p$ for the spaces $H(\mathrm{curl})$, $H(\mathrm{div})$ and $L^2$ respectively.
Then, one has to check that $\mathcal{P}^p\subseteq W^p$, $(\mathcal{P}^{p-1})^3\subseteq Q^p$, $(\mathcal{P}^{p-1})^3\subseteq V^p$ and $\mathcal{P}^{p-1}\subseteq Y^p$.

To do this, first consider an arbitrary $u$ in a given energy space $U$ approximated by a discrete space $U_h$.
Clearly,
\begin{equation}
	u\in U_h \Leftrightarrow \mathrm{dist}(u,U_h)=\min_{u_h\in U_h}||u-u_h||_U=0\,.
\end{equation}
Hence, given $u\in U$ the task is to compute $\mathrm{dist}(u,U_h)=||u-u_h||_U$, with $u_h\in U_h$ being the element where the minimum is attained.
Fortunately $u_h$ is computed from a variational problem equivalent to the projection (distance) problem.
It is,
\begin{equation}
	||u-u_h||_U=\mathrm{dist}(u,U_h)\Leftrightarrow
		\begin{cases}
			u_h\in U_h\,,&{}\\
			b(u_h,v_h)=\langle u_h,v_h \rangle_U=\langle u,v_h \rangle_U=\ell_u(v_h)& \forall v_h\in U_h.
		\end{cases}
	\label{eq:variationalprojection}
\end{equation}
Naturally, the inner product is different depending on the energy space $U$.
They are,
\begin{equation}
  \begin{aligned}
	  \langle \phi_1,\phi_2\rangle_{H^1}&=\int_\Omega (\phi_1\phi_2+\nabla\phi_1\cdot\nabla\phi_2)\mathrm{d}\Omega\,,\\
	  \langle E_1,E_2\rangle_{H(\mathrm{curl})}&=\int_\Omega (E_1\cdot E_2+(\nabla\times E_1)\cdot(\nabla\times E_2))\mathrm{d}\Omega\,,\\
		\langle V_1,V_2\rangle_{H(\mathrm{div})}&=\int_\Omega (V_1\cdot V_2+(\nabla\cdot V_1)(\nabla\cdot V_2))\mathrm{d}\Omega\,,\\
		\langle \psi_1,\psi_2\rangle_{L^2}&=\int_\Omega \psi_1\psi_2\mathrm{d}\Omega\,.
	\end{aligned}
\end{equation}

Then, the task is to determine whether each element $u$ of a monomial basis for the polynomial spaces in question lies in $U_h$.
This is achieved by solving the variational problem in \eqref{eq:variationalprojection} and checking that the relative error $\frac{||u-u_h||_U}{||u||_U}=\frac{\mathrm{dist}(u,U_h)}{||u||_U}$ is in the range of machine zero.
Thus, for example to ensure that $\mathcal{P}^p\subseteq W^p$ one must check that all monomials of the form $x^iy^jz^k$ for $i+j+k\leq p$ lie in $W^p$, the span of the $H^1$ basis functions.
Similar procedures hold for $H(\mathrm{curl})$, $H(\mathrm{div})$ and $L^2$.
These tests are called \textit{polynomial reproducibility} tests.

The polynomial reproducibility tests are successful when using the code associated with this work.
The tests are done on a series of meshes, including a four element hybrid mesh with one element of each type, as depicted in Figure \ref{fig:VerificationMesh}.
By doing this on a mesh, there is the additional value of implicitly verifying compatibility of the shape functions across the boundaries of the elements.
Indeed, it should be checked that the polynomial reproducibililty tests pass under all possible orientations of each face and edge in the mesh, which is the case for our code.

\begin{figure}[!ht]
\begin{center}
\includegraphics[scale=0.5]{./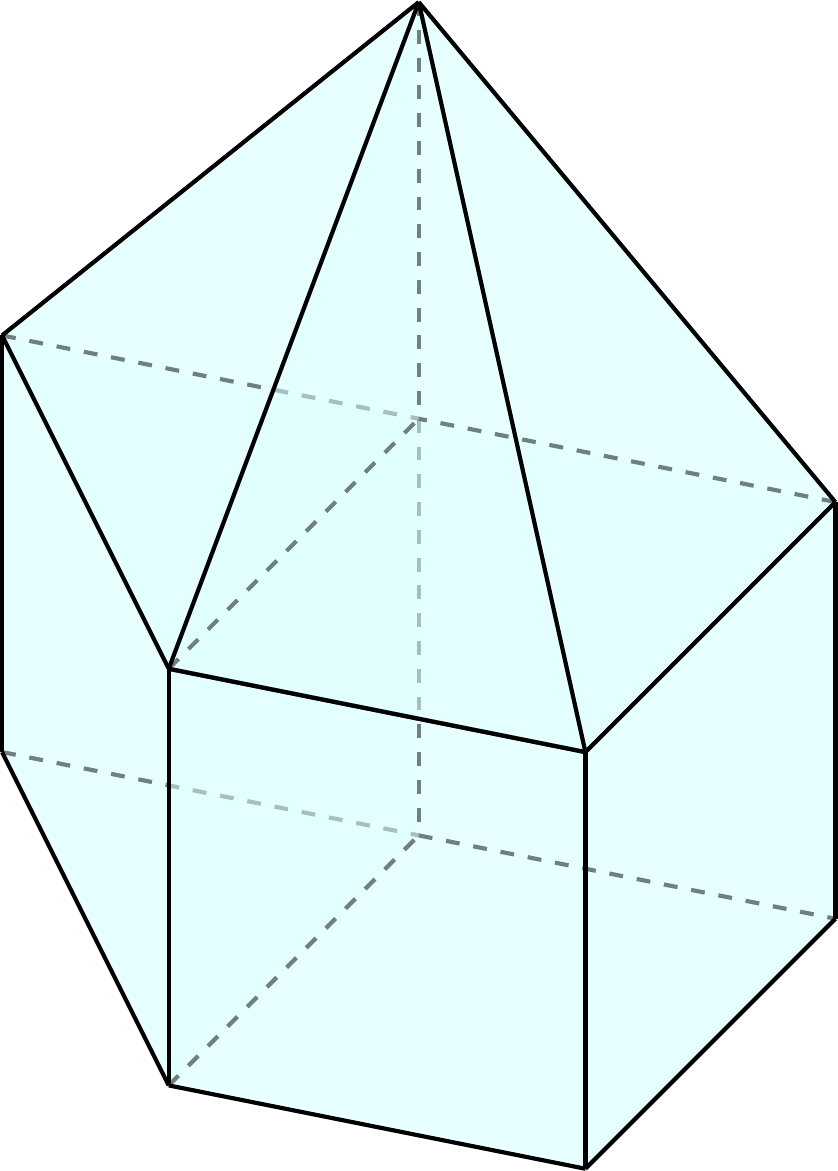}
\caption{A hybrid mesh used to verify polynomial reproducibility.}
\label{fig:VerificationMesh}
\end{center}
\end{figure}


Another convenient test is to verify some aspects of the exact sequence property of the discrete spaces.
For this, consider a fixed element and the discrete spaces $W^p$, $Q^p$, $V^p$ and $Y^p$ conforming to $H^1$, $H(\mathrm{curl})$, $H(\mathrm{div})$ and $L^2$ respectively.
The discrete spaces are precisely the span of the corresponding shape functions.

Hence, for example consider an $H^1$ shape function $\phi\in W^p$.
Then the idea is to confirm numerically that $\nabla\phi\in Q^p$.
This is done as described in the polynomial reproducibility tests, where the computed projection of $\nabla\phi$ to $Q^p$ is $E_h$, which is given as a linear combination of the $H(\mathrm{curl})$ shape functions $E\in Q^p$.
Therefore, one should obtain that that $\frac{||\nabla\phi-E_h||_{H(\mathrm{curl})}}{||\nabla\phi||_{H(\mathrm{curl})}}$ is in the range of machine zero.
Moreover, one can additionally check that the coefficients of the linear combination for $E_h$ make sense.
For instance, if $\phi\in W^p$ is originally an $H^1$ interior bubble, then $\nabla\phi$ is also an $H(\mathrm{curl})$ interior bubble and as a result is in the span of the $H(\mathrm{curl})$ interior shape functions (meaning the coefficients associated to $H(\mathrm{curl})$ edge and face shape functions are zero).
Similarly, if $\phi\in W^p$ is an $H^1$ face shape function, then $\nabla\phi$ is in the span of the the $H(\mathrm{curl})$ face functions associated to the same face \textit{and} the $H(\mathrm{curl})$ interior bubbles.
Naturally this applies to other topological entities and to the different energy spaces.

These verifications are successful when using the code that supplements this text.

\newpage
\pagenumbering{arabic}
\renewcommand*{\thepage}{\thesection-\arabic{page}}
\section{Tables}
\label{app:ShapeFunctionTable}

\subsection{Polynomials}

\renewcommand{\arraystretch}{1.35}
\begin{center}

\end{center}
\renewcommand{\arraystretch}{1}

\end{document}